\documentclass[12pt,a4paper,reqno,oneside]{amsart}
\usepackage{t1enc}
\usepackage{times}
\usepackage{amssymb}
\usepackage[mathscr]{euscript}
\usepackage{graphicx}
\usepackage{hyperref}

\usepackage{comment}

\usepackage{float}
\usepackage{epstopdf}

\usepackage{tikz}

\usepackage{dsfont}

\textwidth 15cm
\oddsidemargin 0cm

\numberwithin{equation}{section}

\newtheorem{claim}{Claim}[section]
\newtheorem{thm}{Theorem}[section]

\newtheorem{lem}[thm]{Lemma}
\newtheorem{cor}[thm]{Corollary}
\newtheorem{ass}[thm]{Assumption}

\theoremstyle{remark}
\newtheorem{rem}[thm]{Remark}

\theoremstyle{definition}
\newtheorem{defn}[thm]{Definition}

\newcommand{\eps}{\varepsilon}
\newcommand{\ind}{\mathds{1}} 
\newcommand{\Hfct}{\mathcal{H}} 
\newcommand{\R}{\mathbb{R}} 
\newcommand{\PP}{\mathrm{P}} 
\newcommand{\BB}{\mathbb B} 
\newcommand{\NN}{\mathbb N} 
\newcommand{\F}{\mathcal F} 
\newcommand{\Wi}{\mathcal W} 
\newcommand{\N}{\mathcal N} 
\newcommand{\<}{\langle}
\renewcommand{\>}{\rangle}
\newcommand{\DM}{\mathcal{D}}

\DeclareMathOperator{\card}{card}
\DeclareMathOperator{\dom}{dom}
\DeclareMathOperator{\mo}{mod}
\DeclareMathOperator{\Var}{Var}

\begin{document}

\title{Strong Uniqueness of Singular Stochastic Delay Equations}

\date{\today}

\author[D. Ba\~{n}os]{David Ba\~{n}os}
\address{D. Ba\~{n}os: Department of Mathematics, University of Oslo, P.O. Box 1053, Blindern, N--0316 Oslo, Norway}
\email{davidru@math.uio.no}
\author[H. H. Haferkorn]{Hannes Hagen Haferkorn}
\address{H. H. Haferkorn: Department of Mathematics, University of Oslo, Moltke Moes vei 35, P.O. Box 1053 Blindern, 0316 Oslo, Norway.}
\email{hanneshh@math.uio.no}
\author[F. Proske]{Frank Proske}
\address{F. Proske: CMA, Department of Mathematics, University of Oslo, Moltke Moes vei 35, P.O. Box 1053 Blindern, 0316 Oslo, Norway.}
\email{proske@math.uio.no} 
 
\keywords{SDEs, Stochastic delay differential equations (SDDE), Stochastic functional differential equations (SFDE), Compactness criterion, irregular drift, Malliavin calculus.}
\subjclass[2010]{60H10, 34K50, 49N60}


\begin{abstract}
In this article we introduce a new method for the construction of unique strong solutions of a larger class of stochastic delay equations driven by a discontinuous drift vector field and a Wiener process. The results obtained in this paper can be regarded as an infinite-dimensional generalization of those of A. Y. Veretennikov \cite{Ver79} in the case of certain stochastic delay equations with irregular drift coefficients. The approach proposed in this work rests on Malliavin calculus and arguments of a ``local time variational calculus'', which may also be used to study other types of stochastic equations as e.g. functional It\^{o}-stochastic differential equations in connection with path-dependent Kolmogorov equations \cite{FZ16}.
\end{abstract}

\maketitle

\section{Introduction}

Let us consider the ordinary differential equation ODE

\begin{align}\label{3MotivatingODE}
 \frac{d}{dt}x(t)=b(t,x(t)),\quad x(0)=x\in\R^d,\quad 0\leq t\leq T,
\end{align}

where $b:[0,T]\times\R^d\rightarrow\R^d$ is a Borel-measurable vector field.\\
Using Picard iteration, it is well-known that if, e.g. $b$ is Lipschitz continuous and satisfies a linear growth condition, the ODE \eqref{3MotivatingODE} has a unique solution $x$. However, if the vector field $b$ is not Lipschitzian, then uniqueness or even existence of solutions to \eqref{3MotivatingODE} may fail.\\
\par
On the other hand, the situation changes dramatically, if equation \eqref{3MotivatingODE} is superposed by, for instance, a small Brownian noise, that is if $x$ is supposed to solve the stochastic differential equation (SDE)

\begin{align}\label{3MotivatingSDE}
 x(t)=x+\int_0^tb(s,x(s))ds+\eps W(t),
\end{align}

where $W$ is a $d$-dimensional Brownian motion and $\eps>0$. In fact, a milestone result by Zvonkin \cite{Zvon74} shows (for $d=1$) that the SDE \eqref{3MotivatingSDE} admits the existence of a unique strong solution for merely bounded Borel measurable $b$, regardless of how small $\eps$ is chosen. Subsequently, the latter result was generalized by Veretennikov \cite{Ver79} to the multidimensional case.\\
The intuition with respect to the regularization effect of the Brownian noise in \eqref{3MotivatingSDE} may be explained by the roughness of the Brownian paths which avoid ``longer stays'' at singularities of the vector field $b$.\\
Other, more recent, important results in this direction in the finite dimensional case were obtained by Krylov, R\"{o}ckner \cite{KR05}, Krylov \cite{Kry80}, Gy\"{o}ngy, Martinez \cite{GyM01}. Let us also mention here that the results of Zvonkin \cite{Zvon74} and Veretennikov \cite{Ver79} for bounded measurable drift coefficients $b$ were recently generalized by Da Prato, Flandoli, Priola, R\"{o}ckner \cite{DPFPR13} to the case of mild solutions to SPDEs of the form

\begin{align}\label{3MotivatingSPDE}
 x(t)=x+\int_0^tAx(s)+b(s,x(s))ds+\int_0^tZ(t)d\mathbb{W}(t),
\end{align}

where $A$ is a (non-zero) densely defined linear operator, $Z$ is a Hilbert-Schmidt operator valued function on $[0,T]$ and $\mathbb{W}$ is a cylindrical Brownian motion. Here in this striking work, the authors employ solutions of infinite dimensional Kolmogorov equations for the construction of unique strong solutions to \eqref{3MotivatingSPDE}.\\
\par
Motivated by the above mentioned results, one may ask whether the regularization effect of a Brownian or related noise also applies to other types of equations, as for example the delay equation of the form

\begin{align}\label{3MotivatingODDE}
 x(t)=\eta(0)+\int_0^tb(x_s)ds,\quad x_0=\eta,
\end{align}

where, for $s\in[0,T]$, $x_s$ denotes the past \emph{segment} of length $r>0$ of the process $x$ in $s$, i.e. $$x_s=\{u\mapsto x(s+u),\,u\in[-r,0]\},$$
which is seen as an element of a suitably chosen space (e.g. $C([-r,0])$ or $L^p([-r,0])$, etc.) and the initial path $\eta$ is an element of that same space.\\
Inspired by Zvonkin \cite{Zvon74} and Veretennikov \cite{Ver79}, it would be natural here to regularize the deterministic delay equation  \eqref{3MotivatingODDE} by adding a Brownian noise, that is to consider the SDE

\begin{align}\label{3MotivatingSDDE1}
 x(t)=\eta(0)+\int_0^tb(x_s)ds+W(t),\quad x_0=\eta.
\end{align}

Unfortunately, if $b$ is irregular (non-Lipschitzian) then \eqref{3MotivatingSDDE1} may not allow for the existence of a strong solution. An indication for this -- however in the related case of functional SDEs -- may be the celebrated counter-example of Tsirelson in \cite{Ts75} where the author proved the non-existence of a strong solution of a certain functional SDE with bounded and continuous drift coefficient. A possible explanation for this phenomenon could be the ``mismatching'' of the dimensions of the driving noise (in $\R^d$) and the infinite dimensional space (i.e. path space) on which the vector field $b$ is defined. In order to overcome this mismatching, one could try to compensate for this ``dimension gap'' by distorting \eqref{3MotivatingODDE} in the spirit of Zvonkin \cite{Zvon74} and Veretennikov \cite{Ver79} by means of a noise with dimension corresponding to that of the domain of $b$. For this purpose, one could recast the delay equation \eqref{3MotivatingODDE} in segment form and superpose it by a noise with values in the space that was chosen for the segments. However, it is not obvious at all, how this could be done without changing the delay character of the equation. More precisely, the resulting equation may admit for solutions in the space where the segments are defined on, but may itself not be a segment process corresponding to a solution of a (stochastic) delay equation.\\
\par
In order to restore well-posedness of \eqref{3MotivatingODDE} in the sense of unique strong solutions, one could resort to an alternative approach to the previous one by enlarging the perturbation noise from the ``inside'' of the vector field \emph{instead} of from the ``outside''. In other words, one could consider the stochastic delay equation (SDDE) of the following form:

\begin{align}\label{3MotivatingSDDE2}
 x(t)=\eta(0)+\int_0^tb(x_s+\mathbb{B}(s))ds+W(t),\quad x_0=\eta,
\end{align}

where $\mathbb{B}$ is a rough stochastic process with values in the domain of $b$. This is the type of equation that we are studying in this paper. The noise $\BB$ involved in equation \eqref{3MotivatingSDDE2} could be principally correlated with $W$ and Markovian as e.g. a $Q$-Wiener process. However, in this paper we assume that $\BB$ is a stationary Gaussian process of the form

\begin{align}\label{3Bprocess}
  \BB(t):=\sum_{n=1}^\infty w_nB^{H_n}(t)e_n,
\end{align}

where $B^{H_n}$, $n\geq1$ are independent one-dimensional fractional Brwonian motions with Hurst parameters $H_n\in(0,\frac{1}{2})$, $(w_n)_{n\geq1}$ is a sequence in $\ell^1$ and $(e_n)_{n\geq1}$ is an orthonormal basis of the state space (Hilbert space) of the segments $x_s$.\\
\par
The objective of this paper is the construction of unique strong solutions to the SDDE \eqref{3MotivatingSDDE2} in the case of a larger class of merely measurable and genuinely infinite-dimensional vector-fields $b$. To the best of our knowledge, the results obtained in this article are the first in the literature dealing with strong solutions of singular SDDE's in the sense of SDDE's with non-Lipschitzian or discontinuous coefficients.\\
Our construction technique with respect to strong solutions, which does not rely on the Yamada-Watanabe principle and which therefore considerably differs from those of the above mentioned authors, is based on Malliavin calculus and ideas in connection with a ``local time variational calculus''. More precisely, we approximate the singular vector field $b$ in \eqref{3MotivatingSDDE2} by a sequence of Lipschitz continuous vector fields $b^n$ on the state space of the segments. Then, we show that the sequence of strong solutions $x^n(t)$ to the SDDE's

\begin{align}\label{3MotivatingSDDE3}
 x^n(t)=\eta(0)+\int_0^tb^n(x^n_s+\mathbb{B}(s))ds+W(t),\quad x^n_0=\eta,
\end{align}

converges strongly in $L^2(\Omega)$ to a solution to \eqref{3MotivatingSDDE2}. In proving this, we apply a compactness criterion for square integrable functionals from Malliavin calculus \cite{DPMN92} to the sequence $x^n(t)$, $n\geq1$ in combination with an argument based on ``local time variational calculus''. See \cite{BanNilPro15}, where the authors proved strong uniqueness of singular SDE's driven by a fractional Brownian motion with Hurst parameters $H\in(0,\frac{1}{2})$. See also \cite{BOPP17} in the case of generalized vector fields and \cite{MBP10} in the Markovian setting of a Wiener process. As for this approach, we also mention a series of other articles in the Wiener and L\'{e}vy process case and in the Hilbert space setting \cite{Ban17,Ban17.2,FNP.13,HaaPros.14,MMNPZ10,MNP14}.\\
Further, we also want to point out the following characteristic feature of our paper in view of the work \cite{FNP.13}: In contrast to \cite{FNP.13}, we do not employ an infinite dimensional compactness criterion in the direction of the driving noise $(W,\BB)$. In fact, we only use a finite-dimensional compactness criterion with respect to $W$ (see, e.g. \cite{MMNPZ10}) for the construction of a strong solution to \eqref{3MotivatingSDDE2}. The latter technique presented in this article also exhibits the advantage of the study of SDDE's driven by certain types of L\'{e}vy processes.\\
\par
Finally let us mention a possible alternative method regarding the construction of strong solutions to singular SDDE's: In \cite{FZ16} the authors analyze path-dependent Kolmogorov-equations associated with solutions to functional It\^{o} SDE's. In this context it is conceivable that the framework developed by these authors could be extended to the case of SDDE's and be employed in connection with the ``It\^{o}-Tanaka-Zvonkin trick'': The drift part in \eqref{3MotivatingSDDE2} could be expressed, as e.g. in \cite{DPFPR13}, by means of a ``more regular'' term by using solutions of path-dependent Kolmogorov equations. Using the latter could enable one to establish strong uniqueness of such equations. See \cite{DPFPR13} and \cite{FNP.13}. However, such a task seems very difficult, since there are (besides other complications) no known suitable a priory estimates of solutions of path-dependent Kolmogorov equations associated with singular SDDE's in the sense of \cite{DPFPR13}.\\
\par
Our article is organized as follows: In Section 2 we give the mathematical framework of the paper. Section 3 is devoted to the construction of unique strong solutions of singular SDDE's based on the compactness criterion in \cite{DPMN92}.

\section{Preliminaries}

In this section we introduce the mathematical framework of our paper, which we aim at using in Section 3 for the construction of strong solutions to singular SDDE's. See \cite{BanNilPro15}.\\
\par
Let $0\leq\theta<t\leq T$ and $m\in\NN$. We define $$\Delta^m_{\theta,t}:=\{s_1,\dots,s_m\in\R:\, \theta\leq s_m<\dots<s_1\leq t\}.$$
The following lemma gives a representation formula for products of integrals over such sets $\Delta^m_{\theta,t}$, $\Delta^n_{\theta,t}$.

\begin{lem}\label{3IntegralProductShuffle}
 Let $m,n\in\NN$, $S(m,n)$ denote the set of shuffle permutations $\sigma:\{1,\dots,m+n\}\rightarrow\{1,\dots,m+n\}$, s.t. $\sigma(1)<\dots<\sigma(m)$ and $\sigma(m+1)<\dots<\sigma(m+n)$. Moreover, let $f:\Delta^m_{\theta,t}\rightarrow\R$ and $g:\Delta^n_{\theta,t}\rightarrow\R$ be Lebesgue integrable. Then
 \begin{align}
 \begin{split}
  &\int_{\Delta^m_{\theta,t}}f(s_1,\dots,s_m)ds_m\dots ds_1\int_{\Delta^n_{\theta,t}}g(s_{m+1},\dots,s_{m+n})ds_{m+n}\dots ds_{m+1}\\
  &=\sum_{\sigma\in S(m,n)}\int_{\Delta^{m+n}_{\theta,t}}f(s_{\sigma(1)},\dots,s_{\sigma(m)})g(s_{\sigma(m+1)},\dots,s_{\sigma(m+n)})ds_m\dots ds_1.
 \end{split}
 \end{align}
\end{lem}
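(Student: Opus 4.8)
The plan is to realize both sides as integrals over subsets of $[\theta,t]^{m+n}$ and to match them via a partition of the product simplex into shuffle cells. First I would use Tonelli's theorem to rewrite the product of the two iterated integrals as a single integral over the product set,
\begin{align*}
\int_{\Delta^m_{\theta,t}\times\Delta^n_{\theta,t}} f(s_1,\dots,s_m)\,g(s_{m+1},\dots,s_{m+n})\,ds_1\cdots ds_{m+n},
\end{align*}
where the interchange is justified because $f\in L^1(\Delta^m_{\theta,t})$ and $g\in L^1(\Delta^n_{\theta,t})$ imply that $(s_1,\dots,s_{m+n})\mapsto f(s_1,\dots,s_m)g(s_{m+1},\dots,s_{m+n})$ is integrable on the product (apply Tonelli to $|f|$ and $|g|$).

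Next I would exploit the ordering structure. On $\Delta^m_{\theta,t}\times\Delta^n_{\theta,t}$ the first $m$ coordinates are strictly decreasing among themselves and the last $n$ are strictly decreasing among themselves, but the two blocks are unconstrained relative to each other. Discarding the Lebesgue-null set on which two of the $m+n$ coordinates coincide, every point of the product set has $m+n$ pairwise distinct coordinates, and I would classify such a point by the way the two decreasing blocks interleave when all $m+n$ coordinates are arranged in strictly decreasing order. Each interleaving pattern is recorded by a permutation $\sigma\in S(m,n)$: sorting the combined coordinates decreasingly yields values $u_1>\dots>u_{m+n}$, and the block of the $f$-variables occupies the positions $\sigma(1)<\dots<\sigma(m)$ while the $g$-variables occupy $\sigma(m+1)<\dots<\sigma(m+n)$. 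This gives a decomposition of the product set, up to a null set, into the disjoint cells $D_\sigma$, $\sigma\in S(m,n)$.

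On each cell $D_\sigma$ I would perform the measure-preserving change of variables sending the original coordinates to the decreasingly sorted coordinates $u_1>\dots>u_{m+n}$, which ranges exactly over $\Delta^{m+n}_{\theta,t}$; since this map merely permutes coordinates it has Jacobian $\pm1$ and preserves Lebesgue measure. Under it the $f$-argument becomes $(u_{\sigma(1)},\dots,u_{\sigma(m)})$ and the $g$-argument becomes $(u_{\sigma(m+1)},\dots,u_{\sigma(m+n)})$, so the integral over $D_\sigma$ equals $\int_{\Delta^{m+n}_{\theta,t}}f(u_{\sigma(1)},\dots,u_{\sigma(m)})g(u_{\sigma(m+1)},\dots,u_{\sigma(m+n)})\,du$. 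Summing over $\sigma\in S(m,n)$ and relabelling $u$ as $s$ yields the claimed identity.

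The main obstacle is the bookkeeping in the middle step: verifying carefully that the cells $D_\sigma$ are (up to measure zero) disjoint and exhaust the product simplex, and that the permutation realizing the sorting on $D_\sigma$ is exactly the shuffle $\sigma$ — i.e.\ that shuffles are in bijection with the admissible interleavings of two internally ordered blocks. An alternative that sidesteps some of this combinatorics is induction on $m+n$: splitting $\Delta^{m+n}_{\theta,t}$ according to whether the largest variable is the first $f$-coordinate or the first $g$-coordinate reproduces the recursive description of $S(m,n)$, and the integrability hypotheses let one peel off the outermost integral; I would keep this induction in reserve as a cross-check.
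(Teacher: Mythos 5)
Your proposal is correct and follows essentially the same route as the paper's proof: both rewrite the product as an integral over $\Delta^m_{\theta,t}\times\Delta^n_{\theta,t}$ via Fubini--Tonelli, partition that set (up to a Lebesgue null set) into cells indexed by the interleavings of the two decreasing blocks, and map each cell onto $\Delta^{m+n}_{\theta,t}$ by a measure-preserving coordinate permutation. The only difference is presentational: you index each cell directly by the shuffle recording which sorted positions the $f$-block occupies, so the integrand comes out as $f(u_{\sigma(1)},\dots,u_{\sigma(m)})g(u_{\sigma(m+1)},\dots,u_{\sigma(m+n)})$ immediately, whereas the paper first obtains the arguments via $\sigma^{-1}$ and then relabels the summation index, so your bookkeeping is in fact the cleaner of the two.
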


\begin{proof}
 It holds $$\Delta^m_{\theta,t}\times\Delta^n_{\theta,t}=\dot\bigcup_{\sigma\in S(m,n)}\{(w_1,\dots,w_{m+n}):\theta<w_{\sigma(m+n)}<\dots<w_{\sigma(1)}<t\}\cup\N$$ for some Lebesgue nullset $\N$. Therefore,
 \begin{align*}
  &\int_{\Delta^m_{\theta,t}}f(s_1,\dots,s_m)ds_m\dots ds_1\int_{\Delta^n_{\theta,t}}g(s_{m+1},\dots,s_{m+n})ds_{m+n}\dots ds_{m+1}\\
  &=\int_{\dot\bigcup_{\sigma\in S(m,n)}\{\dots\}\cup\N}f(w_1,\dots,w_m)g(w_{m+1},\dots,w_{m+n})dw_{m+n}\dots dw_1\\
  &=\sum_{\sigma\in S(m,n)}\int_{\Delta^{m+n}_{\theta,t}}f(w_1,\dots,w_m)g(w_{m+1},\dots,w_{m+n})dw_{\sigma(m+n)}\dots dw_{\sigma(1)},
 \end{align*}
where we reordered the integrals by application of Fubini's theorem in the last step. Defining $\tilde w_i:=w_{\sigma(i)}$ and noting that $\sigma\in S(m,n)\Leftrightarrow\sigma^{-1}\in S(m,n)$, we eventually get
\begin{align*}
  &\int_{\Delta^m_{\theta,t}}f(s_1,\dots,s_m)ds_m\dots ds_1\int_{\Delta^n_{\theta,t}}g(s_{m+1},\dots,s_{m+n})ds_{m+n}\dots ds_{m+1}\\
  &=\sum_{\sigma\in S(m,n)}\int_{\Delta^{m+n}_{\theta,t}}f(\tilde w_{\sigma^{-1}(1)},\dots,\tilde w_{\sigma^{-1}(m)})g(\tilde w_{\sigma^{-1}(m+1)},\dots,\tilde w_{\sigma^{-1}(m+n)})d\tilde w_{m+n}\dots d\tilde w_{1}\\
  &=\sum_{\sigma\in S(m,n)}\int_{\Delta^{m+n}_{\theta,t}}f(\tilde w_{\sigma(1)},\dots,\tilde w_{\sigma(m)})g(\tilde w_{\sigma(m+1)},\dots,\tilde w_{\sigma(m+n)})d\tilde w_{m+n}\dots d\tilde w_{1},
 \end{align*}
 which ends the proof.
\end{proof}

\begin{cor}\label{3DeltatimesDeltaShuffle}
 Let $m,n\in\NN$ and let $f:\Delta^m_{\theta',t}\times\Delta^n_{\theta,\theta'}\rightarrow\R$ be Lebesgue integrable. Then
 \begin{align}
 \begin{split}
  &\bigg(\int_{\Delta^m_{\theta',t}\times\Delta^n_{\theta,\theta'}}f(s_1,\dots,s_{m},s_{m+1},\dots,s_{m+n})ds_{m+n}\dots ds_1\bigg)^2\\
  &=\sum_{\substack{(\sigma,\tau)\in\\ S(m,m)\times S(n,n)}}\int_{\Delta^{2m}_{\theta',t}\times\Delta^{2n}_{\theta,\theta'}}\prod_{k=0}^1f(s_{(\sigma,\tau)(k(m+n)+1)},\dots,s_{(\sigma,\tau)(k(m+n)+m+n)})ds_{2m+2n}\dots ds_1,
 \end{split}
 \end{align}
 where we define the notation
 \begin{align*}
  (\sigma,\tau)(i)=\begin{cases}
                    \sigma(i), & 1\leq i\leq m\\
                    2m+\tau(i-m), & m+1\leq i\leq m+n\\
                    \sigma(i-n), & m+n+1\leq i\leq 2m+n\\
                    2m+\tau(i-2m), & 2m+n+1\leq i\leq 2m+2n.
                   \end{cases}
 \end{align*}
\end{cor}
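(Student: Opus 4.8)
The plan is to reduce the statement to two successive applications of Lemma \ref{3IntegralProductShuffle}, exploiting the fact that the ``outer'' coordinates (living in $[\theta',t]$) and the ``inner'' coordinates (living in $[\theta,\theta']$) occupy essentially disjoint intervals and can therefore be shuffled independently of one another.

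First I would write the square as a product of two identical integrals and, invoking Fubini's theorem, introduce the partial integral $F(u_1,\dots,u_m):=\int_{\Delta^n_{\theta,\theta'}}f(u_1,\dots,u_m,v_1,\dots,v_n)\,dv_n\cdots dv_1$ for $(u_1,\dots,u_m)\in\Delta^m_{\theta',t}$; integrability of $f$ on the product domain guarantees that $F$ is integrable on $\Delta^m_{\theta',t}$. The quantity to be computed is then $\big(\int_{\Delta^m_{\theta',t}}F\,du\big)^2$. Applying Lemma \ref{3IntegralProductShuffle} with both factors equal to $F$ and both simplices taken over $[\theta',t]$ expresses this as $\sum_{\sigma\in S(m,m)}\int_{\Delta^{2m}_{\theta',t}}F(s_{\sigma(1)},\dots,s_{\sigma(m)})F(s_{\sigma(m+1)},\dots,s_{\sigma(2m)})\,ds$. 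Here one notes that on $\Delta^{2m}_{\theta',t}$ the two tuples $(s_{\sigma(1)},\dots,s_{\sigma(m)})$ and $(s_{\sigma(m+1)},\dots,s_{\sigma(2m)})$ are automatically strictly decreasing, since $\sigma\in S(m,m)$, so they are legitimate arguments of $F$.

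Next I would re-expand each factor $F$ as its defining integral over $\Delta^n_{\theta,\theta'}$ and apply Lemma \ref{3IntegralProductShuffle} a second time, now with both simplices taken over $[\theta,\theta']$ and with the two $n$-variable integrands obtained from the two copies of $f$; this produces a sum over $\tau\in S(n,n)$ and merges the two inner simplices into $\Delta^{2n}_{\theta,\theta'}$. Relabelling the $2m$ outer variables as $s_1,\dots,s_{2m}$ and the $2n$ inner variables as $s_{2m+1},\dots,s_{2m+2n}$, the first copy of $f$ receives the arguments $s_{\sigma(1)},\dots,s_{\sigma(m)}$ followed by $s_{2m+\tau(1)},\dots,s_{2m+\tau(n)}$, while the second copy receives $s_{\sigma(m+1)},\dots,s_{\sigma(2m)}$ followed by $s_{2m+\tau(n+1)},\dots,s_{2m+\tau(2n)}$. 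A direct comparison with the definition of $(\sigma,\tau)(i)$ shows these are exactly the index lists $(\sigma,\tau)(1),\dots,(\sigma,\tau)(m+n)$ and $(\sigma,\tau)(m+n+1),\dots,(\sigma,\tau)(2m+2n)$, i.e. the factors $k=0$ and $k=1$ in the claimed product.

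The only genuinely delicate point is the index bookkeeping in this final relabelling: one must verify that the two independent shuffles $\sigma\in S(m,m)$ of the $[\theta',t]$-coordinates and $\tau\in S(n,n)$ of the $[\theta,\theta']$-coordinates assemble into precisely the map $(\sigma,\tau)$, and in particular that no shuffle interleaving an outer with an inner coordinate can arise. This last point is exactly the geometric reason why one obtains the product $S(m,m)\times S(n,n)$ rather than a single shuffle from $S(m+n,m+n)$: since every $[\theta',t]$-coordinate exceeds every $[\theta,\theta']$-coordinate, the combined ordering never mixes the two blocks and the merged domain is genuinely the product $\Delta^{2m}_{\theta',t}\times\Delta^{2n}_{\theta,\theta'}$. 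Everything else reduces to routine uses of Fubini's theorem together with the set-decomposition already established in the proof of Lemma \ref{3IntegralProductShuffle}.
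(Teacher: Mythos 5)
Your proposal is correct and follows essentially the same route as the paper's proof: the partial integral $F$ you introduce is exactly the paper's $\tilde f$, and the two successive applications of Lemma \ref{3IntegralProductShuffle} (first over $\Delta^m_{\theta',t}$ yielding $\sigma\in S(m,m)$, then over the two copies of $\Delta^n_{\theta,\theta'}$ yielding $\tau\in S(n,n)$) together with the final index relabelling reproduce the paper's argument step for step. One small remark: the point you flag as delicate --- that no shuffle interleaving an outer with an inner coordinate can arise --- is in fact automatic in this proof structure, since the two invocations of the lemma act on disjoint groups of variables and a shuffle from $S(m+n,m+n)$ over all variables is never formed; the geometric observation that $[\theta',t]$-coordinates dominate $[\theta,\theta']$-coordinates is not needed.
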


\begin{proof}
 Defining $\tilde f(s_1,\dots,s_m):=\int_{\Delta^n_{\theta,\theta'}}f(s_1,\dots,s_{m},s_{m+1},\dots,s_{m+n})ds_{m+n}\dots ds_{m+1}$, it holds, by Lemma \ref{3IntegralProductShuffle},
 \begin{align*}
  &\bigg(\int_{\Delta^m_{\theta',t}\times\Delta^n_{\theta,\theta'}}f(s_1,\dots,s_{m},s_{m+1},\dots,s_{m+n})ds_{m+n}\dots ds_1\bigg)^2\\
  &=\bigg(\int_{\Delta^m_{\theta',t}}\tilde f(s_1,\dots,s_{m})ds_{m}\dots ds_1\bigg)^2\\
  &=\sum_{\sigma\in S(m,m)}\int_{\Delta^{2m}_{\theta',t}}\tilde f(s_{\sigma(1)},\dots,s_{\sigma(m)})\tilde f(s_{\sigma(m+1)},\dots,s_{\sigma(2m)})ds_{2m}\dots ds_1\\
  &=\sum_{\sigma\in S(m,m)}\int_{\Delta^{2m}_{\theta',t}}\bigg\{\Big(\int_{\Delta^n_{\theta,\theta'}}f(s_{\sigma(1)},\dots,s_{\sigma(m)},s_{2m+1},\dots,s_{2m+n})ds_{2m+n}\dots ds_{2m+1}\Big)\\
  &\quad\cdot\Big(\int_{\Delta^n_{\theta,\theta'}}f(s_{\sigma(m+1)},\dots,s_{\sigma(2m)},s_{2m+n+1},\dots,s_{2m+2n})ds_{2m+2n}\dots ds_{2m+n+1}\Big)\bigg\}ds_{2m}\dots ds_1\\
  &=\sum_{\sigma\in S(m,m)}\int_{\Delta^{2m}_{\theta',t}}\bigg\{\sum_{\tau\in S(n,n)}\int_{\Delta^{2n}_{\theta,\theta'}}f(s_{\sigma(1)},\dots,s_{\sigma(m)},s_{2m+\tau(1)},\dots,s_{2m+\tau(n)})\\
  &\quad\cdot f(s_{\sigma(m+1)},\dots,s_{\sigma(2m)},s_{2m+\tau(n+1)},\dots,s_{2m+\tau(2n)})ds_{2m+2n}\dots ds_{2m+1}\bigg\}ds_{2m}\dots ds_1\\
  &=\sum_{\substack{(\sigma,\tau)\in\\ S(m,m)\times S(n,n)}}\int_{\Delta^{2m}_{\theta',t}\times\Delta^{2n}_{\theta,\theta'}}\prod_{k=0}^1f(s_{(\sigma,\tau)(k(m+n)+1)},\dots,s_{(\sigma,\tau)(k(m+n)+m+n)})ds_{2m+2n}\dots ds_1.
 \end{align*}

\end{proof}

\begin{cor}\label{3DoubleShuffleInverse}
 It holds $(\sigma,\tau)^{-1}(l)\in\{1,\dots,m\}\cup\{m+n+1,\dots,2m+n\}$ whenever $l\in\{1,\dots,2m\}$ and $(\sigma,\tau)^{-1}(l)\in\{m+1,\dots,m+n\}\cup\{2m+n+1,\dots,2m+2n\}$ whenever $l\in\{2m+1,\dots,2m+2n\}$.
\end{cor}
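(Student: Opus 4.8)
The plan is to read the assertion directly off the block structure in the definition of $(\sigma,\tau)$, observing that the two target ranges $\{1,\dots,2m\}$ and $\{2m+1,\dots,2m+2n\}$ are fed by two disjoint groups of source indices. First I would partition the domain $\{1,\dots,2m+2n\}$ according to which of the four cases in the definition applies and regroup these four blocks into two: the set $I_\sigma:=\{1,\dots,m\}\cup\{m+n+1,\dots,2m+n\}$, on which $(\sigma,\tau)$ acts through $\sigma$ (namely $i\mapsto\sigma(i)$ on the first piece and $i\mapsto\sigma(i-n)$ on the second), and the set $I_\tau:=\{m+1,\dots,m+n\}\cup\{2m+n+1,\dots,2m+2n\}$, on which it acts through $2m+\tau(\cdot)$ (namely $i\mapsto 2m+\tau(i-m)$ and $i\mapsto 2m+\tau(i-2m)$).

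Next I would check that these two groups are mapped \emph{onto} the two complementary ranges. On $I_\sigma$ the arguments of $\sigma$ run through $\{1,\dots,m\}$ coming from the first piece and through $\{m+1,\dots,2m\}$ coming from $i-n$ on the second piece, hence through all of $\{1,\dots,2m\}$; since $\sigma$ is a bijection of $\{1,\dots,2m\}$, the image of $I_\sigma$ under $(\sigma,\tau)$ is exactly $\{1,\dots,2m\}$. Similarly, on $I_\tau$ the arguments of $\tau$ run through $\{1,\dots,n\}$ and through $\{n+1,\dots,2n\}$, hence through all of $\{1,\dots,2n\}$, so that the image of $I_\tau$ is exactly $2m+\{1,\dots,2n\}=\{2m+1,\dots,2m+2n\}$. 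In particular $(\sigma,\tau)$ is a genuine bijection of $\{1,\dots,2m+2n\}$.

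Finally, since the ranges $\{1,\dots,2m\}$ and $\{2m+1,\dots,2m+2n\}$ are disjoint and exhaust the whole index set, any $l\in\{1,\dots,2m\}$ can only have its preimage in $I_\sigma=\{1,\dots,m\}\cup\{m+n+1,\dots,2m+n\}$, and any $l\in\{2m+1,\dots,2m+2n\}$ can only have its preimage in $I_\tau=\{m+1,\dots,m+n\}\cup\{2m+n+1,\dots,2m+2n\}$, which is precisely the two assertions of the statement. There is no genuine obstacle here; the only point requiring care is the bookkeeping of the two index shifts (the shift by $n$ on the third block and the shift by $2m$ inside the argument of $\tau$ on the fourth block), which I would verify explicitly to confirm that the $\sigma$-arguments and the $\tau$-arguments really do exhaust $\{1,\dots,2m\}$ and $\{1,\dots,2n\}$ respectively. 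Once that verification is in place, the disjointness of the two image ranges makes the inverse statement immediate.
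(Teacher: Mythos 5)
Your argument is correct, and it is exactly the verification the paper intends: the corollary is stated without proof as an immediate consequence of the case-by-case definition of $(\sigma,\tau)$, and your bookkeeping (the $\sigma$-arguments $i$ and $i-n$ exhausting $\{1,\dots,2m\}$, the $\tau$-arguments $i-m$ and $i-2m$ exhausting $\{1,\dots,2n\}$, hence $(\sigma,\tau)$ maps $I_\sigma$ bijectively onto $\{1,\dots,2m\}$ and $I_\tau$ bijectively onto $\{2m+1,\dots,2m+2n\}$) is precisely the routine check the authors leave implicit. Nothing is missing.
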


For the segments and the stochastic perturbation that appear in our study (see \eqref{3MotivatingSDDE2}), we use the Hilbert space $M_2:=\R\times L^2([-r,0],\R),$ equipped with the scalar product
\begin{align*}
 \<x,y\>=x(0)y(0)+\int_{-r}^0x(u)y(u)du,
\end{align*}
and the corresponding norm $\|\cdot\|_{M_2}$, as state space. This space is known as the Delfour-Mitter-space. Furthermore, we denote by $\ell^p$, $p=1,2,\dots$, the space of sequences $(a_i)_{i\geq1}$ of real numbers such that
\begin{align*}
 \sum_{i=1}^\infty|a_i|^p<\infty.
\end{align*}

\begin{lem}\label{3CylfBM}
 Let $(e_n)_{n=1}^\infty$ be an orthonormal basis (ONB) of the Hilbert space $M_2$. Furthermore, let $(B^{H_n})_{n=1}^\infty$ be a sequence of independent fractional brownian motions in $\R$ with Hurst parameters $H_n$, $n=1,2,\dots$. Let $(w_n)_{n=1}^\infty$ be a sequence in $\ell^1$ for which we, for simplicity, assume that $w_n\leq1$ for all $n$. Then, for each $t\in[0,T]$
 \begin{align}
  \BB(t):=\sum_{n=1}^\infty w_nB^{H_n}(t)e_n
 \end{align}
 is a well-defined object in $M_2$ a.s.
\end{lem}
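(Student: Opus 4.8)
The plan is to fix $t\in[0,T]$ and show that the partial sums $S_N(t):=\sum_{n=1}^N w_nB^{H_n}(t)e_n$ form a Cauchy sequence in $M_2$ almost surely; since $M_2$ is complete, this produces the a.s.\ limit $\BB(t)\in M_2$. Because $(e_n)_{n\geq1}$ is orthonormal, for $M<N$ one computes directly
\begin{align*}
 \|S_N(t)-S_M(t)\|_{M_2}^2=\sum_{n=M+1}^N w_n^2\big(B^{H_n}(t)\big)^2,
\end{align*}
so the entire question reduces to showing that the nonnegative random series $\sum_{n=1}^\infty w_n^2\big(B^{H_n}(t)\big)^2$ converges almost surely.

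To establish this I would pass to expectations and invoke Tonelli's theorem. Using the normalization $\E\big[(B^{H_n}(t))^2\big]=t^{2H_n}$ for fractional Brownian motion, one obtains
\begin{align*}
 \E\Big[\sum_{n=1}^\infty w_n^2\big(B^{H_n}(t)\big)^2\Big]=\sum_{n=1}^\infty w_n^2\,t^{2H_n}.
\end{align*}
The key observation is that the factor $t^{2H_n}$ is controlled \emph{uniformly} in $n$: since $0<H_n<\tfrac12$, the exponent $2H_n$ lies in $(0,1)$, so $t^{2H_n}\leq\max(1,t)\leq\max(1,T)$ for every $n$ and every $t\in[0,T]$. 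Since $(w_n)_{n\geq1}\in\ell^1\subset\ell^2$, we have $\sum_{n=1}^\infty w_n^2<\infty$, and therefore
\begin{align*}
 \sum_{n=1}^\infty w_n^2\,t^{2H_n}\leq\max(1,T)\sum_{n=1}^\infty w_n^2<\infty.
\end{align*}

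A nonnegative random series with finite expected total sum is finite almost surely, hence $\sum_{n=1}^\infty w_n^2\big(B^{H_n}(t)\big)^2<\infty$ a.s. On the event where this holds, the tails $\sum_{n=M+1}^\infty w_n^2\big(B^{H_n}(t)\big)^2$ tend to $0$ as $M\to\infty$, so by the first displayed identity $(S_N(t))_N$ is a.s.\ Cauchy in $M_2$ and thus convergent, which defines $\BB(t)$. I do not anticipate a genuine obstacle: note that independence of the $B^{H_n}$ is not even required, only the second-moment formula. The one point deserving care is the uniform-in-$n$ bound on $t^{2H_n}$, which is precisely where the restriction $H_n\in(0,\tfrac12)$ together with $t\leq T$ enters; absent such uniform control, the series of expectations could fail to be summable.
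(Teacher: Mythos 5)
Your proof is correct, but it takes a genuinely different route from the paper's. The paper works with the norm series $\sum_{n}\|w_nB^{H_n}(t)e_n\|_{M_2}=\sum_n|w_n||B^{H_n}(t)|$ and proves its a.s.\ finiteness via \emph{Kolmogorov's three-series theorem}, verifying the three conditions with first- and second-moment estimates (e.g. $E[\|X_n\|_{M_2}]\leq|w_n|(1\vee t)$ together with Markov's inequality); this uses the independence of the $B^{H_n}$ (which the three-series theorem requires) and yields a.s.\ \emph{absolute} convergence of the series in $M_2$. You instead exploit the orthonormality of $(e_n)$ through the Pythagorean identity $\|S_N(t)-S_M(t)\|_{M_2}^2=\sum_{n=M+1}^N w_n^2\big(B^{H_n}(t)\big)^2$, reducing everything to a.s.\ finiteness of $\sum_n w_n^2(B^{H_n}(t))^2$, which follows by Tonelli from $\sum_n w_n^2t^{2H_n}\leq\max(1,T)\sum_n w_n^2<\infty$ (using $\ell^1\subset\ell^2$). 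Your argument is more elementary — no three-series theorem, only the second-moment formula — and, as you correctly observe, dispenses with independence altogether; it even yields the slightly stronger byproduct $\bar E\big[\|\BB(t)\|_{M_2}^2\big]<\infty$, i.e.\ convergence in $L^2(\Omega;M_2)$ as well. What the paper's route buys in exchange is norm-absolute summability $\sum_n\|X_n\|_{M_2}<\infty$ a.s., rather than convergence of the orthogonal series only, though for the mere well-definedness of $\BB(t)$ either conclusion suffices. One small remark: your uniform bound $t^{2H_n}\leq\max(1,t)$ rests on $2H_n\in(0,1)$, i.e.\ $H_n<\tfrac12$; this matches the standing assumption of the paper (see the introduction), and in any case for general $H_n\in(0,1)$ the bound $t^{2H_n}\leq\max(1,t^2)$ serves just as well, so nothing in your argument is fragile on this point.
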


\begin{proof}
 Denote $X_n:=w_nB^{H_n}(t)e_n$, $n\geq1$. The random variables $X_n$, $n\geq1$, take values in $M_2$ and are independent. Define the partial sums $S_m:=\sum_{n=1}^mX_n$, $m\geq1$. Suppose, we can prove that $\sum_{n=1}^\infty\|X_n\|_{M_2}<\infty$. Then, for any $M\geq1$,
 \begin{align*}
  \sup_{m>M}\|S_m-S_M\|_{M_2}\leq\sup_{m>M}\sum_{n=M+1}^m\|X_n\|_{M_2}=\sum_{n=M+1}^\infty\|X_n\|_{M_2}\stackrel{M\rightarrow\infty}{\longrightarrow}0,\quad P\text{-a.s.},
 \end{align*}
 which means that the sequence $(S_m)_{m\geq1}\subset M_2$ is a Cauchy sequence and thus converges and the limit is $\BB(t)$. In order to prove that $\sum_{n=1}^\infty\|X_n\|_{M_2}<\infty$, we apply Kolmogorov's three series theorem. This means, we have to prove that there exists an $A>0$ such that
 \begin{itemize}
  \item[(i)] $\sum_{n=1}^\infty P(\|X_n\|_{M_2}>A)<\infty,$
  \item[(ii)] $\sum_{n=1}^\infty E[\|X_n\|_{M_2}\ind_{\{\|X_n\|_{M_2}>A\}}]<\infty,$
  \item[(iii)] $\sum_{n=1}^\infty \Var(\|X_n\|_{M_2}\ind_{\{\|X_n\|_{M_2}>A\}})<\infty.$
 \end{itemize}
Note that $\|X_n\|_{M_2}=|w_n||B^{H_n}(t)|$ and that $w_nB^{H_n}(t)\stackrel{\text{Law}}{=}|w_n|t^{H_n}N$ for some $N\sim\N(0,1)$. Therefore, we have, for any $A>0$,
\begin{align*}
 E[\|X_n\|_{M_2}]=|w_n|t^{H_n}E[|N|]\leq|w_n|t^{H_n}\sqrt{E[|N|^2]}=|w_n|t^{H_n}\leq|w_n|(1\vee t).
\end{align*}
Then, by Markov's inequality, it follows that
\begin{align*}
 \sum_{n=1}^\infty P(\|X_n\|_{M_2}>A)&\leq\sum_{n=1}^\infty \frac{E[\|X_n\|_{M_2}]}{A}\leq\frac{1\vee t}{A}\sum_{n=1}^\infty |w_n|<\infty,
\end{align*}
as $(w_n)_{n\geq1}\in\ell^1$, which proves $(i)$. Moreover, we have
\begin{align*}
 E[\|X_n\|_{M_2}\ind_{\{\|X_n\|_{M_2}>A\}}]\leq E[\|X_n\|_{M_2}]\leq|w_n|(1\vee t),
\end{align*}
and thus
\begin{align*}
  \sum_{n=1}^\infty E[\|X_n\|_{M_2}\ind_{\{\|X_n\|_{M_2}>A\}}]\leq(1\vee t)\sum_{n=1}^\infty|w_n|<\infty.
\end{align*}
This proves $(ii)$.
Finally, observe that
\begin{align*}
 \Var(\|X_n\|_{M_2}\ind_{\{\|X_n\|_{M_2}>A\}})&\leq E[\|X_n\|_{M_2}^2\ind_{\{\|X_n\|_{M_2}>A\}}]\leq E[\|X_n\|_{M_2}^2]\\
 &=|w_n|^2t^{2H_n}E[|N|^2]\leq|w_n|^2(1\vee t^2),
\end{align*}
which yields the last convergence needed:
\begin{align*}
  \sum_{n=1}^\infty \Var(\|X_n\|_{M_2}\ind_{\{\|X_n\|_{M_2}>A\}})\leq(1\vee t^2)\sum_{n=1}^\infty|w_n|^2<\infty.
 \end{align*}
\end{proof}

Finally we recall a crucial property of the fractional Brownian motion, called the \textit{strong local non-determinism} (see \cite[p. 6]{GuoHuXiao17} or \cite[p. 10]{BanNilPro15}; for more details on the matter, see \cite{pitt.78} or \cite{xiao.11}).

\begin{rem}\label{3RemarkStrongLocalNondeterminism}
 Let $B^H$ denote a one-dimensional fractional Brownian motion with Hurst parameter $H$, $m\in\NN$, $0=:t_0<t_1<\dots<t_m\leq T$ and $\xi_1,\dots,\xi_m\in\R$. Then there exists a constant $C=C(H)>0$ such that
 \begin{align*}
  \Var\Big(\sum_{l=1}^m\xi_l\big(B^H(t_l)-B^H(t_{l-1})\big)\Big)&\geq C\sum_{l=1}^m|\xi_l|^2\Var\big(B^H(t_l)-B^H(t_{l-1})\big)\\
  &=C\sum_{l=1}^m|\xi_l|^2|t_l-t_{l-1}|^{2H}
 \end{align*}
\end{rem}

\section{Existence and uniqueness of global strong solutions}

As outlined in the introduction the object of study is a time-homogeneous Stochastic Functional Differential Equation of Delay type as given in \eqref{3MotivatingSDDE2}. We restrict ourselves to the one-dimensional case, i.e. $d=1$. Moreover, we assume the segments to take values in $M_2$ (which is reasonable under the assumptions on $b$ that we are going to impose later) and, for a given ONB $(e_i)_{i\geq1}$ of $M_2$, we denote by $\< x,e_i\>$ the $i^{\text{th}}$ Fourier coefficient, i.e. for $x\in M_2$,
$$x=\sum_{i\geq1}\<x,e_i\>e_i.$$
In this study, we only consider drift functions of the particular form

\begin{align}\label{3driftRestriction}
 b(x)=\sum_{i\geq1}b_i(\<x,e_i\>),
\end{align}

where $b_i\in L^1(\R)\cap L^\infty(\R)$ are bounded, integrable functions and $(\|b_i\|_\infty)_{i\geq1}\in\ell^1$. The latter condition makes sure that the sum in \eqref{3driftRestriction} is well-defined for every $x\in M_2$. Now, for $0<\eps\leq1$, consider the SDDE

\begin{align}\label{3SFDE}
\begin{split}
 dx(t) &= b(x_t+\eps\BB(t))dt + dW(t)\\
 &=\sum_{i\geq1}b_i(\<x_t,e_i\>+\eps w_iB^{H_i}(t))dt+dW(t), \quad t\in [0,T],\\
 x_0 &= \eta\in M_2.
\end{split}
\end{align}

Throughout the paper, we assume that the Brownian motion $W$ is defined on a probability space $(\Omega,\mathfrak{A},\PP)$ where it generates a filtration $(\F^W_t)_{t\in[0,T}$, i.e. $\F^W_t:=\sigma(W(s),s\leq t)\subseteq\mathfrak{A}$. The object $\BB$ on the other hand, is defined on a second probability space $(\Omega',\mathfrak{A}',\PP')$ where it generates the filtration $(\F^\BB_t)_{t\in[0,T}$. The probability space we work on is the product probability space $(\Omega\times\Omega',\mathfrak{A}\otimes\mathfrak{A}',\PP\otimes\PP')$ with the filtration $(\F_t)_{t\in[0,T]}$, $\F_t=\F^W_t\otimes\F^\BB_t$, although we will later show that our results can be ``lifted'' to any stochastic basis $(\hat\Omega,\hat{\mathfrak{A}},\hat\PP,\widehat W,\hat\BB)$. Furthermore, we let $E$ denote the expectation operator under $(\Omega,\mathfrak{A},\PP)$, $E'$ denote the expectation operator under $(\Omega',\mathfrak{A}',\PP')$ and $\bar E$ the expectation operator under on the product space. In the same manner, we sometimes write $\bar\PP$ for the product probability measure $\PP\otimes\PP'$. To be more specific, let $X:\Omega\times\Omega'\rightarrow\R$ be a random variable on the product space. Then

\begin{align*}
 \bar E[X]=\int_{\Omega\times\Omega'}X(\omega,\omega')\PP(d\omega)\PP'(d\omega')
\end{align*}
denotes the expectation of $X$, whereas,

\begin{align*}
 E[X(\cdot,\omega')]=\int_\Omega X(\omega,\omega')\PP(d\omega)
\end{align*}

denotes the expectation of the random variable $X(\cdot,\omega'):\Omega\rightarrow\R$ for every fixed $\omega'\in\Omega'$ and

\begin{align*}
 E'[X(\omega,\cdot)]=\int_{\Omega'} X(\omega,\omega')\PP'(d\omega')
\end{align*}

denotes the expectation of the random variable $X(\omega,\cdot):\Omega'\rightarrow\R$ for every fixed $\omega\in\Omega$.\\
\par

Hereunder, we state the main result of this paper.

\begin{claim}\label{3VI_mainthm}
Under some integrability conditions on $b_i$, $i\geq 1$, and some ``roughness conditions'' on $\BB$, equation \eqref{3SFDE} has a unique strong solution $x=\{x(t), t\in [0,T]\}$ on a small interval $[0,T]$. Here, the interval size depends on $\eps$ -- the smaller $\eps$ is, the smaller $T$ needs to be.
\end{claim}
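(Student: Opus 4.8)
The plan is to realise the approximation scheme announced in the introduction. \emph{First}, I would regularise the drift coordinate-wise: for each $i\ge 1$ mollify $b_i$ to obtain Lipschitz functions $b_i^n$ with $\|b_i^n\|_\infty\le\|b_i\|_\infty$, $\|b_i^n\|_{L^1}\le\|b_i\|_{L^1}$ and $b_i^n\to b_i$ almost everywhere, and set $b^n(x)=\sum_{i\ge1}b_i^n(\<x,e_i\>)$. This $b^n$ is globally Lipschitz on $M_2$ for each fixed $n$ (the coordinate Lipschitz constants are square-summable because $(\|b_i\|_\infty)_{i\ge1}\in\ell^1\subset\ell^2$), so the approximating equation \eqref{3MotivatingSDDE3} has a unique strong solution $x^n$ by a standard contraction argument in segment form. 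Since the drift is bounded uniformly in $n$, one obtains an a priori estimate $\sup_n\bar E[|x^n(t)|^2]<\infty$ holding uniformly on $[0,T]$.

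\emph{Second}, to extract a limit I would invoke the Malliavin compactness criterion of \cite{DPMN92}, applied with respect to the Wiener noise $W$ only, as stressed in the introduction. For fixed $t$ this requires, beyond the uniform $L^2$ bound just obtained, a uniform-in-$n$ bound on $\bar E\int_0^T|D_\theta x^n(t)|^2\,d\theta$ together with a modulus of the form $\bar E\int_0^T\int_0^T|\theta-\theta'|^{-1-2\beta}|D_\theta x^n(t)-D_{\theta'}x^n(t)|^2\,d\theta\,d\theta'$ for some $\beta>0$. Differentiating \eqref{3SFDE} in the Malliavin sense, $D_\theta x^n(t)$ solves a linear (delay) equation whose coefficients contain the derivatives $(b_i^n)'$; these are not bounded uniformly in $n$, and taming them is the crux of the argument.

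\emph{Third}, and this is the main obstacle, I would estimate the Malliavin derivatives by a ``local time variational calculus''. Expanding $D_\theta x^n(t)$ into a series of iterated time integrals, each summand is a product of factors $(b_i^n)'$ evaluated at $\<x^n_u,e_i\>+\eps w_iB^{H_i}(u)$. Taking the full expectation $\bar E=E\otimes E'$ and integrating first over the fractional noise $\BB$ under $\PP'$, I would integrate by parts in each coordinate to move the derivative off $b_i^n$ and onto the Gaussian density of the increments of $\eps w_iB^{H_i}$. This replaces every $(b_i^n)'$ by $b_i^n$, now controlled by $\|b_i\|_{L^1}$ and $\|b_i\|_\infty$ and hence uniformly in $n$, at the cost of density factors bounded, via the strong local non-determinism of Remark \ref{3RemarkStrongLocalNondeterminism}, by negative powers of the time increments and of $\eps w_i$. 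The ensuing integrals over simplices are reorganised by the shuffle identities of Lemma \ref{3IntegralProductShuffle} and Corollaries \ref{3DeltatimesDeltaShuffle} and \ref{3DoubleShuffleInverse}, which is exactly what renders the term-by-term bounds summable. One then checks that the resulting series converges uniformly in $n$ provided $T$ is small relative to $\eps$: since the density bounds scale like inverse powers of $\eps w_i$, a smaller $\eps$ forces a smaller $T$, as the statement asserts, while summability over the coordinates $i$ rests on $(w_i)\in\ell^1$, $(\|b_i\|_\infty)\in\ell^1$ and $H_i\in(0,\tfrac12)$. The H\"{o}lder modulus is treated by the same mechanism.

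\emph{Finally}, compactness gives a subsequence with $x^n(t)\to x(t)$ in $L^2(\bar\PP)$; passing to the limit in \eqref{3MotivatingSDDE3}, using $b_i^n\to b_i$ a.e. and the uniform local-time bounds to carry the drift integral through, shows that $x$ solves \eqref{3SFDE}, and the construction depending on the law of $(W,\BB)$ only allows the solution to be lifted to an arbitrary stochastic basis. For uniqueness I would apply the same variational estimate to the difference of two solutions: bounding the difference of the drift integrals by the local-time estimates yields a Gronwall-type inequality, convergent for small $T$, which forces the two solutions to agree in $L^2$ and hence pathwise. In particular uniqueness is obtained directly, without recourse to the Yamada--Watanabe principle.
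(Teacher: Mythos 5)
Your overall architecture (mollify, solve Lipschitz SFDEs, bound Malliavin derivatives through iterated-integral expansions tamed by a Fourier/integration-by-parts device plus strong local non-determinism and the shuffle lemmas, extract an $L^2$-limit, lift to an arbitrary basis) does match the paper's, but two steps as you state them would fail. The fatal one is uniqueness: a Gronwall inequality for the difference of two solutions is unavailable here, because $b$ is merely bounded and measurable, so $|b(x_s+\eps\BB(s))-b(y_s+\eps\BB(s))|$ admits no bound in terms of $\|x_s-y_s\|_{M_2}$, and the ``local time'' estimates of Lemma \ref{3VI_relcomp} control Malliavin derivatives of the \emph{approximating} solutions, not differences of two exact solutions. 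The paper never runs a Gronwall argument; uniqueness (and the identification of the limit) comes from the Wiener transform: by Girsanov, the Wiener transform of any candidate solution is given by the explicit expression of Lemma \ref{3WienerRepresentations}(c), and Fact 1 of Remark \ref{3Facts1and2} then pins the solution down a.e. Relatedly, your limit passage is too quick on two counts. First, the compactness criterion is applied $\omega'$-wise and yields subsequences $n_k(\omega')$ \emph{depending on} $\omega'$; these cannot be assembled directly into convergence in $L^2(\bar\PP)$. The paper repairs this by proving weak $L^2$-convergence of the full sequences $x^n(t)$ and $|x^n(t)|^2$ (reflexivity plus Wiener-transform identification), combining weak convergence with convergence of norms to get strong $\omega'$-wise convergence of the \emph{whole} sequence (Theorem \ref{3StrongConvergenceL2}), and then dominating over $\omega'$. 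Second, ``carrying the drift integral through'' using $b^n_i\to b_i$ a.e. does not work when composed with the merely $L^2$-convergent arguments $x^n_s$; the paper instead constructs the solution as a Girsanov weak solution \eqref{3Xtildeprocess}--\eqref{3Wtilde} and proves its adaptedness via the conditional-expectation identity $f(\hat x(t))=E_{\widetilde P}[f(\tilde x(t))|\widetilde\F_t]$ of Lemma \ref{3weakConvergencefXn} together with the measurability Lemma \ref{3measurabilityLemma} — all expectations being computed under measures where the relevant process is an explicit functional of a Brownian motion, so dominated convergence applies.

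The second genuine gap is quantitative: the sufficient conditions you name ($(w_i)\in\ell^1$, $(\|b_i\|_\infty)\in\ell^1$, $H_i\in(0,\tfrac12)$) are not the ones the argument actually needs, and in one respect they point the wrong way. Because the scheme takes fourth moments, the simplex integrands carry factors $|s^k_l-s^k_{l+1}|^{-3H_k}$, forcing $H_k<\frac{1-\delta_H}{3}$ (Assumption \ref{3FinalAssumption}(H')), not merely $H_k<\tfrac12$; and the Gaussian density/characteristic-function bounds scale like $C_k^{-3/2}\eps^{-3}|w_k|^{-3}$, so the coordinate sums converge only under the smallness condition $\sum_{j\ge1}A_j<1$ with $A_j\propto C_j^{-3/2}|w_j|^{-3}\|b_j\|_{L^1}$ (Assumption \ref{3FinalAssumption}(A')). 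Note that $(w_j)\in\ell^1$ makes $|w_j|^{-3}$ blow up, so the $\ell^1$-condition on the weights works \emph{against} summability and must be compensated by decay of $\|b_j\|_{L^1}$; likewise the time horizon enters through $T^{\delta_H}<\eps^3-\delta_T$ (Assumption \ref{3StandingAssumptions}(T)), which is the precise form of ``the smaller $\eps$, the smaller $T$''. Finally, a minor structural deviation: you mollify directly in infinite dimensions, whereas the paper first truncates to $d$ coordinates with compactly supported $b_i\ind_{[-d,d]}$, proves everything for finite $d$ (where the expansion indices $j_1,\dots,j_m$ range over a finite set and Malliavin differentiability is standard), and then passes $d\to\infty$ via Lemma \ref{3VI_relcomp2}; your one-step scheme could probably be made to work under (A'), but as written it silently assumes the infinite coordinate sums in the derivative expansion converge, which is exactly what (A') is for.
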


The proof of Claim \ref{3VI_mainthm} is based on the following steps:

\begin{itemize}
 \item[(I.)] We first prove the claim for the case where
 \begin{align*}
  b(x)=\sum_{i=1}^db_i(\<x,e_i\>),
 \end{align*}
 i.e. the sum in \ref{3driftRestriction} is a finite sum. Moreover, we assume in this step that each $b_i$, $i=1,\dots,d$ has compact support. We follow these steps:
 \begin{enumerate}
  \item Approximate $b$ by a sequence of bounded, Lipschitz functions $b^n$. More precisely, we approximate the functions $b_i$, $i=1,\dots,d$ by mollification: $$b^{n}(z):=\sum_{i=1}^db_{i,n}(z_i),\text{ where }b_{i,n}(z):=b_i\ast\varphi_n(z)$$ for some mollifier $\varphi$ with support $[-1,1]$. Then each $b_{i,n}$, $i=1,\dots,d$ is a $C^\infty_c$ function and therefore Lipschitz, which then also makes $b^n:\R^d\rightarrow\R$ Lipschitz. For SFDEs with Lipschitz coefficients which fulfill a linear growth condition (satisfied for every bounded function) we know by \cite[Theorem 2.1]{MohammedBook} that the corresponding SFDE has a unique solution. Therefore, the SFDE
  \begin{align*}
   \begin{cases}
    dx^{n}(t)&= \sum_{i=1}^db_{i,n}(\<x^{n}_t, e_i\>+\eps w_iB^{H_i}(t)) dt+ dW(t), \,\,0\leq t\leq T\\
    x^{n}_0&=\eta  \in M_2\,.
   \end{cases}
  \end{align*}
  has a unique strong solution $x^n$. In particular, for each $n\in\NN$, there exists a family of progressively measurable functionals $(\psi_n(t,\cdot,\cdot))_{t\in[0,T]}$ such that
  \begin{align*}
   x^n_t=\psi_n(t,W_\cdot,\BB_\cdot),\quad\text{for all }t\in[0,T].
  \end{align*}
  It is well known, see e.g. \cite{Nualart}, that for each $t\in[0,T]$ the strong solution $x^{n}(t)$, $n\ge 1$, is Malliavin differentiable, and that the Malliavin derivative $\DM_s x^{n}(t)$, $0\leq s\leq t$, with respect to $W$ satisfies
  \begin{align}
   \DM_s x^{n}(t) = 1 + \int_s^t \sum_{i=1}^db_{i,n}(\<x^{n}_u, e_i\>+\eps w_iB^{H_i}(u))\DM_s \< x^{n}_u,e_i\>du. 
  \end{align}
  \item We prove a compactness criterion for the sequence of solutions $x^n$ when we fix $\omega'\in\Omega'$. More precisely, we prove that for every fixed $t\in[0,T]$ and almost every $\omega'\in\Omega'$, there exists a subsequence $(n_k(\omega'))_{k\geq1}$ such that $x^{n_k(\omega')}(t,\cdot,\omega')$ is relatively compact in $L^2(\Omega,\F^W_t,\PP)$.
  \item Applying the compactness criterion that we have proven in the previous step, we show that for all $t\in[0,T]$ there exists an $x(t)$ such that
  \begin{align*}
   x^n(t)\stackrel{n\rightarrow\infty}{\longrightarrow}x(t)\quad\text{strongly in }L^2(\Omega\times\Omega',\F_t,\PP\otimes\PP'),
  \end{align*}
  where $\F_t=\sigma((W(s),\BB(s)),s\leq t)$.
  \item Via Girsanov's theorem, we construct a weak solution $\tilde x$ of SFDE \ref{3SFDE} for the stochastic basis $(\Omega\times\Omega',\mathfrak{A}\otimes\mathfrak{A}',\widetilde\PP,\widetilde W,\BB)$. For the same family of progressively measurable functionals $(\psi_n(t,\cdot,\cdot))_{t\in[0,T]}$ as before, we define
  \begin{align*}
   \tilde x^n_t=\psi_n(t,\widetilde W_\cdot,\BB_\cdot),\quad\text{for all }t\in[0,T].
  \end{align*}
  Note that this definition makes $\tilde x^n$ adapted to the filtration $(\widetilde\F_t)_{t\in[0,T]}$, where $\F_t=\sigma((\widetilde W(s),\BB(s)),s\leq t)$. Applying the results from the previous steps, we show that
  \begin{align*}
   \tilde x^n(t)\stackrel{n\rightarrow\infty}{\longrightarrow}\tilde x(t)\quad\text{strongly in }L^2(\Omega\times\Omega',\widetilde \F_t,\widetilde\PP),
  \end{align*}
  which implies that the weak solution that we constructed is adapted to $(\widetilde\F_t)_{t\in[0,T]}$, the filtration generated by the two stochastic basis elements $\widetilde W$ and $\BB$. This result implies that for any stochastic basis $(\hat\Omega,\hat{\mathfrak{A}},\hat\PP,\hat{W},\hat{\BB})$ there is a solution to the SFDE which is $(\sigma((\hat W(s),\hat\BB(s)),s\leq t))_{t\in[0,T]}$-adapted, i.e. we found a \textit{strong solution}.
 \end{enumerate}
 \item[(II.)] In the second step we apply the results we found in $(I.)$ to prove the existence of a strong solution to the original SFDE \ref{3SFDE}.
\end{itemize}

\subsection{Approximation by finite dimensional Lipschitz SFDEs}

As outlined before, we first consider the case where $b_i$, $i=1,\dots,d$ have compact support and

\begin{align}\label{3bfiniteCase}
  b(x)=\sum_{i=1}^db_i(\<x,e_i\>).
 \end{align}

Let $\varphi:\R\rightarrow\R$ be a non-negative mollifier with support $[-1,1]$ and let $\varphi_n(z):=n\varphi(nz)$ for every $n\in\NN$. We now approximate every $b_i:\R\rightarrow\R$, $i=1,\dots,d$ by a sequence of functions $(b_{i,n})_{n\geq1}$, which is given by

\begin{align}
 b_{i,n}(z)=b_i\ast\varphi_n(z).
\end{align}

Moreover, we define the sequence $(b^n)_{n\geq1}$ of approximations of $b$ by

\begin{align}
 b^n(x)=\sum_{i=1}^db_{i,n}(\<x,e_i\>),
\end{align}

and the corresponding sequence of SFDEs are given by

\begin{align}\label{3ApproxSFDE}
   \begin{cases}
    dx^{n}(t)&= b^n(x^{n}_t+\eps\BB(t))dt+ dW(t)\\
    &= \sum_{i=1}^db_{i,n}(\<x^{n}_t, e_i\>+\eps w_iB^{H_i}(t)) dt+ dW(t), \,\,0\leq t\leq T\\
    x^{n}_0&=\eta  \in M_2\,.
   \end{cases}
  \end{align}

It then holds $b_{i,n}\rightarrow b_i$ pointwise and in $L^1$, as $n\rightarrow\infty$. Therefore, we also have $b^n\rightarrow b$ pointwise, as $n\rightarrow\infty$. Furthermore, since

\begin{align*}
 \|b_{i,n}\|_\infty&=\sup_{z\in\R}\Big|\int_\R b_i(y)\varphi_n(z-y)dy\Big|\leq\sup_{z\in\R}\int_\R |b_i(y)|\varphi_n(z-y)dy\\
 &\leq\sup_{z\in\R}\|b_i\|_\infty\int_\R \varphi_n(z-y)dy=\sup_{z\in\R}\|b_i\|_\infty\cdot1=\|b_i\|_\infty,
\end{align*}

and since every $b_{i,n}$ is $C^\infty_c$ and therefore Lipschitz (the first derivative is continuous with compact support and thus bounded), $b^n$ is also bounded and Lipschitz for every $n$. The following Lemma shows that the approximative SFDEs have a unique soluion.

\begin{lem}\label{3ExistAndUniqApprox}
 For each $n\geq1$, the system \eqref{3ApproxSFDE} has a unique, $(\F_t)_{t\in[0,T]}$-adapted solution $x^{n}\in L^2(\Omega\times\Omega', M_2([-r,T]))$.
\end{lem}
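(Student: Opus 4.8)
The plan is to solve \eqref{3ApproxSFDE} pathwise in $\omega'$ and then reassemble a jointly measurable, $(\F_t)$-adapted solution on the product space. First I would fix $\omega'\in\Omega'$ in the full-measure set on which every path $t\mapsto B^{H_i}(t,\omega')$, $i=1,\dots,d$, is continuous. For such $\omega'$ the drift functional
\begin{align*}
 F_{\omega'}(t,\phi):=\sum_{i=1}^d b_{i,n}\big(\<\phi,e_i\>+\eps w_iB^{H_i}(t,\omega')\big),\qquad \phi\in M_2,
\end{align*}
is a deterministic, time-dependent functional that is (i) bounded, namely $|F_{\omega'}(t,\phi)|\le\sum_{i=1}^d\|b_{i,n}\|_\infty$ uniformly in $(t,\phi)$, (ii) continuous in $t$ through the continuity of the paths $B^{H_i}(\cdot,\omega')$, and (iii) uniformly Lipschitz in the segment variable, since $|F_{\omega'}(t,\phi)-F_{\omega'}(t,\psi)|\le L\sum_{i=1}^d|\<\phi-\psi,e_i\>|\le L\sqrt d\,\|\phi-\psi\|_{M_2}$, where $L$ is a common Lipschitz constant of the $C^\infty_c$ functions $b_{i,n}$. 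Thus, for each fixed $\omega'$, \eqref{3ApproxSFDE} is a classical one-dimensional SFDE driven by $W$ with a bounded, uniformly Lipschitz (hence linearly growing) drift, and I would invoke the existence-and-uniqueness theory for such equations (\cite[Theorem 2.1]{MohammedBook}; the time-inhomogeneity introduced by $B^{H_i}(t,\omega')$ is harmless, as it only enters through a bounded coefficient that is measurable in $t$) to obtain a unique continuous $(\F^W_t)$-adapted solution $x^n(\cdot,\cdot,\omega')$ with $x^n_0=\eta$.

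Second, in order to track the dependence on $\omega'$ explicitly, I would rederive this existence-uniqueness by a Picard iteration run \emph{directly} on the product space. Setting $x^{n,(0)}(t):=\eta(0)+W(t)$ on $[0,T]$ (and $x^{n,(0)}_0:=\eta$ on $[-r,0]$) and
\begin{align*}
 x^{n,(k+1)}(t)=\eta(0)+\int_0^t b^n\big(x^{n,(k)}_s+\eps\BB(s)\big)\,ds+W(t),\qquad t\in[0,T],
\end{align*}
the boundedness of $b^n$ makes every iterate well defined, continuous and adapted, while the Lipschitz estimate together with $\|\phi_s-\psi_s\|_{M_2}^2\le|\phi(s)-\psi(s)|^2+\int_{s-r}^s|\phi(v)-\psi(v)|^2\,dv$ yields, after a Gronwall argument, a contraction either on a sufficiently small time-step, which one then concatenates over $[0,T]$, or, more cleanly, in a Bielecki-type weighted supremum norm on the whole interval. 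The resulting fixed point is the solution, and its uniqueness on the product space is inherited from the pathwise uniqueness already established.

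The remaining, and main, technical point is to upgrade pathwise existence to a genuine solution on $(\Omega\times\Omega',\mathfrak{A}\otimes\mathfrak{A}',\PP\otimes\PP')$ that is jointly measurable and $(\F_t)$-adapted. I would handle this through the Picard scheme itself: each iterate $x^{n,(k)}$ is, by construction, a measurable functional of $(W_\cdot,\BB_\cdot)$ and adapted to $\F_t=\F^W_t\otimes\F^\BB_t$, and joint measurability and adaptedness are preserved under the uniform (in the weighted norm) limit, so the solution $x^n$ is $(\F_t)$-adapted and of the announced functional form $x^n_t=\psi_n(t,W_\cdot,\BB_\cdot)$. Finally, the $L^2$-bound is immediate from the boundedness of $b^n$: for $t\ge0$ one has $|x^n(t)|\le|\eta(0)|+\big(\sum_{i=1}^d\|b_{i,n}\|_\infty\big)T+\sup_{s\le T}|W(s)|$, so that, together with $\eta\in M_2$ and $\bar E[\sup_{s\le T}|W(s)|^2]<\infty$, one obtains $\bar E\big[\|x^n\|_{M_2([-r,T])}^2\big]<\infty$, i.e. $x^n\in L^2(\Omega\times\Omega',M_2([-r,T]))$. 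I expect the joint-measurability and adaptedness bookkeeping across the two probability spaces to be the only genuinely delicate step; the analytic estimates are routine consequences of $b^n$ being bounded and Lipschitz.
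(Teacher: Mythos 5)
Your argument is correct, but it takes a genuinely different route from the paper. The paper's proof is a one-step reduction: it augments the state with the coordinates $y_i(t)=B^{H_i}(t)$, $i=1,\dots,d$, rewrites \eqref{3ApproxSFDE} as a $(d+1)$-dimensional SFDE with drift $h^n$ (equal to $b^n$ in the first coordinate and zero in the others) driven by the noise vector $(W,B^{H_1},\dots,B^{H_d})$, observes that $h^n$ inherits boundedness and Lipschitz continuity from $b^n$, and invokes \cite[Theorem 2.1]{MohammedBook} once; adaptedness to $(\F_t)_{t\in[0,T]}$ then comes for free, since the filtration generated by the augmented noise is exactly $\F_t=\F^W_t\otimes\F^\BB_t$. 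You instead freeze $\omega'$, treat the fBm paths as a frozen continuous time-dependence in the drift, solve a one-dimensional SFDE driven by $W$ alone, and then run a Picard iteration on the product space to recover joint measurability and adaptedness. Your route is somewhat longer (and your first, pathwise, step is strictly speaking redundant once the product-space Picard scheme is in place, since that scheme alone delivers existence, uniqueness, adaptedness and the functional representation $x^n_t=\psi_n(t,W_\cdot,\BB_\cdot)$), but it has two genuine advantages: it avoids any question of whether the cited theorem's hypotheses on the driving noise accommodate the (non-semimartingale) fBm coordinates of the augmented system, since in your formulation only $W$ drives the equation and the $B^{H_i}$ enter merely as bounded continuous perturbations of the drift; and it is structurally aligned with the $\omega'$-wise analysis used later in the paper (the compactness criterion of Lemma \ref{3VI_relcomp} is also run for fixed $\omega'$). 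The paper's augmentation buys brevity and an immediate appeal to a standard reference; your construction buys self-containedness and an explicit handle on the measurability in $\omega'$. Your Lipschitz bound $\sum_{i=1}^d|\<\phi-\psi,e_i\>|\le\sqrt d\,\|\phi-\psi\|_{M_2}$, the segment estimate, and the final $L^2$ bound from the boundedness of $b^n$ are all correct.
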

  
\begin{proof}
 We can reformulate SFDE \ref{3ApproxSFDE} as the $(d+1)$-dimensional SFDE
 \begin{align*}
  \begin{pmatrix}
   x^n(t)\\
   y_1(t)\\
   \vdots\\
   y_d(t)
  \end{pmatrix}=\int_0^th^n(x^n_s,y_1(s),\dots,y_n(s))ds
  +\begin{pmatrix}
    W(t)\\
    B^{H_1}(t)\\
    \vdots\\
    B^{H_d}(t)
   \end{pmatrix},
 \end{align*}
 where
 \begin{align*}
  h^n(x^n_s,y_1(s),\dots,y_n(s))=\begin{pmatrix}
                                b^n(\<x^{n}_t,e_1\>+\eps w_1y_1(t),\dots,\<x^{n}_t,e_d\>+\eps w_dy_d(t))\\
                                0\\
                                \vdots\\
                                0
                               \end{pmatrix}.
 \end{align*}
 Since $b^n$ is bounded and Lipschitz for every $n\in\NN$, so is $h^n$. It follows by \cite[Theorem 2.1]{MohammedBook} that \eqref{3ApproxSFDE} has a unique solution $x^{n}$.
\end{proof}


\subsection{The $\omega'$-wise compactness criterion}

Before we can prove our compactness result, we are going to derive an iteration formula for the Malliavin derivative of $x^{n}(t)$ w.r.t. $W$. Fix $t\in[0,T]$. By standard results on Malliavin differentiability (see \cite{Nualart} and the references therein), we know that for each $n\geq1$, $x^{n}(t)$ is Malliavin differentiable with Malliavin derivative 

\begin{align}\label{3VI_DXn1}
\DM_s x^{n}(t) = 1 + \int_s^t \sum_{i=1}^db_{i,n}(\<x^{n}_u, e_i\>+\eps w_iB^{H_i}(u))\DM_s \< x^{n}_u,e_i\>du.
\end{align}

The following representation result will be useful for developing our compactness result later on.

\begin{lem}
 Let $0\leq\theta<\theta'\leq t\leq T$ and define $\chi_j(z):=\<(1,\ind_{[z,0]}(\cdot)),e_j\>$ for $z\leq0$, $j=1,\dots,d$. Then,
 \begin{itemize}
  \item[(Ma1)] for the Malliavin derivative $\DM_\theta x^{n}(t)$, the following representation holds true:
  \begin{align}\label{3MalliavinIteration}
   \begin{split}
   &\DM_\theta x^{n}(t)\\
   &=1+ \sum_{m=1}^\infty\sum_{j_1,\dots,j_m=1}^d\int_{\Delta^m_{\theta,t}}\Hfct_{j_1,\dots,j_m}(s,\theta)\prod_{l=1}^mb_{j_l,n}'(\< x^{n}_{s_l}, e_{j_l}\>+\eps w_{j_l}B^{H_{j_l}}(s_l))ds,
   \end{split}
  \end{align}
  where we defined
  \begin{align}\label{3Hfunction}
   \Hfct_{j_1,\dots,j_m}(s,\theta):=\chi_{j_m}(\theta-s_m)\prod_{l=1}^{m-1}\chi_{j_l}(s_{l+1}-s_l).
  \end{align}
 \end{itemize}
 \item[(Ma2)] for the difference $\DM_\theta x^{n}(t)-\DM_{\theta'} x^{n}(t)$, the following representation holds true:
 \begin{align}\label{3MalliavinDiffIteration}
   \begin{split}
   &\DM_\theta x^{n}(t)-\DM_{\theta'} x^{n}(t)\\
   &=\Big(\DM_\theta x^{n}(\theta')-1\Big)\\
   &\quad+ \sum_{m=1}^\infty\sum_{j_1,\dots,j_m=1}^d\int_{\Delta^m_{\theta',t}}\tilde\Hfct_{j_1,\dots,j_m}(s,\theta,\theta')\prod_{l=1}^mb_{j_l,n}'(\< x^{n}_{s_l}, e_{j_l}\>+\eps w_{j_l}B^{H_{j_l}}(s_l))ds\\
   &\quad+\sum_{m_1, m_2=1}^\infty\sum_{j_1,\dots,j_{m_1+m_2}=1}^d\int_{\Delta^{m_1}_{\theta',t}\times\Delta^{m_2}_{\theta,\theta'}}\Big\{\Hfct_{j_1,\dots,j_{m_1+m_2} }(s,\theta)\\
   &\quad\cdot\prod_{l=1}^{m_1+m_2}b_{j_l,n}'(\< x^{n}_{s_l}, e_{j_l}\>+\eps w_{j_l}B^{H_{j_l}}(s_l))\Big\}ds,
   \end{split}
  \end{align}
  where we defined
  \begin{align}\label{3Htildefunction}
   \tilde\Hfct_{j_1,\dots,j_m}(s,\theta,\theta'):=\Big(\chi_{j_m}(\theta-s_m)-\chi_{j_m}(\theta'-s_m)\Big)\prod_{l=1}^{m-1}\chi_{j_l}(s_{l+1}-s_l).
  \end{align}
\end{lem}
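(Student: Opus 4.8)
The plan is to prove both representations by unfolding the linear equation \eqref{3VI_DXn1} into an absolutely convergent Neumann series and then reorganising the resulting iterated integrals by elementary decompositions of the integration simplices $\Delta^m_{\theta,t}$. The crux is to first establish a \emph{closed} recursion for the Malliavin derivatives of the Fourier coefficients of the segment, rather than for $x^n(t)$ directly. Abbreviate $\Phi_i(u):=b_{i,n}'(\<x^n_u,e_i\>+\eps w_iB^{H_i}(u))$ (the factors entering \eqref{3VI_DXn1} via the chain rule) and $D^i(s):=\DM_\theta\<x^n_s,e_i\>$. Since $\<x^n_s,e_i\>=x^n(s)e_i(0)+\int_{-r}^0 x^n(s+v)e_i(v)\,dv$ in $M_2$, for $s\ge\theta$ one has
\begin{align*}
D^i(s)=\DM_\theta x^n(s)\,e_i(0)+\int_{-r}^0\DM_\theta x^n(s+v)\,e_i(v)\,dv.
\end{align*}
I would insert \eqref{3VI_DXn1} and split every $\DM_\theta x^n(\cdot)$ into its \emph{$W$-part} (the constant $1$, which carries an indicator because $\DM_\theta W(\tau)=\ind_{\{\theta\le\tau\}}$ and $x^n(\tau)=\eta(\tau)$ is deterministic for $\tau<0$) and its \emph{drift-part} (the time integral). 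The $W$-part of $D^i(s)$ collapses to $e_i(0)+\int_{\max(\theta-s,-r)}^0 e_i(v)\,dv=\chi_i(\theta-s)$, while in the drift-part a Fubini interchange of the $dv$- and $du$-integrations (with binding constraint $v\ge u-s$) reassembles the inner $v$-integral into $\chi_i(u-s)$. This produces the key recursion
\begin{align*}
D^i(s)=\chi_i(\theta-s)+\int_\theta^s\sum_{j=1}^d\chi_i(u-s)\,\Phi_j(u)\,D^j(u)\,du,\qquad s\ge\theta.
\end{align*}

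Next I would iterate this linear Volterra equation. Each substitution of $D^j(u)$ attaches one further factor $\chi_{j_l}(s_{l+1}-s_l)$ joining consecutive times, the terminating term contributes $\chi_{j_m}(\theta-s_m)$, and the time points are automatically ordered on $\Delta^m_{\theta,s}$. Substituting the resulting series into $\DM_\theta x^n(t)=1+\int_\theta^t\sum_i\Phi_i(s)\,D^i(s)\,ds$ and relabelling (the outer index $i$ becomes $j_1$, attached to $s_1$) yields exactly the chain $\Hfct_{j_1,\dots,j_m}(s,\theta)\prod_{l}\Phi_{j_l}(s_l)$ of \eqref{3Hfunction}, i.e.\ (Ma1). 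Absolute convergence and the admissibility of all interchanges follow from the uniform bounds $\max_{1\le i\le d}\|b_{i,n}'\|_\infty<\infty$ and $|\chi_i(z)|\le\sqrt{1+r}$ (Cauchy--Schwarz, since $\|e_i\|_{M_2}=1$): the $m$-th term is dominated by $C^m(t-\theta)^m/m!$, which is summable.

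For (Ma2) I would apply (Ma1) to both $\DM_\theta x^n(t)$ and $\DM_{\theta'}x^n(t)$ and subtract. As $\theta<\theta'\le t$, up to a Lebesgue null set one has the disjoint decomposition $\Delta^m_{\theta,t}=\bigcup_{m_1+m_2=m}\Delta^{m_1}_{\theta',t}\times\Delta^{m_2}_{\theta,\theta'}$, the order across the two blocks being forced by $\theta'$. Splitting the $\theta$-series according to the three cases $m_1=0$, $m_2=0$ and $m_1,m_2\ge1$: the $m_1=0$ block equals $\sum_m\sum_{j_1,\dots,j_m}\int_{\Delta^m_{\theta,\theta'}}\Hfct_{j_1,\dots,j_m}(\cdot,\theta)\prod_l\Phi_{j_l}$, which by (Ma1) at time $\theta'$ is $\DM_\theta x^n(\theta')-1$; the $m_2=0$ block, after cancelling against the matching term of $\DM_{\theta'}x^n(t)$, turns $\chi_{j_m}(\theta-s_m)$ into $\chi_{j_m}(\theta-s_m)-\chi_{j_m}(\theta'-s_m)$ and hence reproduces the $\tilde\Hfct$-term \eqref{3Htildefunction}; and the block $m_1,m_2\ge1$ is precisely the double sum over $\Delta^{m_1}_{\theta',t}\times\Delta^{m_2}_{\theta,\theta'}$ carrying $\Hfct_{j_1,\dots,j_{m_1+m_2}}(\cdot,\theta)$. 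Adding the three pieces gives \eqref{3MalliavinDiffIteration}.

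The main obstacle is the first step: differentiating the segment's Fourier coefficient and performing the Fubini interchange so that the ostensibly two-layer integral (over $v\in[-r,0]$ and over the drift time $u$) collapses onto the single kernel $\chi_i(u-s)$. Getting the indicator bookkeeping right---the cut-offs $\max(\theta-s,-r)$ and $\max(u-s,-r)$, the vanishing of $\DM_\theta x^n$ on $\{\,\cdot<\theta\}$, and the deterministic initial segment---is exactly what makes the closed recursion, and thus the clean emergence of $\chi$, possible. Everything afterwards is combinatorial reindexing together with the uniform estimates needed for convergence.
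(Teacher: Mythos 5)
Your proposal is correct. For (Ma1) it coincides with the paper's argument: you derive exactly the same closed Volterra recursion $\DM_\theta\<x^n_s,e_i\>=\chi_i(\theta-s)+\int_\theta^s\sum_j\chi_i(u-s)\,b'_{j,n}(\<x^n_u,e_j\>+\eps w_jB^{H_j}(u))\,\DM_\theta\<x^n_u,e_j\>\,du$ (the paper's equation \eqref{3DXi}, written componentwise), including the two delicate points the paper also handles -- the collapse of the $W$-part to $\chi_i(\theta-s)$ via $\DM_\theta x^n(\tau)=0$ for $\tau<\theta$ together with the deterministic initial segment, and the Fubini interchange reassembling the drift part onto the kernel $\chi_i(s_1-s)$ -- and then you iterate to the Neumann series; your justification via the domination $C^m(t-\theta)^m/m!$ is a sound substitute for the paper's appeal to Weissinger's fixed point theorem. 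For (Ma2), however, you take a genuinely different and leaner route. The paper re-derives an inhomogeneous Volterra equation for the difference $\DM_\theta\Xi(s)-\DM_{\theta'}\Xi(s)$, whose inhomogeneity $A(\theta,\theta',s)$ contains both the $\chi$-difference and an extra integral over $[\theta,\theta']$; it then iterates a second time, substitutes the full expansion of $\DM_\theta\Xi$ into that extra integral, and performs an index shift to produce the three terms of \eqref{3MalliavinDiffIteration} structurally. You instead subtract two instances of (Ma1) and use the disjoint (mod null sets) decomposition $\Delta^m_{\theta,t}=\bigcup_{m_1+m_2=m}\Delta^{m_1}_{\theta',t}\times\Delta^{m_2}_{\theta,\theta'}$: the $m_1=0$ blocks resum, by (Ma1) at terminal time $\theta'$, to $\DM_\theta x^n(\theta')-1$; the $m_2=0$ blocks, after cancellation against the series for $\DM_{\theta'}x^n(t)$ (both supported on $\Delta^m_{\theta',t}$), turn $\chi_{j_m}(\theta-s_m)$ into the difference defining $\tilde\Hfct$; and the mixed blocks give the double series, with the coupling factor $\chi_{j_{m_1}}(s_{m_1+1}-s_{m_1})$ across the split correctly absorbed into $\Hfct_{j_1,\dots,j_{m_1+m_2}}(s,\theta)$. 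Your route buys brevity and makes the combinatorial origin of the three terms transparent, at the modest price of invoking absolute convergence to license the regrouping of the series over $m$ into a double series over $(m_1,m_2)$ -- which your factorial bound supplies; the paper's route avoids that rearrangement but pays with a second, heavier iteration. As a minor bonus, your Cauchy--Schwarz bound $|\chi_i(z)|\le\sqrt{1+r}$ is slightly sharper than the bound $1+r$ implicit in the paper's estimate \eqref{3HfunctionEstimation}.
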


\begin{proof}
We start with proving \eqref{3MalliavinIteration}. First, we define the process $\Xi$ by

\begin{align}\label{3Xiprocess}
 \Xi(s):=\begin{pmatrix}
   \< x^{n}_s,e_1\> \\                                                                                                                                                          
   \vdots \\
   \< x^{n}_s,e_d\>
  \end{pmatrix}.
\end{align}

Then, equation \eqref{3VI_DXn1} becomes

\begin{align*}
&\DM_\theta x^{n}(t)= 1 + \int_\theta^t \bigg\{\nabla b^{n}(\< x^{n}_s, e_1\>+\eps w_1B^{H_1}(s),\dots)\cdot \DM_\theta\Xi(s)\bigg\}ds.
\end{align*}

By definition of $\<\cdot,\cdot\>$ in $M_2$,

\begin{align*}
&\DM_\theta\<x^{n}_s,e_j\>\\
&= \DM_\theta x^{n}(s)e_j(0) + \int_{-r}^0 \DM_\theta x^{n}(s+u)e_j(u)du\\
&=1\cdot e_j(0)+\int_\theta^s \nabla b^{n}(\< x^{n}_{s_1}, e_1\>+\eps w_1B^{H_1}(s_1),\dots)\begin{pmatrix}
                 \DM_\theta \< x^{n}_{s_1},e_1\> \\                                                                                                                                                          
                \vdots \\
                 \DM_\theta \< x^{n}_{s_1},e_d\>
                \end{pmatrix}
 ds_1e_j(0)\\
 &\quad+\int_{-r}^0\ind_{\{s+u\geq\theta\}}\bigg\{1+\int_\theta^{s+u} \nabla b^{n}(\< x^{n}_{s_1}, e_1\>+\eps w_1B^{H_1}(s_1),\dots)\begin{pmatrix}
                 \DM_\theta \< x^{n}_{s_1},e_1\> \\                                                                                                                                                          
                \vdots \\
                 \DM_\theta \< x^{n}_{s_1},e_d\>
                \end{pmatrix}
 ds_1\bigg\}e_j(u)du\\
 &=\<(1,\ind_{[\theta-s,0]}(\cdot)),e_j\>\\
 &\quad+\int_\theta^s \nabla b^{n}(\< x^{n}_{s_1}, e_1\>+\eps w_1B^{H_1}(s_1),\dots)\begin{pmatrix}
                 \DM_\theta \< x^{n}_{s_1},e_1\> \\                                                                                                                                                          
                \vdots \\
                 \DM_\theta \< x^{n}_{s_1},e_d\>
                \end{pmatrix}
 \<(1,\ind_{[s_1-s,0]}(\cdot)),e_j\>ds_1\\
 &=\chi_j(\theta-s)+\int_\theta^s \nabla b^{n}(\< x^{n}_{s_1}, e_1\>+\eps w_1B^{H_1}(s_1),\dots)\DM_\theta\Xi(s_1)\chi_j(s_1-s)ds_1,
\end{align*}

where we applied Fubini's theorem and the definitions of the function $\chi_j$ and the process $\Xi$ in the last two steps. Therefore,

\begin{align}\label{3DXi}
\begin{split}
 &\DM_\theta\Xi(s)=\begin{pmatrix}
    \DM_\theta \< x^{n}_s,e_1\> \\                                                                                                                                                          
   \vdots \\
    \DM_\theta \< x^{n}_s,e_d\>
  \end{pmatrix}\\ 
  &=\begin{pmatrix}
      \chi_1(\theta-s) \\                                                                                                                                                          
     \vdots \\
      \chi_d(\theta-s)
    \end{pmatrix}
    +\int_\theta^s \bigg\{\nabla b^{n}(\< x^{n}_{s_1}, e_1\>+\eps w_1B^{H_1}(s_1),\dots)\cdot \DM_\theta\Xi(s_1)\bigg\}\begin{pmatrix}
       \chi_1(s_1-s) \\                                                                                                                                                          
      \vdots \\
       \chi_d(s_1-s)
     \end{pmatrix}ds_1.
\end{split}
\end{align}

By iteration in connection with the fixed point theorem of Weissinger in e.g. $L^p$-spaces, we get

\begin{align*}
  &\DM_\theta\Xi(s)\\
  &=\begin{pmatrix}
      \chi_1(\theta-s) \\                                                                                                                                                          
     \vdots \\
      \chi_d(\theta-s)
    \end{pmatrix}
    +\sum_{m=1}^\infty\int_{\Delta^m_{\theta,s}} \bigg\{\nabla b^{n}(\< x^{n}_{s_m}, e_1\>+\eps w_1B^{H_1}(s_m),\dots)\cdot\begin{pmatrix}
       \chi_1(\theta-s_m) \\                                                                                                                                                          
      \vdots \\
       \chi_d(\theta-s_m)
     \end{pmatrix}\\
     &\quad\cdot\prod_{l=1}^{m-1}\nabla b^{n}(\< x^{n}_{s_l}, e_1\>+\eps w_1B^{H_1}(s_l),\dots)\cdot
     \begin{pmatrix}
       \chi_1(s_{l+1}-s_l) \\                                                                                                                                                          
      \vdots \\
       \chi_d(s_{l+1}-s_l)
     \end{pmatrix}\bigg\}\begin{pmatrix}
       \chi_1(s_1-s) \\                                                                                                                                                          
      \vdots \\
       \chi_d(s_1-s)
     \end{pmatrix}ds_m\dots ds_1.
\end{align*}

When plugging this result into \eqref{3VI_DXn1}, we achieve

\begin{align*}
&\DM_\theta x^{n}(t)\\
&= 1 + \sum_{m=1}^\infty\int_{\Delta^m_{\theta,t}} \bigg\{\nabla b^{n}(\< x^{n}_{s_m}, e_1\>+\eps w_1B^{H_1}(s_m),\dots)\cdot\begin{pmatrix}
       \chi_1(\theta-s_m) \\                                                                                                                                                          
      \vdots \\
       \chi_d(\theta-s_m)
     \end{pmatrix}\\
     &\quad\cdot\prod_{l=1}^{m-1}\nabla b^{n}(\< x^{n}_{s_l}, e_1\>+\eps w_1B^{H_1}(s_l),\dots)\cdot
     \begin{pmatrix}
       \chi_1(s_{l+1}-s_l) \\                                                                                                                                                          
      \vdots \\
       \chi_d(s_{l+1}-s_l)
     \end{pmatrix}\bigg\}ds_m\dots ds_1\\
     &=1 + \sum_{m=1}^\infty\sum_{j_1,\dots,j_m=1}^d\int_{\Delta^m_{\theta,t}}b_{j_m,n}'(\< x^{n}_{s_m}, e_1\>+\eps w_1B^{H_1}(s_m),\dots)\chi_{j_m}(\theta-s_m)\\
     &\quad\cdot\prod_{l=1}^{m-1}b_{j_l,n}'(\< x^{n}_{s_l}, e_1\>+\eps w_1B^{H_1}(s_l),\dots)\chi_{j_l}(s_{l+1}-s_l)ds_m\dots ds_1.
\end{align*}

Finally, we define the function $\Hfct_{j_1,\dots,j_m}$ as in \eqref{3Hfunction} and achive representation \eqref{3MalliavinIteration}.\\
\par

Next, we prove \eqref{3MalliavinDiffIteration}. Note that, by \eqref{3VI_DXn1} and \eqref{3Xiprocess}, we have

\begin{align*}
&\DM_\theta x^{n}(t)-\DM_{\theta'} x^{n}(t)\\
&\quad= 1 + \int_\theta^t \bigg\{\nabla b^{n}(\< x^{n}_s, e_1\>+\eps w_1B^{H_1}(s),\dots)\cdot \DM_\theta\Xi(s)\bigg\}ds\\
&\qquad-\bigg(1 + \int_{\theta'}^t \bigg\{\nabla b^{n}(\< x^{n}_s, e_1\>+\eps w_1B^{H_1}(s),\dots)\cdot \DM_\theta\Xi(s)\bigg\}ds\bigg)\\
&\quad= \int_\theta^{\theta'} \bigg\{\nabla b^{n}(\< x^{n}_s, e_1\>+\eps w_1B^{H_1}(s),\dots)\cdot \DM_\theta\Xi(s)\bigg\}ds\\
&\qquad+ \int_{\theta'}^t \bigg\{\nabla b^{n}(\< x^{n}_s, e_1\>+\eps w_1B^{H_1}(s),\dots)\cdot\Big(\DM_\theta\Xi(s)-\DM_{\theta'}\Xi(s)\Big)\bigg\}ds\\
&\quad=\Big(\DM_\theta x^{n}(\theta')-1\Big)+ \int_{\theta'}^t \bigg\{\nabla b^{n}(\< x^{n}_s, e_1\>+\eps w_1B^{H_1}(s),\dots)\cdot\Big(\DM_\theta\Xi(s)-\DM_{\theta'}\Xi(s)\Big)\bigg\}ds.
\end{align*}

By \eqref{3DXi}, we have
\begin{align*}
 &\DM_\theta\Xi(s)-\DM_{\theta'}\Xi(s)\\
 &=\begin{pmatrix}
     \chi_1(\theta-s)-\chi_1(\theta'-s) \\                                                                                                                                                          
     \vdots \\
     \chi_d(\theta-s)-\chi_d(\theta'-s)
    \end{pmatrix}\\
 &\quad+\int_\theta^{\theta'} \bigg\{\nabla b^{n}(\< x^{n}_{s_1}, e_1\>+\eps w_1B^{H_1}(s_1),\dots)\cdot \DM_\theta\Xi(s_1)\bigg\}\begin{pmatrix}
       \chi_1(s_1-s) \\                                                                                                                                                          
      \vdots \\
       \chi_d(s_1-s)
     \end{pmatrix}ds_1\\
 &\quad +\int_{\theta'}^s \bigg\{\nabla b^{n}(\< x^{n}_{s_1}, e_1\>+\eps w_1B^{H_1}(s_1),\dots)\cdot\Big(\DM_\theta\Xi(s_1)-\DM_{\theta'}\Xi(s_1)\Big)\bigg\}\begin{pmatrix}
       \chi_1(s_1-s) \\                                                                                                                                                          
      \vdots \\
       \chi_d(s_1-s)
     \end{pmatrix}ds_1\\
 &=A(\theta,\theta',s)\\
 &\quad +\int_{\theta'}^s \bigg\{\nabla b^{n}(\< x^{n}_{s_1}, e_1\>+\eps w_1B^{H_1}(s_1),\dots)\cdot\Big(\DM_\theta\Xi(s_1)-\DM_{\theta'}\Xi(s_1)\Big)\bigg\}\begin{pmatrix}
       \chi_1(s_1-s) \\                                                                                                                                                          
      \vdots \\
       \chi_d(s_1-s)
     \end{pmatrix}ds_1.
\end{align*}

By iteration, we achieve

\begin{align*}
 &\DM_\theta\Xi(s)-\DM_{\theta'}\Xi(s)\\
 &=A(\theta,\theta',s)+\sum_{m=1}^\infty\int_{\Delta^m_{\theta',s}} \bigg\{\nabla b^{n}(\< x^{n}_{s_m}, e_1\>+\eps w_1B^{H_1}(s_m),\dots)\cdot A(\theta,\theta',s_m)\\
 &\quad\cdot\prod_{l=1}^{m-1}\nabla b^{n}(\< x^{n}_{s_l}, e_1\>+\eps w_1B^{H_1}(s_l),\dots)\cdot\begin{pmatrix}
       \chi_1(s_{l+1}-s_l) \\                                                                                                                                                          
      \vdots \\
       \chi_d(s_{l+1}-s_l)
     \end{pmatrix}\bigg\}\begin{pmatrix}
       \chi_1(s_1-s) \\                                                                                                                                                          
      \vdots \\
       \chi_d(s_1-s)
     \end{pmatrix}ds_m\dots ds_1.
\end{align*}

Plugging this result into the equation for $\DM_\theta x^{n}(t)-\DM_{\theta'} x^{n}(t)$ yields

\begin{align*}
&\DM_\theta x^{n}(t)-\DM_{\theta'} x^{n}(t)\\
&=\Big(\DM_\theta x^{n}(\theta')-1\Big)+ \int_{\theta'}^t \bigg\{\nabla b^{n}(\< x^{n}_s, e_1\>+\eps w_1B^{H_1}(s),\dots)\cdot A(\theta,\theta',s)\bigg\}ds\\
&\quad+\sum_{m=1}^\infty\int_{\theta'}^t\int_{\Delta^m_{\theta',s}}\Bigg\{\nabla b^{n}(\< x^{n}_s, e_1\>+\eps w_1B^{H_1}(s),\dots)\\
&\quad\cdot\Bigg[\bigg\{\nabla b^{n}(\< x^{n}_{s_m}, e_1\>+\eps w_1B^{H_1}(s_m),\dots)\cdot A(\theta,\theta',s_m)\\
 &\quad\cdot\prod_{l=1}^{m-1}\nabla b^{n}(\< x^{n}_{s_l}, e_1\>+\eps w_1B^{H_1}(s_l),\dots)\cdot\begin{pmatrix}
       \chi_1(s_{l+1}-s_l) \\                                                                                                                                                          
      \vdots \\
       \chi_d(s_{l+1}-s_l)
     \end{pmatrix}\bigg\}\begin{pmatrix}
       \chi_1(s_1-s) \\                                                                                                                                                          
      \vdots \\
       \chi_d(s_1-s)
     \end{pmatrix}\Bigg]\Bigg\}ds_1ds
\end{align*}

\begin{align*}
&\hspace{-2cm}=\Big(\DM_\theta x^{n}(\theta')-1\Big)\\
&\hspace{-2cm}\quad+\sum_{m=1}^\infty\int_{\Delta^m_{\theta',t}} \bigg\{\nabla b^{n}(\< x^{n}_{s_m}, e_1\>+\eps w_1B^{H_1}(s_m),\dots)\cdot A(\theta,\theta',s_m)\\
 &\hspace{-2cm}\quad\cdot\prod_{l=1}^{m-1}\nabla b^{n}(\< x^{n}_{s_l}, e_1\>+\eps w_1B^{H_1}(s_l),\dots)\cdot\begin{pmatrix}
       \chi_1(s_{l+1}-s_l) \\                                                                                                                                                          
      \vdots \\
       \chi_d(s_{l+1}-s_l)
     \end{pmatrix}\bigg\}ds_m\dots ds_1.
\end{align*}

By the definition of $A(\theta,\theta',s_m)$, this can be written as

\begin{align*}
&\DM_\theta x^{n}(t)-\DM_{\theta'} x^{n}(t)\\
&=\Big(\DM_\theta x^{n}(\theta')-1\Big)\\
&\quad+\sum_{m=1}^\infty\int_{\Delta^m_{\theta',t}} \bigg\{\nabla b^{n}(\< x^{n}_{s_m}, e_1\>+\eps w_1B^{H_1}(s_m),\dots)\cdot \begin{pmatrix}
     \chi_1(\theta-s_m)-\chi_1(\theta'-s_m) \\                                                                                                                                                          
     \vdots \\
     \chi_d(\theta-s_m)-\chi_d(\theta'-s_m)
    \end{pmatrix}\\
 &\quad\cdot\prod_{l=1}^{m-1}\nabla b^{n}(\< x^{n}_{s_l}, e_1\>+\eps w_1B^{H_1}(s_l),\dots)\cdot\begin{pmatrix}
       \chi_1(s_{l+1}-s_l) \\                                                                                                                                                          
      \vdots \\
       \chi_d(s_{l+1}-s_l)
     \end{pmatrix}\bigg\}ds_m\dots ds_1\\
&\quad+\sum_{m=1}^\infty\int_{\Delta^m_{\theta',t}} \bigg\{\nabla b^{n}(\< x^{n}_{s_m}, e_1\>+\eps w_1B^{H_1}(s_m),\dots)\\
&\quad\cdot\bigg[\int_\theta^{\theta'} \bigg\{\nabla b^{n}(\< x^{n}_{s_{m+1}}, e_1\>+\eps w_1B^{H_1}(s_{m+1}),\dots)\cdot \DM_\theta\Xi(s_{m+1})\bigg\}\begin{pmatrix}
       \chi_1(s_{m+1}-s_m) \\                                                                                                                                                          
      \vdots \\
       \chi_d(s_{m+1}-s_m)
     \end{pmatrix}ds_{m+1}\bigg]\\
 &\quad\cdot\prod_{l=1}^{m-1}\nabla b^{n}(\< x^{n}_{s_l}, e_1\>+\eps w_1B^{H_1}(s_l),\dots)\cdot\begin{pmatrix}
       \chi_1(s_{l+1}-s_l) \\                                                                                                                                                          
      \vdots \\
       \chi_d(s_{l+1}-s_l)
     \end{pmatrix}\bigg\}ds_m\dots ds_1.
\end{align*}

Plugging in the iteration for $\DM_\theta\Xi(s_{m+1})$, we get

\begin{align*}
 &\int_\theta^{\theta'} \bigg\{\nabla b^{n}(\< x^{n}_{s_{m+1}}, e_1\>+\eps w_1B^{H_1}(s_{m+1}),\dots)\cdot \DM_\theta\Xi(s_{m+1})\bigg\}\begin{pmatrix}
       \chi_1(s_{m+1}-s_m) \\                                                                                                                                                          
      \vdots \\
       \chi_d(s_{m+1}-s_m)
     \end{pmatrix}ds_{m+1}\\
 &=\int_\theta^{\theta'} \left\{\nabla b^{n}(\< x^{n}_{s_{m+1}}, e_1\>+\eps w_1B^{H_1}(s_{m+1}),\dots)\cdot \left[\begin{pmatrix}
      \chi_1(\theta-s_{m+1}) \\                                                                                                                                                          
     \vdots \\
      \chi_d(\theta-s_{m+1})
    \end{pmatrix}\right.\right.\\
    &\quad+\sum_{m_2=1}^\infty\int_{\Delta^{m_2}_{\theta,s}} \bigg\{\nabla b^{n}(\< x^{n}_{s_{m+1+m_2}}, e_1\>+\eps w_1B^{H_1}(s_{m+1+m_2}),\dots)\cdot\begin{pmatrix}
       \chi_1(\theta-s_{m+1+m_2}) \\                                                                                                                                                          
      \vdots \\
       \chi_d(\theta-s_{m+1+m_2})
     \end{pmatrix}\\
     &\quad\cdot\prod_{l=1}^{m+1+m_2-1}\nabla b^{n}(\< x^{n}_{s_l}, e_1\>+\eps w_1B^{H_1}(s_l),\dots)\cdot
     \begin{pmatrix}
       \chi_1(s_{l+1}-s_l) \\                                                                                                                                                          
      \vdots \\
       \chi_d(s_{l+1}-s_l)
     \end{pmatrix}\bigg\}\\
    &\quad\left.\left.\cdot\begin{pmatrix}
       \chi_1(s_{m+1+1}-s_{m+1}) \\                                                                                                                                                          
      \vdots \\
       \chi_d(s_{m+1+1}-s_{m+1})
     \end{pmatrix}ds_{m+1+m_2}\dots ds_{m+1+1}\right]\right\}
     \begin{pmatrix}
       \chi_1(s_{m+1}-s_m) \\                                                                                                                                                          
      \vdots \\
       \chi_d(s_{m+1}-s_m)
     \end{pmatrix}ds_{m+1}\\
 &=\sum_{m_2=1}^\infty\int_{\Delta^{m_2}_{\theta,\theta'}}\bigg\{\nabla b^{n}(\< x^{n}_{s_{m+m_2}}, e_1\>+\eps w_1B^{H_1}(s_{m+m_2}),\dots)\cdot\begin{pmatrix}
      \chi_1(\theta-s_{m+m_2}) \\                                                                                                                                                          
     \vdots \\
      \chi_d(\theta-s_{m+m_2})
    \end{pmatrix}\\
    &\quad\cdot\prod_{l=1}^{m+m_2-1}\nabla b^{n}(\< x^{n}_{s_l}, e_1\>+\eps w_1B^{H_1}(s_l),\dots)\cdot
     \begin{pmatrix}
       \chi_1(s_{l+1}-s_l) \\                                                                                                                                                          
      \vdots \\
       \chi_d(s_{l+1}-s_l)
     \end{pmatrix}\bigg\}\\
     &\quad\cdot\begin{pmatrix}
       \chi_1(s_{m+1}-s_m) \\                                                                                                                                                          
      \vdots \\
       \chi_d(s_{m+1}-s_m)
     \end{pmatrix}ds_{m+m_2}\dots ds_{m+1},    
\end{align*}

where we applied the index shift $m_2:=m_2+1$ together with Fubini's theorem in the last line. Thus, defining $m_2:=m$ and plugging in, we finally achive

\begin{align*}
&\DM_\theta x^{n}(t)-\DM_{\theta'} x^{n}(t)\\
&=\Big(\DM_\theta x^{n}(\theta')-1\Big)\\
&\quad+\sum_{m=1}^\infty\int_{\Delta^m_{\theta',t}} \bigg\{\nabla b^{n}(\< x^{n}_{s_m}, e_1\>+\eps w_1B^{H_1}(s_m),\dots)\cdot \begin{pmatrix}
     \chi_1(\theta-s_m)-\chi_1(\theta'-s_m) \\                                                                                                                                                          
     \vdots \\
     \chi_d(\theta-s_m)-\chi_d(\theta'-s_m)
    \end{pmatrix}\\
 &\quad\cdot\prod_{l=1}^{m-1}\nabla b^{n}(\< x^{n}_{s_l}, e_1\>+\eps w_1B^{H_1}(s_l),\dots)\cdot\begin{pmatrix}
       \chi_1(s_{l+1}-s_l) \\                                                                                                                                                          
      \vdots \\
       \chi_d(s_{l+1}-s_l)
     \end{pmatrix}\bigg\}ds_m\dots ds_1\\
&\quad+\sum_{m_1,m_2=1}^\infty\int_{\Delta^{m_1+m_2}_{\theta',t}\times\Delta^{m_2}_{\theta,\theta'}}\nabla b^{n}(\< x^{n}_{s_{m_1+m_2}}, e_1\>+\eps w_1B^{H_1}(s_{m_1+m_2}),\dots)\cdot\Big[\begin{pmatrix}
      \chi_1(\theta-s_{m_1+m_2}) \\                                                                                                                                                          
     \vdots \\
      \chi_d(\theta-s_{m_1+m_2})
    \end{pmatrix}\Big]\\
    &\quad\cdot\prod_{l=1}^{m_1+m_2-1}\nabla b^{n}(\< x^{n}_{s_l}, e_1\>+\eps w_1B^{H_1}(s_l),\dots)\cdot\begin{pmatrix}
      \chi_1(s_{l+1}-s_l) \\                                                                                                                                                          
      \vdots \\
      \chi_d(s_{l+1}-s_l)
     \end{pmatrix}\bigg\}ds_{m_1+m_2}\dots ds_1.
\end{align*}

This can be easily reformulated into \eqref{3MalliavinDiffIteration}.

\end{proof}

These representations will be used to prove the following lemma. Note that, by definition of $\chi_j(z)$, we have that

\begin{align}
 |\Hfct_{j_1,\dots,j_m}(s,\theta)|&\leq (1+r)^m\label{3HfunctionEstimation}\\
 |\tilde\Hfct_{j_1,\dots,j_m}(s,\theta,\theta')|&\leq (1+r)^{m-1}|\theta-\theta'|^{\frac{1}{2}}.\label{3HtildefunctionEstimation}
\end{align}

The next Lemma is an application of Girsanov's Theorem.

\begin{lem}\label{3GirsanovWdk}
 The process $(\overline W^{n}(t))_{t\in[0,T]}$ defined by
 \begin{align}\label{3Wdk}
  \overline W^{n}(t)&:=W(t)+\int_0^tb^{n}\Big(\< x^{n}_s,e_1\>+\eps w_1B^{H_1}(s),\dots,\< x^{n}_s,e_d\>+\eps w_dB^{H_d}(s)\Big)ds
 \end{align}
 is a Brownian motion under the measure $\overline\PP^{n}$ given by
 \begin{align}\label{3dPdkDP}
 \begin{split}
  \frac{d\overline\PP^{n}}{d\PP\otimes\PP'}\Big|_{\F_t}&=\mathcal{E}\bigg( -\int_0^\cdot b^{n}\Big(\< x^{n}_s,e_1\>+\eps w_1 B^{H_1}(s),\dots,\< x^{n}_s,e_d\>\eps w_d B^{H_d}(s)\Big)dW(s)\bigg)_t.
 \end{split}
 \end{align}
\end{lem}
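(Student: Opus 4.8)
The plan is to apply the classical Girsanov theorem to the bounded drift $b^n$. The key observation is that the drift vector field $b^n$ is \emph{bounded} (uniformly in $n$, since $\|b_{i,n}\|_\infty\le\|b_i\|_\infty$ as established above), which is precisely what makes Novikov's condition trivial to verify. First I would define the integrand
\begin{align*}
 \varphi^n(s):=b^{n}\big(\< x^{n}_s,e_1\>+\eps w_1 B^{H_1}(s),\dots,\< x^{n}_s,e_d\>+\eps w_d B^{H_d}(s)\big),
\end{align*}
and observe that $(\varphi^n(s))_{s\in[0,T]}$ is $(\F_t)$-adapted (since $x^n$ is adapted by Lemma \ref{3ExistAndUniqApprox} and $\BB$ is adapted) and bounded: indeed, by \eqref{3driftRestriction} and the finite-sum compact-support setting, $|\varphi^n(s)|\le\sum_{i=1}^d\|b_i\|_\infty=:K<\infty$, uniformly in $s$ and $\omega$. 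Hence the Doléans-Dade exponential
\begin{align*}
 Z^n_t:=\mathcal{E}\Big(-\int_0^\cdot\varphi^n(s)\,dW(s)\Big)_t
\end{align*}
is well-defined.

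The second step is to verify that $(Z^n_t)_{t\in[0,T]}$ is a genuine martingale (not merely a local martingale), so that $\overline\PP^n$ defined via $d\overline\PP^n/d(\PP\otimes\PP')|_{\F_t}=Z^n_t$ is a probability measure. I would check Novikov's condition: since $\varphi^n$ is bounded by $K$,
\begin{align*}
 \bar E\Big[\exp\Big(\tfrac12\int_0^T|\varphi^n(s)|^2\,ds\Big)\Big]\le\exp\big(\tfrac12 K^2 T\big)<\infty,
\end{align*}
which holds trivially. By the Novikov criterion (see e.g. \cite{Nualart}), $Z^n$ is a uniformly integrable martingale on $[0,T]$, and in particular $\bar E[Z^n_T]=1$, so $\overline\PP^n$ is a probability measure equivalent to $\PP\otimes\PP'$.

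The third and final step is to invoke Girsanov's theorem in its standard form. Since $W$ is an $(\F_t)$-Brownian motion under $\PP\otimes\PP'$ and $Z^n$ is the exponential martingale associated with the drift $-\varphi^n$, Girsanov's theorem yields that the process
\begin{align*}
 \overline W^n(t)=W(t)-\int_0^t\big(-\varphi^n(s)\big)\,ds=W(t)+\int_0^t\varphi^n(s)\,ds,
\end{align*}
which is exactly \eqref{3Wdk}, is an $(\F_t)$-Brownian motion under $\overline\PP^n$. This completes the proof. One subtlety worth flagging is that the underlying filtration is the product filtration $\F_t=\F^W_t\otimes\F^\BB_t$ on $(\Omega\times\Omega',\mathfrak A\otimes\mathfrak A',\PP\otimes\PP')$ rather than a filtration generated by $W$ alone; I would note that $W$ remains a Brownian motion with respect to this enlarged filtration because $\BB$ is independent of $W$ (defined on the separate factor $\Omega'$), so the independence guarantees $W$ retains the martingale and independent-increment properties under $(\F_t)$, and Girsanov applies verbatim.

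\emph{The main obstacle} is essentially bookkeeping rather than analysis: the only nontrivial point is ensuring that Girsanov's theorem is correctly applied on the product space with the product filtration, i.e. that the independence of $\BB$ from $W$ keeps $W$ an $(\F_t)$-Brownian motion so that the change of measure acts only in the $W$-direction while leaving the law of $\BB$ untouched. The boundedness of $b^n$ makes the Novikov condition immediate, so there is no genuine integrability difficulty to overcome.
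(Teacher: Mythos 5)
Your proposal is correct and follows essentially the same route as the paper: the paper likewise verifies Novikov's condition via the uniform bound $\|b_{i,n}\|_\infty\leq\|b_i\|_\infty$ (indeed with any constant $\alpha$ in the exponent, giving $L^p$-integrability of the Dol\'eans-Dade exponential for all $p>0$), concludes that the exponential is an $(\F_t)_{t\in[0,T]}$-martingale, and invokes Girsanov's theorem. Your explicit remark that the product filtration $\F_t=\F^W_t\otimes\F^\BB_t$ is harmless because $\BB$ lives on the independent factor $\Omega'$ is a point the paper leaves implicit, and flagging it is a welcome addition rather than a deviation.
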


\begin{proof}
 By Lemma \ref{3ExistAndUniqApprox}, the process $(x^{n}(t))_{t\in[0,T]}$ is well defined and therefore, so is $(\overline W^{n}(t))_{t\in[0,T]}$. Note that Novikov's condition is satisfied, since we even have for all $\alpha\in\R$:
\begin{align}\label{3Novikov}
\begin{split}
 &\bar E\bigg[\exp\Big\{\alpha\int_0^T|b^n(\<x_s,e_1\>+\eps w_1 B^{H_1}(s),\dots,\<x_s,e_d\>+\eps w_d B^{H_d}(s))|^2ds\Big\}\bigg]\\
  &\leq e^{|\alpha|T\sum_{i=1}^d\lVert b_i\rVert_\infty^2}<\infty.
\end{split}
\end{align}
Therefore, the Dol\'eons-Dade exponential

\begin{align*}
&\mathcal{E}\bigg( -\int_0^\cdot b^{n}\big(\< x^{n}_s,e_1\>+\eps w_1 B^{H_1}(s),\dots,\< x^{n}_s,e_d\>\eps w_d B^{H_d}(s)\big)dW(s)\bigg)_t\\
&:= \exp \bigg\{ -\int_0^\cdot b^{n}\big(\< x^{n}_s,e_1\>+\eps w_1 B^{H_1}(s),\dots,\< x^{n}_s,e_d\>\eps w_d B^{H_d}(s)\big)dW(s)\\
&\qquad-\frac{1}{2} \int_0^t\Big|b^{n}\big(\< x^{n}_s,e_1\>+\eps w_1 B^{H_1}(s),\dots,\< x^{n}_s,e_d\>\eps w_d B^{H_d}(s)\big)\Big|^2ds  \bigg\}, \quad t\in [0,T],
\end{align*}

is an $(\F_t)_{t\in[0,T]}$-martingale and it is in $L^p$ for all $p>0$. The result now follows from Girsanov's theorem.
\end{proof}

\begin{cor}\label{3GirsanovWdkCor}
 If we define for $\varphi\in M_2$, $i\geq1$
 \begin{align}
 \begin{split}
  F_i(t,\varphi)&:=\eta(0)e_i(0)+\int_{-r}^0(\ind_{\{t+u<0\}}\eta(t+u)+\ind_{\{t+u\geq0\}}\eta(0))e_i(u)du\\
  &\quad+\varphi(0)e_i(0)+\int_{-r}^0\ind_{\{t+u\geq0\}}\varphi(u)e_i(u)du,
 \end{split}
 \end{align}
 then we can rewrite,
 \begin{align*}
  \<x^n_t,e_i\>=F_i(t,\overline W^{n}_t).
 \end{align*}
 This can be seen by noting that $x^n(t)=\eta(0)+W^{n}(t)$ for $t\in[0,T]$ and applying the definition of $\<\cdot,e_i\>$ in $M_2$.
\end{cor}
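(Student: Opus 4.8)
The plan is to reduce the corollary to the elementary identity $x^n(t)=\eta(0)+\overline W^n(t)$ for $t\in[0,T]$ and then simply unfold the definition of the Delfour--Mitter inner product on the segment $x^n_t$.

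First I would establish that $x^n(t)=\eta(0)+\overline W^n(t)$. Starting from the approximative SFDE \eqref{3ApproxSFDE}, note that for any $x\in M_2$ one has $\langle\BB(s),e_i\rangle=w_iB^{H_i}(s)$, so that $b^n(x^n_s+\eps\BB(s))=\sum_{i=1}^d b_{i,n}(\langle x^n_s,e_i\rangle+\eps w_iB^{H_i}(s))$ coincides exactly with the integrand appearing in the definition \eqref{3Wdk} of $\overline W^n$. Hence $\int_0^t b^n(x^n_s+\eps\BB(s))\,ds=\overline W^n(t)-W(t)$, and substituting this back into \eqref{3ApproxSFDE} gives $x^n(t)=\eta(0)+\overline W^n(t)$ (so the symbol $W^n$ in the statement hint should read $\overline W^n$).

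Next I would write out $\langle x^n_t,e_i\rangle$ using the inner product on $M_2$ together with the segment convention, under which the ``value at $0$'' component of $x^n_t$ is $x^n(t)$ and the $L^2$-component is $u\mapsto x^n(t+u)$ for $u\in[-r,0]$. This yields $\langle x^n_t,e_i\rangle=x^n(t)e_i(0)+\int_{-r}^0 x^n(t+u)e_i(u)\,du$. The single point requiring care is the splitting of the time argument $t+u$: for $u$ with $t+u<0$ the value $x^n(t+u)$ is the prescribed initial datum $\eta(t+u)$, whereas for $t+u\geq0$ it equals $\eta(0)+\overline W^n(t+u)$ by the identity just proved. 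Inserting the indicators $\ind_{\{t+u<0\}}$ and $\ind_{\{t+u\geq0\}}$, and also decomposing $x^n(t)e_i(0)=\eta(0)e_i(0)+\overline W^n(t)e_i(0)$, I then regroup the $\eta$-dependent terms and the $\overline W^n$-dependent terms. The former reproduce exactly the first two summands of $F_i(t,\varphi)$, while the latter, with $\varphi=\overline W^n_t$ so that $\varphi(0)=\overline W^n(t)$ and $\varphi(u)=\overline W^n(t+u)$, reproduce the remaining two summands; this gives $\langle x^n_t,e_i\rangle=F_i(t,\overline W^n_t)$.

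There is no genuine obstacle here: the corollary is a bookkeeping consequence of Lemma \ref{3GirsanovWdk}, and the only subtlety is the handling of the initial segment through the indicator split described above. I would flag one consistency point: $\overline W^n(t+u)$ is a priori only defined for $t+u\in[0,T]$, but it appears exclusively multiplied by $\ind_{\{t+u\geq0\}}$, which vanishes precisely on the range where $\overline W^n$ is undefined; moreover at the boundary $t+u=0$ the two expressions agree, since $\overline W^n(0)=0$ forces $\eta(0)+\overline W^n(t+u)=\eta(0)$, matching the initial segment's value at that point.
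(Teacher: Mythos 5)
Your proposal is correct and follows precisely the route the paper sketches in its one-line justification: deriving $x^n(t)=\eta(0)+\overline W^{n}(t)$ by matching the drift in \eqref{3ApproxSFDE} with the integrand in \eqref{3Wdk}, and then unfolding the $M_2$ inner product on the segment $x^n_t$ with the indicator split according to the sign of $t+u$. Your two side remarks are also accurate: the $W^{n}$ in the paper's statement is indeed a typo for $\overline W^{n}$, and $\overline W^{n}(t+u)$ appears only multiplied by $\ind_{\{t+u\geq0\}}$, so its being undefined for $t+u<0$ is harmless.
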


\begin{rem}\label{3GirsanovWdkRemark}
 Note that \eqref{3dPdkDP} can be equivalently written as equivalently,
 \begin{align*}
  \frac{d\PP\otimes\PP'}{d\overline\PP^{n}}\Big|_{\F_t}&=\mathcal{E}\bigg( \int_0^\cdot b^{n}\Big( F_1(s,\overline W^{n}_s)+\eps w_1B^{H_1}(s),\dots,F_d(s,\overline W^{n}_s)+\eps w_dB^{H_d}(s)\Big)d\overline W^{n}(s)\bigg)_t.
 \end{align*}
 This can be seen since \eqref{3Wdk} and \eqref{3dPdkDP} imply that
 \begin{align*}
  \frac{d\overline\PP^{n}}{d\PP\otimes\PP'}&=\exp\bigg\{-\int_0^T b^{n}\Big(F_1(s,\overline W^{n}_s)+\eps w_1B^{H_1}(s),\dots,F_d(s,\overline W^{n}_s)+\eps w_dB^{H_d}(s)\Big)dW(s)\\
 &\qquad\quad-\frac{1}{2}\int_0^T \Big|b^{n}\Big(F_1(s,\overline W^{n}_s)+\eps w_1B^{H_1}(s),\dots,F_d(s,\overline W^{n}_s)+\eps w_dB^{H_d}(s)\Big)\Big|^2ds\bigg\}\\
 &=\exp\bigg\{-\int_0^T b^{n}\Big(F_1(s,\overline W^{n}_s)+\eps w_1B^{H_1}(s),\dots,F_d(s,\overline W^{n}_s)+\eps w_dB^{H_d}(s)\Big)d\overline W^{n}(s)\\
 &\qquad\quad+\frac{1}{2}\int_0^T \Big|b^{n}\Big(F_1(s,\overline W^{n}_s)+\eps w_1B^{H_1}(s),\dots,F_d(s,\overline W^{n}_s)+\eps w_dB^{H_d}(s)\Big)\Big|^2ds\bigg\},
 \end{align*}
 which proves that
 \begin{align*}
  \frac{d\PP\otimes\PP'}{d\overline\PP^{n}}=\bigg(\frac{d\overline\PP^{n}}{d\PP\otimes\PP'}\bigg)^{-1}&=\mathcal{E}\bigg( \int_0^\cdot b^{n}\Big(F_1(s,\overline W^{n}_s)+\eps w_1B^{H_1}(s),\\
 &\quad\quad\dots,F_d(s,\overline W^{n}_s)+\eps w_dB^{H_d}(s)\Big)d\overline W^{n}(s)\bigg)_T.
 \end{align*}
\end{rem}

Before we can prove a theorem that gives the $\omega'$-wise compactness criterion, we state some assumptions that will be assumed to hold during the entire paper.

\begin{ass}\label{3StandingAssumptions}
 Suppose, it exists a $\delta_H\in(0,1)$ such that the following conditions are satisfied:
\begin{itemize}
 \item[(T)] The time interval considered is small enough, meaning that there exists a $\delta_T\in(0,\eps^{\frac{3}{\delta_H}})$ such that
 \begin{align}\label{3Tcondition}
  T=T(\eps)<\big(\eps^3-\delta_T\big)^{\frac{1}{\delta_H}}.
 \end{align}
 \item[(H)] The Hurst parameters $(H_k)_{k=1\dots,d}$ corresponding to the terms $B^{H_1},\dots,B^{H_d}$ that appear in the pertunbation (see Lemma \ref{3CylfBM}) satisfies
 \begin{align}\label{3Hcondition}
  H_k<\frac{1-\delta_H}{3}\quad\text{for }k=1,\dots,d.
 \end{align}
 \item[(A)] It holds
 \begin{align}\label{3Acondition}
  \sum_{j=1}^dA_j<1,
 \end{align}
 where $A_j$, $j=1,\dots,d$ are defined by
 \begin{align}\label{3ADefinition}
  A_j=\frac{48\sqrt{2}(1+r)\Gamma(\delta_H)}{\sqrt{\pi}}C_{j}^{-\frac{3}{2}}|w_{j}|^{-3}\|b_{j}\|_{L^1},
 \end{align}
\end{itemize}
\end{ass}

\begin{rem}
 Assumption $(A)$ is automatically satisfied if one requires for all $j=1,\dots,d$
 \begin{align*}
 \|b_{j}\|_{L^1}\leq \sqrt{\pi}2^{-j}\left(48\sqrt{2}(1+r)\Gamma(\delta_H)\right)^{-1}C_{j}^{\frac{3}{2}}|w_{j}|^{3}.
 \end{align*}
\end{rem}

\begin{rem}
 Assumptions $(A)$ and $(T)$ imply together that
 \begin{align*}
  \eps ^{-3}t^{\delta_H}\sum_{j=1}^dA_j<1.
 \end{align*}
 This can be seen since, as $t^{\delta_H}\leq T^{\delta_H}<\eps^3$.
\end{rem}

\begin{lem}\label{3VI_relcomp}
Fix $t\in[0,T]$ and assume Assumptions \ref{3StandingAssumptions} to be satisfied. Then, for almost every $\omega'\in\Omega'$, there exists a subsequence $(n_k(\omega'))_{k\geq1}$ such that

\begin{enumerate}
 \item we have
 \begin{align}\label{3CompactnessMalliavin1}
 \sup_{k\geq 1} E\bigg[\int_0^t|\DM_{\theta} x^{n_k(\omega')}(t,\cdot,\omega')|^2d\theta\bigg] <\infty,
 \end{align}
 \item there exists a $\beta\in (0,1/2)$ such that
 \begin{align}\label{3CompactnessMalliavin2}
  \sup_{k\geq 1} \int_0^t \int_0^t \frac{E[|\DM_\theta x^{n_k(\omega')}(t,\cdot,\omega') - \DM_{\theta'} x^{n_k(\omega')}(t,\cdot,\omega')|^2]}{|\theta' - \theta|^{1+2\beta}} d\theta' d\theta <\infty
 \end{align}
\end{enumerate}
In particular, the sequence $x^{n_k(\omega')}(t,\cdot,\omega')$ is relatively compact in $L^2(\Omega,\F^W_t,P)$ for almost every $\omega'\in\Omega'$.
\end{lem}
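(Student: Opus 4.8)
The statement is precisely the pair of hypotheses of the compactness criterion of \cite{DPMN92}, so once \eqref{3CompactnessMalliavin1} and \eqref{3CompactnessMalliavin2} are established along a suitable subsequence, together with the trivial uniform bound $\sup_n E[|x^n(t,\cdot,\omega')|^2]<\infty$ (coming from $x^n(t)=\eta(0)+W(t)+\int_0^tb^n(\cdots)ds$ and $\|b^n\|_\infty\leq\sum_{i=1}^d\|b_i\|_\infty$), relative compactness in $L^2(\Omega,\F^W_t,\PP)$ follows. The plan is therefore to first prove the two bounds \emph{uniformly in $n$ after integrating also over $\omega'$}, i.e.\ to show
\[
\sup_n\bar E\Big[\int_0^t|\DM_\theta x^n(t)|^2d\theta\Big]<\infty
\quad\text{and}\quad
\sup_n\bar E\Big[\int_0^t\!\!\int_0^t\frac{|\DM_\theta x^n(t)-\DM_{\theta'}x^n(t)|^2}{|\theta'-\theta|^{1+2\beta}}d\theta'd\theta\Big]<\infty ,
\]
and then to pass to the $\omega'$-wise statement by a Fatou argument: writing $\Phi_n(\omega')$, $\Psi_n(\omega')$ for the two inner $E[\cdots]$-expressions, Tonelli gives $\sup_nE'[\Phi_n+\Psi_n]<\infty$, whence $E'[\liminf_n(\Phi_n+\Psi_n)]\leq\liminf_nE'[\Phi_n+\Psi_n]<\infty$, so $\liminf_n(\Phi_n+\Psi_n)(\omega')<\infty$ for $\PP'$-a.e.\ $\omega'$; choosing a subsequence $n_k(\omega')$ realizing this $\liminf$ yields both $\sup_k\Phi_{n_k}<\infty$ and $\sup_k\Psi_{n_k}<\infty$.

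For the first bound I would start from the series representation \eqref{3MalliavinIteration}. The crucial difficulty is that the derivatives $b_{j_l,n}'$ are \emph{not} uniformly bounded in $n$, so the product cannot be controlled naively. To circumvent this I would apply Girsanov's theorem (Lemma \ref{3GirsanovWdk}): under $\overline\PP^n$ the process $\overline W^n$ is a Brownian motion, $x^n(t)=\eta(0)+\overline W^n(t)$, and by Corollary \ref{3GirsanovWdkCor} the arguments $\< x^n_{s_l},e_{j_l}\>+\eps w_{j_l}B^{H_{j_l}}(s_l)=F_{j_l}(s_l,\overline W^n_{s_l})+\eps w_{j_l}B^{H_{j_l}}(s_l)=:\Theta_l$ become \emph{Gaussian} functionals, with $\overline W^n$ independent of $\BB$ (this independence and the preservation of the law of $\BB$ follow by conditioning on $\omega'$, since the density \eqref{3dPdkDP} is, for fixed $\omega'$, an $\F^W$-adapted exponential martingale). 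Using $\bar E[\,\cdot\,]=\bar E^n[\,\cdot\,\tfrac{d\PP\otimes\PP'}{d\overline\PP^n}]$, Hölder's inequality, and the uniform $L^p$-bound on the density furnished by \eqref{3Novikov}, the estimate reduces to Gaussian expectations of products of the $b_{j_l,n}'$ under $\overline\PP^n$. After squaring the series and linearizing the products of simplex integrals via the shuffle identity of Lemma \ref{3IntegralProductShuffle}, each contribution has the form $\bar E^n\big[\int_{\Delta^{m}_{\theta,t}}\Hfct_{j_1,\dots,j_m}(s,\theta)\prod_{l=1}^{m}b_{j_l,n}'(\Theta_l)\,ds\big]$, whose $\Hfct$-factor is bounded by $(1+r)^{m}$ through \eqref{3HfunctionEstimation}.

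The \textbf{heart of the proof}, and the step I expect to be the main obstacle, is the estimation of $\bar E^n[\prod_{l}b_{j_l,n}'(\Theta_l)]$ --- the ``local time variational calculus''. I would write this expectation as the integral of $\prod_l b_{j_l,n}'(z_l)$ against the joint Gaussian density of $(\Theta_1,\dots,\Theta_m)$ and integrate by parts in every variable, transferring each derivative onto the density; this replaces each blowing-up $\|b_{j_l,n}'\|$ by $\|b_{j_l,n}\|_{L^1}\leq\|b_{j_l}\|_{L^1}$, which is uniform in $n$ since mollification does not increase the $L^1$-norm. Decomposing the density into successive conditional densities ordered along the simplex and invoking the strong local non-determinism of $B^{H_{j_l}}$ (Remark \ref{3RemarkStrongLocalNondeterminism}) to bound the $l$-th conditional variance below by $C_{j_l}\eps^2w_{j_l}^2|s_l-s_{l-1}|^{2H_{j_l}}$, the differentiated $l$-th conditional density produces the singular factor $\eps^{-3}|w_{j_l}|^{-3}C_{j_l}^{-3/2}|s_l-s_{l-1}|^{-3H_{j_l}}$. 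Since $3H_{j_l}<1-\delta_H$ by assumption $(H)$, integrating $\prod_l|s_l-s_{l-1}|^{-3H_{j_l}}$ over $\Delta^{m}_{\theta,t}$ yields, via a Beta-type (Dirichlet) integral, a factorial gain $\sim t^{m\delta_H}/\Gamma(m\delta_H+1)$ together with the geometric factor $\prod_lA_{j_l}$ recognizable from \eqref{3ADefinition}. Summing over $m$ and over $j_1,\dots,j_m$ then gives a convergent series precisely because assumptions $(T)$ and $(A)$ force $\eps^{-3}t^{\delta_H}\sum_{j=1}^dA_j<1$, furnishing the uniform bound for \eqref{3CompactnessMalliavin1}.

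For the second bound I would argue analogously, starting from the difference representation \eqref{3MalliavinDiffIteration}. Each of its three summands carries a gain of a full power of $|\theta-\theta'|$: the $\tilde\Hfct$-term through the estimate \eqref{3HtildefunctionEstimation}, which gives $|\tilde\Hfct_{j_1,\dots,j_m}|\leq(1+r)^{m-1}|\theta-\theta'|^{1/2}$, and the $\big(\DM_\theta x^n(\theta')-1\big)$-term and the double-sum term through the Lebesgue measure $\sim|\theta-\theta'|^{m_2}/m_2!$ of the small simplex $\Delta^{m_2}_{\theta,\theta'}$ over which they are integrated. Squaring and using Corollary \ref{3DeltatimesDeltaShuffle} to reorganize the products of simplex integrals into integrals over $\Delta^{2m_1}_{\theta',t}\times\Delta^{2m_2}_{\theta,\theta'}$, and then applying the same Girsanov / Gaussian-density / strong-local-non-determinism estimate as above, one obtains $\bar E[|\DM_\theta x^n(t)-\DM_{\theta'}x^n(t)|^2]\lesssim|\theta-\theta'|$ uniformly in $n$. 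Hence the integrand in \eqref{3CompactnessMalliavin2} is dominated by $|\theta-\theta'|^{-2\beta}$, which is integrable on $[0,t]^2$ for any $\beta\in(0,1/2)$, yielding the second uniform bound and completing, together with the Fatou argument and \cite{DPMN92}, the proof.
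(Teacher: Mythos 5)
Your overall architecture coincides with the paper's: the same reduction via Tonelli and Fatou's lemma from uniform-in-$n$ bounds on $E'[E[\cdots]]$ to an $\omega'$-wise subsequence, the same Girsanov step (Lemma \ref{3GirsanovWdk}, Corollary \ref{3GirsanovWdkCor}) with H\"older's inequality and the uniform moment bound \eqref{3Novikov} on the density, the same use of the shuffle identities, strong local non-determinism, the simplex Gamma-integral (Lemma \ref{3SimplexIntegralLemma}), and convergence of the resulting geometric series under $(T)$ and $(A)$, concluding with the criterion of \cite{DPMN92}. The one genuinely different ingredient is your treatment of $\bar E[\prod_l b_{j_l,n}'(\Theta_l)]$: you integrate by parts against the joint Gaussian density factored into successive conditionals, whereas the paper writes $b_{i,n}'$ through Fourier inversion and computes the characteristic function of the Gaussian linear combination $\sum_l \tilde u_l Y_{j_{\alpha_l^\rho}}(s_l)$. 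These are equivalent in spirit, but be aware that your assertion that each differentiated conditional density produces exactly the factor $\eps^{-3}|w_{j_l}|^{-3}C_{j_l}^{-3/2}|s_l-s_{l-1}|^{-3H_{j_l}}$ is nontrivial: a naive ``one derivative costs one inverse conditional standard deviation'' count gives $\sigma^{-1}$, not $\sigma^{-3}$. In the paper the cube arises from a specific piece of bookkeeping: after the telescoping change of variables $\tilde u^k=M^k\xi^k$ the Jacobian-type factor $\prod_l|(M^k\xi^k)_l|$ is bounded by sums of monomials $\prod_l|\xi^k_l|^{\delta_l}$ with $\delta_l\leq2$, and the Gaussian integral then yields $\sigma_{l,k}^{1+\delta_l}\leq\sigma_{l,k}^3$. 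In the integration-by-parts route the analogous effect appears because $\partial_{z_l}$ also hits all later conditionals through their conditional means; this must be tracked to recover the constants $A_{j_l}$ of \eqref{3ADefinition}, and your sketch asserts rather than derives it. A smaller inaccuracy of the same kind: after H\"older against the $L^2$-norm of the density one is left with a \emph{fourth} moment $\bar E[|\cdots|^4]^{1/2}$, so the shuffle lemma is applied twice (leading to $\Delta^{4m}$ and then $\Delta^{8m}$), not once as in your squared version.

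There is also a concrete error in your second bound. You claim all three summands of \eqref{3MalliavinDiffIteration} gain a full power of $|\theta-\theta'|$, giving $\bar E[|\DM_\theta x^n(t)-\DM_{\theta'}x^n(t)|^2]\lesssim|\theta-\theta'|$ and hence integrability for \emph{any} $\beta\in(0,1/2)$. This is false in general: for the term $\DM_\theta x^n(\theta')-1$ the small-simplex volume $|\theta-\theta'|^{m}/m!$ is largely consumed by the singular kernels $\prod_l|s_l-s_{l+1}|^{-3H_{j_l}}$, leaving only $|\theta-\theta'|^{m\delta_H}$ per integral; after squaring, the paper's estimate \eqref{3I1Estimate} gives $I_1\lesssim|\theta-\theta'|^{2\delta_H}$, and likewise $I_3\lesssim|t-\theta'|^{2\delta_H}|\theta-\theta'|^{2\delta_H}$ in \eqref{3I3Estimate}; only the $\tilde\Hfct$-term $I_2$ carries the full power $|\theta-\theta'|$, via \eqref{3HtildefunctionEstimation}. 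Since $\delta_H$ may well be smaller than $1/2$ (Assumption \ref{3StandingAssumptions} $(H)$ forces $\delta_H<1-3\max_kH_k$ but nothing forces $\delta_H\geq1/2$), your intermediate claim fails whenever $\delta_H<1/2$. The conclusion of the lemma survives because it only requires \emph{some} $\beta\in(0,1/2)$: choosing $\beta<\delta_H$, as the paper does, makes all three right-hand sides integrable over $[0,t]^2$. So the defect is repairable, but as written the exponent accounting for $I_1$ and $I_3$ is wrong and the asserted range of admissible $\beta$ is too large.
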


\begin{proof}
 In order to prove this result, we are going to show that
 \begin{itemize}
  \item[(1')] it holds
  \begin{align}\label{3CompactnessMalliavin3}
   \sup_{n\geq 1}E'\bigg[E\bigg[\int_0^t|\DM_{\theta} x^{n}(t)-1|^2d\theta\bigg]\bigg]<\infty,
  \end{align}
  \item[(2')] with $\beta\in (0,1/2)$ from \eqref{3CompactnessMalliavin2}, we have
  \begin{align}\label{3CompactnessMalliavin4}
   \sup_{n\geq 1}E'\bigg[\int_0^t \int_0^t \frac{E[|\DM_\theta x^{n}(t) - \DM_{\theta'} x^{n}(t)|^2]}{|\theta' - \theta|^{1+2\beta}} d\theta' d\theta\bigg]<\infty.
  \end{align}
 \end{itemize}
 Once we have shown \eqref{3CompactnessMalliavin3} and \eqref{3CompactnessMalliavin4}, it follows by simple application of Fatou's lemma that
 \begin{align*}
  &E'\bigg[\liminf_{n\geq 1}\Big(E\bigg[\int_0^t|\DM_{\theta} x^{n}(t)-1|^2d\theta\bigg]+\int_0^t \int_0^t \frac{E[|\DM_\theta x^{n}(t) - \DM_{\theta'} x^{n}(t)|^2]}{|\theta' - \theta|^{1+2\beta}} d\theta' d\theta\Big)\bigg]\\
  &\quad\leq \liminf_{n\geq 1}E'\bigg[E\bigg[\int_0^t|\DM_{\theta} x^{n}(t)-1|^2d\theta\bigg]+\int_0^t \int_0^t \frac{E[|\DM_\theta x^{n}(t) - \DM_{\theta'} x^{n}(t)|^2]}{|\theta' - \theta|^{1+2\beta}} d\theta' d\theta\bigg]\\
  &\quad\leq \sup_{n\geq 1}E'\bigg[E\bigg[\int_0^t|\DM_{\theta} x^{n}(t)-1|^2d\theta\bigg]\bigg]+\sup_{n\geq 1}E'\bigg[\int_0^t \int_0^t \frac{E[|\DM_\theta x^{n}(t) - \DM_{\theta'} x^{n}(t)|^2]}{|\theta' - \theta|^{1+2\beta}} d\theta' d\theta\bigg]\\
  &\quad<\infty.
 \end{align*}
 But since any random variable with finite expectation is finite almost everywhere, this means that for $P'$-a.e. $\omega'\in\Omega'$
 \begin{align*}
  &\liminf_{n\geq 1}\Big(E\bigg[\int_0^t|\DM_{\theta} x^{n}(t,\cdot,\omega')-1|^2d\theta\bigg]+\int_0^t \int_0^t \frac{E[|\DM_\theta x^{n}(t,\cdot,\omega') - \DM_{\theta'} x^{n}(t,\cdot,\omega')|^2]}{|\theta' - \theta|^{1+2\beta}} d\theta' d\theta\Big)\\
  &\quad<\infty.
 \end{align*}
 In other words, there exists a subset $\Omega'_0\subseteq\Omega'$ with $P'(\Omega'_0)=1$ such that for all $\omega'\in\Omega'_0$ the above holds true. In particular, for every $\omega'\in\Omega'_0$, there exists a subsequence $(n_k(\omega'))_{k\geq1}$ such that $n_k(\omega')\rightarrow\infty$, as $n\rightarrow\infty$, and
 
 \begin{align*}
  &\sup_{k\geq 1}\Big(E\bigg[\int_0^t|\DM_{\theta} x^{n_k(\omega')}(t,\cdot,\omega')-1|^2d\theta\bigg]\\
  &\qquad+\int_0^t \int_0^t \frac{E[|\DM_\theta x^{n_k(\omega')}(t,\cdot,\omega') - \DM_{\theta'} x^{n_k(\omega')}(t,\cdot,\omega')|^2]}{|\theta' - \theta|^{1+2\beta}} d\theta' d\theta\Big)\\
  &\quad<\infty.
 \end{align*}
 
 This proves \eqref{3CompactnessMalliavin1} and \eqref{3CompactnessMalliavin2}. The relative compactness follows from Lemma \ref{3VI_compactcrit}.\\
 \par

 \textbf{Proof of \eqref{3CompactnessMalliavin3}:} Representation \eqref{3MalliavinIteration} yields
 
\begin{align}\label{3IterativeRep}
\begin{split}
 E'[E[|\DM_{\theta} x^{n}(t)-1|^2]]&=E'\bigg[E\Big[\Big|\sum_{m=1}^\infty\sum_{j_1,\dots,j_m=1}^d\int_{\Delta^m_{\theta,t}}\Big\{\Hfct_{j_1,\dots,j_m}(s,\theta)\\
 &\quad\cdot \prod_{l=1}^mb_{j_l,n}'(\<x^{n}_{s_l}, e_{j_l}\>+\eps w_{j_l}B^{H_{j_l}}(s_l))\Big\}ds\Big|^2\Big]\bigg]
\end{split}
\end{align}

Now applying Girsanov's theorem 
together with Corollary \ref{3GirsanovWdkCor}, H\"{o}lder's theorem, we get

\begin{align*}
 &E'\bigg[E\bigg[|\DM_{\theta} x^{n}(t)-1|^2\bigg]\bigg]\\
 &=E_{\overline\PP^{n}}\Big[\frac{d\PP\otimes\PP'}{d\overline\PP^{n}}\Big|\sum_{m=1}^\infty\sum_{j_1,\dots,j_m=1}^d\int_{\Delta^m_{\theta,t}}\Big\{\Hfct_{j_1,\dots,j_m}(s,\theta)\prod_{l=1}^mb_{j_l,n}'(\<x^{n}_{s_l}, e_{j_l}\>+\eps w_{j_l}B^{H_{j_l}}(s_l))\Big\}ds\Big|^2\Big]\\
 &\leq E_{\overline\PP^{n}}\Big[\Big|\frac{d\PP\otimes\PP'}{d\overline\PP^{n}}\Big|^2\Big]^{\frac{1}{2}}E_{\overline\PP^{n}}\Big[\Big|\sum_{m=1}^\infty\sum_{j_1,\dots,j_m=1}^d\int_{\Delta^m_{\theta,t}}\Big\{\Hfct_{j_1,\dots,j_m}(s,\theta)\\
 &\quad\cdot \prod_{l=1}^mb_{j_l,n}'(F_{j_l}(s_l,\overline W^{n}_{s_l})+\eps w_{j_l}B^{H_{j_l}}(s_l))\Big\}ds\Big|^4\Big]^{\frac{1}{2}}
\end{align*}

Note again that, for each $\omega'\in\Omega'$, the law of the process $\overline W^{n}$ under $\overline\PP^{n}$ is the same as the law of the process $W$ under $\PP$. Moreover, we have the following estimate:

\begin{align*}
 E_{\overline\PP^{n}}\Big[\Big|\frac{d\PP\otimes\PP'}{d\overline\PP^{n}}\Big|^2\Big]^{\frac{1}{2}}&=E_{\overline\PP^{n}}\Big[\Big|\mathcal{E}\bigg( \int_0^\cdot b^{n}\Big(F_1(r,\overline W^{n}_r)+\eps w_1B^{H_1}(r),\\
 &\quad\dots,F_d(r,\overline W^{n}_r)+\eps w_1B^{H_d}(r)\Big)d\overline W^{n}(r)\bigg)_T\Big|^2\Big]^{\frac{1}{2}}\\
 &=E\Big[\mathcal{E}\bigg( \int_0^\cdot 2b^{n}\Big(F_1(r,W_r)+\eps w_1B^{H_1}(r),\dots\Big)dW(r)\bigg)_T\\
 &\quad\cdot\exp\Big(\int_0^T \Big|b^{n}\Big(F_1(r,W_{r})+\eps w_1B^{H_1}(r),\dots,\Big)\Big|^2dr\Big)\Big]^{\frac{1}{2}}\\
 &\leq\exp\Big(\frac{1}{2}M^2T\Big),
\end{align*}

since $0<\exp\Big(\int_0^T|b^{n}(\cdot)|^2ds\Big)\leq\exp\Big(M^2T\Big)$, $\mathcal{E}(\cdot)\geq0$ and $E[\mathcal{E}(\cdot)_T]=1$. This yields

\begin{align*}
 E'\bigg[E\bigg[|\DM_{\theta} x^{n}(t)-1|^2\bigg]\bigg]&\leq e^{\frac{1}{2}M^2T}\bar E\Big[\Big|\sum_{m=1}^\infty\sum_{j_1,\dots,j_m=1}^d\int_{\Delta^m_{\theta,t}}\Big\{\Hfct_{j_1,\dots,j_m}(s,\theta)\\
 &\quad\cdot \prod_{l=1}^mb_{j_l,n}'(F_{j_l}(s_l,W_{s_l})+\eps w_{j_l}B^{H_{j_l}}(s_l))\Big\}ds\Big|^4\Big]^{\frac{1}{2}}.
\end{align*}

Now we rewrite the above in terms of the norm $\|\cdot\|_{L^4(\bar\Omega)}$ and apply that $\|\sum\dots\|\leq\sum\|\dots\|$:

\begin{align*}
 E'\bigg[E\bigg[|\DM_{\theta} x^{n}(t)-1|^2\bigg]\bigg]&\leq e^{\frac{1}{2}M^2T}\bar E\Big[\Big|\sum_{m=1}^\infty\sum_{j_1,\dots,j_m=1}^d\int_{\Delta^m_{\theta,t}}\Big\{\Hfct_{j_1,\dots,j_m}(s,\theta)\\
 &\quad\cdot \prod_{l=1}^mb_{j_l,n}'(F_{j_l}(s_l,W_{s_l})+\eps w_{j_l}B^{H_{j_l}}(s_l))\Big\}ds\Big|^4\Big]^{\frac{1}{2}}\\
 &=e^{\frac{1}{2}M^2T}\bigg\|\sum_{m=1}^\infty\sum_{j_1,\dots,j_m=1}^d\int_{\Delta^m_{\theta,t}}\Big\{\Hfct_{j_1,\dots,j_m}(s,\theta)\\
 &\quad\cdot \prod_{l=1}^mb_{j_l,n}'(F_{j_l}(s_l,W_{s_l})+\eps w_{j_l}B^{H_{j_l}}(s_l))\Big\}ds\bigg\|_{L^4(\bar\Omega)}^2\\
 &\leq e^{\frac{1}{2}M^2T}\bigg(\sum_{m=1}^\infty\sum_{j_1,\dots,j_m=1}^d\bigg\|\int_{\Delta^m_{\theta,t}}\Big\{\Hfct_{j_1,\dots,j_m}(s,\theta)\\
 &\quad\cdot \prod_{l=1}^mb_{j_l,n}'(F_{j_l}(s_l,W_{s_l})+\eps w_{j_l}B^{H_{j_l}}(s_l))\Big\}ds\bigg\|_{L^4(\bar\Omega)}\bigg)^2\\
 &=e^{\frac{1}{2}M^2T}\bigg(\sum_{m=1}^\infty\sum_{j_1,\dots,j_m=1}^d\bar E\bigg[\Big|\int_{\Delta^m_{\theta,t}}\Big\{\Hfct_{j_1,\dots,j_m}(s,\theta)\\
 &\quad\cdot \prod_{l=1}^mb_{j_l,n}'(F_{j_l}(s_l,W_{s_l})+\eps w_{j_l}B^{H_{j_l}}(s_l))\Big\}ds\Big|^4\bigg]^{\frac{1}{4}}\bigg)^2.
\end{align*}

For simpler notation, we define the process $Y:[0,T]\times\bar\Omega\rightarrow\R^d$ by
\begin{align}\label{3Yprocess}
\begin{split}
 Y(s)&:=(Y_1(s),\dots,Y_d(s))^\top\\
 Y_i(s)&:=F_i(s,W_s)+\eps w_iB^{H_i}(s).
\end{split}
\end{align}

Then the above estimate becomes

\begin{align}\label{3IntermediateEstimate1}
\begin{split}
 &E'\bigg[E\bigg[|\DM_{\theta} x^{n}(t)-1|^2\bigg]\bigg]\\
 &\leq e^{\frac{1}{2}M^2T}\bigg(\sum_{m=1}^\infty\sum_{j_1,\dots,j_m=1}^d\bar E\bigg[\Big|\int_{\Delta^m_{\theta,t}}\Hfct_{j_1,\dots,j_m}(s,\theta) \prod_{l=1}^mb_{j_l,n}'(Y_{j_l}(s_l))ds\Big|^4\bigg]^{\frac{1}{4}}\bigg)^2.
\end{split}
\end{align}

Let us, for a moment, only consider the term inside the expectation $\bar E$. Application of Lemma \ref{3IntegralProductShuffle} and rewriting $a^4=(a^2)^2$ yields

\begin{align*}
 &\Big|\int_{\Delta^m_{\theta,t}}\Hfct_{j_1,\dots,j_m}(s,\theta) \prod_{l=1}^mb_{j_l,n}'(Y_{j_l}(s_l))ds_m\dots ds_1\Big|^4\\
 &=\bigg(\sum_{\sigma\in S(m,m)}\int_{\Delta^{2m}_{\theta,t}}\Big\{\Hfct_{j_1,\dots,j_m}(s_{\sigma(1)},\dots,s_{\sigma(m)},\theta)\Hfct_{j_1,\dots,j_m}(s_{\sigma(m+1)},\dots,s_{\sigma(2m)},\theta)\\
 &\quad\cdot \prod_{l=1}^mb_{j_l,n}'(Y_{j_l}(s_{\sigma(l)}))\prod_{l=m+1}^{2m}b_{j_{l-m},n}'(Y_{j_{l-m}}(s_{\sigma(l)}))\Big\}ds\bigg)^2
\end{align*}

We define

\begin{align}\label{3H2tensor}
\begin{split}
 &\Hfct_{j_1,\dots,j_m}^{\otimes2}(\sigma;s_{1},\dots,s_{2m},\theta)\\
 &:=\Hfct_{j_1,\dots,j_m}(\tilde s_{1},\dots,\tilde s_{m},\theta)\Hfct_{j_1,\dots,j_m}(\tilde s_{m+1},\dots,\tilde s_{2m},\theta)\Big|_{\tilde s_i=s_{\sigma(i)},\,i=1,\dots,2m}.
\end{split}
\end{align}

Moreover, defining $\tilde l:=\sigma(l)$, we note that

\begin{align*}
 \prod_{l=1}^mb_{j_l,n}'(Y_{j_l}(s_{\sigma(l)}))\prod_{l=m+1}^{2m}b_{j_{l-m},n}'(Y_{j_{l-m}}(s_{\sigma(l)}))=\prod_{l=1}^{2m}b_{j_{[\sigma^{-1}(l)]_{\mo m}},n}'(Y_{j_{[\sigma^{-1}(l)]_{\mo m}}}(s_{l}))
\end{align*}

Plugging this in and applying Lemma \ref{3IntegralProductShuffle} again yields

\begin{align*}
 &\Big|\int_{\Delta^m_{\theta,t}}\Hfct_{j_1,\dots,j_m}(s,\theta) \Big(\prod_{l=1}^mb_{j_l,n}'(Y(s_l))\Big)ds_m\dots ds_1\Big|^4\\
 &=\bigg(\sum_{\sigma_1\in S(m,m)}\int_{\Delta^{2m}_{\theta,t}}\Big\{\Hfct_{j_1,\dots,j_m}^{\otimes2}(\sigma_1;s_{1},\dots,s_{2m},\theta)\\
 &\quad\cdot \prod_{l=1}^{2m}b_{j_{[\sigma_1^{-1}(l)]_{\mo m}},n}'(Y_{j_{[\sigma_1^{-1}(l)]_{\mo m}}}(s_{l}))\Big\}ds_{2m}\dots ds_1\bigg)\\
 &\quad\cdot\bigg(\sum_{\sigma_2\in S(m,m)}\int_{\Delta^{2m}_{\theta,t}}\Big\{\Hfct_{j_1,\dots,j_m}^{\otimes2}(\sigma_2;s_{2m+1},\dots,s_{4m},\theta)\\
 &\quad\cdot \prod_{l=2m+1}^{4m}b_{j_{[\sigma_2^{-1}(l-2m)]_{\mo m}},n}'(Y_{j_{[\sigma_2^{-1}(l-2m)]_{\mo m}}}(s_{l}))\Big\}ds_{4m}\dots ds_{2m+1}\bigg)
\end{align*}

\begin{align*}
 \hspace{1.5cm}&=\sum_{\sigma_1,\sigma_2\in S(m,m)}\sum_{\tau\in S(2m,2m)}\int_{\Delta^{4m}_{\theta,t}}\Big\{\Hfct_{j_1,\dots,j_m}^{\otimes2}(\sigma_1;s_{\tau(1)},\dots,s_{\tau(2m)},\theta)\\
 &\quad\cdot \Hfct_{j_1,\dots,j_m}^{\otimes2}(\sigma_{2};s_{\tau(2m+1)},\dots,s_{\tau(4m)},\theta)\prod_{l=1}^{2m}b_{j_{[\sigma_1^{-1}(l)]_{\mo m}},n}'(Y_{j_{[\sigma_1^{-1}(l)]_{\mo m}}}(s_{\tau(l)}))\\
 &\quad\cdot \prod_{l=2m+1}^{4m}b_{j_{[\sigma_2^{-1}(l-2m)]_{\mo m}},n}'(Y_{j_{[\sigma_2^{-1}(l-2m)]_{\mo m}}}(s_{\tau(l)}))\Big\}ds_{4m}\dots ds_1
\end{align*}

Note that, by \eqref{3H2tensor},
\begin{align*}
 &\Hfct_{j_1,\dots,j_m}^{\otimes2}(\sigma_1;s_{\tau(1)},\dots,s_{\tau(2m)},\theta)\\
 &=\Hfct_{j_1,\dots,j_m}^{\otimes2}(\sigma_1;\hat s_1,\dots,\hat s_{2m},\theta)\Big|_{\hat s_i=s_{\tau(i)},\,i=1,\dots,2m}\\
 &=\Hfct_{j_1,\dots,j_m}(\tilde s_{1},\dots,\tilde s_{m},\theta)\Hfct_{j_1,\dots,j_m}(\tilde s_{m+1},\dots,\tilde s_{2m},\theta)\Big|_{\tilde s_i=\hat s_{\sigma_1(i)}=s_{\tau(\sigma_1(i))},\,i=1,\dots,2m}\\
 &=\Hfct_{j_1,\dots,j_m}(s_{\tau(\sigma_1(1))},\dots,s_{\tau(\sigma_1(m))},\theta)\Hfct_{j_1,\dots,j_m}(s_{\tau(\sigma_1(m+1))},\dots,s_{\tau(\sigma_1(2m))},\theta).
\end{align*}

A similar representation holds for $\Hfct_{j_1,\dots,j_m}^{\otimes2}(\sigma_2;s_{\tau(2m+1)},\dots,s_{\tau(4m)},\theta)$. So we can define

\begin{align}\label{3H4tensor}
\begin{split}
 &\Hfct_{j_1,\dots,j_m}^{\otimes4}(\tau,\sigma_1,\sigma_2;s_{1},\dots,s_{4m},\theta)\\
 &:=\Hfct_{j_1,\dots,j_m}^{\otimes2}(\sigma_1;s_{\tau(1)},\dots,s_{\tau(2m)},\theta)\Hfct_{j_1,\dots,j_m}^{\otimes2}(\sigma_2;s_{\tau(2m+1)},\dots,s_{\tau(4m)},\theta)\\
 &=\Hfct_{j_1,\dots,j_m}(s_{\tau(\sigma_1(1))},\dots,s_{\tau(\sigma_1(m))},\theta)\Hfct_{j_1,\dots,j_m}(s_{\tau(\sigma_1(m+1))},\dots,s_{\tau(\sigma_1(2m))},\theta)\\
 &\quad\cdot\Hfct_{j_1,\dots,j_m}(s_{\tau(\sigma_2(2m+1))},\dots,s_{\tau(\sigma_2(3m))},\theta)\Hfct_{j_1,\dots,j_m}(s_{\tau(\sigma_2(3m+1))},\dots,s_{\tau(\sigma_2(4m))},\theta)
\end{split}
\end{align}

In the same spirit as before, we can write

\begin{align*}
 &\prod_{l=1}^{2m}b_{j_{[\sigma_1^{-1}(l)]_{\mo m}},n}'(Y_{j_{[\sigma_1^{-1}(l)]_{\mo m}}}(s_{\tau(l)}))\prod_{l=2m+1}^{4m}b_{j_{[\sigma_2^{-1}(l-2m)]_{\mo m}},n}'(Y_{j_{[\sigma_2^{-1}(l-2m)]_{\mo m}}}(s_{\tau(l)}))\\
 &=\prod_{l=1}^{4m}b_{j_{\alpha_l},n}'(Y_{j_{\alpha_l}}(s_{l}))
\end{align*}

where

\begin{align}\label{3alphalDef}
 \alpha_l=\left[\sigma_{\left\lceil\frac{\tau^{-1}(l)-1}{2m}\right\rceil}^{-1}([\tau^{-1}(l)]_{\mo 2m})\right]_{\mo m}
\end{align}

where we used that
\begin{align*}
 \left\lceil\frac{\tau^{-1}(l)-1}{2m}\right\rceil=\begin{cases}
                                                          1,\quad\text{if }\tau^{-1}(l)\in\{1,\dots,2m\}\\
                                                          2,\quad\text{if }\tau^{-1}(l)\in\{2m+1,\dots,4m\}
                                                         \end{cases}.
\end{align*}

Plugging all in, we get

\begin{align*}
 &\Big|\int_{\Delta^m_{\theta,t}}\Hfct_{j_1,\dots,j_m}(s,\theta) \prod_{l=1}^mb_{j_l,n}'(Y_{j_l}(s_l))ds_m\dots ds_1\Big|^4\\
 &=\sum_{\substack{\sigma_1,\sigma_2\in S(m,m) \\ \tau\in S(2m,2m)}}\int_{\Delta^{4m}_{\theta,t}}\Hfct_{j_1,\dots,j_m}^{\otimes4}(\tau,\sigma_1,\sigma_2;s_{1},\dots,s_{4m},\theta) \prod_{l=1}^{4m}b_{j_{\alpha_l},n}'(Y_{j_{\alpha_l}}(s_l))ds_{4m}\dots ds_1.
\end{align*}

Note each $b_{i,n}$, $i=1,\dots,d$ has bounded support and is smooth. Therefore, $b_{i,n}$ is a Schwartz function, which means that it has an inverse Fourier transform, and thus

\begin{align*}
 b_{i,n}'(y)&=\frac{d}{dy}b_{i,n}(y)=\frac{d}{dy}\F\F^{-1}b_{i,n}(y)=\frac{d}{dy}\int_\R\F^{-1}b_{i,n}(u)e^{-iuy}du\\
 &=\frac{d}{dy}\int_\R\frac{1}{2\pi}\int_\R b_{i,n}(z)e^{izu}dze^{-iuy}du=(2\pi)^{-1}\int_\R b_{i,n}(z)\int_\R e^{-iu(y-z)}dudz.
\end{align*}

We therefore have

\begin{align*}
 &\Big|\int_{\Delta^m_{\theta,t}}\Hfct_{j_1,\dots,j_m}(s,\theta) \prod_{l=1}^mb_{j_l,n}'(Y_{j_l}(s_l))ds_m\dots ds_1\Big|^4\\
 &=\sum_{\substack{\sigma_1,\sigma_2\in S(m,m) \\ \tau\in S(2m,2m)}}\int_{\R^{4m}}\Big\{\Big(\prod_{l=1}^{4m}b_{j_{\alpha_l},n}(z_l)\Big)(2\pi)^{-4m}\int_{\R^{4m}}\int_{\Delta^{4m}_{\theta,t}}\big\{\Hfct_{j_1,\dots,j_m}^{\otimes4}(\tau,\sigma_1,\sigma_2;s_{1},\dots,s_{4m},\theta)\\
 &\quad\cdot\prod_{l=1}^{4m}e^{-iu_l(Y_{j_{\alpha_l}}(s_l)-z_l)}(-iu_l)\big\}dsdu\Big\}dz\\
 &\leq\sum_{\substack{\sigma_1,\sigma_2\in S(m,m) \\ \tau\in S(2m,2m)}}\int_{\R^{4m}}\Big\{\Big|\prod_{l=1}^{4m}b_{j_{\alpha_l},n}(z_l)\Big|\cdot\Big|(2\pi)^{-4m}\int_{\R^{4m}}\int_{\Delta^{4m}_{\theta,t}}\big\{\Hfct_{j_1,\dots,j_m}^{\otimes4}(\tau,\sigma_1,\sigma_2;s_{1},\dots,s_{4m},\theta)\\
 &\quad\cdot\prod_{l=1}^{4m}e^{-iu_l(Y_{j_{\alpha_l}}(s_l)-z_l)}(-iu_l)\big\}dsdu\Big|\Big\}dz.
\end{align*}

Using H\"{o}lder's inequality we achieve the following estimate

\begin{align}
 &\bar E\bigg[\Big|\int_{\Delta^m_{\theta,t}}\Hfct_{j_1,\dots,j_m}(s,\theta) \prod_{l=1}^mb_{j_l,n}'(Y_{j_l}(s_l))ds_m\dots ds_1\Big|^4\bigg]\nonumber\\
 &\leq \sum_{\substack{\sigma_1,\sigma_2\in S(m,m) \\ \tau\in S(2m,2m)}}\int_{\R^{4m}}\Bigg\{\prod_{l=1}^{4m}|b_{j_{\alpha_l},n}(z_l)|\nonumber\\
 &\quad\cdot\bar E\bigg[\bigg|(2\pi)^{-4m}\int_{\R^{4m}}\int_{\Delta^{4m}_{\theta,t}}\Hfct_{j_1,\dots,j_m}^{\otimes4}(\tau,\sigma_1,\sigma_2;s_{1},\dots,s_{4m},\theta)\nonumber\\
 &\quad\cdot\prod_{l=1}^{4m}e^{-iu_l(Y_{j_{\alpha_l}}(s_l)-z_l)}(-iu_l)dsdu\bigg|\bigg]\Bigg\}dz\nonumber\\
 &\leq \sum_{\substack{\sigma_1,\sigma_2\in S(m,m) \\ \tau\in S(2m,2m)}}\int_{\R^{4m}}\Bigg\{\prod_{l=1}^{4m}|b_{j_{\alpha_l},n}(z_l)|\nonumber\\
 &\quad\cdot\bar E\bigg[\bigg|(2\pi)^{-4m}\int_{\R^{4m}}\int_{\Delta^{4m}_{\theta,t}}\Hfct_{j_1,\dots,j_m}^{\otimes4}(\tau,\sigma_1,\sigma_2;s_{1},\dots,s_{4m},\theta)\label{3IntermediateEstimate2}\\
 &\quad\cdot\prod_{l=1}^{4m}e^{-iu_l(Y_{j_{\alpha_l}}(s_l)-z_l)}(-iu_l)dsdu\bigg|^2\bigg]^{\frac{1}{2}}\Bigg\}dz.\nonumber
\end{align}

Again, for a moment we just consider the term inside the expectation $\bar E$. Recall that for an integrable function $g: \R^q \rightarrow \mathbb{C}$ we can write
\begin{align*}
&\left|\int_{\R^{q}} g(u_1, \dots, u_q)   du_1 \dots du_q  \right|^2    \\
&\hspace{0.5cm} = \int_{\R^{q}} g(u_1, \dots, u_q) du_1 \dots du_q \overline{ \int_{\R^{q}} g(u_{q+1}, \dots, u_{2q}) du_{q+1} \dots du_{2q} } \\
&\hspace{0.5cm} = \int_{\R^{q}} g(u_1, \dots, u_q) du_1 \dots du_q  \int_{\R^{q}} \overline{g(u_{q+1}, \dots ,u_{2q})} du_{q+1} \dots du_{2q} \\
&\hspace{0.5cm} = \int_{\R^{q}} g(u_1, \dots, u_q) du_1 \dots du_q (-1)^{q} \int_{\R^{q}}  \overline{g(-u_{q+1}, \dots,- u_{2q})} du_{q+1} \dots du_{2q} \\
&\hspace{0.5cm} = (-1)^{q} \int_{\R^{2q}} g(u_1, \dots, u_q) \overline{g(-u_{q+1}, \dots,- u_{2q})} du_1 \dots du_{2q},
\end{align*}
where we have used the change of variables $(u_{q+1}, \dots, u_{2q}) \mapsto (-u_{q+1}, \dots, - u_{2q})$ in the third equality. Therefore, setting
\begin{align*}
 g(u_1,\dots,u_{4m})&:=\int_{\Delta^{4m}_{\theta,t}}\Big\{\Hfct_{j_1,\dots,j_m}^{\otimes4}(\tau,\sigma_1,\sigma_2;s_{1},\dots,s_{4m},\theta)\\
 &\quad\cdot\prod_{l=1}^{4m}e^{-iu_l(Y_{j_{\alpha_l}}(s_l)-z_l)}(-iu_l)\Big\}ds_{4m}\dots ds_1,
\end{align*}

and applying the rules for the complex conjugate we get
\begin{align*}
 &\overline{g(-u_{4m+1},\dots,-u_{8m})}\\
 &=\overline{\int_{\Delta^{4m}_{\theta,t}}\bigg\{\Hfct_{j_1,\dots,j_m}^{\otimes4}(\tau,\sigma_1,\sigma_2;s_{4m+1},\dots,s_{8m},\theta)}\\
 &\quad\overline{\cdot\prod_{l=4m+1}^{8m}e^{-i(-u_l)(Y_{j_{\alpha_{[l]\mo 4m}}}(s_l)-z_{[l]_{\mo 4m}})}(iu_l)\bigg\}ds_{8m}\dots ds_{4m+1}}\\
 &=\int_{\Delta^{4m}_{\theta,t}}\bigg\{\Hfct_{j_1,\dots,j_m}^{\otimes4}(\tau,\sigma_1,\sigma_2;s_{4m+1},\dots,s_{8m},\theta)\\
 &\quad\cdot\prod_{l=4m+1}^{8m}e^{\overline{iu_l(Y_{j_{\alpha_{[l]\mo 4m}}}(s_l)-z_{[l]_{\mo 4m}})}}(\overline{iu_l})\bigg\}ds_{8m}\dots ds_{4m+1}\\
 &=\quad\int_{\Delta^{4m}_{\theta,t}}\bigg\{\Hfct_{j_1,\dots,j_m}^{\otimes4}(\tau,\sigma_1,\sigma_2;s_{4m+1},\dots,s_{8m},\theta)\\
 &\quad\cdot\prod_{l=4m+1}^{8m}e^{-iu_l(Y_{j_{\alpha_{[l]\mo 4m}}}(s_l)-z_{[l]_{\mo 4m}})}(-iu_l)\bigg\}ds_{8m}\dots ds_{4m+1}.
\end{align*}
This yields

\begin{align*}
 &\bar E\bigg[\bigg|(2\pi)^{-4m}\int_{\R^{4m}}\int_{\Delta^{4m}_{\theta,t}}\Hfct_{j_1,\dots,j_m}^{\otimes4}(\tau,\sigma_1,\sigma_2;s_{1},\dots,s_{4m},\theta)\prod_{l=1}^{4m}e^{-iu_l(Y_{j_{\alpha_l}}(s_l)-z_l)}(-iu_l)dsdu\bigg|^2\bigg]\\
 &=\bar E\bigg[(2\pi)^{-8m}(-1)^{4m}\int_{\R^{8m}}\bigg(\int_{\Delta^{4m}_{\theta,t}}\Hfct_{j_1,\dots,j_m}^{\otimes4}(\tau,\sigma_1,\sigma_2;s_{1},\dots,s_{4m},\theta)\\
 &\quad\cdot\prod_{l=1}^{4m}e^{-iu_l(Y_{j_{\alpha_l}}(s_l)-z_l)}(-iu_l)ds_{4m}\dots ds_1\bigg)\cdot\bigg(\int_{\Delta^{4m}_{\theta,t}}\Hfct_{j_1,\dots,j_m}^{\otimes4}(\tau,\sigma_1,\sigma_2;s_{4m+1},\dots,s_{8m},\theta)\\
 &\quad\cdot\prod_{l=4m+1}^{8m}e^{-iu_l(Y_{j_{\alpha_{[l]\mo 4m}}}(s_l)-z_{[l]_{\mo 4m}})}(-iu_l)ds_{8m}\dots ds_{4m+1}\bigg)du\bigg]\\
 &=\bar E\bigg[(2\pi)^{-8m}\int_{\R^{8m}}\prod_{l=1}^{4m}e^{i(u_l+u_{l+4m})z_l}\Big((-iu_l)(-iu_{l+4m})\Big)\\
 &\quad\cdot\bigg(\int_{\Delta^{4m}_{\theta,t}}\Hfct_{j_1,\dots,j_m}^{\otimes4}(\tau,\sigma_1,\sigma_2;s_{1},\dots,s_{4m},\theta)\prod_{l=1}^{4m}e^{-iu_lY_{j_{\alpha_{[l]\mo 4m}}}(s_l)}ds_{4m}\dots ds_1\bigg)\\
 &\quad\cdot\bigg(\int_{\Delta^{4m}_{\theta,t}}\Hfct_{j_1,\dots,j_m}^{\otimes4}(\tau,\sigma_1,\sigma_2;s_{4m+1},\dots,s_{8m},\theta)\prod_{l=4m+1}^{8m}e^{-iu_lY_{j_{\alpha_{[l]\mo 4m}}}(s_l)}ds_{8m}\dots ds_{4m+1}\bigg)du\bigg].
\end{align*}

Now observe that, since $(-i)(-i)=-1$, we have that $\Big((-iu_l)(-iu_{l+4m})\Big)$ is of the form $(-1)u_lu_{l+4m}$ and thus
\begin{align*}
 \prod_{l=1}^{4m}e^{i(u_l+u_{l+4m})z_l}\Big((-iu_l)(-iu_{l+4m})\Big)&=(-1)^{4m}\prod_{l=1}^{4m}e^{i(u_l+u_{l+4m})z_l}\Big(u_lu_{l+4m}\Big)\\
 &=\prod_{l=1}^{4m}e^{i(u_l+u_{l+4m})z_l}u_lu_{l+4m}.
\end{align*}

Plugging this in and applying Lemma \ref{3IntegralProductShuffle}, we get

\begin{align*}
 &\bar E\Big[\Big|(2\pi)^{-4m}\int_{\R^{4m}}\int_{\Delta^{4m}_{\theta,t}}\Hfct_{j_1,\dots,j_m}^{\otimes4}(\tau,\sigma_1,\sigma_2;s_{1},\dots,s_{4m},\theta)\prod_{l=1}^{4m}e^{-iu_l(Y_{j_{\alpha_l}}(s_l)-z_l)}(-iu_l)dsdu\Big|^2\Big]\\
 &=\sum_{\rho\in S(4m,4m)}\bar E\Big[(2\pi)^{-8m}\int_{\R^{8m}}\Big(\prod_{l=1}^{4m}e^{i(u_l+u_{l+4m})z_l}u_lu_{l+4m}\\
 &\quad\hspace{-2mm}\cdot\int_{\Delta^{8m}_{\theta,t}}\Big\{\Hfct_{j_1,\dots,j_m}^{\otimes4}(\tau,\sigma_1,\sigma_2;s_{\rho(1)},\dots,s_{\rho(4m)},\theta)\Hfct_{j_1,\dots,j_m}^{\otimes4}(\tau,\sigma_1,\sigma_2;s_{\rho(4m+1)},\dots,s_{\rho(8m)},\theta)\\
 &\quad\hspace{-2mm}\cdot\prod_{l=1}^{8m}e^{-iu_lY_{j_{\alpha_{[l]\mo 4m}}}(s_{\rho(l)})}\Big\}ds_{8m}\dots ds_1\Big)du\Big]\\
 &=\sum_{\rho\in S(4m,4m)}\bar E\Big[(2\pi)^{-8m}\int_{\R^{8m}}\Big(e^{i\sum_{l=1}^{4m}(u_l+u_{l+4m})z_l}\prod_{l=1}^{8m}u_l\\
 &\quad\hspace{-2mm}\cdot\int_{\Delta^{8m}_{\theta,t}}\Big\{\Hfct_{j_1,\dots,j_m}^{\otimes8}(\rho,\tau,\sigma_1,\sigma_2;s_{1},\dots,s_{8m},\theta)e^{-i\sum_{l=1}^{8m}u_{\rho^{-1}(l)}Y_{j_{\alpha_{[\rho^{-1}(l)]_{\mo 4m}}}}(s_l)}\Big\}ds_{8m}\dots ds_1\Big)du\Big]\\
 &=\sum_{\rho\in S(4m,4m)}(2\pi)^{-8m}\int_{\R^{8m}}\Big(e^{i\sum_{l=1}^{4m}(u_l+u_{l+4m})z_l}\prod_{l=1}^{8m}u_l\\
 &\quad\hspace{-2mm}\cdot\int_{\Delta^{8m}_{\theta,t}}\Big\{\Hfct_{j_1,\dots,j_m}^{\otimes8}(\rho,\tau,\sigma_1,\sigma_2;s_{1},\dots,s_{8m},\theta)\bar E\Big[e^{-i\sum_{l=1}^{8m}u_{\rho^{-1}(l)}Y_{j_{\alpha^\rho_l}}(s_l)}\Big]\Big\}ds_{8m}\dots ds_1\Big)du,
\end{align*}

where $\Hfct_{j_1,\dots,j_m}^{\otimes8}(\dots)$ is defined the same way as $\Hfct_{j_1,\dots,j_m}^{\otimes4}(\dots)$ and $\Hfct_{j_1,\dots,j_m}^{\otimes2}(\dots)$ before and $\alpha^\rho_l:=\alpha_{[\rho^{-1}(l)]_{\mo 4m}}$. Now denote for any $\rho\in S(4m,4m)$, by $P_\rho\in\R^{8m\times8m}$ the permutation matrix such that, for $u\in\R^{8m}$, $P_\rho u=u_\rho$, where $u_\rho=(u_{\rho(1)},\dots,u_{\rho(8m)})$. Then
$$u_{\rho^{-1}}=P_{\rho^{-1}} u=P_\rho^{-1}u.$$
For every fixed $\rho\in S(4m,4m)$ we can define $\tilde u:=P_\rho^{-1}u$. Note that $\prod_{l=1}^{8m}u_l=\prod_{l=1}^{8m}\tilde u_l$. Since $|\det P_{\rho}^{-1}|=1$, we have by the transformation formula

\begin{align*}
 &\bar E\bigg[\bigg|(2\pi)^{-4m}\int_{\R^{4m}}\int_{\Delta^{4m}_{\theta,t}}\Hfct_{j_1,\dots,j_m}^{\otimes4}(\tau,\sigma_1,\sigma_2;s_{1},\dots,s_{4m},\theta)\prod_{l=1}^{4m}e^{-iu_l(Y_{j_{\alpha_l}}(s_l)-z_l)}(-iu_l)dsdu\bigg|^2\bigg]\\
 &=\sum_{\rho\in S(4m,4m)}(2\pi)^{-8m}\int_{\R^{8m}}\Big(e^{i\sum_{l=1}^{4m}((P_\rho\tilde u)_l+(P_\rho\tilde u)_{l+4m})z_l}\prod_{l=1}^{8m}\tilde u_l\\
 &\quad\cdot\int_{\Delta^{8m}_{\theta,t}}\Big\{\Hfct_{j_1,\dots,j_m}^{\otimes8}(\rho,\tau,\sigma_1,\sigma_2;s_{1},\dots,s_{8m},\theta)\bar E\Big[e^{-i\sum_{l=1}^{8m}\tilde u_lY_{j_{\alpha^\rho_l}}(s_l)}\Big]\Big\}ds_{8m}\dots ds_1\Big)d\tilde u.
\end{align*}

Note that $\sum_{l=1}^{8m}\tilde u_lY_{j_{\alpha^\rho_l}}(s_l)$ is a Gaussian random variable. To see this, rewrite

\begin{align}
 -\sum_{l=1}^{8m}\tilde u_lY_{j_{\alpha^\rho_l}}(s_l)&=\sum_{k\in\{j_1,\dots,j_m\}}\sum_{l: j_{\alpha^\rho_l}=k}(-\tilde u_l)Y_k(s_l)\label{3ReGrouping}\\
 &=\sum_{k\in\{j_1,\dots,j_m\}}\sum_{l: j_{\alpha^\rho_l}=k}(-\tilde u_l)(F_k(s_l,W_{s_l})+\eps w_kB^{H_k}(s_l))\nonumber\\
 &=\sum_{k\in\{j_1,\dots,j_m\}}\sum_{l: j_{\alpha^\rho_l}=k}(-\tilde u_l)F_k(s_l,W_{s_l})+\sum_{k\in\{j_1,\dots,j_m\}}G_k(s),\nonumber
\end{align}

where $G_k(s):=\sum_{l: j_{\alpha^\rho_l}=k}(-\tilde u_l)\eps w_kB^{H_k}(s_l)$, $s\in\Delta^{8m}_{\theta,t}$. Note that the first summand is Gaussian as a sum bounded linear functionals of the Gaussian process $W$ and that $(G_k(s))_{k}$ are independent Gaussian random variables, independent of the process $W$. We therefore have that

\begin{align*}
 \bar E\Big[e^{-i\sum_{l=1}^{8m}\tilde u_lY_{j_{\alpha^\rho_l}}(s_l)}\Big]&=\bar E\Big[e^{i(-\sum_{l=1}^{8m}\tilde u_lY_{j_{\alpha^\rho_l}}(s_l))}\Big]\\
 &=\exp\Big\{i\bar E\Big[-\sum_{l=1}^{8m}\tilde u_lY_{j_{\alpha^\rho_l}}(s_l)\Big]-\frac{1}{2}\Var_{\bar\PP}\Big(-\sum_{l=1}^{8m}\tilde u_lY_{j_{\alpha^\rho_l}}(s_l)\Big)\Big\}\\
 &=e^{i\mu^\rho(\tilde u,s)}\exp\Big\{-\frac{1}{2}\Var_{\bar\PP}\Big(-\sum_{l=1}^{8m}\tilde u_lY_{j_{\alpha^\rho_l}}(s_l)\Big)\Big\}
\end{align*}

Plugging in and using that $\int\dots\leq\int|\dots|$ as well as the fact that $|e^{i\varphi}|=1$ and that \newline$|\Hfct_{j_1,\dots,j_m}^{\otimes8}(\rho,\tau,\sigma_1,\sigma_2;s_{1},\dots,s_{8m},\theta)|\leq(1+r)^{8m}$, we have

\begin{align*}
 &\bar E\bigg[\bigg|(2\pi)^{-4m}\int_{\R^{4m}}\int_{\Delta^{4m}_{\theta,t}}\Hfct_{j_1,\dots,j_m}^{\otimes4}(\tau,\sigma_1,\sigma_2;s_{1},\dots,s_{4m},\theta)\prod_{l=1}^{4m}e^{-iu_l(Y_{j_{\alpha_l}}(s_l)-z_l)}(-iu_l)dsdu\bigg|^2\bigg]\\
 &\leq\sum_{\rho\in S(4m,4m)}(2\pi)^{-8m}\int_{\R^{8m}}\bigg(|e^{i\sum_{l=1}^{4m}((P_\rho\tilde u)_l+(P_\rho\tilde u)_{l+4m})z_l}|\prod_{l=1}^{8m}|\tilde u_l|\\
 &\quad\cdot\int_{\Delta^{8m}_{\theta,t}}\bigg\{|\Hfct_{j_1,\dots,j_m}^{\otimes8}(\rho,\tau,\sigma_1,\sigma_2;s_{1},\dots,s_{8m},\theta)||e^{i\mu^\rho(\tilde u,s)}|\\
 &\quad\cdot\Big|\exp\Big\{-\frac{1}{2}\Var_{\bar\PP}\Big(-\sum_{l=1}^{8m}\tilde u_lY_{j_{\alpha^\rho_l}}(s_l)\Big)\Big\}\Big|\bigg\}ds_{8m}\dots ds_1\bigg)d\tilde u\\
 &\leq\sum_{\rho\in S(4m,4m)}\left(\frac{1+r}{2\pi}\right)^{8m}\int_{\R^{8m}}\bigg(\prod_{l=1}^{8m}|\tilde u_l|\\
 &\quad\cdot\int_{\Delta^{8m}_{\theta,t}}\exp\Big\{-\frac{1}{2}\Var_{\bar\PP}\Big(-\sum_{l=1}^{8m}\tilde u_lY_{j_{\alpha^\rho_l}}(s_l)\Big)\Big\}ds_{8m}\dots ds_1\bigg)d\tilde u.
\end{align*}

Moreover, by the independence of $W$ and $(G_k)_k$ it holds

\begin{align*}
 \Var_{\bar\PP}\Big(-\sum_{l=1}^{8m}\tilde u_lY_{j_{\alpha^\rho_l}}(s_l)\Big)&=\Var_{\bar\PP}\Big(\sum_{k\in\{j_1,\dots,j_m\}}\sum_{l: j_{\alpha^\rho_l}=k}(-\tilde u_l)F_k(s_l,W_{s_l})+\sum_{k\in\{j_1,\dots,j_m\}}G_k(s)\Big)\\
 &=\Var_{\bar\PP}\Big(\sum_{k\in\{j_1,\dots,j_m\}}\sum_{l: j_{\alpha^\rho_l}=k}(-\tilde u_l)F_k(s_l,W_{s_l})\Big)\\
 &\quad+\sum_{k\in\{j_1,\dots,j_m\}}\Var_{\bar\PP}(G_k(s))\\
 &\geq\sum_{k\in\{j_1,\dots,j_m\}}\Var_{\bar\PP}(G_k(s))\\
 &=\sum_{k\in\{j_1,\dots,j_m\}}\Var_{\bar\PP}\Big(\sum_{l: j_{\alpha^\rho_l}=k}(-\tilde u_l)\eps w_kB^{H_k}(s_l)\Big).
\end{align*}

Now we define for each $k\in\{j_1,\dots,j_m\}$ and each $\rho\in S(4m,4m)$,

\begin{align}\label{3a_k}
 a_k:=\card\{l:\,j_{\alpha^\rho_l}=k\}=8\card\{l:\,j_l=k\},
\end{align}

(note that this implies that $\sum_{k\in\{j_1,\dots,j_m\}}a_k=8m$) and perform the transformations

\begin{align}\label{3ReGrouping2}
\begin{split}
 \R^{8m}\ni\tilde u&=\begin{pmatrix}
                      \tilde u_1\\
                      \vdots\\
                      \tilde u_{8m}
                     \end{pmatrix}\mapsto\tilde u^k=\begin{pmatrix}
                      \tilde u^k_1\\
                      \vdots\\
                      \tilde u^k_{a_k}
                     \end{pmatrix}\in\R^{a_k}\\
 \Delta^{8m}_{\theta,t}\ni s&=\begin{pmatrix}
                      s_1\\
                      \vdots\\
                      s_{8m}
                     \end{pmatrix}\mapsto s^k=\begin{pmatrix}
                      s^k_1\\
                      \vdots\\
                      s^k_{a_k}
                     \end{pmatrix}\in\Delta^{a_k}_{\theta,t}
\end{split}
\end{align}

where $\tilde u^k$ consists of those entries $\tilde u_l$ of $\tilde u$ such that $j_{\alpha^\rho_l}=k$ and $s^k$ is defined in the same manner, in particular, the entries of $s^k$ follow the same order as those of $\tilde u^k$. Note that the $\R^{8m}$ vector that one gets from putting all $\tilde u^k$, $k\in\{j_1,\dots,j_m\}$, together contains all the elements in $\tilde u$ just rearranged (i.e. permutated). We thus have

\begin{align*}
 \Var_{\bar\PP}\Big(-\sum_{l=1}^{8m}\tilde u_lY_{j_{\alpha^\rho_l}}(s_l)\Big)&\geq\sum_{k\in\{j_1,\dots,j_m\}}\Var_{\bar\PP}\Big(\sum_{l=1}^{a_k}(-\tilde u^k_l)\eps w_kB^{H_k}(s^k_l)\Big).
\end{align*}

Plugging this in and recalling that $\prod_{l=1}^{8m}|\tilde u_l|=\prod_{k\in\{j_1,\dots,j_m\}}\prod_{l=1}^{a_k}|\tilde u^k_l|$ and that
\begin{align*}
 \Delta^{8m}_{\theta,t}\subseteq\prod_{k\in\{j_1,\dots,j_m\}}\Delta^{a_k}_{\theta,t},
\end{align*}

we get the estimate

\begin{align*}
 &\bar E\bigg[\bigg|(2\pi)^{-4m}\int_{\R^{4m}}\int_{\Delta^{4m}_{\theta,t}}\Hfct_{j_1,\dots,j_m}^{\otimes4}(\tau,\sigma_1,\sigma_2;s_{1},\dots,s_{4m},\theta)\prod_{l=1}^{4m}e^{-iu_l(Y_{j_{\alpha_l}}(s_l)-z_l)}(-iu_l)dsdu\bigg|^2\bigg]\\
 &\leq\sum_{\rho\in S(4m,4m)}\left(\frac{1+r}{2\pi}\right)^{8m}\int_{\R^{8m}}\bigg(\Big(\prod_{k\in\{j_1,\dots,j_m\}}\prod_{l=1}^{a_k}|\tilde u^k_l|\Big)\\
 &\quad\cdot\int_{\prod_{k\in\{j_1,\dots,j_m\}}\Delta^{a_k}_{\theta,t}}\prod_{k\in\{j_1,\dots,j_m\}}\exp\Big\{-\frac{1}{2}\Var_{\bar\PP}\Big(\sum_{l=1}^{a_k}(-\tilde u^k_l)\eps w_kB^{H_k}(s^k_l)\Big)\Big\}d(s^k)_{k}\bigg)d(\tilde u^k)_k\\
 &=\sum_{\rho\in S(4m,4m)}\left(\frac{1+r}{2\pi}\right)^{8m}\prod_{k\in\{j_1,\dots,j_m\}}\int_{\R^{a_k}}\bigg(\prod_{l=1}^{a_k}|\tilde u^k_l|\\
 &\quad\cdot\int_{\Delta^{a_k}_{\theta,t}}\exp\Big\{-\frac{1}{2}\Var_{\bar\PP}\Big(\sum_{l=1}^{a_k}(-\tilde u^k_l)\eps w_kB^{H_k}(s^k_l)\Big)\Big\}ds^k\bigg)d\tilde u^k,
\end{align*}

where $d(s^k)_k:=\prod_{k\in\{j_1,\dots,j_m\}}ds^k$ and $d(\tilde u^k)_k$ is defined equivalently. For each $k\in\{j_1,\dots,j_m\}$, we define now the transformation $M^k:\R^{a_k}\rightarrow\R^{a_k}$ by

\begin{align*}
 \R^{a_k\times a_k}\ni M^k:=\begin{pmatrix}
                          1    &  0     &   0    & \cdots &   0    &   0    &   0   \\
                          -1   &  1   &   0    & \cdots &   0    &   0    &   0   \\
                          0      &  -1  &  1   & \cdots &   0    &   0    &   0   \\
                          \vdots & \vdots & \vdots & \ddots & \vdots & \vdots & \vdots\\
                          0      &  0     &   0    & \cdots &   1  &   0    &   0   \\
                          0      &  0     &   0    & \cdots &  -1  &  1   &   0   \\
                          0      &  0     &   0    & \cdots &    0   & -1   &  1
                         \end{pmatrix}
\end{align*}

and define the vector $\xi^k\in\R^{a_k}$ by

\begin{align*}
 \tilde u^k=:M^k\xi^k.
\end{align*}

Since $|\det M^k|=1$, we have by the transformation formula

\begin{align*}
 &\bar E\bigg[\bigg|(2\pi)^{-4m}\int_{\R^{4m}}\int_{\Delta^{4m}_{\theta,t}}\Hfct_{j_1,\dots,j_m}^{\otimes4}(\tau,\sigma_1,\sigma_2;s_{1},\dots,s_{4m},\theta)\prod_{l=1}^{4m}e^{-iu_l(Y_{j_{\alpha_l}}(s_l)-z_l)}(-iu_l)dsdu\bigg|^2\bigg]\\
 &\leq\sum_{\rho\in S(4m,4m)}\left(\frac{1+r}{2\pi}\right)^{8m}\prod_{k\in\{j_1,\dots,j_m\}}\int_{\R^{a_k}}\bigg(\prod_{l=1}^{a_k}|(M^k\xi^k)_l|\\
 &\quad\cdot\int_{\Delta^{a_k}_{\theta,t}}\exp\Big\{-\frac{1}{2}\Var_{\bar\PP}\Big(\sum_{l=1}^{a_k}(-(M^k\xi^k)_l)\eps w_kB^{H_k}(s^k_l)\Big)\Big\}ds^k\bigg)d\xi^k.
\end{align*}

Applying the strong local non-determinism (see Remark \ref{3RemarkStrongLocalNondeterminism}) of the fractional Brownian motion yields
\begin{align*}
 &\Var_{\bar\PP}\Big(\sum_{l=1}^{a_k}(-(M^k\xi^k)_l)\eps w_kB^{H_k}(s^k_l)\Big)\\
 &\quad=\Var_{\bar\PP}\Big(\xi^k_1\eps w_kB^{H_k}(s^k_1)+\sum_{l=2}^{a_k}(\xi^k_l-\xi^k_{l-1})\eps w_kB^{H_k}(s^k_l)\Big)\\
 &\quad=\Var_{\bar\PP}\Big(\xi^k_{a_k}\eps w_kB^{H_k}(s^k_{a_k})+\sum_{l=1}^{a_k-1}\xi^k_l\eps w_k(B^{H_k}(s^k_l)-B^{H_k}(s^k_{l+1}))\Big)\\
 &\quad\geq C_k\sum_{l=1}^{a_k}\big|\xi^k_l\big|^2\eps ^2|w_k|^2|s^k_l-s^k_{l+1}|^{2H_k}=\sum_{l=1}^{a_k}\frac{\big|\xi^k_l\big|^2}{\sigma_{l,k}^2(s^k)},
\end{align*}

for a constant $C_k=C(H_k)\in(0,1)$, where we defined $s_{a_k+1}:=\theta$ and

\begin{align}\label{3sigmalkDef}
 \sigma_{l,k}(s^k):=C_k^{-\frac{1}{2}}\eps ^{-1}|w_k|^{-1}|s^k_l-s^k_{l+1}|^{-H_k}.
\end{align}

Moreover, we have

\begin{align*}
 \prod_{l=1}^{a_k}|(M^k\xi^k)_l|&=|\xi^k_1|\prod_{l=2}^{a_k}|\xi^k_l-\xi^k_{l-1}|<(1+|\xi^k_1|)\prod_{l=2}^{a_k}(1+|\xi^k_l|)(1+|\xi^k_{l-1}|)\\
 &=\Big(\prod_{l=1}^{a_k}(1+|\xi^k_l|)\Big)\Big(\prod_{l=1}^{a_k-1}(1+|\xi^k_l|)\Big)\leq\prod_{l=1}^{a_k}(1+|\xi^k_l|)^2\\
 &<2^{a_k}\prod_{l=1}^{a_k}(1+|\xi^k_l|+|\xi^k_l|^2)=2^{a_k}\prod_{l=1}^{a_k}\sum_{\delta_l\in\{0,1,2\}}|\xi^k_l|^{\delta_l}\\
 &=2^{a_k}\sum_{\delta\in\{0,1,2\}^{a_k}}\prod_{l=1}^{a_k}|\xi^k_l|^{\delta_l}.
\end{align*}

Plugging this in and using that $\sum_{k\in\{j_1,\dots,j_m\}}a_k=8m$, we achieve the estimate

\begin{align*}
 &\bar E\bigg[\bigg|(2\pi)^{-4m}\int_{\R^{4m}}\int_{\Delta^{4m}_{\theta,t}}\Hfct_{j_1,\dots,j_m}^{\otimes4}(\tau,\sigma_1,\sigma_2;s_{1},\dots,s_{4m},\theta)\prod_{l=1}^{4m}e^{-iu_l(Y_{j_{\alpha_l}}(s_l)-z_l)}(-iu_l)dsdu\bigg|^2\bigg]\\
 &\leq\sum_{\rho\in S(4m,4m)}\prod_{k\in\{j_1,\dots,j_m\}}\left(\frac{1+r}{\pi}\right)^{a_k}\int_{\R^{a_k}}\bigg(\sum_{\delta\in\{0,1,2\}^{a_k}}\prod_{l=1}^{a_k}|\xi^k_l|^{\delta_l}\\
 &\quad\cdot\int_{\Delta^{a_k}_{\theta,t}}\exp\Big\{-\frac{1}{2}\sum_{l=1}^{a_k}\frac{\big|\xi^k_l\big|^2}{\sigma_{l,k}^2(s^k)}\Big\}ds^k\bigg)d\xi^k\\
 &=\sum_{\rho\in S(4m,4m)}\prod_{k\in\{j_1,\dots,j_m\}}\left(\frac{1+r}{\pi}\right)^{a_k}\sum_{\delta\in\{0,1,2\}^{a_k}}\int_{\Delta^{a_k}_{\theta,t}}\int_{\R^{a_k}}\prod_{l=1}^{a_k}|\xi^k_l|^{\delta_l}e^{-\frac{1}{2}\frac{\big|\xi^k_l\big|^2}{\sigma_{l,k}^2(s^k)}}d\xi^kds^k\\
 &=\sum_{\rho\in S(4m,4m)}\prod_{k\in\{j_1,\dots,j_m\}}\left(\frac{1+r}{\pi}\right)^{a_k}\sum_{\delta\in\{0,1,2\}^{a_k}}\int_{\Delta^{a_k}_{\theta,t}}\prod_{l=1}^{a_k}\int_{\R}|\xi^k_l|^{\delta_l}e^{-\frac{1}{2}\frac{\big|\xi^k_l\big|^2}{\sigma_{l,k}^2(s^k)}}d\xi^k_lds^k.
\end{align*}

Consider the inner integral

\begin{align*}
 \int_{\R}|\xi^k_l|^{\delta_l}e^{-\frac{1}{2}\frac{\big|\xi^k_l\big|^2}{\sigma_{l,k}^2(s^k)}}d\xi^k_l&=
 \begin{cases}
  \int_\R e^{-\frac{x^2}{2\sigma_{l,k}^2(s^k)}}dx=\sqrt{2\pi\sigma_{l,k}^2(s^k)},\quad \delta_{l}=0\\
  \int_\R |x|e^{-\frac{x^2}{2\sigma_{l,k}^2(s^k)}}dx=2\sigma_{l,k}^2(s),\quad \delta_{l}=1\\
  \int_\R |x|^2e^{-\frac{x^2}{2\sigma_{l,k}^2(s^k)}}dx=\sqrt{2\pi}\sigma_{l,k}^3(s),\quad \delta_{l}=2
 \end{cases}\\
 &\leq\sqrt{2\pi}(\sigma_{l,k}(s^k))^{1+\delta_{l}}.
\end{align*}

By \eqref{3sigmalkDef}, using that $1+\delta_{l}\in\{1,2,3\}$ and that $|s^k_l-s^k_{l+1}|\leq T(\eps)<\eps^{\frac{3}{\delta_H}} <1$, $C_k<1$ and $w_k\leq1$, we get

\begin{align*}
 \int_{\R}|\xi^k_l|^{\delta_l}e^{-\frac{1}{2}\frac{\big|\xi^k_l\big|^2}{\sigma_{l,k}^2(s^k)}}d\xi^k_l
 &\leq\sqrt{2\pi}(C_k^{-\frac{1}{2}}\eps ^{-1}|w_k|^{-1}|s^k_l-s^k_{l+1}|^{-H_k})^{1+\delta_{l}}\\
 &\leq\sqrt{2\pi} C_k^{-\frac{3}{2}}\eps ^{-3}|w_k|^{-3}|s^k_l-s^k_{l+1}|^{-3H_k}
\end{align*}

Note that the right-hand side of this inequality does not depend on $\delta$ any longer. Inserting this inequality and making use of the fact that $\card\{0,1,2\}^{a_k}=3^{a_k}$, the estimate becomes

\begin{align}
 &\bar E\bigg[\bigg|(2\pi)^{-4m}\int_{\R^{4m}}\int_{\Delta^{4m}_{\theta,t}}\Hfct_{j_1,\dots,j_m}^{\otimes4}(\tau,\sigma_1,\sigma_2;s_{1},\dots,s_{4m},\theta)\prod_{l=1}^{4m}e^{-iu_l(Y_{j_{\alpha_l}}(s_l)-z_l)}(-iu_l)dsdu\bigg|^2\bigg]\nonumber\\
 &\leq\sum_{\rho\in S(4m,4m)}\bigg\{\prod_{k\in\{j_1,\dots,j_m\}}\left(\frac{\sqrt{2}(1+r)}{\sqrt{\pi}}C_k^{-\frac{3}{2}}\eps ^{-3}|w_k|^{-3}\right)^{a_k}\nonumber\\
 &\quad\cdot\sum_{\delta\in\{0,1,2\}^{a_k}}\int_{\Delta^{a_k}_{\theta,t}}\prod_{l=1}^{a_k}|s^k_l-s^k_{l+1}|^{-3H_k}ds^k\bigg\}\label{3IntermediateEstimate3}\\
 &=\sum_{\rho\in S(4m,4m)}\bigg\{\prod_{k\in\{j_1,\dots,j_m\}}\left(\frac{3\sqrt{2}(1+r)}{\sqrt{\pi}}C_k^{-\frac{3}{2}}\eps ^{-3}|w_k|^{-3}\right)^{a_k}\int_{\Delta^{a_k}_{\theta,t}}\prod_{l=1}^{a_k}|s^k_l-s^k_{l+1}|^{-3H_k}ds^k\bigg\}\nonumber.
\end{align}

Now we study the term $\int_{\Delta^{a_k}_{\theta,t}}\prod_{l=1}^{a_k}|s^k_l-s^k_{l+1}|^{-3H_k}ds^k$. Observe that, by Assumption \ref{3StandingAssumptions} $(H)$, it holds $-3H_k>-1$ for all $l=1,\dots,a_k$. We can therefore apply Lemma \ref{3SimplexIntegralLemma} and get that

\begin{align*}
 \int_{\Delta^{a_k}_{\theta,t}}\prod_{l=1}^{a_k}|s^k_l-s^k_{l+1}|^{-3H_k}ds^k= \frac{\prod_{l=1}^{a_k}\Gamma(1-3H_k)}{\Gamma(1+a_k-3a_kH_k)}(t-\theta)^{a_k-3a_kH_k}.
\end{align*}

Now since, by Assumption \ref{3StandingAssumptions} $(H)$, $H_k<\frac{1-\delta_H}{3}$ for some $\delta_H\in(0,1)$, and since the Gamma function is monotone decreasing on the interval $[0,1]$ we have

\begin{align*}
 \Gamma(1-3H_k)&\leq\Gamma(\delta_H).
\end{align*}

Moreover, since $\Gamma(x)>\frac{1}{2}$ for all $x>0$ and since $1+a_k(1-3H_k)\geq1+a_k\delta_H>1$, we have

\begin{align*}
 \frac{\prod_{l=1}^{a_k}\Gamma(1-3H_k)}{\Gamma(1+a_k(1-3H_k))}\leq 2\Gamma(\delta_H)^{a_k}\leq2^{a_k}\Gamma(\delta_H)^{a_k}\quad\text{for }k\in\{j_1,\dots,j_m\}.
\end{align*}


Note further that $(t-\theta)^{a_k(1-3H_k)}\leq(t-\theta)^{\delta_Ha_k}$. Plugging in, we achieve the estimate



\begin{align*}
 \int_{\Delta^{a_k}_{\theta,t}}\prod_{l=1}^{a_k}|s^k_l-s^k_{l+1}|^{-3H_k}ds^k\leq 2^{a_k}\Gamma(\delta_H)^{a_k}(t-\theta)^{\delta_Ha_k},
\end{align*}

which we insert in \eqref{3IntermediateEstimate3} to get

\begin{align*}
 &\bar E\bigg[\bigg|(2\pi)^{-4m}\int_{\R^{4m}}\int_{\Delta^{4m}_{\theta,t}}\Hfct_{j_1,\dots,j_m}^{\otimes4}(\tau,\sigma_1,\sigma_2;s_{1},\dots,s_{4m},\theta)\prod_{l=1}^{4m}e^{-iu_l(Y_{j_{\alpha_l}}(s_l)-z_l)}(-iu_l)dsdu\bigg|^2\bigg]\\
 &\leq\sum_{\rho\in S(4m,4m)}\bigg\{\prod_{k\in\{j_1,\dots,j_m\}}\left(\frac{3\sqrt{2}(1+r)}{\sqrt{\pi}}C_k^{-\frac{3}{2}}\eps ^{-3}|w_k|^{-3}\right)^{a_k}2^{a_k}\Gamma(\delta_H)^{a_k}(t-\theta)^{\delta_Ha_k}\bigg\}\\
 &\leq\sum_{\rho\in S(4m,4m)}\prod_{k\in\{j_1,\dots,j_m\}}\left(\frac{6\sqrt{2}(1+r)\Gamma(\delta_H)(t-\theta)^{\delta_H}}{\sqrt{\pi}}C_k^{-\frac{3}{2}}\eps ^{-3}|w_k|^{-3}\right)^{a_k}.
\end{align*}

Now recall that, by \eqref{3a_k},

\begin{align*}
 a_k=\card\{l:\,j_{\alpha^\rho_l}=k\}=8\card\{l:\,j_{l}=k\},
\end{align*}

which means that we can rewrite

\begin{align*}
 &\prod_{k\in\{j_1,\dots,j_m\}}\left(\frac{6\sqrt{2}(1+r)\Gamma(\delta_H)(t-\theta)^{\delta_H}}{\sqrt{\pi}}C_k^{-\frac{3}{2}}\eps ^{-3}|w_k|^{-3}\right)^{a_k}\\
 &=\prod_{l=1}^m\left(\frac{6\sqrt{2}(1+r)\Gamma(\delta_H)(t-\theta)^{\delta_H}}{\sqrt{\pi}}C_{j_l}^{-\frac{3}{2}}\eps ^{-3}|w_{j_l}|^{-3}\right)^8.
\end{align*}

Since $\card(S(4m,4m))={{8m}\choose{4m}}\leq 2^{8m}$ we therefore obtain

\begin{align*}
 &\bar E\bigg[\bigg|(2\pi)^{-4m}\int_{\R^{4m}}\int_{\Delta^{4m}_{\theta,t}}\Hfct_{j_1,\dots,j_m}^{\otimes4}(\tau,\sigma_1,\sigma_2;s_{1},\dots,s_{4m},\theta)\prod_{l=1}^{4m}e^{-iu_l(Y_{j_{\alpha_l}}(s_l)-z_l)}(-iu_l)dsdu\bigg|^2\bigg]\\
 &\leq\prod_{l=1}^m\left(\frac{12\sqrt{2}(1+r)\Gamma(\delta_H)(t-\theta)^{\delta_H}}{\sqrt{\pi}}C_{j_l}^{-\frac{3}{2}}\eps ^{-3}|w_{j_l}|^{-3}\right)^8.
\end{align*}

Note that while the term on the left handside depends on $z$, the term on the right handside does not. We are now ready to go back to \eqref{3IntermediateEstimate2}. Plugging our results in, we achieve the following estimate

\begin{align*}
 &\bar E\bigg[\Big|\int_{\Delta^m_{\theta,t}}\Hfct_{j_1,\dots,j_m}(s,\theta) \prod_{l=1}^mb_{j_l,n}'(Y_{j_l}(s_l))ds_m\dots ds_1\Big|^4\bigg]\\
 &\leq \sum_{\substack{\sigma_1,\sigma_2\in S(m,m) \\ \tau\in S(2m,2m)}}\int_{\R^{4m}}\prod_{l=1}^{4m}|b_{j_{\alpha_l},n}(z_l)|\bar E\bigg[\bigg|(2\pi)^{-4m}\int_{\R^{4m}}\int_{\Delta^{4m}_{\theta,t}}\Hfct_{j_1,\dots,j_m}^{\otimes4}(\tau,\sigma_1,\sigma_2;s_{1},\dots,s_{4m},\theta)\\
 &\quad\cdot\prod_{l=1}^{4m}e^{-iu_l(Y_{j_{\alpha_l}}(s_l)-z_l)}(-iu_l)dsdu\bigg|^2\bigg]^{\frac{1}{2}}dz\\
 &\leq \sum_{\substack{\sigma_1,\sigma_2\in S(m,m) \\ \tau\in S(2m,2m)}}\left(\prod_{l=1}^{4m}\|b_{j_{\alpha_l},n}\|_{L^1}\right)\cdot\left(\prod_{l=1}^m\frac{12\sqrt{2}(1+r)\Gamma(\delta_H)(t-\theta)^{\delta_H}}{\sqrt{\pi}}C_{j_l}^{-\frac{3}{2}}\eps ^{-3}|w_{j_l}|^{-3}\right)^4\\
 &\leq\prod_{l=1}^m\left(\frac{48\sqrt{2}(1+r)\Gamma(\delta_H)(t-\theta)^{\delta_H}}{\sqrt{\pi}}C_{j_l}^{-\frac{3}{2}}\eps ^{-3}|w_{j_l}|^{-3}\|b_{j_{l}}\|_{L^1}\right)^4\\
 &=\prod_{l=1}^m\eps^{-12}(t-\theta)^{4\delta_H}A_{j_l}^4.
\end{align*}

In the last inequality, we used \eqref{3alphalDef} and the fact that $\sigma_1$, $\sigma_2$ and $\tau$ are permutations which implies that

\begin{align*}
 \prod_{l=1}^{4m}\|b_{j_{\alpha_l},n}\|_{L^1}&=\prod_{l=1}^{m}\|b_{j_{l},n}\|_{L^1}^4=\prod_{l=1}^{m}\Big(\int_\R|b_{j_{l},n}(z)|dz\Big)^4\\
 &=\prod_{l=1}^{m}\Big(\int_\R|b_{j_{l}}\ast\varphi_n(z)|dz\Big)^4\\
 &=\prod_{l=1}^{m}\Big(\int_\R|\int_\R b_{j_{l}}(y)\varphi_n(z-y)dy|dz\Big)^4\\
 &\leq\prod_{l=1}^{m}\Big(\int_\R\int_\R |b_{j_{l}}(y)|\varphi_n(z-y)dydz\Big)^4\\
 &=\prod_{l=1}^{m}\Big(\int_\R|b_{j_{l}}(y)|\int_\R \varphi_n(z-y)dzdy\Big)^4\\
 &=\prod_{l=1}^{m}\Big(\int_\R|b_{j_{l}}(y)|dy\Big)^4=\prod_{l=1}^{m}\|b_{j_{l}}\|_{L^1}^4.
\end{align*}

Moreover, we applied the fact that $\card(S(2m,2m))={{4m}\choose{2m}}\leq 2^{4m}$ and $\card(S(m,m))={{2m}\choose{m}}\leq 2^{2m}$.\\
\par
With this estimation at hands, we can finally return to \eqref{3IntermediateEstimate1} and obtain

\begin{align}
 &E'\bigg[E\bigg[|\DM_{\theta} x^{n}(t)-1|^2\bigg]\bigg]\nonumber\\
 &\leq e^{\frac{1}{2}M^2T}\bigg(\sum_{m=1}^\infty\sum_{j_1,\dots,j_m=1}^d\bar E\bigg[\Big|\int_{\Delta^m_{\theta,t}}\Hfct_{j_1,\dots,j_m}(s,\theta) \prod_{l=1}^mb_{j_l,n}'(Y_{j_l}(s_l))ds\Big|^4\bigg]^{\frac{1}{4}}\bigg)^2\nonumber\\
 &\leq e^{\frac{1}{2}M^2T}\bigg(\sum_{m=1}^\infty\sum_{j_1,\dots,j_m=1}^d\prod_{l=1}^m\eps^{-3}(t-\theta)^{\delta_H}A_{j_l}\bigg)^2\nonumber\\
 &=e^{\frac{1}{2}M^2T}\bigg(\sum_{m=1}^\infty\prod_{l=1}^m\sum_{j_l=1}^d\eps^{-3}(t-\theta)^{\delta_H}A_{j_l}\bigg)^2\nonumber\\
 &=e^{\frac{1}{2}M^2T}\bigg(\sum_{m=1}^\infty\Big(\eps^{-3}(t-\theta)^{\delta_H}\sum_{j=1}^dA_{j}\Big)^m\bigg)^2\label{3UsefulEstimationForLater1}.
\end{align}

The right handside is finite by Assumptions \ref{3StandingAssumptions} $(T)$ and $(A)$. In fact,

\begin{align*}
 E'\bigg[E\bigg[|\DM_{\theta} x^{n}(t)-1|^2\bigg]\bigg]&\leq e^{\frac{1}{2}M^2T}\bigg(\sum_{m=1}^\infty\Big((t-\theta)^{\delta_H}\eps ^{-3}\Big)^m\bigg)^2\\
 &=e^{\frac{1}{2}M^2T}\bigg(\frac{1}{1-(t-\theta)^{\delta_H}\eps ^{-3}}-1\bigg)^2\\
 &=e^{\frac{1}{2}M^2T}\bigg(\frac{(t-\theta)^{\delta_H}\eps ^{-3}}{1-(t-\theta)^{\delta_H}\eps ^{-3}}\bigg)^2\\
 &= e^{\frac{1}{2}M^2T}\frac{|t-\theta|^{2\delta_H}}{\left(\eps ^{3}-|t-\theta|^{\delta_H}\right)^2}.
\end{align*}

Since we have $|t-\theta|^{\delta_H}\leq T(\eps)^{\delta_H}<\eps^3-\delta_T^{\delta_H}$, the denominator is larger than $\delta_T^{2\delta_H}$. The above term therefore simplifies to

\begin{align}\label{3FinalEstimationStep1}
 E'\bigg[E\bigg[|\DM_{\theta} x^{n}(t)-1|^2\bigg]\bigg]\leq e^{\frac{1}{2}M^2T}\delta_T^{-2\delta_H}|t-\theta|^{2\delta_H}.
\end{align}

Integrating and taking the supremum over $n$ (note that we eliminated all dependence on $n$ before) now finally yields

\begin{align*}
 \sup_{n\geq1}\int_0^tE'\bigg[E\bigg[|\DM_{\theta} x^{n}(t)-1|^2\bigg]\bigg]d\theta&\leq e^{\frac{1}{2}M^2T}\delta_T^{-2\delta_H}\frac{t^{2\delta_H+1}}{2\delta_H+1}<\infty,
\end{align*}

which proves \eqref{3CompactnessMalliavin3}.

\textbf{Proof of \eqref{3CompactnessMalliavin4}:} Note that, by Fubini's theorem, we have

\begin{align*}
   E'\bigg[ \int_0^t \int_0^t \frac{E[|\DM_\theta x^{n}(t) - \DM_{\theta'} x^{n}(t)|^2]}{|\theta' - \theta|^{1+2\beta}} d\theta' d\theta\bigg]= \int_0^t \int_0^t \frac{E'\Big[ E[|\DM_\theta x^{n}(t) - \DM_{\theta'} x^{n}(t)|^2]\Big]}{|\theta' - \theta|^{1+2\beta}} d\theta' d\theta.
  \end{align*}

We therefore consider for the moment the term $E'\Big[ E[|\DM_\theta x^{n}(t) - \DM_{\theta'} x^{n}(t)|^2]\Big]$. The representation \eqref{3MalliavinDiffIteration} together with the fact that $(a+b+c)^2\leq 3a^2+3b^2+3c^2$ yield

\begin{align*}
 &E'\Big[ E[|\DM_\theta x^{n}(t) - \DM_{\theta'} x^{n}(t)|^2]\Big]\\
 &\leq 3E'\Big[ E[|\DM_\theta x^{n}(\theta')-1|^2]\Big]\\
 &\quad +3E'\Big[ E\Big[\Big|\sum_{m=1}^\infty\sum_{j_1,\dots,j_m=1}^d\int_{\Delta^m_{\theta',t}}\tilde\Hfct_{j_1,\dots,j_m}(s,\theta,\theta')\\
 &\quad \cdot \prod_{l=1}^mb_{j_l,n}'(\<x^{n}_{s_l}, e_{j_l}\>+\eps w_{j_l}B^{H_{j_l}}(s_l))ds\Big|^2\Big]\Big]\\
 &\quad +3E'\Big[ E\Big[\Big|\sum_{m_1, m_2=1}^\infty\sum_{j_1,\dots,j_{m_1+m_2}=1}^d\int_{\Delta^{m_1}_{\theta',t}\times\Delta^{m_2}_{\theta,\theta'}}\Big\{\Hfct_{j_1,\dots,j_{m_1+m_2} }(s,\theta)\\
   &\quad \cdot \prod_{l=1}^{m_1+m_2}b_{j_l,n}'(\<x^{n}_{s_l}, e_1\>+\eps w_{j_l}B^{H_{j_l}}(s_l),\dots)\Big\}ds\Big|^2\Big]\Big]\\
 &=3I_1(\theta,\theta')+3I_2(\theta,\theta',t)+3I_3(\theta,\theta',t).
\end{align*}

It follows directly from \eqref{3FinalEstimationStep1} that

\begin{align}\label{3I1Estimate}
 I_1(\theta,\theta')\leq e^{\frac{1}{2}M^2T}\delta_T^{-2\delta_H}|\theta'-\theta|^{2\delta_H}.
\end{align}

For the term $I_2(\theta,\theta',t)$ recall \eqref{3IterativeRep}, which we state again for convenience:

\begin{align*}
 E'[E[|\DM_{\theta} x^{n}(t)-1|^2]]&=E'\bigg[E\Big[\Big|\sum_{m=1}^\infty\sum_{j_1,\dots,j_m=1}^d\int_{\Delta^m_{\theta,t}}\Big\{\Hfct_{j_1,\dots,j_m}(s,\theta)\\
 &\quad\cdot \prod_{l=1}^mb_{j_l,n}'(\<x^{n}_{s_l}, e_{j_l}\>+\eps w_{j_l}B^{H_{j_l}}(s_l))\Big\}ds\Big|^2\Big]\bigg]
\end{align*}

Note that the term $I_2(\theta,\theta',t)$ has the same form as the term on the right-hand side of this equation, except that the integral is now over $\Delta^m_{\theta',t}$ instead of $\Delta^m_{\theta,t}$, and the function $\Hfct_{j_1,\dots,j_m}(s,\theta)$ must be replaced by the function $\tilde\Hfct_{j_1,\dots,j_m}(s,\theta,\theta')$. We then perform the exact same steps as in the prove of \eqref{3CompactnessMalliavin3} until we reach estimation
\begin{align*}
 |\Hfct_{j_1,\dots,j_m}^{\otimes8}(\rho,\tau,\sigma_1,\sigma_2;s_{1},\dots,s_{8m},\theta)|\leq (1+r)^{8m},
\end{align*}

which in our case is replaced by

\begin{align*}
 |\tilde\Hfct_{j_1,\dots,j_m}^{\otimes8}(\rho,\tau,\sigma_1,\sigma_2;s_{1},\dots,s_{8m},\theta)|\leq (1+r)^{8(m-1)}|\theta-\theta'|^4=(1+r)^{8m}\frac{|\theta-\theta'|^4}{(1+r)^8},
\end{align*}

as to be seen easily from \eqref{3HtildefunctionEstimation}. Continuing with the exact same steps as in the proof of \eqref{3CompactnessMalliavin3}, just keeping in mind the additional term $\frac{|\theta-\theta'|^4}{(1+r)^8}$, we eventually get the estimate

\begin{align}\label{3I2Estimate}
 I_2(\theta,\theta',t)\leq e^{\frac{1}{2}M^2T}\delta_T^{-2\delta_H}|t-\theta'|^{2\delta_H}\frac{|\theta-\theta'|}{(1+r)^2}.
\end{align}

For the term $I_3(\theta,\theta',t)$, similarly to the case of $I_2(\theta,\theta',t)$, we compare with \eqref{3IterativeRep} and note that it also has the same structure as the right-hand side of this equation, just that the sum over $m$ becomes a double sum over $m_1$ and $m_2$ and the integral over $\Delta^m_{\theta,t}$ becomes an integral over $\Delta^{m_1}_{\theta',t}\times\Delta^{m_2}_{\theta,\theta'}$. We therefore again perform the exact same steps as in the proof of \eqref{3CompactnessMalliavin3} until we reach \eqref{3ReGrouping}, just that whenever we multiply two integrals over $\Delta^{m_1}_{\theta',t}\times\Delta^{m_2}_{\theta,\theta'}$ we apply Corollary \ref{3DeltatimesDeltaShuffle}. Note that this means that all the shuffle permutations $\sigma_1,\sigma_2\in S(m,m)$, $\tau\in S(2m,2m)$ and $\rho\in S(4m,4m)$ are replaced by permutations $(\sigma^1_1,\sigma^2_1),(\sigma^1_2,\sigma^2_2)\in S(m_1,m_1)\times S(m_2,m_2)$, $(\tau^1,\tau^2)\in S(2m_1,2m_1)\times S(2m_2,2m_2)$ and $(\rho^1,\rho^2)\in S(4m_1,4m_1)\times S(4m_2,4m_2)$. We have to adjust the definitions for the index permutaions $\alpha$ and $\alpha^\rho$ to the new type of permutations but this works in the canonical way. In equation \eqref{3ReGrouping}, having in mind Corollary \ref{3DoubleShuffleInverse}, we re-group the terms now a little bit differently, namely

\begin{align*}
 -\sum_{l=1}^{8m_1+8m_2}\tilde u_lY_{j_{\alpha^{(\rho^1,\rho^2)}_l}}(s_l)&=\sum_{k_1\in\{j_1,\dots,j_{m_1}\}}\sum_{\substack{l\in\{1,\dots,8m_1\}:\\ j_{\alpha^{(\rho^1,\rho^2)}_l}=k_1}}(-\tilde u_l)Y_{k_1}(s_l)\\
 &\quad+\sum_{k_2\in\{j_{m_1+1},\dots,j_{m_1+m_2}\}}\sum_{\substack{l\in\{8m_1+1,\dots,8m_1+8m_2\}:\\ j_{\alpha^{(\rho^1,\rho^2)}_l}=k_2}}(-\tilde u_l)Y_{k_2}(s_l)\\
 &=\sum_{k_1\in\{j_1,\dots,j_{m_1}\}}\sum_{\substack{l\in\{1,\dots,8m_1\}:\\ j_{\alpha^{(\rho^1,\rho^2)}_l}=k_1}}(-\tilde u_l)F_{k_1}(s_l,W_{s_l})\\
 &\quad+\sum_{k_2\in\{j_{m_1+1},\dots,j_{m_1+m_2}\}}\sum_{\substack{l\in\{8m_1+1,\dots,8m_1+8m_2\}:\\ j_{\alpha^{(\rho^1,\rho^2)}_l}=k_2}}(-\tilde u_l)F_{k_2}(s_l,W_{s_l})\\
 &\quad+\sum_{k_1\in\{j_1,\dots,j_{m_1}\}}G^1_{k_1}(s)+\sum_{k_2\in\{j_{m_1+1},\dots,j_{m_1+m_2}\}}G^2_{k_2}(s),
\end{align*}

where, for $s\in\Delta^{8m_1}_{\theta',t}\times\Delta^{8m_2}_{\theta,\theta'}$,

\begin{align*}
 G^1_k(s)&:=\sum_{\substack{l\in\{1,\dots,8m_1\}:\\ j_{\alpha^{(\rho^1,\rho^2)}_l}=k_1}}(-\tilde u_l)\eps w_kB^{H_{k_1}}(s_l)\\
 G^2_k(s)&:=\sum_{\substack{l\in\{8m_1+1,\dots,8m_1+8m_2\}:\\ j_{\alpha^{(\rho^1,\rho^2)}_l}=k_2}}(-\tilde u_l)\eps w_kB^{H_{k_2}}(s_l).
\end{align*}

Next we define for each $k_1\in\{j_1,\dots,j_{m_1}\}$, $k_2\in\{j_{m_1+1},\dots,j_{m_1+m_2}\}$,

\begin{align*}
 a^1_{k_1}&:=8\card\{l\in\{1,\dots,m_1\}:\,j_{l}=k_1\},\\
 a^2_{k_2}&:=8\card\{l\in\{m_1+1,\dots,m_1+m_2\}:\,j_{l}=k_2\}
\end{align*}

and adjust the transformations from \eqref{3ReGrouping2} to our new setup by defining

\begin{align*}
 \R^{8m_1+8m_2}\ni\tilde u&=\begin{pmatrix}
                      \tilde u_1\\
                      \vdots\\
                      \tilde u_{8m_1+8m_2}
                     \end{pmatrix}\mapsto\tilde u^{k_i,i}=\begin{pmatrix}
                      \tilde u^{k_i,i}_1\\
                      \vdots\\
                      \tilde u^{k_i,i}_{a^i_{k_i}}
                     \end{pmatrix}\in\R^{a^i_{k_i}},\quad i=1,2\\
 \Delta^{8m_1}_{\theta',t}\times\Delta^{8m_2}_{\theta,\theta'}\ni s&=\begin{pmatrix}
                      s_1\\
                      \vdots\\
                      s_{8m_1+8m_2}
                     \end{pmatrix}\mapsto s^{k_1,1}=\begin{pmatrix}
                      s^{k_1,1}_1\\
                      \vdots\\
                      s^{k_1,1}_{a^1_{k_1}}
                     \end{pmatrix}\in\Delta^{a^1_{k_1}}_{\theta',t}\\
 \Delta^{8m_1}_{\theta',t}\times\Delta^{8m_2}_{\theta,\theta'}\ni s&=\begin{pmatrix}
                      s_1\\
                      \vdots\\
                      s_{8m_1+8m_2}
                     \end{pmatrix}\mapsto s^{k_2,2}=\begin{pmatrix}
                      s^{k_2,2}_1\\
                      \vdots\\
                      s^{k_2,2}_{a^2_{k_2}}
                     \end{pmatrix}\in\Delta^{a^2_{k_2}}_{\theta,\theta'}                    
\end{align*}

where $\tilde u^{k_1,1}$ consists of those entries $\tilde u_l$ of $\tilde u$ such that $l\in\{1,\dots,8m_1\}$ and $j_{\alpha^{(\rho^1,\rho^2)}_l}=k_1$ whereas $\tilde u^{k_2,2}$ consists of those entries $\tilde u_l$ such that $l\in\{8m_1+1,\dots,8m_1+8m_2\}$ and $j_{\alpha^{(\rho^1,\rho^2)}_l}=k_1$ ($s^{k_1,1}$ and $s^{k_2,2}$ are defined accordingly). Then we get

\begin{align*}
 \Var_{\bar\PP}\Big(-\sum_{l=1}^{8m_1+8m_2}\tilde u_lY_{j_{\alpha^{(\rho^1,\rho^2)}_l}}(s_l)\Big)&\geq \sum_{k_1\in\{j_1,\dots,j_{m_1}\}}\Var_{\bar\PP}\Big(\sum_{l=1}^{a^1_{k_1}}(-\tilde u^{k_1,1}_l)\eps w_{k_1}B^{H_{k_1}}(s^{k_1,1}_l)\Big)\\
 &\quad+\sum_{k_2\in\{j_{m_1+1},\dots,j_{m_1+m_2}\}}\Var_{\bar\PP}\Big(\sum_{l=1}^{a^2_{k_2}}(-\tilde u^{k_2,2}_l)\eps w_{k_2}B^{H_{k_2}}(s^{k_2,2}_l)\Big).
\end{align*}

With this in hands and using that

\begin{align*}
 \Delta^{8m_1}_{\theta',t}\times\Delta^{8m_2}_{\theta,\theta'}\subseteq \Big(\prod_{k_1\in\{j_1,\dots,j_{m_1}\}}\Delta^{a^1_{k_1}}_{\theta',t}\Big)\times\Big(\prod_{k_2\in\{j_{m_1+1},\dots,j_{m_1+m_2}\}}\Delta^{a^2_{k_2}}_{\theta,\theta'}\Big)
\end{align*}

we can proceed until estimation \eqref{3IntermediateEstimate3} in the same manner as performed in the proof of \eqref{3CompactnessMalliavin3}. In order to make the calculations more readable, let us introduce the following short-hand notation $$\Hfct_{j_1,\dots,j_{m_1+m_2}}^{\otimes4}(\tau^{1,2},\sigma^{1,2}_{1,2};s,\theta):=\Hfct_{j_1,\dots,j_{m_1+m_2}}^{\otimes4}((\tau^1,\tau^2),(\sigma^1_1,\sigma^2_1),(\sigma^1_2,\sigma^2_2);s_{1},\dots,s_{4(m_1+m_2)},\theta).$$ Estimation \eqref{3IntermediateEstimate3} now becomes

\begin{align*}
 &\bar E\bigg[\bigg|(2\pi)^{-4(m_1+m_2)}\int_{\R^{4(m_1+m_2)}}\int_{\Delta^{4m_1}_{\theta',t}\times\Delta^{4m_2}_{\theta,\theta'}}\Hfct_{j_1,\dots,j_{m_1+m_2}}^{\otimes4}(\tau^{1,2},\sigma^{1,2}_{1,2};s,\theta)\\
 &\quad\cdot\prod_{l=1}^{4(m_1+m_2)}e^{-iu_l(Y_{j_{\alpha_l}}(s_l)-z_l)}(-iu_l)dsdu\bigg|^2\bigg]\\
 &\leq\sum_{(\rho^1,\rho^2)\in S(4m_1,4m_1)\times S(4m_2,4m_2)}\bigg\{\bigg[\prod_{k_1\in\{j_1,\dots,j_{m_1}\}}\left(\frac{3\sqrt{2}(1+r)}{\sqrt{\pi}}C_{k_1}^{-\frac{3}{2}}\eps ^{-3}|w_{k_1}|^{-3}\right)^{a^1_{k_1}}\\
 &\quad\cdot\int_{\Delta^{a^1_{k_1}}_{\theta',t}}\prod_{l=1}^{a^1_{k_1}}|s^{k_1,1}_l-s^{k_1,1}_{l+1}|^{-3H_{k_1}}ds^{k_1,1}\bigg]\\
 &\quad\cdot\bigg[\prod_{k_2\in\{j_{m_1+1},\dots,j_{m_1+m_2}\}}\left(\frac{3\sqrt{2}(1+r)}{\sqrt{\pi}}C_{k_2}^{-\frac{3}{2}}\eps ^{-3}|w_{k_2}|^{-3}\right)^{a^2_{k_2}}\\
 &\quad\cdot\int_{\Delta^{a^2_{k_2}}_{\theta,\theta'}}\prod_{l=1}^{a^2_{k_2}}|s^{k_2,2}_l-s^{k_2,2}_{l+1}|^{-3H_{k_2}}ds^{k_2,2}\bigg]\bigg\}\\
 &\leq|t-\theta'|^{8m_1\delta_H}|\theta'-\theta|^{8m_2\delta_H}\prod_{l=1}^{m_1+m_2}\left(\frac{12\sqrt{2}(1+r)\Gamma(\delta_H)}{\sqrt{\pi}}C_{j_l}^{-\frac{3}{2}}\eps ^{-3}|w_{j_l}|^{-3}\right)^8,
\end{align*}

where we used the same estimates for the integrals over $\Delta^{a^1_{k_1}}_{\theta',t}$ and $\Delta^{a^2_{k_2}}_{\theta,\theta'}$ that we have shown before. Following now the exact same steps from the proof of \eqref{3CompactnessMalliavin3} until \eqref{3FinalEstimationStep1}, we finally achieve

\begin{align}\label{3I3Estimate}
 I_3(\theta,\theta',t)\leq e^{\frac{1}{2}M^2T}\delta_T^{-4\delta_H}|t-\theta'|^{2\delta_H}|\theta-\theta'|^{2\delta_H}.
\end{align}

We are now ready for a conclusion. Observe that if $\beta<\delta_H$, each of the right handsides in \eqref{3I1Estimate}, \eqref{3I2Estimate} and \eqref{3I3Estimate} is integrable w.r.t. $\theta'$ and $\theta$. Finally, we observe that none of the estimates \eqref{3I1Estimate}, \eqref{3I2Estimate} and \eqref{3I3Estimate} depends on $n$. We therefore have

\begin{align}\label{3FinalEstimationStep2}
\begin{split}
 &\sup_{n\geq1}E'\bigg[\int_0^t \int_0^t \frac{E[|\DM_\theta x^{n}(t) - \DM_{\theta'} x^{n}(t)|^2]}{|\theta' - \theta|^{1+2\beta}} d\theta' d\theta\bigg]\\
 &\quad\leq 3\int_0^t \int_0^t \big(I_1(\theta,\theta')+I_2(\theta,\theta',t)+I_3(\theta,\theta',t)\big)d\theta' d\theta\\
 &\quad<\infty.
\end{split}
\end{align}

\end{proof}


\subsection{The convergence result}

Before we provide a proof for the $L^2$-convergence of the sequence $(x^n(t))_{n\in\NN}$ for every $t\in[0,T]$, we need to recall some definitions and facts.

\begin{defn}\label{3DefWienerTrafo}
 Let $X\in L^2(\Omega\times\Omega',\F_t,P\otimes P')$. Further, we denote by $\mathbb{S}([0,t])$ the space of simple functions on $[0,t]$, i.e. functions of the form
 \begin{align}\label{3simplefct}
  \alpha(t)=\sum_{j=1}^l\alpha_j\ind_{[t_{j-1},t_j)},\quad\text{where }0=t_0<t_1<t_2<\dots<t_l\leq t,\text{ and }l\in\NN.
 \end{align}
 Let now, $\alpha_i\in\mathbb{S}([0,t])$ for all $i=1,2\dots$, with the corresponding time points $0<t_{i,1}<\dots<t_{i,l_i}\leq t$ and let $\phi\in L^2([0,t])$. We define the \emph{Wiener transform of} $X$ by
 \begin{align}\label{3WienerTrafo}
  \Wi_{\Omega\times\Omega'}(X)(\phi,\alpha):=\bar E\bigg[X\exp\Big(\int_0^t\phi(s)dW(s)+\sum_{i\geq1}\sum_{j=1}^{l_i}\alpha_{i,j}w_i(B^{H_i}(t_{i,j})-B^{H_i}(t_{i,j-1}))\Big)\bigg].
 \end{align}
 Similarly, for $X\in L^2(\Omega,\F^W_t,P)$, the Wiener transform is defined by
 \begin{align*}
  \Wi_{\Omega}(X)(\phi):=E\bigg[X\exp\Big(\int_0^t\phi(s)dW(s)\Big)\bigg],
 \end{align*}
 and for $X\in L^2(\Omega',\F^\BB_t,P')$, by
 \begin{align*}
  \Wi_{\Omega'}(X)(\alpha):=E'\bigg[X\exp\Big(\sum_{i\geq1}\sum_{j=1}^{l_i}\alpha_{i,j}w_i(B^{H_i}(t_{i,j})-B^{H_i}(t_{i,j-1}))\Big)\bigg].
 \end{align*}
\end{defn}

\begin{rem}\label{3Facts1and2}
 We recall the following facts about the Wiener transform:
 \begin{itemize}
  \item[(Fact 1)] Every $X\in L^2(\Omega\times\Omega',\F_t,P\otimes P')$ (or $X\in L^2(\Omega,\F^W_t,P)$, $X\in L^2(\Omega',\F^\BB_t,P')$) is uniquely defined by its Wiener transform, up to a nullset.
  \item[(Fact 2)] $X^n\stackrel{n\rightarrow\infty}{\longrightarrow} X$ weakly in $L^2(\Omega\times\Omega',\F_t,P\otimes P')$ if and only if
  \begin{align*}
   \Wi_{\Omega\times\Omega'}(X^n)(\phi,\alpha)\rightarrow \Wi_{\Omega\times\Omega'}(X)(\phi,\alpha),
  \end{align*}
  for all $\phi$ and $\alpha$ given as in Definition \ref{3DefWienerTrafo}.
 \end{itemize}
\end{rem}

\begin{cor}\label{3UniquenessWeakLimit}
 Let $X^n\rightarrow X$, weakly in $L^2$, and $X^n\rightarrow Y$, weakly in $L^2$. Then, $X=Y$ a.e., i.e. the weak limit in $L^2$ is unique (up to a nullset).
\end{cor}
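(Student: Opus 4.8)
The plan is to combine Fact~1 and Fact~2 from Remark~\ref{3Facts1and2}. The key observation is that the Wiener transform simultaneously characterizes an $L^2$-element uniquely (Fact~1) and converts weak convergence into pointwise convergence of \emph{real numbers} (Fact~2), and real-number limits are automatically unique. So the corollary should follow with essentially no computation, purely by chaining these two facts.

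First I would invoke Fact~2 for the first convergence: since $X^n\to X$ weakly in $L^2$, for every admissible pair $(\phi,\alpha)$ as in Definition~\ref{3DefWienerTrafo} the real sequence $\Wi_{\Omega\times\Omega'}(X^n)(\phi,\alpha)$ converges to $\Wi_{\Omega\times\Omega'}(X)(\phi,\alpha)$. Applying Fact~2 a second time to $X^n\to Y$, the \emph{same} real sequence $\Wi_{\Omega\times\Omega'}(X^n)(\phi,\alpha)$ also converges to $\Wi_{\Omega\times\Omega'}(Y)(\phi,\alpha)$.

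Next I would use uniqueness of limits in $\R$: a convergent real sequence has exactly one limit, so
\begin{align*}
 \Wi_{\Omega\times\Omega'}(X)(\phi,\alpha)=\Wi_{\Omega\times\Omega'}(Y)(\phi,\alpha)
\end{align*}
for every admissible $(\phi,\alpha)$. Hence $X$ and $Y$ possess identical Wiener transforms. Finally, Fact~1 states that an element of $L^2(\Omega\times\Omega',\F_t,P\otimes P')$ is determined uniquely, up to a nullset, by its Wiener transform; since $X$ and $Y$ share the same transform, we conclude $X=Y$ a.e.

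I do not anticipate any genuine obstacle here, as the statement is an immediate corollary of the two recorded facts. The only point warranting minor care is that the corollary is phrased for ``weakly in $L^2$'' without specifying the underlying space, so I would note that the identical argument applies verbatim in each of the three settings of Definition~\ref{3DefWienerTrafo} (on $\Omega\times\Omega'$, on $\Omega$, or on $\Omega'$), since Facts~1 and~2 hold in the same form for each version $\Wi_{\Omega\times\Omega'}$, $\Wi_{\Omega}$, $\Wi_{\Omega'}$ of the Wiener transform.
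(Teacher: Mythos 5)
Your proposal is correct and follows exactly the paper's own argument: apply Fact~2 to both convergences to get that the real sequence $\Wi_{\Omega\times\Omega'}(X^n)(\phi,\alpha)$ converges to both $\Wi_{\Omega\times\Omega'}(X)(\phi,\alpha)$ and $\Wi_{\Omega\times\Omega'}(Y)(\phi,\alpha)$, conclude equality of the transforms by uniqueness of limits in $\R$, and then invoke Fact~1 to get $X=Y$ a.e. Your closing remark about the argument applying verbatim for $\Wi_{\Omega}$ and $\Wi_{\Omega'}$ is a harmless addition not present in the paper, which states the proof only for the product-space transform.
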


\begin{proof}
 This is a direct consequence of Fact 1 and Fact 2: Since $X^n\rightarrow X$ and $X^n\rightarrow Y$, weakly in $L^2$, it holds for all $(\phi,\alpha)$ as in Definition \ref{3DefWienerTrafo},
 \begin{align*}
  \R\ni\Wi_{\Omega\times\Omega'}(X)(\phi,\alpha)\leftarrow\Wi_{\Omega\times\Omega'}(X^n)(\phi,\alpha)\rightarrow\Wi_{\Omega\times\Omega'}(Y)(\phi,\alpha)\in\R,
 \end{align*}
 which implies $\Wi_{\Omega\times\Omega'}(X)(\phi,\alpha)=\Wi_{\Omega\times\Omega'}(Y)(\phi,\alpha)$ and thus $X=Y$ a.e.
\end{proof}

From now on, let $x^n$ denote the sequence that we constructed in the section before and fix $t\in[0,T]$. The following lemmata will help us proving the strong convergence of $(x^n)_{n\in\NN}$ in $L^2$.

\begin{lem}\label{3WeaklyConvergingSubsequence}\hspace{12cm}
\begin{itemize}
 \item[(a)] The sequence $(x^n(t))_{n\in\NN}\subseteq L^2(\Omega\times\Omega',\F_t,P\otimes P')$ contains a subsequence that converges weakly in $L^2(\Omega\times\Omega',\F_t,P\otimes P')$.
 \item[(b)] Moreover, for every $n\in\NN$, $|x^n(t)|^2\in L^2(\Omega\times\Omega',\F_t,P\otimes P')$ and the sequence $(|x^n(t)|^2)_{n\in\NN}\subseteq L^2(\Omega\times\Omega',\F_t,P\otimes P')$ contains a subsequence that converges weakly in $L^2(\Omega\times\Omega',\F_t,P\otimes P')$.
\end{itemize}
\end{lem}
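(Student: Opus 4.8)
The plan is to deduce both statements from uniform $L^p$-bounds on $x^n(t)$ together with the fact that a norm-bounded sequence in a Hilbert space admits a weakly convergent subsequence (by reflexivity of $L^2$). The crucial observation is that, by construction, $x^n(t)=\eta(0)+\overline W^n(t)$, where, by Lemma \ref{3GirsanovWdk}, $\overline W^n$ is a Brownian motion under the equivalent measure $\overline\PP^n$, so that the law of $\overline W^n(t)$ under $\overline\PP^n$ is centred Gaussian with variance $t$, independently of $n$.

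First I would establish part (a) by bounding $\sup_n\bar E[|x^n(t)|^2]$. Passing to the measure $\overline\PP^n$ and applying the Cauchy--Schwarz inequality gives
\begin{align*}
 \bar E[|x^n(t)|^2]=E_{\overline\PP^n}\Big[\frac{d\PP\otimes\PP'}{d\overline\PP^n}|x^n(t)|^2\Big]\leq E_{\overline\PP^n}\Big[\Big|\frac{d\PP\otimes\PP'}{d\overline\PP^n}\Big|^2\Big]^{\frac12}E_{\overline\PP^n}\big[|\eta(0)+\overline W^n(t)|^4\big]^{\frac12}.
\end{align*}
The first factor is bounded by $\exp(\tfrac12 M^2T)$, exactly as in the computation carried out in the proof of \eqref{3CompactnessMalliavin3}, with $M$ the uniform bound on $|b^n|$ used there. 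The second factor is the fourth moment of a (shifted) Gaussian, hence finite and independent of $n$. Thus $\sup_n\bar E[|x^n(t)|^2]<\infty$, and reflexivity of $L^2(\Omega\times\Omega',\F_t,\PP\otimes\PP')$ yields the desired weakly convergent subsequence.

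For part (b) I would run the identical argument one power higher. The integrability $|x^n(t)|^2\in L^2(\Omega\times\Omega',\F_t,\PP\otimes\PP')$ is equivalent to $\bar E[|x^n(t)|^4]<\infty$, and the same change of measure bounds this by $\exp(\tfrac12 M^2T)\,E_{\overline\PP^n}[|\eta(0)+\overline W^n(t)|^8]^{\frac12}$, a finite eighth Gaussian moment that is again uniform in $n$. Consequently $\sup_n\bar E[|x^n(t)|^4]<\infty$, i.e.\ $(|x^n(t)|^2)_{n\in\NN}$ is norm-bounded in $L^2(\Omega\times\Omega',\F_t,\PP\otimes\PP')$, and reflexivity once more provides a weakly convergent subsequence.

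Since the laws involved are explicit Gaussian ones, no delicate estimate is required; the only point demanding care --- and the reason the bounds are uniform in $n$ --- is that the $L^2$-norm of the Radon--Nikodym density is controlled by the $n$-independent constant $\exp(\tfrac12 M^2T)$, and that the law of $\overline W^n$ under $\overline\PP^n$ is, for every $n$, that of a standard Brownian motion (see Corollary \ref{3GirsanovWdkCor} and Remark \ref{3GirsanovWdkRemark}). Modulo this observation the statement is a routine application of weak sequential compactness in Hilbert space.
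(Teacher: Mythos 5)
Your proof is correct, but it takes a genuinely different route from the paper. The paper never invokes Girsanov here: it bounds $\bar E[|x^n(t)|^4]$ directly from the equation, writing $x^n(t)=\eta(0)+\int_0^tb^n(x^n_s+\eps\BB(s))ds+W(t)$, using $(a+b+c)^4\leq 27(a^4+b^4+c^4)$, Jensen's inequality on the time integral, the uniform bound $\|b^n\|_\infty\leq\|b\|_\infty$ (from the mollification), and the Gaussian moment $\bar E[|W(t)|^4]=3t^2$, which yields the explicit $n$-independent bound $27|\eta(0)|^4+27t^4\|b\|_\infty^4+81t^2$; part (a) then follows from part (b) by H\"{o}lder, and reflexivity finishes exactly as you do. Your argument instead passes to $\overline\PP^n$, under which $x^n(t)-\eta(0)=\overline W^n(t)$ is a Brownian motion, pays a factor $E_{\overline\PP^n}\big[\big|\tfrac{d\PP\otimes\PP'}{d\overline\PP^n}\big|^2\big]^{1/2}\leq e^{\frac12 M^2T}$ via Cauchy--Schwarz, and reads off Gaussian fourth and eighth moments uniform in $n$. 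This is valid: the density bound and the martingale property (Novikov) are exactly what Lemma \ref{3GirsanovWdk} and the computation in the proof of \eqref{3CompactnessMalliavin3} provide, so nothing is missing, and your route has the virtue of recycling machinery the paper uses anyway for the compactness estimates. What it costs is that it is heavier than necessary --- the boundedness of $b^n$ alone suffices without any change of measure --- and note that Cauchy--Schwarz forces you to control one moment order higher than the one you are bounding (the fourth moment for the $L^2$ bound in (a), the eighth for (b)), whereas the paper's direct estimate works at the natural order; both give bounds uniform in $n$, which is all the weak sequential compactness argument requires.
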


\begin{proof}
 First, we prove \emph{(b)}. Since every bounded sequence in a reflexive space (which $L^2(\Omega\times\Omega',\F_t,P\otimes P')$ is) has a weakly convergent subsequence, we only need to show that $(\||x^n(t)|^2\|_{L^2})_{n\in\NN}$ is bounded. In order to do so, note that for all $a,b,c\in\R$,
 $$(a+b+c)^4=((a+b+c)^2)^2\leq(3a^2+3b^2+3c^2)^2\leq27a^4+27b^4+27c^4.$$ Now, by application of Jensen's inequality, Fubini's theorem and the fact that $\mathcal{L}_{P\otimes P'}(W(t))=\N(0,t)$, we have
 \begin{align*}
  \big\||x^n(t)|^2\big\|_{L^2}^2&=\bar E[|x^n(t)|^4]=\bar E\Big[\big|\eta(0)+\int_0^tb^n(x^n_s+\eps\BB(s))ds+W(t)\big|^4\Big]\\
  &\leq 27|\eta(0)|^4+27\bar E\Big[\big|\int_0^tb^n(x^n_s+\eps\BB(s))ds\big|^4\Big]+27\bar E[|W(t)|^4]\\
  &\leq 27|\eta(0)|^2+27t^3\int_0^t\bar E\big[|b^n(x^n_s+\eps\BB(s))|^4\big]ds+81t^2\\
  &\leq 27|\eta(0)|^4+27t^4\|b^n\|_\infty^4+81t^2\\
  &\leq 27|\eta(0)|^4+27t^4\|b\|_\infty^2+81t^2<\infty,\quad\text{independently of }n.
 \end{align*}
 In order to prove \emph{(a)}, we apply H\"{o}lder's inequality and get
 \begin{align*}
  \|x^n(t)\|_{L^2}^2=\bar E[|x^n(t)|^2]\leq\Big(\bar E[|x^n(t)|^4]\Big)^{\frac{1}{2}}\leq\sqrt{27|\eta(0)|^4+27t^4\|b\|_\infty^2+81t^2}<\infty,
 \end{align*}
 independently of $n$.
\end{proof}

We know from Lemma \ref{3WeaklyConvergingSubsequence} \emph{(a)} that there exists an $x(t)\in L^2(\Omega\times\Omega',\F_t,P\otimes P')$ and a subsequence $(n_k)_{k\in\NN}$ s.t. $x^{n_k}(t)\stackrel{k\rightarrow\infty}{\longrightarrow} x(t)$, weakly in $L^2(\Omega\times\Omega',\F_t,P\otimes P')$. Moreover, by Lemma \ref{3WeaklyConvergingSubsequence} \emph{(b)}, applied to the sequence $(x^{n_k}(t))_{k\geq1}$, we know that there exists an $y(t)\in L^2(\Omega\times\Omega',\F_t,P\otimes P')$ and a subsequence $(n_{k_l})_{l\in\NN}$ s.t. still $x^{n_{k_l}}(t)\stackrel{l\rightarrow\infty}{\longrightarrow} x(t)$, weakly in $L^2(\Omega\times\Omega',\F_t,P\otimes P')$ \emph{and} $\big(x^{n_{k_l}}(t)\big)^2\stackrel{l\rightarrow\infty}{\longrightarrow} y(t)$, weakly in $L^2(\Omega\times\Omega',\F_t,P\otimes P')$. Let w.l.o.g. already $x^{n}(t)\stackrel{n\rightarrow\infty}{\longrightarrow} x(t)$, weakly in $L^2(\Omega\times\Omega',\F_t,P\otimes P')$ and $\big(x^{n}(t)\big)^2\stackrel{n\rightarrow\infty}{\longrightarrow} y(t)$, weakly in $L^2(\Omega\times\Omega',\F_t,P\otimes P')$ (otherwise we re-define the sequence $x^n(t)$ as to be this subsequence that we just found). Note that $x(t)$ is $\F_t$-measurable as an element of $L^2(\Omega\times\Omega',\F_t,P\otimes P')$.\\ 
\par
In this section we will prove that $x^n(t)$ converges indeed \emph{strongly} in $L^2(\Omega\times\Omega',\F_t,P\otimes P')$ to $x(t)$. The next lemma contains some technical results that we need in order to do so. Let us first introduce another notation: recall that $x^n(t)$, $x(t)$ denote functions $\Omega\times\Omega'\rightarrow\R$. We denote by $x^n(t,\cdot,\omega')$ and $x(t,\cdot,\omega')$ these functions where we plug in $\omega'\in\Omega'$. In other words, $x^n(t,\cdot,\omega')$ and $x(t,\cdot,\omega')$ are functions $\Omega\rightarrow\R$.

\begin{lem}\label{3WienerRepresentations}
 The following representations hold true
 \begin{itemize}
  \item[(a)] For a.e. $\omega'\in\Omega'$ we have
  \begin{align}
  \begin{split}
   &\Wi_\Omega\big(|x^n(t,\cdot,\omega')|^2\big)(\phi)\\
   &=E\bigg[(\eta(0)+W(t))^2\mathcal{E}\Big(\int_0^\cdot b^n(F_1(W_\cdot)_s+\eps w_1B^{H_1}(s,\omega'),\dots)dW(s)\Big)_t\\
   &\quad\cdot\exp\Big(-\int_0^t\phi(s)b^n(F_1(W_\cdot)_s+\eps w_1B^{H_1}(s,\omega'),\dots)ds\Big)\exp\Big(\int_0^t\phi(s)dW(s)\Big)\bigg]
  \end{split}
  \end{align}
  for every $\phi$ in a given dense subset in $L^2([0,T])$.
  \item[(b)] For a.e. $\omega'\in\Omega'$ we have
  \begin{align}
  \begin{split}
   &\Wi_\Omega\big(x^n(t,\cdot,\omega')\big)(\phi)\\
   &=E\bigg[(\eta(0)+W(t))\mathcal{E}\Big(\int_0^\cdot b^n(F_1(W_\cdot)_s+\eps w_1B^{H_1}(s,\omega'),\dots)dW(s)\Big)_t\\
   &\quad\cdot\exp\Big(-\int_0^t\phi(s)b^n(F_1(W_\cdot)_s+\eps w_1B^{H_1}(s,\omega'),\dots)ds\Big)\exp\Big(\int_0^t\phi(s)dW(s)\Big)\bigg]
  \end{split}
  \end{align}
  for every $\phi$ in a given dense subset in $L^2([0,T])$.
  \item[(c)] For a.e. $\omega'\in\Omega'$ we have
  \begin{align}
  \begin{split}
   &\Wi_\Omega\big(x(t,\cdot,\omega')\big)(\phi)\\
   &=E\bigg[(\eta(0)+W(t))\mathcal{E}\Big(\int_0^\cdot b(F_1(W_\cdot)_s+\eps w_1B^{H_1}(s,\omega'),\dots)dW(s)\Big)_t\\
   &\quad\cdot\exp\Big(-\int_0^t\phi(s)b(F_1(W_\cdot)_s+\eps w_1B^{H_1}(s,\omega'),\dots)ds\Big)\exp\Big(\int_0^t\phi(s)dW(s)\Big)\bigg]
  \end{split}
  \end{align}
  for every $\phi$ in a given dense subset in $L^2([0,T])$.
  \item[(d)] For a.e. $\omega'\in\Omega'$ we have
  \begin{align}
  \begin{split}
   &\Wi_\Omega\big(y(t,\cdot,\omega')\big)(\phi)\\
   &=E\bigg[(\eta(0)+W(t))^2\mathcal{E}\Big(\int_0^\cdot b(F_1(W_\cdot)_s+\eps w_1B^{H_1}(s,\omega'),\dots)dW(s)\Big)_t\\
   &\quad\cdot\exp\Big(-\int_0^t\phi(s)b(F_1(W_\cdot)_s+\eps w_1B^{H_1}(s,\omega'),\dots)ds\Big)\exp\Big(\int_0^t\phi(s)dW(s)\Big)\bigg]
  \end{split}
  \end{align}
  for every $\phi$ in a given dense subset in $L^2([0,T])$.
 \end{itemize}
\end{lem}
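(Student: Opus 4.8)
The plan is to obtain \emph{(a)} and \emph{(b)} directly from the Girsanov change of measure of Lemma \ref{3GirsanovWdk}, and then to deduce \emph{(c)} and \emph{(d)} by letting $n\to\infty$. \textbf{Parts (a) and (b).} Fix $\omega'\in\Omega'$ and recall from Corollary \ref{3GirsanovWdkCor} that $x^n(t)=\eta(0)+\overline W^n(t)$ and $\<x^n_s,e_i\>=F_i(s,\overline W^n_s)$, so that $|x^n(t)|^2=(\eta(0)+\overline W^n(t))^2$. Using \eqref{3Wdk} I would first rewrite
\begin{align*}
\int_0^t\phi(s)\,dW(s)=\int_0^t\phi(s)\,d\overline W^n(s)-\int_0^t\phi(s)\,b^n\big(F_1(s,\overline W^n_s)+\eps w_1B^{H_1}(s),\dots\big)\,ds.
\end{align*}
I then pass from $\PP$ to $\overline\PP^{n}$ in $\Wi_\Omega(|x^n(t,\cdot,\omega')|^2)(\phi)=E[|x^n(t)|^2\exp(\int_0^t\phi\,dW)]$ by inserting the density $\tfrac{d\PP\otimes\PP'}{d\overline\PP^{n}}$ computed in Remark \ref{3GirsanovWdkRemark} (which, for fixed $\omega'$, is a functional of $W$ alone). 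Finally, invoking the fact used already in the proof of Lemma \ref{3VI_relcomp}, namely that for each fixed $\omega'$ the law of $\overline W^n$ under $\overline\PP^{n}$ coincides with the law of $W$ under $\PP$, I replace $\overline W^n$ by $W$ throughout. This yields exactly the asserted identity; \emph{(b)} is verbatim with the square reduced to a first power.

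\textbf{Parts (c) and (d).} I denote by $R_n(\omega',\phi)$ the right-hand side of \emph{(b)} and by $R_\infty(\omega',\phi)$ the same expression with $b^n$ replaced by $b$. Since $b^n\to b$ pointwise with $\|b^n\|_\infty\le\|b\|_\infty$, standard stability of the stochastic exponential $\mathcal{E}(\int_0^\cdot b^n(\cdots)\,dW)_t$ together with dominated convergence gives $R_n(\omega',\phi)\to R_\infty(\omega',\phi)$ for a.e. $\omega'$ and every admissible $\phi$; by \emph{(b)} this says $\Wi_\Omega(x^n(t,\cdot,\omega'))(\phi)\to R_\infty(\omega',\phi)$. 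The remaining task is to identify $R_\infty(\omega',\phi)$ with $\Wi_\Omega(x(t,\cdot,\omega'))(\phi)$.

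To do so, set $g_\alpha:=\exp\big(\sum_{i\ge1}\sum_{j=1}^{l_i}\alpha_{i,j}w_i(B^{H_i}(t_{i,j})-B^{H_i}(t_{i,j-1}))\big)$ and note the Fubini factorization
\begin{align*}
\Wi_{\Omega\times\Omega'}(X)(\phi,\alpha)=E'\big[g_\alpha\,\Wi_\Omega(X(\cdot,\omega'))(\phi)\big],\qquad X\in L^2(\Omega\times\Omega',\F_t,\PP\otimes\PP').
\end{align*}
Because $x^n(t)\to x(t)$ weakly in $L^2$ by Lemma \ref{3WeaklyConvergingSubsequence}, Fact 2 of Remark \ref{3Facts1and2} gives $\Wi_{\Omega\times\Omega'}(x^n(t))(\phi,\alpha)\to\Wi_{\Omega\times\Omega'}(x(t))(\phi,\alpha)$ for all $\alpha$. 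On the other hand, representation \emph{(b)} together with Hölder's inequality bounds $|\Wi_\Omega(x^n(t,\cdot,\omega'))(\phi)|$ uniformly in $n$ and independently of $\omega'$: the $\omega'$-dependence enters only through the uniformly bounded drift, while $(\eta(0)+W(t))$, $\mathcal{E}(\cdot)_t$ and $\exp(\int_0^t\phi\,dW)$ carry $n$-uniform $L^p$-norms. Hence dominated convergence on $\Omega'$ lets me pass the limit inside the factorization, and equating the two limits yields
\begin{align*}
E'\big[g_\alpha\,R_\infty(\omega',\phi)\big]=E'\big[g_\alpha\,\Wi_\Omega(x(t,\cdot,\omega'))(\phi)\big]
\end{align*}
for every $\alpha$. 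By Fact 1 (injectivity of $\Wi_{\Omega'}$) the two $\omega'$-integrands agree for a.e. $\omega'$, which is precisely \emph{(c)}; part \emph{(d)} is the same argument with $(x^n(t))^2\to y(t)$ weakly (Lemma \ref{3WeaklyConvergingSubsequence}) and representation \emph{(a)}.

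\textbf{Main obstacle.} The delicate point is that weak $L^2$-convergence on the product space $\Omega\times\Omega'$ does not descend to the individual $\omega'$-fibres, so $\Wi_\Omega(x^n(t,\cdot,\omega'))(\phi)\to\Wi_\Omega(x(t,\cdot,\omega'))(\phi)$ cannot be claimed directly. Circumventing this through the Fubini factorization of the Wiener transform, the uniform-in-$(n,\omega')$ bound coming from \emph{(b)}, and the injectivity of the $\Omega'$-Wiener transform is the crux of \emph{(c)} and \emph{(d)}.
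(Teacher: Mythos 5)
Your proposal is correct, but for parts \emph{(a)} and \emph{(b)} it takes a genuinely different route from the paper. The paper never performs Girsanov fibrewise: it computes the iterated transform $\Wi_{\Omega'}\big(\omega'\mapsto\Wi_\Omega(\cdot)(\phi)\big)(\alpha)$, applies the product-space change of measure $\widetilde\PP^n$ on $\Omega\times\Omega'$, uses that $(\widetilde W^n,\BB)$ under $\widetilde\PP^n$ has the law of $(W,\BB)$ under $\PP\otimes\PP'$, and then invokes injectivity of $\Wi_{\Omega'}$ (Fact 1, via Corollary \ref{3UniquenessWeakLimit}) to descend to the $\omega'$-fibres --- i.e.\ exactly the mechanism you reserve for \emph{(c)} and \emph{(d)}. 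Your fibrewise argument is nonetheless legitimate: for $\PP'$-a.e.\ fixed $\omega'$ the section $x^n(\cdot,\cdot,\omega')$ is $(\F^W_t)$-adapted and solves the fibre SDDE with the fBm paths frozen, the drift is bounded so Novikov holds on $(\Omega,\PP)$, and the density of Remark \ref{3GirsanovWdkRemark} is then a functional of $W$ alone; this buys you the identity for \emph{all} $\phi\in L^2([0,t])$ outside a single $\phi$-independent null set, whereas the paper's route gives it only for a countable dense $\Phi$ after intersecting the sets $\Omega'_1(\phi)$. What the paper's route buys in exchange is that it never has to justify the fibreization (the Fubini argument that the section solves the fibre equation $\PP$-a.s.\ for a.e.\ $\omega'$), which your proposal compresses into a citation of Lemma \ref{3VI_relcomp} and which you should spell out. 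For \emph{(c)} and \emph{(d)} your argument --- weak convergence in $L^2(\PP\otimes\PP')$, dominated convergence through the factorization $\Wi_{\Omega\times\Omega'}(X)(\phi,\alpha)=E'\big[g_\alpha\,\Wi_\Omega(X(\cdot,\omega'))(\phi)\big]$ with your $g_\alpha$, and injectivity of $\Wi_{\Omega'}$ --- coincides with the paper's proof, and your ``main obstacle'' paragraph correctly identifies why the fibrewise shortcut cannot work there. One bookkeeping omission: since the null set in \emph{(c)}/\emph{(d)} depends on $\phi$, you must still choose a countable dense $\Phi\subseteq L^2([0,t])$ and intersect, as the paper does via $\Omega'_3=\bigcap_{\phi\in\Phi}\Omega'_3(\phi)$, to obtain the quantifier order asserted in the lemma.
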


\begin{proof}
 In order to show \emph{(a)}, we fix, for the moment $\phi\in L^2([0,t])$. Then, for all $\alpha$ as in Definition \ref{3DefWienerTrafo} with corresponding $0<t_1<\dots<t_{i,l_i}\leq t$, we have
\begin{align*}
  &\Wi_{\Omega'}\Big(\omega'\mapsto\Wi_\Omega\big(|x^n(t,\cdot,\omega')|^2\big)(\phi)\Big)(\alpha)\\
  &=\int_{\Omega'}\Wi_\Omega\big(|x^n(t,\cdot,\omega')|^2\big)(\phi)\exp\Big(\sum_{i\geq1}\sum_{j=1}^{l_i}\alpha_{i,j}w_i(B^{H_i}(t_{i,j},\omega')-B^{H_i}(t_{i,j-1},\omega'))\Big)P'(d\omega')\\
  &=\int_{\Omega'}\int_{\Omega}\big|x^n(t,\omega,\omega')\big|^2\exp\Big(\int_0^t\phi(s)dW(s,\omega)\Big)\\
  &\quad\cdot\exp\Big(\sum_{i\geq1}\sum_{j=1}^{l_i}\alpha_{i,j}w_i(B^{H_i}(t_{i,j},\omega')-B^{H_i}(t_{i,j-1},\omega'))\Big)P(d\omega)P'(d\omega')\\
  &=\bar E\Big[|x^n(t)|^2\exp\Big(\int_0^t\phi(s)dW(s)+\sum_{i\geq1}\sum_{j=1}^{l_i}\alpha_{i,j}w_i(B^{H_i}(t_{i,j})-B^{H_i}(t_{i,j-1}))\Big)\Big].
 \end{align*}
 
Defining

\begin{align*}
 \widetilde W^n(t):=W(t)+\int_0^tb^n(x^n_s+\eps\BB(s))ds,
\end{align*}

we have that $x^n(t)=\eta(0)+\widetilde W^n(t)$ and, by Girsanov's theorem, $\widetilde W^n$ is a Brownian motion under the probability measure $\widetilde P^n$ given by

\begin{align*}
 \frac{d\widetilde P^n}{dP\otimes P'}\Big|_{\F_t}&=\mathcal{E}\Big(-\int_0^\cdot b^n(x^n_s+\eps\BB(s))dW(s)\Big)_t,
\end{align*}

which is equivalent to

\begin{align*}
 \frac{dP\otimes P'}{d\widetilde P^n}\Big|_{\F_t}&=\mathcal{E}\Big(\int_0^\cdot b^n(x^n_s+\eps\BB(s))d\widetilde W^n(s)\Big)_t\\
 &=\mathcal{E}\Big(\int_0^\cdot b^n(F_1(\widetilde W^n_\cdot)_s+\eps w_1B^{H_1}(s),\dots)d\widetilde W^n(s)\Big)_t.
\end{align*}

Plugging this in and exploiting the fact that the process $(\widetilde W^n,\BB)$ has under $\widetilde P^n$ the same law as the process $(W,\BB)$ has under $P\otimes P'$, we achieve

\begin{align*}
  &\Wi_{\Omega'}\Big(\omega'\mapsto\Wi_\Omega\big(|x^n(t,\cdot,\omega')|^2\big)(\phi)\Big)(\alpha)\\
  &=E_{\widetilde P^n}\Big[\big|\eta(0)+\widetilde W^n(t)\big|^2\mathcal{E}\Big(\int_0^\cdot b^n(F_1(\widetilde W^n_\cdot)_s+\eps w_1B^{H_1}(s),\dots)d\widetilde W^n(s)\Big)_t\\
  &\quad\cdot\exp\Big(\int_0^t\phi(s)d\widetilde W^n(s)-\int_0^t\phi(s)b^n(F_1(\widetilde W^n_\cdot)_s+\eps w_1B^{H_1}(s),\dots)ds\\
  &\quad+\sum_{i\geq1}\sum_{j=1}^{l_i}\alpha_{i,j}w_i(B^{H_i}(t_{i,j})-B^{H_i}(t_{i,j-1}))\Big)\Big]\\
  &=\bar E\Big[\big|\eta(0)+W(t)\big|^2\mathcal{E}\Big(\int_0^\cdot b^n(F_1(W_\cdot)_s+\eps w_1B^{H_1}(s),\dots)dW(s)\Big)_t\\
  &\quad\cdot\exp\Big(\int_0^t\phi(s)dW(s)-\int_0^t\phi(s)b^n(F_1(W_\cdot)_s+\eps w_1B^{H_1}(s),\dots)ds\\
  &\quad+\sum_{i\geq1}\sum_{j=1}^{l_i}\alpha_{i,j}w_i(B^{H_i}(t_{i,j})-B^{H_i}(t_{i,j-1}))\Big)\Big]\\
  &=E'\bigg[E\Big[\big|\eta(0)+W(t)\big|^2\mathcal{E}\Big(\int_0^\cdot b^n(F_1(W_\cdot)_s+\eps w_1B^{H_1}(s),\dots)dW(s)\Big)_t\\
  &\quad\cdot\exp\Big(\int_0^t\phi(s)dW(s)-\int_0^t\phi(s)b^n(F_1(W_\cdot)_s+\eps w_1B^{H_1}(s),\dots)ds\Big)\Big]\\
  &\quad\cdot\exp\Big(\sum_{i\geq1}\sum_{j=1}^{l_i}\alpha_{i,j}w_i(B^{H_i}(t_{i,j})-B^{H_i}(t_{i,j-1}))\Big)\bigg]\\
  &=\Wi_{\Omega'}\Big(\omega'\mapsto E\Big[\big|\eta(0)+W(t)\big|^2\mathcal{E}\Big(\int_0^\cdot b^n(F_1(W_\cdot)_s+\eps w_1B^{H_1}(s,\omega'),\dots)dW(s)\Big)_t\\
  &\quad\cdot\exp\Big(\int_0^t\phi(s)dW(s)-\int_0^t\phi(s)b^n(F_1(W_\cdot)_s+\eps w_1B^{H_1}(s,\omega'),\dots)ds\Big)\Big]\Big).
 \end{align*}

 It follows now from Corollary \ref{3UniquenessWeakLimit} that for our particular choice of $\phi$,
 
 \begin{align*}
  &\Wi_\Omega\big(|x^n(t,\cdot,\omega')|^2\big)(\phi)\\
   &=E\bigg[(\eta(0)+W(t))^2\mathcal{E}\Big(\int_0^\cdot b^n(F_1(W_\cdot)_s+\eps w_1B^{H_1}(s),\dots)dW(s)\Big)_t\\
   &\quad\cdot\exp\Big(-\int_0^t\phi(s)b^n(F_1(W_\cdot)_s+\eps w_1B^{H_1}(s),\dots)ds\Big)\exp\Big(\int_0^t\phi(s)dW(s)\Big)\bigg],
 \end{align*}

 for $P'$-a.e. $\omega'\in\Omega'$. Or in other words: there exists an $\Omega'_1(\phi)\subseteq\Omega'$ s.t. $P'(\Omega'_1(\phi))=1$ and for all $\omega'\in\Omega'_1(\phi)$ the above equality holds. Now, since $L^2([0,t])$ is \emph{separable}, there is a countable dense set $\Phi\subseteq L^2([0,t])$. Setting
 
 \begin{align*}
  \Omega'_1:=\bigcap_{\phi\in\Phi}\Omega'_1(\phi),
 \end{align*}

 we still have $P'(\Omega'_1)=1$ and the equation holds for all $\omega'\in\Omega'_1$.\\
 \par
 The proof of \emph{(b)} works exactly the same way as the proof of \emph{(a)}. Now, with the same dense set $\Phi\subseteq L^2([0,t])$, we define
 \begin{align*}
  \Omega'_2:=\bigcap_{\phi\in\Phi}\Omega'_2(\phi),
 \end{align*}
 and have $P'(\Omega'_2)=1$.\\
 \par
 In order to show \emph{(c)}, fix again $\phi\in L^2([0,t])$. Note that, by Lemma \ref{3WeaklyConvergingSubsequence}, $X^n\stackrel{n\rightarrow\infty}{\longrightarrow}X(t)$ weakly in $L^2(\Omega\times\Omega',\F_t,P\otimes P')$. It follows from Remark \ref{3Facts1and2} that, for all $\alpha$ as in Definition \ref{3DefWienerTrafo},
 
 \begin{align*}
  &\Wi_{\Omega'}\Big(\omega'\mapsto\Wi_\Omega\big(x^n(t,\cdot,\omega')\big)(\phi)\Big)(\alpha)\\
  &=\bar E\Big[x^n(t)\exp\Big(\int_0^t\phi(s)dW(s)+\sum_{i\geq1}\sum_{j=1}^{l_i}\alpha_{i,j}w_i(B^{H_i}(t_{i,j})-B^{H_i}(t_{i,j-1}))\Big)\Big]\\
  &=\Wi_{\Omega\times\Omega'}\big(x^n(t)\big)(\phi,\alpha)\\
  &\rightarrow\Wi_{\Omega\times\Omega'}\big(x(t)\big)(\phi,\alpha),\quad\text{as }n\rightarrow\infty.
 \end{align*}
 
 On the other hand, by \emph{(b)}, we know that
 \begin{align*}
   &\Wi_\Omega\big(x^n(t,\cdot,\omega')\big)(\phi)\\
   &=E\bigg[(\eta(0)+W(t))\mathcal{E}\Big(\int_0^\cdot b^n(F_1(W_\cdot)_s+\eps w_1B^{H_1}(s),\dots)dW(s)\Big)_t\\
   &\quad\cdot\exp\Big(-\int_0^t\phi(s)b^n(F_1(W_\cdot)_s+\eps w_1B^{H_1}(s),\dots)ds\Big)\exp\Big(\int_0^t\phi(s)dW(s)\Big)\bigg],
 \end{align*}
 and thus, by the pointwise convergence of $b^n$ to $b$ and the boundedness of $b^n$, it follows from dominated convergence that
 \begin{align*}
  &\Wi_{\Omega'}\Big(\omega'\mapsto\Wi_\Omega\big(x^n(t,\cdot,\omega')\big)(\phi)\Big)(\alpha)\\
  &=\bar E\Big[(\eta(0)+W(t))\mathcal{E}\Big(\int_0^\cdot b^n(F_1(W_\cdot)_s+\eps w_1B^{H_1}(s),\dots)dW(s)\Big)_t\\
   &\quad\cdot\exp\Big(-\int_0^t\phi(s)b^n(F_1(W_\cdot)_s+\eps w_1B^{H_1}(s),\dots)ds\Big)\exp\Big(\int_0^t\phi(s)dW(s)\Big)\\
   &\quad\cdot\exp\Big(\sum_{i\geq1}\sum_{j=1}^{l_i}\alpha_{i,j}w_i(B^{H_i}(t_{i,j})-B^{H_i}(t_{i,j-1}))\Big)\Big]\\
  &\stackrel{n\rightarrow\infty}{\longrightarrow}\bar E\Big[(\eta(0)+W(t))\mathcal{E}\Big(\int_0^\cdot b(F_1(W_\cdot)_s+\eps w_1B^{H_1}(s),\dots)dW(s)\Big)_t\\
   &\quad\cdot\exp\Big(-\int_0^t\phi(s)b(F_1(W_\cdot)_s+\eps w_1B^{H_1}(s),\dots)ds\Big)\exp\Big(\int_0^t\phi(s)dW(s)\Big)\\
   &\quad\cdot\exp\Big(\sum_{i\geq1}\sum_{j=1}^{l_i}\alpha_{i,j}w_i(B^{H_i}(t_{i,j})-B^{H_i}(t_{i,j-1}))\Big)\Big]\\
   &=\Wi_{\Omega'}\Big(\omega'\mapsto E\Big[(\eta(0)+W(t))\mathcal{E}\Big(\int_0^\cdot b(F_1(W_\cdot)_s+\eps w_1B^{H_1}(s),\dots)dW(s)\Big)_t\\
   &\quad\cdot\exp\Big(-\int_0^t\phi(s)b(F_1(W_\cdot)_s+\eps w_1B^{H_1}(s),\dots)ds\Big)\exp\Big(\int_0^t\phi(s)dW(s)\Big)\Big]\Big)(\alpha).
 \end{align*}
 This proves that there exists an $\Omega'_3(\phi)\subseteq\Omega'$ with $P'(\Omega'_3(\phi))=1$ s.t. for all $\omega'\in\Omega'_3(\phi)$
 \begin{align*}
  &\Wi_\Omega\big(x^n(t,\cdot,\omega')\big)(\phi)\\
  &=E\Big[(\eta(0)+W(t))\mathcal{E}\Big(\int_0^\cdot b(F_1(W_\cdot)_s+\eps w_1B^{H_1}(s,\omega'),\dots)dW(s)\Big)_t\\
   &\quad\cdot\exp\Big(-\int_0^t\phi(s)b(F_1(W_\cdot)_s+\eps w_1B^{H_1}(s,\omega'),\dots)ds\Big)\exp\Big(\int_0^t\phi(s)dW(s)\Big)\Big]
 \end{align*}
 Now, with the same dense set $\Phi\subseteq L^2([0,t])$, we define
 \begin{align*}
  \Omega'_3:=\bigcap_{\phi\in\Phi}\Omega'_3(\phi).
 \end{align*}
 The proof of \emph{(d)} follows the same lines as the proof of \emph{(c)}: Since $\big(x^n(t)\big)^2\rightarrow y(t)$, weakly in $L^2(\Omega\times\Omega',\F_t,P\otimes P')$, we have
 \begin{align*}
  \Wi_{\Omega'}\Big(\omega'\mapsto\Wi_\Omega\big(|x^n(t,\cdot,\omega')|^2\big)(\phi)\Big)(\alpha)&=\Wi_{\Omega\times\Omega'}\big(|x^n(t)|^2\big)(\phi,\alpha)\\
  &\rightarrow\Wi_{\Omega\times\Omega'}\big(y(t)\big)(\phi,\alpha),\quad\text{as }n\rightarrow\infty.
 \end{align*}
 On the other hand, \emph{(a)} and the dominated convergence theorem yield
 \begin{align*}
  &\Wi_{\Omega'}\Big(\omega'\mapsto\Wi_\Omega\big(|x^n(t,\cdot,\omega')|^2\big)(\phi)\Big)(\alpha)\\
  &\rightarrow \Wi_{\Omega'}\Big(\omega'\mapsto E\Big[\big|\eta(0)+W(t)\big|^2\mathcal{E}\Big(\int_0^\cdot b(F_1(W_\cdot)_s+\eps w_1B^{H_1}(s,\omega'),\dots)dW(s)\Big)_t\\
  &\quad\cdot\exp\Big(\int_0^t\phi(s)dW(s)-\int_0^t\phi(s)b(F_1(W_\cdot)_s+\eps w_1B^{H_1}(s,\omega'),\dots)ds\Big)\Big]\Big).
 \end{align*}
 Therefore, there exists $\Omega'_4(\phi)\subseteq\Omega'$ with $P'(\Omega'_4(\phi))=1$ s.t. for all $\omega'\in\Omega'_4(\phi)$
 \begin{align*}
  &\Wi_\Omega\big(y(t,\cdot,\omega')\big)(\phi)\\
  &=E\Big[|\eta(0)+W(t)|^2\mathcal{E}\Big(\int_0^\cdot b(F_1(W_\cdot)_s+\eps w_1B^{H_1}(s,\omega'),\dots)dW(s)\Big)_t\\
   &\quad\cdot\exp\Big(-\int_0^t\phi(s)b(F_1(W_\cdot)_s+\eps w_1B^{H_1}(s,\omega'),\dots)ds\Big)\exp\Big(\int_0^t\phi(s)dW(s)\Big)\Big].
 \end{align*}
 Finally, with the same dense set $\Phi\subseteq L^2([0,t])$, we define
 \begin{align*}
  \Omega'_4:=\bigcap_{\phi\in\Phi}\Omega'_4(\phi).
 \end{align*} 
\end{proof}

\begin{cor}\label{3CorollaryFromWienerRepresentations}
 It holds for every $\omega'\in\Omega'_1\cap\Omega'_2\cap\Omega'_3\cap\Omega'_4$:
 \begin{align*}
  E\big[\big(x^n(t,\cdot,\omega')\big)^2\big]&=E\Big[(\eta(0)+W(t))^2\mathcal{E}\Big(\int_0^\cdot b^n(F_1(W_\cdot)_s+\eps w_1B^{H_1}(s,\omega'),\dots)dW(s)\Big)_t\Big]\\
  &\rightarrow E\Big[(\eta(0)+W(t))^2\mathcal{E}\Big(\int_0^\cdot b(F_1(W_\cdot)_s+\eps w_1B^{H_1}(s,\omega'),\dots)dW(s)\Big)_t\Big]\\
  &=E\big[y(t,\cdot,\omega')\big],\quad\text{as }n\rightarrow\infty.
 \end{align*}
\end{cor}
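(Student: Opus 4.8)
The plan is to obtain Corollary~\ref{3CorollaryFromWienerRepresentations} from Lemma~\ref{3WienerRepresentations} by specializing the Wiener transform to the trivial test function $\phi\equiv0$, and to supply the middle limit by a convergence-in-probability argument combined with uniform integrability. The starting observation is that, directly from Definition~\ref{3DefWienerTrafo}, one has $\Wi_\Omega(X)(0)=E[X]$, since $\exp(\int_0^t 0\,dW(s))=1$. Choosing the countable dense set $\Phi\subseteq L^2([0,t])$ constructed in the proof of Lemma~\ref{3WienerRepresentations} so that $0\in\Phi$ (which is harmless), the representations of parts~\emph{(a)} and~\emph{(d)} may be evaluated at $\phi=0$; note that only these two parts are needed, so it suffices to have $\omega'\in\Omega'_1\cap\Omega'_4$.

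First I would read off the two outer equalities. Setting $\phi=0$ in Lemma~\ref{3WienerRepresentations}\emph{(a)} collapses both factors $\exp(-\int_0^t\phi(s)b^n(\dots)ds)$ and $\exp(\int_0^t\phi(s)dW(s))$ to $1$, yielding for $\omega'\in\Omega'_1$
\begin{align*}
 E\big[(x^n(t,\cdot,\omega'))^2\big]=E\Big[(\eta(0)+W(t))^2\mathcal{E}\Big(\int_0^\cdot b^n(F_1(W_\cdot)_s+\eps w_1B^{H_1}(s,\omega'),\dots)dW(s)\Big)_t\Big].
\end{align*}
The identical evaluation of part~\emph{(d)} gives, for $\omega'\in\Omega'_4$, the same formula with $b$ replacing $b^n$ on the right and $E[y(t,\cdot,\omega')]$ on the left. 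These are exactly the first and last equalities asserted in the corollary, so it only remains to link them by the convergence of the middle expressions.

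For the middle limit I would fix $\omega'$ and work on $(\Omega,\F^W_t,\PP)$, where the only randomness is $W$ (each $B^{H_i}(\cdot,\omega')$ being a fixed deterministic path). For a.e.\ $s>0$ the argument $F_i(W_\cdot)_s+\eps w_iB^{H_i}(s,\omega')$ is a Gaussian functional of the Brownian path with an absolutely continuous law; since $b^n\to b$ pointwise Lebesgue-a.e.\ by construction of the mollification, this absolute continuity upgrades the convergence to $b^n(\dots)\to b(\dots)$ for $\PP\otimes ds$-a.e.\ $(\omega,s)$. By the It\^{o} isometry together with bounded convergence one then gets $\int_0^t b^n(\dots)dW(s)\to\int_0^t b(\dots)dW(s)$ in $L^2(\Omega)$ and, pathwise, $\int_0^t|b^n(\dots)|^2ds\to\int_0^t|b(\dots)|^2ds$, so that the Dol\'eans--Dade exponentials $\mathcal{E}(\int_0^\cdot b^n(\dots)dW(s))_t$ converge to $\mathcal{E}(\int_0^\cdot b(\dots)dW(s))_t$ in $\PP$-probability.

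The main obstacle is the passage to the limit under the expectation after multiplying by $(\eta(0)+W(t))^2$, for which convergence in probability alone does not suffice. Here I would exploit the uniform bound already recorded in \eqref{3Novikov}: since $\|b^n\|_\infty\le\sum_{i=1}^d\|b_i\|_\infty$ uniformly in $n$, Novikov's condition holds with a constant independent of $n$, whence the family $\mathcal{E}(\int_0^\cdot b^n(\dots)dW(s))_t$ is bounded in $L^p(\Omega)$ uniformly in $n$ for every $p>1$. Combining this with the finiteness of all moments of $(\eta(0)+W(t))^2$ through H\"{o}lder's inequality shows that the products $(\eta(0)+W(t))^2\mathcal{E}(\int_0^\cdot b^n(\dots)dW(s))_t$ are uniformly integrable; together with the convergence in probability established above, Vitali's convergence theorem delivers the middle limit and completes the proof.
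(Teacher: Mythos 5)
Your proof is correct, and its skeleton is the paper's: the paper's entire proof of this corollary is the one line ``Lemma \ref{3WienerRepresentations} \emph{(a)} and \emph{(d)} with $\phi=0$'', which is precisely your evaluation of the two Wiener-transform representations at the trivial test function (and your observation that only $\omega'\in\Omega'_1\cap\Omega'_4$ is actually needed is accurate, since parts \emph{(b)} and \emph{(c)} play no role). Where you genuinely diverge is the middle arrow. The paper regards that limit as already settled inside the proof of Lemma \ref{3WienerRepresentations} \emph{(d)}, where it is obtained by a bare appeal to ``dominated convergence'' from the pointwise convergence $b^n\to b$ and the uniform bound $\|b^n\|_\infty\leq\sum_{i=1}^d\|b_i\|_\infty$. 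You instead re-derive it self-containedly: $L^2$-convergence of the stochastic integrals via the It\^{o} isometry, hence convergence in probability of the Dol\'eans--Dade exponentials, then uniform $L^p$-bounds for these exponentials coming from the Novikov-type estimate \eqref{3Novikov}, and finally Vitali's theorem for the products with $(\eta(0)+W(t))^2$. Your route is in fact the more careful of the two: a literal dominated-convergence argument is not available here, because the exponentials $\mathcal{E}\big(\int_0^\cdot b^n(\dots)dW(s)\big)_t$ converge only in probability (a.s.\ only along subsequences) and admit no obvious integrable pointwise dominant, so uniform integrability is the correct tool and is presumably what the paper's DCT appeal tacitly means. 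One caveat in your write-up: to upgrade Lebesgue-a.e.\ convergence $b^n\to b$ you invoke absolute continuity of the law of $F_i(W_\cdot)_s+\eps w_iB^{H_i}(s,\omega')$; for basis elements $e_i\in M_2$ with vanishing $\R$-component and $L^2$-part supported away from $0$, this Gaussian functional degenerates to a constant for small $s$, so that step is not automatic. It is also unnecessary: the paper simply asserts pointwise convergence of the mollifications $b_{i,n}\to b_i$ and uses it throughout, and granting that (as the corollary's context does), your argument goes through unchanged.
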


\begin{proof}
 Lemma \ref{3WienerRepresentations} \emph{(a)} and \emph{(d)} with $\phi=0$.
\end{proof}

We are now ready to prove the main theorem of this section.

\begin{thm}\label{3StrongConvergenceL2}
 $x^n(t)\stackrel{n\rightarrow\infty}{\longrightarrow}x(t)$ strongly in $L^2(\Omega\times\Omega',\F_t,P\otimes P')$.
\end{thm}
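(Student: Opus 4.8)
The plan is to upgrade the weak convergence established above to strong convergence by the standard Hilbert space criterion. Since we already know that $x^n(t)\rightharpoonup x(t)$ weakly in $L^2(\Omega\times\Omega',\F_t,P\otimes P')$, it suffices to prove the convergence of norms $\bar E[(x^n(t))^2]\to\bar E[(x(t))^2]$, because then, testing the weak convergence against $x(t)\in L^2$,
\begin{align*}
 \bar E[(x^n(t)-x(t))^2]=\bar E[(x^n(t))^2]-2\bar E[x^n(t)x(t)]+\bar E[(x(t))^2]\longrightarrow 0 .
\end{align*}
So the whole task is reduced to the convergence of the second moments.

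The left-hand side of this convergence is for free. Recall that we arranged $(x^n(t))^2\rightharpoonup y(t)$ weakly in $L^2(\Omega\times\Omega',\F_t,P\otimes P')$; testing this weak convergence against the constant $1\in L^2$ gives immediately $\bar E[(x^n(t))^2]\to\bar E[y(t)]$. Hence everything boils down to the identification $\bar E[y(t)]=\bar E[(x(t))^2]$, and by Fubini it is enough to establish, for $P'$-a.e.\ $\omega'$, the $\Omega$-wise identity $E[y(t,\cdot,\omega')]=E[(x(t,\cdot,\omega'))^2]$.

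This identification is precisely where the $\omega'$-wise compactness criterion is needed, and I expect it to be the main obstacle: merely knowing that the two weak limits $x(t)$ and $y(t)$ exist does not tell us that $y(t)=(x(t))^2$. Here I would fix a generic $\omega'$ lying both in $\Omega'_1\cap\Omega'_2\cap\Omega'_3\cap\Omega'_4$, on which Corollary \ref{3CorollaryFromWienerRepresentations} holds, and in the full-measure set of Lemma \ref{3VI_relcomp}. By Lemma \ref{3VI_relcomp} the sequence $x^{n}(t,\cdot,\omega')$ is relatively compact in $L^2(\Omega,\F^W_t,P)$ along a subsequence, and therefore admits a \emph{further} subsequence converging \emph{strongly} in $L^2(\Omega)$ to some limit $\xi$. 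Strong convergence yields $E[(x^{n_{k_j}}(t,\cdot,\omega'))^2]\to E[\xi^2]$, while Corollary \ref{3CorollaryFromWienerRepresentations} gives convergence of the full sequence, $E[(x^n(t,\cdot,\omega'))^2]\to E[y(t,\cdot,\omega')]$; matching the limits forces $E[\xi^2]=E[y(t,\cdot,\omega')]$.

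It then remains to identify the strong limit $\xi$ with $x(t,\cdot,\omega')$. For this I would use Lemma \ref{3WienerRepresentations}\,(b) and (c): since $b^n\to b$ pointwise with the uniform bound $\|b^n\|_\infty\le\|b\|_\infty$, dominated convergence applied to the Dol\'eans--Dade exponentials (which converge in $L^2(\Omega)$) shows that for a.e.\ $\omega'$ and every $\phi$ in the countable dense set $\Phi\subseteq L^2([0,t])$ one has $\Wi_\Omega(x^n(t,\cdot,\omega'))(\phi)\to\Wi_\Omega(x(t,\cdot,\omega'))(\phi)$. As the strongly convergent subsequence also converges weakly, Fact 2 of Remark \ref{3Facts1and2} gives $\Wi_\Omega(\xi)(\phi)=\lim_j\Wi_\Omega(x^{n_{k_j}}(t,\cdot,\omega'))(\phi)=\Wi_\Omega(x(t,\cdot,\omega'))(\phi)$ for all $\phi\in\Phi$. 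Both Wiener transforms are continuous in $\phi$ (the map $\phi\mapsto\exp(\int_0^t\phi\,dW)$ is continuous into $L^2(\Omega)$ and $\xi,x(t,\cdot,\omega')\in L^2(\Omega)$), so they agree on all of $L^2([0,t])$, and Fact 1 of Remark \ref{3Facts1and2} forces $\xi=x(t,\cdot,\omega')$. Thus $E[(x(t,\cdot,\omega'))^2]=E[y(t,\cdot,\omega')]$ for $P'$-a.e.\ $\omega'$, and integrating over $\Omega'$ yields $\bar E[(x(t))^2]=\bar E[y(t)]=\lim_n\bar E[(x^n(t))^2]$. This closes the norm-convergence argument and, together with the weak convergence, establishes the claimed strong convergence.
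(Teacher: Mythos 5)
Your argument is correct, and it rests on the same pillars as the paper's proof -- the $\omega'$-wise compactness of Lemma \ref{3VI_relcomp}, the Wiener-transform representations of Lemma \ref{3WienerRepresentations}, and Corollary \ref{3CorollaryFromWienerRepresentations} -- but the final assembly is genuinely different. The paper fixes $\omega'\in\Omega^*$ and upgrades to \emph{strong} convergence of the full sequence $x^n(t,\cdot,\omega')\to x(t,\cdot,\omega')$ in $L^2(\Omega,\F^W_t,P)$ (combining norm convergence \eqref{3A} with weak convergence \eqref{3B}), and only then integrates over $\Omega'$ by dominated convergence, which requires constructing the explicit dominating function $g(\omega')\in L^1(\Omega',\F^\BB_t,P')$ in \eqref{3D} from the uniform bound $3\eta(0)^2+3t^2\|b\|_\infty^2+3t$. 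You instead extract from the $\omega'$-wise analysis only the scalar identity $E[y(t,\cdot,\omega')]=E[(x(t,\cdot,\omega'))^2]$ for $P'$-a.e.\ $\omega'$, integrate it (Fubini applies since $y(t),\,(x(t))^2\in L^1(\Omega\times\Omega')$), and close the argument with a single global application of the Hilbert-space criterion ``weak convergence plus convergence of norms implies strong convergence'' on $L^2(\Omega\times\Omega',\F_t,P\otimes P')$, the norm convergence $\bar E[(x^n(t))^2]\to\bar E[y(t)]$ coming for free by testing $(x^n(t))^2\rightharpoonup y(t)$ against the constant $1$. This buys you a leaner conclusion: no dominating function and no pointwise-in-$\omega'$ strong convergence are needed, since at fixed $\omega'$ you use the strongly convergent sub-subsequence only to match $E[\xi^2]$ with $E[y(t,\cdot,\omega')]$ and to identify $\xi=x(t,\cdot,\omega')$ via the Wiener transform (where your explicit density-plus-continuity argument in $\phi$, extending agreement from the countable set $\Phi$ to all of $L^2([0,t])$ before invoking Fact 1, is a point the paper leaves implicit). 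What the paper's longer route buys in exchange is the stronger intermediate statement of $\omega'$-wise strong $L^2(\Omega)$-convergence of the full sequence, which is the form of convergence most naturally aligned with the fixed-$\omega'$ philosophy used throughout the construction.
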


\begin{proof}
 \textit{Step 1: Apply compactness results from previous section}\\
 Recall from Lemma \ref{3VI_relcomp} and Corollary \ref{3VI_compactcrit} that for a.e. fixed $\omega'\in\Omega'$ (or in other words for every $\omega'\in\Omega'_0$ for some $\Omega'_0\subseteq\Omega'$ with $P'(\Omega'_0)=1$) there exists a subsequence $n_k(\omega')$ such that $n_k(\omega')\stackrel{k\rightarrow\infty}{\longrightarrow}\infty$ and $(x^{n_k(\omega')}(t,\cdot,\omega'))_{k\geq1}$ is a relatively compact set. This implies that for every $\omega'\in\Omega'_0$ it exists a sub-subsequence $(n_{k_q}(\omega'))_{q\geq1}\subseteq\NN$ s.t. $n_{k_q}(\omega')\stackrel{q\rightarrow\infty}{\longrightarrow}\infty$ and it exists an $\check x(t,\cdot,\omega')\in L^2(\Omega,\F^W_t,P)$ such that
 \begin{align}
  x^{n_{k_q}(\omega')}(t,\cdot,\omega')\stackrel{q\rightarrow\infty}{\longrightarrow}\check x(t,\cdot,\omega'),\quad\text{ strongly in }L^2(\Omega,\F^W_t,P).
 \end{align}
 From now on, we assume w.l.o.g. that $(n_k(\omega'))_{k\geq1}$ denotes already the sub-subsequence that converges. Furthermore, we define $\Omega^*:=\bigcap_{i=0}^4\Omega'_i$. Then $P'(\Omega^*)=1$.\\
 \par
 \textit{Step 2: Prove that $x^n(t,\cdot,\omega')\stackrel{n\rightarrow\infty}{\longrightarrow}x(t,\cdot,\omega')$ strongly in $L^2(\Omega,\F^W_t,P)$}\\
 Recall from Corollary \ref{3CorollaryFromWienerRepresentations} that for every $\omega'\in\Omega^*$
 \begin{align}\label{3Star}
  E\big[\big(x^n(t,\cdot,\omega')\big)^2\big]\rightarrow E\big[y(t,\cdot,\omega')\big],\quad\text{as }n\rightarrow\infty.
 \end{align}
 Since at this point, $\omega'\in\Omega^*$ is \emph{fixed}, and convergence of a sequence implies also convergence of any subsequence, we can replace $x^n(t,\cdot,\omega')$ by $x^{n_k(\omega')}(t,\cdot,\omega')$ and get
 \begin{align*}
  E\big[\big(x^{n_k(\omega')}(t,\cdot,\omega')\big)^2\big]\rightarrow E\big[y(t,\cdot,\omega')\big],\quad\text{as }n\rightarrow\infty.
 \end{align*}
 On the other hand, the strong convergence of $x^{n_k(\omega')}(t,\cdot,\omega')$ to $\check x(t,\cdot,\omega')$ in $L^2(\Omega,\F^W_t,P)$ which was shown in the previous step implies that
 \begin{align*}
  E\big[\big(x^{n_k(\omega')}(t,\cdot,\omega')\big)^2\big]\rightarrow E\big[\big(\check x(t,\cdot,\omega')\big)^2\big],\quad\text{as }n\rightarrow\infty,
 \end{align*}
 and therefore we have
 \begin{align*}
  E\big[y(t,\cdot,\omega')\big]= E\big[\big(\check x(t,\cdot,\omega')\big)^2\big].
 \end{align*}
 Plugging this into \eqref{3Star}, we see that, for all $\omega'\in\Omega^*$,
 \begin{align}\label{3A}
  E\big[\big(x^n(t,\cdot,\omega')\big)^2\big]\rightarrow E\big[\big(\check x(t,\cdot,\omega')\big)^2\big],\quad\text{as }n\rightarrow\infty.
 \end{align}
 Moreover, by Lemma \ref{3WienerRepresentations} \emph{(b)} and \emph{(c)}, we have for all $\phi\in\Phi$ and all $\omega'\in\Omega^*$
 \begin{align}\label{32Star}
  \begin{split}
   &\Wi_\Omega\big(x^n(t,\cdot,\omega')\big)(\phi)\\
   &=E\bigg[(\eta(0)+W(t))\mathcal{E}\Big(\int_0^\cdot b^n(F_1(W_\cdot)_s+\eps w_1B^{H_1}(s,\omega'),\dots)dW(s)\Big)_t\\
   &\quad\cdot\exp\Big(-\int_0^t\phi(s)b^n(F_1(W_\cdot)_s+\eps w_1B^{H_1}(s,\omega'),\dots)ds\Big)\exp\Big(\int_0^t\phi(s)dW(s)\Big)\bigg]\\
   &\rightarrow E\bigg[(\eta(0)+W(t))\mathcal{E}\Big(\int_0^\cdot b(F_1(W_\cdot)_s+\eps w_1B^{H_1}(s,\omega'),\dots)dW(s)\Big)_t\\
   &\quad\cdot\exp\Big(-\int_0^t\phi(s)b(F_1(W_\cdot)_s+\eps w_1B^{H_1}(s,\omega'),\dots)ds\Big)\exp\Big(\int_0^t\phi(s)dW(s)\Big)\bigg]\\
   &=\Wi_\Omega\big(x(t,\cdot,\omega')\big)(\phi)
  \end{split}
 \end{align}
 Here we applied dominated convergence (recall the pointwise convergence of $b^n$ to $b$ and the boundness of $b^n$ by $\|b\|_\infty$). 
 Again, since $\omega'\in\Omega^*$ is \emph{fixed}, we can replace $x^n(t,\cdot,\omega')$ by $x^{n_k(\omega')}(t,\cdot,\omega')$ and get
 \begin{align*}
  \Wi_\Omega\big(x^{n_k(\omega')}(t,\cdot,\omega')\big)(\phi)\rightarrow\Wi_\Omega\big(x(t,\cdot,\omega')\big)(\phi).
 \end{align*}
 On the other hand, we know, by the strong convergence of $x^{n_k(\omega')}(t,\cdot,\omega')$ to $\check x(t,\cdot,\omega')$ in $L^2(\Omega,\F^W_t,P)$ that
 \begin{align*}
  \Wi_\Omega\big(x^{n_k(\omega')}(t,\cdot,\omega')\big)(\phi)\rightarrow\Wi_\Omega\big(\check x(t,\cdot,\omega')\big)(\phi),
 \end{align*}
 which implies that $\Wi_\Omega\big(x(t,\cdot,\omega')\big)(\phi)=\Wi_\Omega\big(\check x(t,\cdot,\omega')\big)(\phi)$. By Remark \ref{3Facts1and2}, This implies that for every $\omega'\in\Omega^*$,
 \begin{align}\label{3XequalsXtilde}
  x(t,\cdot,\omega')=\check x(t,\cdot,\omega')\quad\text{ as elements in }L^2(\Omega,\F^W_t,P).
 \end{align}
 Plugging this into \eqref{32Star}, we have
 \begin{align}\label{3B}
 \begin{split}
  \Wi_\Omega\big(x^n(t,\cdot,\omega')\big)(\phi)&\stackrel{n\rightarrow\infty}{\longrightarrow} \Wi_\Omega\big(\check x(t,\cdot,\omega')\big)(\phi),\quad\text{or, in other words,}\\
   x^n(t,\cdot,\omega')&\stackrel{n\rightarrow\infty}{\longrightarrow}\check x(t,\cdot,\omega'),\quad\text{weakly in }L^2(\Omega,\F^W_t,P).
 \end{split}
 \end{align}
 Recall that in any Hilbert space,
 \begin{align*}
  \text{convergence of the norms }+\text{ weak convergence }\Rightarrow\text{ strong convergence.}
 \end{align*}
 Therefore, \eqref{3A} and \eqref{3B} together with \eqref{3XequalsXtilde} imply
 \begin{align*}
  x^n(t,\cdot,\omega')\stackrel{n\rightarrow\infty}{\longrightarrow}\check x(t,\cdot,\omega')=x(t,\cdot,\omega')\quad\text{strongly in }L^2(\Omega,\F^W_t,P),
 \end{align*}
 for each $\omega'\in\Omega^*$ separately.\\
 \par
 \textit{Step 3: Prove that $x^n(t)\stackrel{n\rightarrow\infty}{\longrightarrow}x(t)$ strongly in $L^2(\Omega\times\Omega',\F_t,P\otimes P')$}\\
 We know from the previous step that
 \begin{align}\label{3C}
  E\big[\big(x^n(t,\cdot,\omega')-x(t,\cdot,\omega')\big)^2\big]\stackrel{n\rightarrow\infty}{\longrightarrow}0,\quad\text{for }P'-\text{a.e. }\omega'\in\Omega'.
 \end{align}
 Furthermore, $\omega'\mapsto E[(x(t,\cdot,\omega'))^2]\in L^1(\Omega',\F^\BB_t,P')$ since $x(t)\in L^2(\Omega\times\Omega',\F_t,P\otimes P')$. Moreover, we have
 \begin{align*}
  E\big[\big(x^n(t,\cdot,\omega')\big)^2\big]&=\int_\Omega\big(x^n(t,\omega,\omega')\big)^2P(d\omega)\\
  &=\int_\Omega\Big[\Big(\eta(0)+\int_0^tb^n(x^n_s+\eps\BB(s))ds+W(t)\Big)(\omega,\omega')\Big]^2P(d\omega)\\
  &\leq\int_\Omega\Big(3\eta(0)^2+3\big(\int_0^tb^n(x^n_s+\eps\BB(s))ds\big)^2+3(W(t))^2\Big)(\omega,\omega')P(d\omega)\\
  &\leq\int_\Omega\Big(3\eta(0)^2+3t^2\|b\|_\infty^2+3(W(t,\omega))^2\Big)P(d\omega)\\
  &=3\eta(0)^2+3t^2\|b\|_\infty^2+3t<\infty.
 \end{align*}
 Using that, $(a-b)^2\leq 2a^2+2b^2$, we therefore have
 \begin{align}\label{3D}
 \begin{split}
  E\big[\big(x^n(t,\cdot,\omega')-x(t,\cdot,\omega')\big)^2\big]&\leq 6\eta(0)^2+6t^2\|b\|_\infty^2+6t+2E[(x(t,\cdot,\omega'))^2]\\
  &=:g(\omega')\in L^1(\Omega',\F^\BB_t,P').
 \end{split}
 \end{align}
 It follows now from \eqref{3C} and \eqref{3D} by dominated convergence that
 \begin{align*}
  \bar E\big[\big(x^n(t)-x(t)\big)^2\big]=\int_{\Omega'}E\big[\big(x^n(t,\cdot,\omega')-x(t,\cdot,\omega')\big)^2\big]P'(d\omega')\stackrel{n\rightarrow\infty}{\longrightarrow}0.
 \end{align*}
 This finishes the proof.
\end{proof}

\subsection{The strong solution}

In order to prove the existence of a strong solution to \eqref{3SFDE}, we first define the process $\tilde x:[0,T]\times\Omega\times\Omega'\rightarrow\R$ by

\begin{align}\label{3Xtildeprocess}
 \tilde x(t,\omega,\omega'):=\eta(0)+W(t,\omega),\quad x_0=\eta.
\end{align}

Moreover, we set

\begin{align}\label{3Wtilde}
\begin{split}
 \widetilde W(t,\omega,\omega')&:=W(t,\omega)-\int_0^tb(F_1(W_\cdot(\omega))_s+\eps w_1B^{H_1}(s,\omega'),\dots)ds\\
 &=W(t,\omega)-\int_0^tb(\tilde x_s(\omega,\omega')+\eps\BB(s,\omega'))ds
\end{split}
\end{align}

Then, we have

\begin{align*}
 \tilde x(t)=\eta(0)+\int_0^tb(\tilde x_s+\eps\BB(s))ds+\widetilde W(t),
\end{align*}

and it follows by Girsanov's theorem that, under the new measure $\widetilde P$ given by

\begin{align*}
 \frac{d\widetilde P}{dP\otimes P'}\Big|_{\F_t}=\mathcal{E}\Big(\int_0^\cdot b(F_1(W_\cdot(\omega))_s+\eps w_1B^{H_1}(s,\omega'),\dots)dW(s)\Big)_t
\end{align*}

$\widetilde W$ is a Brownian motion which is independent of $\BB$. In other words, we have constructed a weak solution $\tilde x$ to \eqref{3SFDE} under the measure $\widetilde P$.

\begin{rem}\label{3VI_stochbasisrmk}
As outlined in the scheme above, the main challenge to establish existence of a strong solution is now to show that $\tilde x$ is $(\widetilde\F_t)_{t\in[0,T]}$-adapted. Indeed, in that case, there exists a family of measurable functionals $\psi(t,\cdot,\cdot):C([0,T];\R)\times C([0,T];M_2)\rightarrow M_2$, $t\in [0,T]$ such that $\tilde x_t=\psi(t,(\widetilde W_\cdot)_{[0,t]},(\BB_\cdot)_{[0,t]}\big)$ (see e.g. \cite{MMNPZ10} or \cite{MBP06} for an explicit form of $\psi(t,\cdot,\cdot)$ in the case of SDEs without delay driven by Brownian noise or L\'{e}vy noise, respectively), and for any other stochastic basis $(\hat{\Omega}, \hat{\mathfrak{A}}, \hat{P},\widehat W,\hat{\BB})$ one gets that $\hat x_t=\psi(t,(\widehat W_\cdot)_{[0,t]},(\hat\BB_\cdot)_{[0,t]}\big)$, $t\in [0,T]$, is a $(\widehat\F_t)_{t\in[0,T]}$-adapted solution to SDE \eqref{3SFDE}, where $\widehat\F_t:=\sigma((\widehat W(s),\hat{\BB}(s)), s\leq t)$.  But this means exactly the existence of a strong solution to SDE \eqref{3SFDE}.
\end{rem}

Recall that the approximative solutions $x^n$ are $(\F_t)_{t\in[0,T]}$-adapted and therefore, for every $t\in[0,T]$, the segment $x^n_t$ is $\F_t$-measurable. Since $\F_t=\sigma((W(s),\BB(s)),s\leq t)$, there exists a progressively measurable functional $\psi_n$ such that

\begin{align}\label{3xnpsi}
 x^n_t=\psi_n\big(t,(W_\cdot)_{[0,t]},(\BB_\cdot)_{[0,t]}\big).
\end{align}

Now we \textit{define} $\tilde x^n_t$ by replacing $W_\cdot$ by $\widetilde W_\cdot$ in this equation:

\begin{align}\label{3xntildepsi}
 \tilde x^n_t=\psi_n\big(t,(\widetilde W_\cdot)_{[0,t]},(\BB_\cdot)_{[0,t]}\big).
\end{align}

\begin{rem}
 Note that $\tilde x^n_t$ is \textit{by construction} measurable w.r.t. $\widetilde\F_t=\sigma((\widetilde W(s),\BB(s)),s\leq t)$
\end{rem}

We are going to prove that $\tilde x^n(t)$ converges strongly in $L^2(\Omega\times\Omega',\widetilde\F_t,\widetilde P)$ to the weak solution $\tilde x(t)$ that we constructed before, and therefore that $\tilde x(t)$ indeed is $\widetilde\F_t$-measurable. This proves that $\tilde x$ is a strong solution to the equation on the probability space $(\Omega\times\Omega',\widetilde\F_t,\widetilde P)$. In order to do so, we need the following lemmata.

\begin{lem}\label{3strongConvergenceXntilde}
 For every $t\in[0,T]$, $\tilde x^n(t)$ converges (strongly) in $L^2(\Omega\times\Omega',\widetilde\F_t,\widetilde P)$.
\end{lem}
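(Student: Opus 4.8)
The plan is to reduce the strong convergence of $\tilde x^n(t)$ to the already established strong convergence of $x^n(t)$ (Theorem \ref{3StrongConvergenceL2}) by a transfer-of-law argument. First I would record the structural fact underlying \eqref{3xnpsi} and \eqref{3xntildepsi}: since the value $x^n(t)$ is obtained from the segment $x^n_t$ by evaluation at the endpoint $u=0$, there is a single deterministic, progressively measurable functional $\Psi_n$ such that $x^n(t)=\Psi_n\big(t,(W_\cdot)_{[0,t]},(\BB_\cdot)_{[0,t]}\big)$ under $P\otimes P'$, while, by construction, $\tilde x^n(t)=\Psi_n\big(t,(\widetilde W_\cdot)_{[0,t]},(\BB_\cdot)_{[0,t]}\big)$ under $\widetilde P$, with $\widetilde W$ given by \eqref{3Wtilde}. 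Thus both sequences are the \emph{same} family of maps fed with two different inputs.

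The key step is to establish the equality of path laws
\[
\mathcal{L}_{\widetilde P}\big((\widetilde W_\cdot)_{[0,t]},(\BB_\cdot)_{[0,t]}\big)=\mathcal{L}_{P\otimes P'}\big((W_\cdot)_{[0,t]},(\BB_\cdot)_{[0,t]}\big),
\]
viewed as laws of $C([0,t];\R)\times C([0,t];M_2)$-valued random elements. This is precisely the content of the Girsanov construction preceding the lemma: under $\widetilde P$ the process $\widetilde W$ is a Brownian motion, it is independent of $\BB$, and the law of $\BB$ is unchanged by the change of measure, as the density depends only on the $W$-stochastic integral. Hence under $\widetilde P$ the pair $(\widetilde W,\BB)$ is distributed as a Brownian motion paired with an independent copy of $\BB$, which is exactly the law of $(W,\BB)$ under $P\otimes P'$.

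Granting this, I would apply $\Psi_n$ and $\Psi_m$ simultaneously. Because $(\tilde x^n(t),\tilde x^m(t))$ and $(x^n(t),x^m(t))$ are the same measurable map of the two pairs of processes, the equality of laws transfers verbatim to the joint law of the pairs, giving in particular
\[
E_{\widetilde P}\big[(\tilde x^n(t)-\tilde x^m(t))^2\big]=\bar E\big[(x^n(t)-x^m(t))^2\big]\qquad\text{for all }n,m\geq1.
\]
By Theorem \ref{3StrongConvergenceL2}, $x^n(t)\to x(t)$ strongly in $L^2(\Omega\times\Omega',\F_t,P\otimes P')$, so the right-hand side tends to $0$ as $n,m\to\infty$; therefore $(\tilde x^n(t))_{n}$ is Cauchy in $L^2(\Omega\times\Omega',\widetilde\F_t,\widetilde P)$, and completeness yields a strong limit, which is what the lemma asserts.

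The main obstacle is the equality-of-laws step, and specifically the independence of $\widetilde W$ and $\BB$ under $\widetilde P$ together with the invariance of the marginal law of $\BB$: one must verify that the Girsanov density, built solely from the $W$-stochastic integral, neither alters the distribution of $\BB$ nor destroys independence, so that the full path law of the pair factorizes as Wiener measure times the law of $\BB$. A secondary, routine point is the measurability and integrability bookkeeping: $\widetilde W$ is defined through $b$ evaluated along the path in \eqref{3Wtilde}, so I should confirm that $\tilde x^n(t)$ is genuinely $\widetilde\F_t$-measurable and square integrable, the latter following from the $L^2$-isometry above together with the uniform bound on $\|x^n(t)\|_{L^2(P\otimes P')}$ obtained in Lemma \ref{3WeaklyConvergingSubsequence}.
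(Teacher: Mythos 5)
Your proposal is correct and follows essentially the same route as the paper: the paper likewise writes $\tilde x^n(t)=\psi_n\big(t,(\widetilde W_\cdot)_{[0,t]},(\BB_\cdot)_{[0,t]}\big)(0)$, uses the equality of laws $\mathcal{L}_{\widetilde P}\big(\widetilde W_\cdot,\BB_\cdot\big)=\mathcal{L}_{P\otimes P'}\big(W_\cdot,\BB_\cdot\big)$ coming from the Girsanov construction to get $E_{\widetilde P}\big[(\tilde x^n(t)-\tilde x^m(t))^2\big]=\bar E\big[(x^n(t)-x^m(t))^2\big]$, and concludes that $(\tilde x^n(t))_{n\geq1}$ is Cauchy by Theorem \ref{3StrongConvergenceL2} and completeness. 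One cosmetic caveat: the Girsanov density is not a functional of $W$ alone (its integrand involves $\BB$ through the drift), so the invariance of the law of $\BB$ and the independence of $\widetilde W$ and $\BB$ under $\widetilde P$ is exactly the joint-law statement the paper records after \eqref{3Wtilde}, rather than a consequence of the density ``depending only on the $W$-stochastic integral.''
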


\begin{proof}
 Since $L^2(\Omega\times\Omega',\widetilde\F_t,\widetilde P)$ is a complete space, it suffices to show that $(\tilde x^n(t))_{n\geq1}$ is a Cauchy sequence. To show that, let $n,m\in\NN$ and recall that $(\widetilde W_\cdot,\BB_\cdot)$ has under $\widetilde P$ the same law as $(W_\cdot,\BB_\cdot)$ has under $P\otimes P'$. Then,
 \begin{align*}
  E_{\widetilde P}\big[\big(\tilde x^n(t)-\tilde x^m(t)\big)^2\big]&=E_{\widetilde P}\big[\big(\psi_n\big(t,(\widetilde W_\cdot)_{[0,t]},(\BB_\cdot)_{[0,t]}\big)(0)-\psi_m\big(t,(\widetilde W_\cdot)_{[0,t]},(\BB_\cdot)_{[0,t]}\big)(0)\big)^2\big]\\
  &=\bar E\big[\big(\psi_n\big(t,(W_\cdot)_{[0,t]},(\BB_\cdot)_{[0,t]}\big)(0)-\psi_m\big(t,(W_\cdot)_{[0,t]},(\BB_\cdot)_{[0,t]}\big)(0)\big)^2\big]\\
  &=\bar E\big[\big(x^n(t)-x^m(t)\big)^2\big].
 \end{align*}
 We know from Theorem \ref{3StrongConvergenceL2} that $(x^n(t))_{n\geq1}$ converges strongly in $L^2(\Omega\times\Omega',\F_t,P\otimes P')$ and is therefore a Cauchy sequence. This implies that
 \begin{align*}
  E_{\widetilde P}\big[\big(\tilde x^n(t)-\tilde x^m(t)\big)^2\big]=\bar E\big[\big(x^n(t)-x^m(t)\big)^2\big]\rightarrow0,\quad\text{as }m,n\rightarrow\infty.
 \end{align*}
\end{proof}

\begin{lem}
 $\tilde x^n$ is a solution to the SFDE
 \begin{align}\label{3ApproxSFDE2}
  \tilde x^n(t)=\eta(0)+\int_0^tb^n(\tilde x_s+\eps\BB(s))ds+\widetilde W(t).
 \end{align}
\end{lem}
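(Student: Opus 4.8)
The plan is to exploit that the defining equation of the approximating solution $x^n$ is at its core a \emph{pathwise} identity between measurable functionals of the driving pair $(W,\BB)$, and that such an identity is preserved when the driving pair is replaced by any other pair with the same law.

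First I would record the functional form of \eqref{3ApproxSFDE}. Since $x^n$ is the unique strong solution furnished by Lemma \ref{3ExistAndUniqApprox} and $x^n_s=\psi_n\big(s,(W_\cdot)_{[0,s]},(\BB_\cdot)_{[0,s]}\big)$ by \eqref{3xnpsi}, the identity
\begin{align*}
 \psi_n\big(t,(W_\cdot)_{[0,t]},(\BB_\cdot)_{[0,t]}\big)(0)
 =\eta(0)+\int_0^tb^n\Big(\psi_n\big(s,(W_\cdot)_{[0,s]},(\BB_\cdot)_{[0,s]}\big)+\eps\BB(s)\Big)ds+W(t)
\end{align*}
holds $(P\otimes P')$-a.s. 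Replacing the concrete processes $(W,\BB)$ by a generic pair of continuous paths $(w,\beta)$ defines a Borel functional
\begin{align*}
 G_n(w,\beta):=\psi_n\big(t,(w_\cdot)_{[0,t]},(\beta_\cdot)_{[0,t]}\big)(0)-\eta(0)-\int_0^tb^n\Big(\psi_n\big(s,(w_\cdot)_{[0,s]},(\beta_\cdot)_{[0,s]}\big)+\eps\beta(s)\Big)ds-w(t),
\end{align*}
which is measurable because $\psi_n$ is progressively measurable, $b^n$ is continuous, and integration and point evaluation are measurable operations on path space. The previous display says exactly $G_n(W,\BB)=0$ $(P\otimes P')$-a.s.

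Next I would transfer this null-set statement to the tilted system. From the construction of $\widetilde W$ in \eqref{3Wtilde} and Girsanov's theorem (Lemma \ref{3GirsanovWdk}), the pair $(\widetilde W_\cdot,\BB_\cdot)$ has under $\widetilde P$ exactly the same law as $(W_\cdot,\BB_\cdot)$ has under $P\otimes P'$ --- the very fact already used in the proof of Lemma \ref{3strongConvergenceXntilde}. Hence the path-space event $\{G_n\neq0\}$, which is a $(W,\BB)$-null set, is also a $(\widetilde W,\BB)$-null set, so $G_n(\widetilde W,\BB)=0$ $\widetilde P$-a.s. Since $\tilde x^n_s=\psi_n\big(s,(\widetilde W_\cdot)_{[0,s]},(\BB_\cdot)_{[0,s]}\big)$ by \eqref{3xntildepsi}, the identity $G_n(\widetilde W,\BB)=0$ unwinds precisely to
\begin{align*}
 \tilde x^n(t)=\eta(0)+\int_0^tb^n\big(\tilde x^n_s+\eps\BB(s)\big)ds+\widetilde W(t),\qquad\widetilde P\text{-a.s.},
\end{align*}
which is the asserted equation \eqref{3ApproxSFDE2}.

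The one point that genuinely needs care --- and which I expect to be the main obstacle --- is cleanly isolating the \emph{pathwise} content of \eqref{3ApproxSFDE} from its probabilistic content. Because the noise enters additively through $W$ and the drift appears as an ordinary Lebesgue integral, $G_n$ is an honest Borel functional of the continuous path $(w,\beta)$ and no It\^o-integral subtleties (such as the dependence of a stochastic integral on the underlying measure) intervene; this is exactly what legitimises the transport of the almost-sure identity along equality of laws. To make the time integral of the composed functional well defined one should also confirm that $\psi_n$ may be chosen jointly measurable in $(s,w,\beta)$, which follows from the progressive measurability of $x^n$ together with the continuity of $b^n$.
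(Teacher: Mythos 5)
Your proposal is correct and is essentially the paper's own argument: the paper likewise writes the defect $\tilde x^n(t)-\eta(0)-\int_0^t b^n(\tilde x^n_s+\eps\BB(s))ds-\widetilde W(t)$ as a measurable functional of $(\widetilde W,\BB)$ via $\psi_n$, uses that $(\widetilde W,\BB)$ under $\widetilde P$ has the same law as $(W,\BB)$ under $P\otimes P'$, and concludes from the fact that $x^n$ solves \eqref{3ApproxSFDE}. The only cosmetic difference is that the paper transfers the second moment $E_{\widetilde P}[(\text{defect})^2]=\bar E[(\text{defect for }x^n)^2]=0$, whereas you transfer the null event $\{G_n\neq0\}$ directly, which is the same mechanism.
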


\begin{proof}
 Note that $\tilde x^n(t)=\tilde x^n_t(0)=\psi_n\big(t,(\widetilde W_\cdot)_{[0,t]},(\BB_\cdot)_{[0,t]}\big)(0)$ and that the process $(\widetilde W_\cdot,\BB_\cdot)$ has under $\widetilde P$ the same law as the process $(W_\cdot,\BB_\cdot)$ has under $P\otimes P'$. With that in mind, we have
 \begin{align*}
  &E_{\widetilde P}\Big[\Big(\tilde x^n(t)-\eta(0)-\int_0^tb^n(\tilde x_s+\eps\BB(s))ds-\widetilde W(t)\Big)^2\Big]\\
  &=E_{\widetilde P}\Big[\Big(\psi_n\big(t,(\widetilde W_\cdot)_{[0,t]},(\BB_\cdot)_{[0,t]}\big)(0)\\
  &\quad-\eta(0)-\int_0^tb^n\big(\psi_n\big(s,(\widetilde W_\cdot)_{[0,s]},(\BB_\cdot)_{[0,s]}\big)(0)+\eps\BB(s)\big)ds-\widetilde W(t)\Big)^2\Big]\\
  &=\bar E\Big[\Big(\psi_n\big(t,(W_\cdot)_{[0,t]},(\BB_\cdot)_{[0,t]}\big)(0)\\
  &\quad-\eta(0)-\int_0^tb^n\big(\psi_n\big(s,(W_\cdot)_{[0,s]},(\BB_\cdot)_{[0,s]}\big)(0)+\eps\BB(s)\big)ds-W(t)\Big)^2\Big]\\
  &=\bar E\Big[\Big(x^n(t)-\eta(0)-\int_0^tb^n(x_s+\eps\BB(s))ds-W(t)\Big)^2\Big]\\
  &=0,
 \end{align*}
 since $x^n$ solves \eqref{3ApproxSFDE}.
\end{proof}

\begin{lem}\label{3weakConvergencefXn}
 For each $t\in[0,T]$ and every bounded, continuous function $f:\R\rightarrow\R$,
 \begin{align*}
  f(\tilde x^n(t))\stackrel{n\rightarrow\infty}{\longrightarrow} E_{\widetilde P}[f(\tilde x(t))|\widetilde\F_t],\quad\text{weakly in }L^2(\Omega\times\Omega',\widetilde\F_t,\widetilde P).
 \end{align*}
\end{lem}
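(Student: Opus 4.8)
The plan is to test both sides against the Wiener transform and to use repeatedly that, under $\widetilde P$, the pair $(\widetilde W,\BB)$ has the same law as $(W,\BB)$ under $P\otimes P'$. Because of this law identity, Facts~1 and~2 of Remark~\ref{3Facts1and2} transfer verbatim to $L^2(\Omega\times\Omega',\widetilde\F_t,\widetilde P)$, with the transform now taken with respect to $\widetilde P$, i.e.
\[
 X\longmapsto E_{\widetilde P}\Big[X\exp\Big(\int_0^t\phi(s)d\widetilde W(s)+\sum_{i\geq1}\sum_{j=1}^{l_i}\alpha_{i,j}w_i\big(B^{H_i}(t_{i,j})-B^{H_i}(t_{i,j-1})\big)\Big)\Big].
\]
Hence it suffices to show that, for every $\phi$ in a countable dense subset $\Phi\subseteq L^2([0,t])$ and every admissible $\alpha$, the $\widetilde P$-transform of $f(\tilde x^n(t))$ converges to the $\widetilde P$-transform of $E_{\widetilde P}[f(\tilde x(t))\mid\widetilde\F_t]$.

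First I would dispose of the right-hand side: the exponential weight above is $\widetilde\F_t$-measurable, so by the tower property the $\widetilde P$-transform of $E_{\widetilde P}[f(\tilde x(t))\mid\widetilde\F_t]$ coincides with that of $f(\tilde x(t))$. For the approximating sequence I would use $\tilde x^n(t)=\psi_n(t,\widetilde W,\BB)(0)$ and $x^n(t)=\psi_n(t,W,\BB)(0)$: the whole integrand is a fixed measurable functional of the driving path, so the law identity turns its $\widetilde P$-expectation into the corresponding $P\otimes P'$-expectation, giving
\[
 E_{\widetilde P}\big[f(\tilde x^n(t))e^{(\cdots)}\big]=\bar E\big[f(x^n(t))e^{(\cdots)}\big]=\Wi_{\Omega\times\Omega'}\big(f(x^n(t))\big)(\phi,\alpha),
\]
where $e^{(\cdots)}$ denotes the same Gaussian exponential weight. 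By Theorem~\ref{3StrongConvergenceL2}, $x^n(t)\to x(t)$ strongly in $L^2(\Omega\times\Omega',\F_t,P\otimes P')$; since $f$ is bounded and continuous, $f(x^n(t))\to f(x(t))$ in $L^2$ (convergence in probability together with uniform boundedness), and as $e^{(\cdots)}$ lies in every $L^p$, Cauchy--Schwarz gives $\Wi_{\Omega\times\Omega'}(f(x^n(t)))(\phi,\alpha)\to\Wi_{\Omega\times\Omega'}(f(x(t)))(\phi,\alpha)$.

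What remains, and where the real work lies, is the identity between $\Wi_{\Omega\times\Omega'}(f(x(t)))(\phi,\alpha)$ and the $\widetilde P$-transform of $f(\tilde x(t))$. I would obtain it by repeating the Girsanov computation underlying Lemma~\ref{3WienerRepresentations}, which used nothing about the square or the identity beyond measurability, now for the general bounded continuous $f$. Writing $x^n(t)=\eta(0)+\overline W^{n}(t)$ and changing measure from $P\otimes P'$ to $\overline\PP^{n}$ by Remark~\ref{3GirsanovWdkRemark} (under which $(\overline W^{n},\BB)$ has the law of $(W,\BB)$ under $P\otimes P'$), together with $dW=d\overline W^{n}-b^{n}(\cdots)ds$, one arrives at
\[
 \Wi_{\Omega\times\Omega'}\big(f(x^n(t))\big)(\phi,\alpha)=\bar E\Big[f(\eta(0)+W(t))\,\mathcal{E}\Big(\int_0^\cdot b^{n}(\cdots)dW\Big)_t\,e^{\int_0^t\phi\,dW-\int_0^t\phi\,b^{n}(\cdots)ds}\,e^{\sum\alpha(\cdots)}\Big],
\]
where $(\cdots)$ abbreviates the $n$-independent arguments $F_1(W_\cdot)_s+\eps w_1B^{H_1}(s),\dots$. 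Letting $n\to\infty$ (using $b^{n}\to b$ pointwise, $\|b^{n}\|_\infty\le\|b\|_\infty$, the subsequential a.s.\ convergence of the Dol\'eans--Dade exponentials, and the uniform $L^p$-bounds from the Novikov estimate \eqref{3Novikov}, so that Vitali's theorem applies) replaces $b^{n}$ by $b$; undoing the change of measure $P\otimes P'\rightsquigarrow\widetilde P$ via $\frac{d\widetilde P}{d(P\otimes P')}\big|_{\F_t}=\mathcal{E}(\int_0^\cdot b(\cdots)dW)_t$ and $\int_0^t\phi\,d\widetilde W=\int_0^t\phi\,dW-\int_0^t\phi\,b(\cdots)ds$, and recalling $\tilde x(t)=\eta(0)+W(t)$, the resulting expression is precisely the $\widetilde P$-transform of $f(\tilde x(t))$. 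Together with the tower-property step this closes the argument.

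The main obstacle is this last passage to the limit inside the Girsanov representation: the limit must be taken through the product of the change-of-measure density, the exponential martingale and $f$, which is controlled not by a single dominating function but by the uniform exponential integrability provided by \eqref{3Novikov} combined with bounded convergence. A secondary point to handle with care is that the limit $x(t)$ fed into these identities is the genuine strong $L^2$-limit secured in Theorem~\ref{3StrongConvergenceL2}, which is what legitimises re-running the representation of Lemma~\ref{3WienerRepresentations} with $f$ in place of the square.
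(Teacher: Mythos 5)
Your proposal is correct and takes essentially the same route as the paper's own proof: both characterize weak convergence via the Wiener transform on $L^2(\Omega\times\Omega',\widetilde\F_t,\widetilde P)$, use a Girsanov change of measure together with the law identity of the driving pair to rewrite the transform of $f(\tilde x^n(t))$ as an expectation in $(W,\BB)$ under $P\otimes P'$ depending on $n$ only through $b^n$, pass to the limit $b^n\to b$ (your Vitali/uniform-$L^p$ treatment is a slightly more careful version of the paper's bare dominated convergence), and conclude with the identical tower-property step since the exponential weight is $\widetilde\F_t$-measurable. The only differences are cosmetic: you apply the law identity first and run Girsanov on the $(W,P\otimes P')$ side via $\overline W^{n}$, $\overline\PP^{n}$, while the paper runs it directly on the $\widetilde P$ side via $\widehat W^n$, $\widehat P^n$; and your intermediate identification of the limit with $\Wi_{\Omega\times\Omega'}\big(f(x(t))\big)(\phi,\alpha)$ through Theorem \ref{3StrongConvergenceL2} is redundant for the stated lemma, though harmless.
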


\begin{proof}
 In order to prove this lemma, we define the Wiener transform on the space $L^2(\Omega\times\Omega',\widetilde\F_t,\widetilde P)$ similarly to the Wiener transform we have defined on $L^2(\Omega\times\Omega',\F_t,P\otimes P')$, namely, for $X\in L^2(\Omega\times\Omega',\widetilde\F_t,\widetilde P)$ and $(\phi,\alpha)$ as in Definition \ref{3WienerTrafo},
 \begin{align*}
  &\Wi_{(\Omega\times\Omega',\widetilde\F_t,\widetilde P)}(X)(\phi,\alpha)\\
  &:=E_{\widetilde P}\Big[X\exp\Big(\int_0^t\phi(s)d\widetilde W(s)+\sum_{i\geq1}\sum_{j=1}^{l_i}\alpha_{i,j}w_i(B^{H_i}(t_j)-B^{H_i}(t_{j-1}))\Big)\Big].
 \end{align*}
 It has the same properties as the Wiener transform on $L^2(\Omega\times\Omega',\F_t,P\otimes P')$. Furthermore, we define the process
 \begin{align*}
  \widehat W^n(t):=\widetilde W(t)+\int_0^tb^n(\tilde x^n_s+\eps\BB(s))ds,
 \end{align*}
 which is a Brownian motion under the measure $\widehat P^n$ given by the Radon-Nikodym derivative
 \begin{align*}
  \frac{d\widehat P^n}{d\widetilde P}\Big|_{\widetilde\F_t}=\mathcal{E}\Big(-\int_0^\cdot b^n(\tilde x^n_s+\eps\BB(s))d\widetilde W(s)\Big)_t.
 \end{align*}
 We can rewrite the SFDE for $\tilde x^n$ as
 \begin{align*}
  \tilde x^n(t)=\eta(0)+\widehat W^n(t),\quad\tilde x^n_0=\eta,
 \end{align*}
 and get $\<\tilde x^n_t,e_i\>=F_i(\widehat W^n_\cdot)_t$. This enables us to write the Radon-Nikodym derivative of $\widetilde P$ w.r.t. $\widehat P^n$ by
 \begin{align*}
  \frac{d\widetilde P}{d\widehat P^n}\Big|_{\widetilde\F_t}=\mathcal{E}\Big(\int_0^\cdot b^n(F_1(\widehat W^n_\cdot)_s+\eps w_1B^{H_1}(s),\dots)d\widehat W^n(s)\Big)_t.
 \end{align*}
 We therefore have for every bounded, continuous $f:\R\rightarrow\R$:
 \begin{align*}
  &\Wi_{(\Omega\times\Omega',\widetilde\F_t,\widetilde P)}\big(f(\tilde x^n(t))\big)(\phi,\alpha)\\
  &=E_{\widetilde P}\Big[f(\tilde x^n(t))\exp\Big(\int_0^t\phi(s)d\widetilde W(s)+\sum_{i\geq1}\sum_{j=1}^{l_i}\alpha_{i,j}w_i(B^{H_i}(t_j)-B^{H_i}(t_{j-1}))\Big)\Big]\\
  &=E_{\widetilde P}\Big[f(\eta(0)+\widehat W^n(t))\exp\Big(\int_0^t\phi(s)d\widehat W^n(s)-\int_0^t\phi(s)b^n(F_1(\widehat W^n_\cdot)_s+\eps w_1B^{H_1}(s),\dots)ds\\
  &\quad+\sum_{i\geq1}\sum_{j=1}^{l_i}\alpha_{i,j}w_i(B^{H_i}(t_j)-B^{H_i}(t_{j-1}))\Big)\Big]\\
 \end{align*}
 
 \begin{align*}
  &\hspace{-2cm}=E_{\widehat P^n}\Big[\mathcal{E}\Big(\int_0^\cdot b^n(F_1(\widehat W^n_\cdot)_s+\eps w_1B^{H_1}(s),\dots)d\widehat W^n(s)\Big)_tf(\eta(0)+\widehat W^n(t))\\
  &\hspace{-2cm}\quad\cdot\exp\Big(\int_0^t\phi(s)d\widehat W^n(s)-\int_0^t\phi(s)b^n(F_1(\widehat W^n_\cdot)_s+\eps w_1B^{H_1}(s),\dots)ds\\
  &\hspace{-2cm}\quad+\sum_{i\geq1}\sum_{j=1}^{l_i}\alpha_{i,j}w_i(B^{H_i}(t_j)-B^{H_i}(t_{j-1}))\Big)\Big]\\
  &\hspace{-2cm}=\bar E\Big[\mathcal{E}\Big(\int_0^\cdot b^n(F_1(W_\cdot)_s+\eps w_1B^{H_1}(s),\dots)dW(s)\Big)_tf(\eta(0)+W(t))\\
  &\hspace{-2cm}\quad\cdot\exp\Big(\int_0^t\phi(s)dW(s)-\int_0^t\phi(s)b^n(F_1(W_\cdot)_s+\eps w_1B^{H_1}(s),\dots)ds\\
  &\hspace{-2cm}\quad+\sum_{i\geq1}\sum_{j=1}^{l_i}\alpha_{i,j}w_i(B^{H_i}(t_j)-B^{H_i}(t_{j-1}))\Big)\Big],
 \end{align*}
 
 since $(\widehat W^n,\BB)$ has the same distribution under $\widehat P^n$ as $(W,\BB)$ has under $P$. By dominated convergence, we now have
 \begin{align*}
  &\Wi_{(\Omega\times\Omega',\widetilde\F_t,\widetilde P)}\big(f(\tilde x^n(t))\big)(\phi,\alpha)\\
  &=\bar E\Big[\mathcal{E}\Big(\int_0^\cdot b^n(F_1(W_\cdot)_s+\eps w_1B^{H_1}(s),\dots)dW(s)\Big)_tf(\eta(0)+W(t))\\
  &\quad\cdot\exp\Big(\int_0^t\phi(s)dW(s)-\int_0^t\phi(s)b^n(F_1(W_\cdot)_s+\eps w_1B^{H_1}(s),\dots)ds\\
  &\quad+\sum_{i\geq1}\sum_{j=1}^{l_i}\alpha_{i,j}w_i(B^{H_i}(t_j)-B^{H_i}(t_{j-1}))\Big)\Big]\\
  &\rightarrow\bar E\Big[\mathcal{E}\Big(\int_0^\cdot b(F_1(W_\cdot)_s+\eps w_1B^{H_1}(s),\dots)dW(s)\Big)_tf(\eta(0)+W(t))\\
  &\quad\cdot\exp\Big(\int_0^t\phi(s)dW(s)-\int_0^t\phi(s)b(F_1(W_\cdot)_s+\eps w_1B^{H_1}(s),\dots)ds\\
  &\quad+\sum_{i\geq1}\sum_{j=1}^{l_i}\alpha_{i,j}w_i(B^{H_i}(t_j)-B^{H_i}(t_{j-1}))\Big)\Big]\\
  &=\bar E\bigg[\Big(\frac{d\widetilde P}{dP\otimes P'}\Big|_{\widetilde\F_t}\Big)f(\tilde x(t))\exp\Big(\int_0^t\phi(s)d\widetilde W(s)+\sum_{i\geq1}\sum_{j=1}^{l_i}\alpha_{i,j}w_i(B^{H_i}(t_j)-B^{H_i}(t_{j-1}))\Big)\bigg].
 \end{align*}
 So far, we do not know whether $f(\tilde x(t))$ is $\widetilde\F_t$-measurable, therefore, this is \textit{not} necessarily the Wiener transform of $f(\tilde x(t))$ (it might even not exist). But, we can apply the tower property and get
 \begin{align*}
  &\Wi_{(\Omega\times\Omega',\widetilde\F_t,\widetilde P)}\big(f(\tilde x^n(t))\big)(\phi,\alpha)\\
  &\rightarrow E_{\widetilde P}\bigg[E\Big[f(\tilde x(t))\exp\Big(\int_0^t\phi(s)d\widetilde W(s)+\sum_{i\geq1}\sum_{j=1}^{l_i}\alpha_{i,j}w_i(B^{H_i}(t_j)-B^{H_i}(t_{j-1}))\Big)\Big|\widetilde\F_t\Big]\bigg]\\
  &=E_{\widetilde P}\Big[E[f(\tilde x(t))|\widetilde\F_t]\exp\Big(\int_0^t\phi(s)d\widetilde W(s)+\sum_{i\geq1}\sum_{j=1}^{l_i}\alpha_{i,j}w_i(B^{H_i}(t_j)-B^{H_i}(t_{j-1}))\Big)\Big]\\
  &=\Wi_{(\Omega\times\Omega',\widetilde\F_t,\widetilde P)}\big(E[f(\tilde x(t))|\widetilde\F_t]\big)(\phi,\alpha),
 \end{align*}
 which proves the statement.
\end{proof}

We are now able to prove the main theorem of this paper.

\begin{thm}\label{3mainTheorem}
 The weak solution $\tilde x$ w.r.t. the probability measure $\widetilde P$ is $(\widetilde\F_t)_{t\in[0,T]}$-adapted, i.e. it is indeed a strong solution.
\end{thm}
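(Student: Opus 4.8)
The plan is to combine the \emph{strong} $L^2$-convergence of the $\widetilde\F_t$-adapted approximations $\tilde x^n(t)$ provided by Lemma \ref{3strongConvergenceXntilde} with the weak-limit identification of Lemma \ref{3weakConvergencefXn}, the point being that together they force the conditional law of $\tilde x(t)$ given $\widetilde\F_t$ to be a Dirac mass, which makes $\tilde x(t)$ measurable with respect to $\widetilde\F_t$.

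First I would invoke Lemma \ref{3strongConvergenceXntilde} to obtain a limit $\bar x(t)\in L^2(\Omega\times\Omega',\widetilde\F_t,\widetilde P)$ with $\tilde x^n(t)\to\bar x(t)$ strongly in $L^2(\Omega\times\Omega',\widetilde\F_t,\widetilde P)$. As a strong limit of $\widetilde\F_t$-measurable random variables, $\bar x(t)$ is itself $\widetilde\F_t$-measurable. For any bounded continuous $f:\R\to\R$, strong $L^2$-convergence implies convergence in $\widetilde P$-probability, and the continuity together with the boundedness of $f$ then yields (via uniform integrability) that $f(\tilde x^n(t))\to f(\bar x(t))$ strongly in $L^2(\Omega\times\Omega',\widetilde\F_t,\widetilde P)$.

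On the other hand, Lemma \ref{3weakConvergencefXn} asserts $f(\tilde x^n(t))\to E_{\widetilde P}[f(\tilde x(t))\mid\widetilde\F_t]$ weakly in $L^2$. Since strong convergence implies weak convergence and weak limits are unique, I conclude
$$
f(\bar x(t))=E_{\widetilde P}\big[f(\tilde x(t))\,\big|\,\widetilde\F_t\big]\qquad\widetilde P\text{-a.s.}
$$
for every bounded continuous $f$. Applying this identity simultaneously to $f$ and to $f^2$ (both bounded continuous) gives
$$
E_{\widetilde P}\big[f(\tilde x(t))^2\,\big|\,\widetilde\F_t\big]=f(\bar x(t))^2=\big(E_{\widetilde P}[f(\tilde x(t))\mid\widetilde\F_t]\big)^2,
$$
so the conditional variance of $f(\tilde x(t))$ vanishes; taking full expectation then shows $f(\tilde x(t))=f(\bar x(t))$ $\widetilde P$-a.s.

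Finally, choosing an injective bounded continuous function, for instance $f=\arctan$, the last identity forces $\tilde x(t)=\bar x(t)$ $\widetilde P$-a.s. Thus $\tilde x(t)$ agrees almost surely with the $\widetilde\F_t$-measurable random variable $\bar x(t)$, hence is $\widetilde\F_t$-measurable for every $t\in[0,T]$; that is, $\tilde x$ is $(\widetilde\F_t)_{t\in[0,T]}$-adapted, which by Remark \ref{3VI_stochbasisrmk} is exactly the assertion that $\tilde x$ is a strong solution. The main obstacle is the degeneracy step: it is essential that Lemma \ref{3strongConvergenceXntilde} delivers \emph{strong} rather than merely weak convergence of $\tilde x^n(t)$, for only then does $f(\tilde x^n(t))$ converge strongly to $f(\bar x(t))$ and can be matched, through uniqueness of weak limits, against the conditional expectation produced by Lemma \ref{3weakConvergencefXn}.
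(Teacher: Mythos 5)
Your proposal is correct, and up to the identity $f(\bar x(t))=E_{\widetilde P}[f(\tilde x(t))\mid\widetilde\F_t]$ it coincides with the paper's proof: both invoke Lemma \ref{3strongConvergenceXntilde} for the strong $L^2$-limit, upgrade $f(\tilde x^n(t))\to f(\bar x(t))$ to strong $L^2$-convergence via convergence in probability and boundedness (the paper cites Vitali), and match this against the weak limit from Lemma \ref{3weakConvergencefXn} using uniqueness of weak limits. Where you genuinely diverge is the endgame. The paper first applies the identity to the truncations $f_k(x)=x\,\ind_{[-k,k]}(x)$ and dominated convergence for conditional expectations to deduce $\hat x(t)=E_{\widetilde P}[\tilde x(t)\mid\widetilde\F_t]$, and then invokes the auxiliary Lemma \ref{3measurabilityLemma}, which says that $\phi(E[X\mid\F])=E[\phi(X)\mid\F]$ for all bounded continuous $\phi$ forces $X$ to be $\F$-measurable. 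You instead apply the identity simultaneously to $f$ and $f^2$, observe that the conditional variance of $f(\tilde x(t))$ given $\widetilde\F_t$ vanishes (so $f(\tilde x(t))=f(\bar x(t))$ a.s., after taking full expectation of the conditional variance), and then choose the single injective bounded continuous function $f=\arctan$ to conclude $\tilde x(t)=\bar x(t)$ a.s.; since only finitely many choices of $f$ are needed per step, there is no issue with accumulating null sets. Your route is more self-contained: it dispenses with both the appendix lemma and the truncation sequence, and it incidentally sidesteps a small blemish in the paper's proof, namely that $f_k(x)=x\,\ind_{[-k,k]}(x)$ is not actually continuous at $\pm k$ (the paper's argument needs a continuous truncation there, a fixable but real imprecision). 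What the paper's route buys in exchange is the explicit intermediate identity $\hat x(t)=E_{\widetilde P}[\tilde x(t)\mid\widetilde\F_t]$, identifying the strong $L^2$-limit of the $\tilde x^n(t)$ as the conditional expectation of the weak solution, which is conceptually informative even before measurability is established. Both arguments then conclude via Remark \ref{3VI_stochbasisrmk} in the same way.
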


\begin{proof}
 By Lemma \ref{3strongConvergenceXntilde}, we know that $\tilde x^n$ converges strongly in $L^2(\Omega\times\Omega',\widetilde\F_t,\widetilde P)$, i.e. it exists $\hat x(t)\in L^2(\Omega\times\Omega',\widetilde\F_t,\widetilde P)$ s.t.
 \begin{align*}
  \tilde x^n(t)\stackrel{n\rightarrow\infty}{\longrightarrow}\hat x(t),\quad\text{strongly in }L^2(\Omega\times\Omega',\widetilde\F_t,\widetilde P).
 \end{align*}
 This implies that $\tilde x^n(t)$ converges against $\hat x(t)$ in probability and therefore, by the continuous mapping theorem, we have for every bounded continuous $f:\R\rightarrow\R$:
 \begin{align*}
  f(\tilde x^n(t))\stackrel{n\rightarrow\infty}{\longrightarrow}f(\hat x(t)),\quad\text{in probability.}
 \end{align*}
 It follows from the boundedness of $f$ and Vitali's Theorem that
 \begin{align*}
  f(\tilde x^n(t))\stackrel{n\rightarrow\infty}{\longrightarrow}f(\hat x(t)),\quad\text{strongly in }L^2(\Omega\times\Omega',\widetilde\F_t,\widetilde P),
 \end{align*}
 and since strong convergence implies weak convergence, we also have that $f(\tilde x^n(t))$ converges to $f(\hat x(t))$ weakly in $L^2(\Omega\times\Omega',\widetilde\F_t,\widetilde P)$. On the other hand, we know from Lemma \ref{3weakConvergencefXn} that
 \begin{align*}
  f(\tilde x^n(t))\stackrel{n\rightarrow\infty}{\longrightarrow}E_{\widetilde P}\big[f(\tilde x(t))\big|\widetilde\F_t\big],\quad\text{weakly in }L^2(\Omega\times\Omega',\widetilde\F_t,\widetilde P).
 \end{align*}
 Therefore, by the uniqueness of the weak limit, we have for every bounded, continuous $f$
 \begin{align}\label{3fXhatEqualsCondfXtilde}
  f(\hat x(t))=E_{\widetilde P}\big[f(\tilde x(t))\big|\widetilde\F_t\big].
 \end{align}
 Now consider the sequence $(f_k)_{k\in\NN}$ of bounded, continuous functions given by
 \begin{align*}
  f_k(x)=x\ind_{[-k,k]}(x).
 \end{align*}
 Then, for every $x\in\R$, we have $f_k(x)\rightarrow x$ pointwise, as $k\rightarrow\infty$, and $|f_k(\tilde x(t))|\leq|\tilde x(t)|$ is square integrable. Therefore, by the dominated convergence theorem for conditional expectations, we have
 \begin{align*}
  \hat x(t)=\lim_{k\rightarrow\infty}f_k(\hat x(t))=\lim_{k\rightarrow\infty}E_{\widetilde P}\big[f_k(\tilde x(t))\big|\widetilde\F_t\big]=E_{\widetilde P}\big[\lim_{k\rightarrow\infty}f_k(\tilde x(t))\big|\widetilde\F_t\big]=E_{\widetilde P}\big[\tilde x(t)\big|\widetilde\F_t\big].
 \end{align*}
 Plugging this into \eqref{3fXhatEqualsCondfXtilde}, we have for every bounded, continuous $f:\R\rightarrow\R$:
 \begin{align*}
  f\big(E_{\widetilde P}\big[\tilde x(t)\big|\widetilde\F_t\big]\big)=E_{\widetilde P}\big[f(\tilde x(t))\big|\widetilde\F_t\big].
 \end{align*}
 It follows from Lemma \ref{3measurabilityLemma} that $\tilde x(t)$ is $\widetilde\F_t$-measurable. It follows from Remark \ref{3VI_stochbasisrmk} that the constructed solution is indeed a strong solution.
\end{proof}

\begin{rem}\label{3StrongSolutionOriginalSetup}
 In particular, by back-lifting to the original stochastic Basis $(\Omega\times\Omega', \F\otimes\F', P\otimes P',W,\BB)$ via
 \begin{align*}
  x^n_t&=\psi_n(t,(W_\cdot)_{[0,t]},(\BB_\cdot)_{[0,t]}\big)\\
  x_t&=\psi(t,(W_\cdot)_{[0,t]},(\BB_\cdot)_{[0,t]}\big),
 \end{align*}
 for $t\in [0,T]$, one observes that the process $x$ from Theorem \ref{3StrongConvergenceL2} actually is the strong solution to SFDE \eqref{3SFDE} and for $x^n$ denoting the strong solutions of the approximative SFDE, $x^n(t)$ converges strongly in $L^2(\Omega\times\Omega',\F_t,P\otimes P')$ to $x(t)$.
\end{rem}

\subsection{The infinite dimensional case}

We are finally in the position to treat the infinite dimensional case. Let $b:M_2\rightarrow\R$ be given by
\begin{align*}
 b(x)=\sum_{i=1}^\infty b_i(\<x,e_i\>),
\end{align*}
which we identify with the function $\tilde b:\ell^2\rightarrow\R$
\begin{align*}
 \tilde b(z)=\sum_{i=1}^\infty b_i(z_i)
\end{align*}
via $b(x)=\tilde b\big((\<x,e_i\>)_{i\geq1}\big)$. From now on we write $b$ instead of $\tilde b$.\\
\par
Now, for every $d\in\NN$, we define the approximation $b^d:\R^d\rightarrow\R$

\begin{align}\label{3FiniteDimensionalApprox}
 b^d(z):=\sum_{i=1}^d b^d_i(z_i)=\sum_{i=1}^d b_i(z_i)\ind_{[-d,d]}(z_i).
\end{align}

Then $b^d$ converges pointwise against $b$, as the next lemma shows.

\begin{lem}\label{3PointwiseConvergenceFiniteDimensionalApprox}
 Let $z=(z_i)_{i\geq1}\in\ell^2$. Then
 \begin{align}
  |b(z)-b^d(z_1,\dots,z_d)|\rightarrow0.
 \end{align}
\end{lem}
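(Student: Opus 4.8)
The plan is to write the difference $b(z)-b^d(z_1,\dots,z_d)$ explicitly and split it into a ``tail'' contribution coming from the discarded indices $i>d$ and a ``truncation'' contribution coming from the cut-off $\ind_{[-d,d]}$ applied to the retained indices. Concretely, I would start from
\begin{align*}
 b(z) - b^d(z_1,\dots,z_d) = \sum_{i=d+1}^\infty b_i(z_i) + \sum_{i=1}^d b_i(z_i)\big(1 - \ind_{[-d,d]}(z_i)\big),
\end{align*}
which is a legitimate rearrangement because the full series $\sum_{i\geq1} b_i(z_i)$ converges absolutely: by the standing hypothesis $(\|b_i\|_\infty)_{i\geq1}\in\ell^1$ one has $\sum_{i\geq1}|b_i(z_i)|\leq\sum_{i\geq1}\|b_i\|_\infty<\infty$. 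I would then treat the two sums separately.

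For the tail sum, the $\ell^1$-bound on $(\|b_i\|_\infty)_{i\geq1}$ immediately gives
\begin{align*}
 \Big|\sum_{i=d+1}^\infty b_i(z_i)\Big| \leq \sum_{i=d+1}^\infty \|b_i\|_\infty,
\end{align*}
and the right-hand side is the tail of a convergent series, hence tends to $0$ as $d\to\infty$. Note that this part does not even use $z\in\ell^2$.

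For the truncation sum, the key observation is that $z\in\ell^2$ forces $\sum_{i\geq1}|z_i|^2<\infty$, so in particular $M:=\sup_{i\geq1}|z_i|<\infty$. As soon as $d>M$ we have $|z_i|\leq M<d$ for every $i$, whence $\ind_{[-d,d]}(z_i)=1$ and the entire truncation sum vanishes. Consequently, for all $d>M$ the difference reduces to the tail sum, and combining with the tail estimate yields $|b(z)-b^d(z_1,\dots,z_d)|\to 0$.

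I do not expect a genuine obstacle here; the argument is essentially a two-line splitting plus a boundedness observation. The only point requiring a word of care is the justification of the splitting, which is why I would record the absolute convergence of $\sum_{i\geq1}b_i(z_i)$ at the outset; once that is in place, the separation into tail and truncation parts and the two limits are routine.
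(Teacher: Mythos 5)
Your proof is correct, and it is in fact slightly cleaner than the paper's at the one point where the two arguments diverge. Both proofs use the same decomposition of $b(z)-b^d(z_1,\dots,z_d)$ into the tail $\sum_{i>d}b_i(z_i)$ (handled identically in both, via the $\ell^1$ tail of $(\|b_i\|_\infty)_{i\geq1}$) and the truncation part $\sum_{i=1}^d b_i(z_i)\big(1-\ind_{[-d,d]}(z_i)\big)$. The difference lies in how the truncation part is treated. The paper runs an $\eps$-argument: it first fixes $d_0$ with $\sum_{i>d_0}\|b_i\|_\infty<\eps$, then chooses $d>d_0$ large enough (depending on $z_1,\dots,z_{d_0}$) so that the indicator does not bite for the finitely many indices $i\leq d_0$, and bounds the remaining terms with $d_0<i\leq d$ \emph{also} by the tail $\sum_{i>d_0}\|b_i\|_\infty$, since for those indices the truncation may genuinely be active. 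You instead observe that $z\in\ell^2$ forces $M:=\sup_{i\geq1}|z_i|\leq\|z\|_{\ell^2}<\infty$, so for every $d>M$ the truncation part vanishes \emph{identically}, leaving the exact identity $b(z)-b^d(z_1,\dots,z_d)=\sum_{i>d}b_i(z_i)$ for all large $d$; this eliminates the two-level index bookkeeping entirely. What the paper's version buys in exchange is marginally more generality: its argument only uses that each coordinate $z_i$ is a finite real number, so it would go through for an arbitrary real sequence, whereas yours needs boundedness of the coordinates — but under the lemma's hypothesis $z\in\ell^2$ this distinction is vacuous, and your route is the more economical one.
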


\begin{proof}
 Fix $\eps>0$. Recall that $\big(\|b_i\|_\infty\big)_{i\geq1}\in\ell^1$. We can therefore choose a $d_0=d_0(\eps)\in\NN$ such that
 \begin{align*}
  \sum_{i=d_0+1}^\infty\|b_i\|_\infty<\eps.
 \end{align*}
 Then, for $d=d(z_1,\dots,z_{d_0})>d_0$ large enough, we have $\ind_{(-\infty,-d)\cup(d,\infty)}(z_i)=0$ for $i=1,\dots,d_0$ and thus
 \begin{align*}
  &|b(z)-b^d(z_1,\dots,z_d)|\\
  &\quad=\Big|\sum_{i=1}^\infty b_i(z_i)-\sum_{i=1}^d b_i(z_i)\ind_{[-d,d]}(z_i)\Big|\\
  &\quad=\Big|\sum_{i=1}^d b_i(z_i)\ind_{(-\infty,-d)\cup(d,\infty)}(z_i)+\sum_{i=d+1}^\infty b_i(z_i)\Big|\\
  &\quad=\Big|\sum_{i=1}^{d_0} b_i(z_i)\ind_{(-\infty,-d)\cup(d,\infty)}(z_i)+\sum_{i=d_0+1}^d b_i(z_i)\ind_{(-\infty,-d)\cup(d,\infty)}(z_i)+\sum_{i=d+1}^\infty b_i(z_i)\Big|\\
  &\quad\leq\sum_{i=1}^{d_0} |b_i(z_i)|\cdot0+\sum_{i=d_0+1}^d |b_i(z_i)|\ind_{(-\infty,-d)\cup(d,\infty)}(z_i)+\sum_{i=d+1}^\infty |b_i(z_i)|\\
  &\quad\leq0+\sum_{i=d_0+1}^\infty \|b_i\|_\infty<\eps.
 \end{align*}
\end{proof}

Note that the functions $b^d_i$ are all bounded, measurable with bounded support, i.e. from the previous results, we know that the corresponding SFDE
\begin{align}\label{3ApproxSFDEinfiniteDim}
\begin{cases}
 x^d(t)&=\eta(0)+\int_0^tb^d(\<x^d_s,e_1\>+\eps w_1 B^{H_1}(s),\dots,\<x^d_s,e_d\>+\eps w_d B^{H_d}(s))ds+W(t)\\
 x^d_0&=\eta,
\end{cases}
\end{align}
has a unique solution. Furthermore, by defining $b^{d,n}_i:\R\rightarrow\R$ for $i=1,\dots,d$ and $n\in\NN$ by
\begin{align*}
 b^{d,n}_i(z)=b^d_i\ast\varphi_n(z),
\end{align*}
and $x^{d,n}$ the strong solution of the corresponding SFDE
\begin{align}\label{3ApproxAproxSFDEinfiniteDim}
\begin{cases}
 x^{d,n}(t)&=\eta(0)+\int_0^tb^{d,n}(\<x^{d,n}_s,e_1\>+\eps w_1 B^{H_1}(s),\dots,\<x^{d,n}_s,e_d\>+\eps w_d B^{H_d}(s))ds+W(t)\\
 x^{d,n}_0&=\eta,
\end{cases}
\end{align}
we get an approximation just as in the sections before. In particular, by Theorem \ref{3StrongConvergenceL2} and Remark \ref{3StrongSolutionOriginalSetup}, we know that, for $t\in[0,T]$, $x^{d,n}(t)\stackrel{n\rightarrow\infty}{\longrightarrow}x^d(t)$ strongly in $L^2(\Omega\times\Omega',\F_t,P\otimes P')$.\\
\par
Before we can prove our main result, we need to modify Assumption \ref{3StandingAssumptions} $(H)$ and $(A)$ a little bit:

\begin{ass}\label{3FinalAssumption}
 Suppose, it exists a $\delta_H\in(0,1)$ such that the following conditions are satisfied:
\begin{itemize}
 \item[(H')] The sequence of Hurst parameters $(H_k)_{k\geq1}$ corresponding to the definition of the pertubation $\BB$ (see Lemma \ref{3CylfBM}) satisfies
 \begin{align}\label{3Hcondition2}
  H_k<\frac{1-\delta_H}{3}\quad\text{for all }k\geq1.
 \end{align}
 \item[(A')] It holds
 \begin{align}\label{3Acondition2}
  \sum_{j=1}^\infty A_j<1,
 \end{align}
 where $A_j$, $j=1,\dots,d$ are defined by
 \begin{align}\label{3ADefinition2}
  A_j=\frac{48\sqrt{2}(1+r)\Gamma(\delta_H)}{\sqrt{\pi}}C_{j}^{-\frac{3}{2}}|w_{j}|^{-3}\|b_{j}\|_{L^1},
 \end{align}
\end{itemize}
\end{ass}

The next lemma generalizes Lemma \ref{3VI_relcomp}.

\begin{lem}\label{3VI_relcomp2}
Fix $t\in[0,T]$. Let Assumptions \ref{3StandingAssumptions} $(T)$ and \ref{3FinalAssumption} $(H')$ and $(A')$ be satisfied. Then, for almost every $\omega'\in\Omega'$, there exists a subsequence $d_k(\omega')$ such that

\begin{enumerate}
 \item we have
 \begin{align}\label{3CompactnessInfiniteDim1}
 \sup_{k\geq 1}E\bigg[\int_0^t|\DM_{\theta} x^{d_k(\omega')}(t,\cdot,\omega')|^2d\theta\bigg] <\infty.
 \end{align}
 \item there exists a $\beta\in (0,1/2)$ such that
 \begin{align}\label{3CompactnessInfiniteDim2}
  \sup_{k\geq 1}\int_0^t \int_0^t \frac{E[|\DM_\theta x^{d_k(\omega')}(t,\cdot,\omega') - \DM_{\theta'} x^{d_k(\omega')}(t,\cdot,\omega')|^2]}{|\theta' - \theta|^{1+2\beta}} d\theta' d\theta <\infty.
 \end{align}
\end{enumerate}
\end{lem}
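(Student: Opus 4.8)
The plan is to reduce the statement to the finite–dimensional estimates already carried out in the proof of Lemma \ref{3VI_relcomp}, exploiting that those estimates are in fact uniform not only in the mollification index $n$ but also in the truncation dimension $d$. First I would record the structural observation that, for each fixed $d$, the drift $b^d$ in \eqref{3ApproxSFDEinfiniteDim} is of exactly the form treated in Section 3.1, namely a finite sum $b^d(z)=\sum_{i=1}^d b^d_i(z_i)$ of bounded measurable functions $b^d_i=b_i\ind_{[-d,d]}$ with compact support. Hence $x^d$ is the strong solution produced by Theorem \ref{3StrongConvergenceL2} and Remark \ref{3StrongSolutionOriginalSetup}, obtained as the strong $L^2(\Omega\times\Omega')$–limit, as $n\to\infty$, of the Malliavin–differentiable solutions $x^{d,n}$ of \eqref{3ApproxAproxSFDEinfiniteDim}, for which the iteration representation \eqref{3MalliavinIteration} of $\DM_\theta x^{d,n}(t)$ holds verbatim with $b^{d,n}_i=b^d_i\ast\varphi_n$ in place of $b_{i,n}$.

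Next I would run the entire chain of estimates of the proof of Lemma \ref{3VI_relcomp} for the sequence $x^{d,n}$. The dimension and the coefficients enter the final bound \eqref{3UsefulEstimationForLater1} only through the constant $e^{\frac12 M^2T}$ and through the quantity $\sum_{j=1}^d A_j^{(d,n)}$, where $A_j^{(d,n)}$ is \eqref{3ADefinition2} with $\|b_j\|_{L^1}$ replaced by $\|b^{d,n}_j\|_{L^1}$. The crucial point is uniformity in both indices: by the $L^1$–contractivity of mollification and truncation, $\|b^{d,n}_j\|_{L^1}\le\|b^d_j\|_{L^1}=\int_{-d}^d|b_j|\le\|b_j\|_{L^1}$, so that $A_j^{(d,n)}\le A_j$ and therefore $\sum_{j=1}^d A_j^{(d,n)}\le\sum_{j=1}^\infty A_j<1$ by Assumption \ref{3FinalAssumption} $(A')$; moreover the uniform drift bound $M=\sum_{i=1}^d\|b^d_i\|_\infty\le\sum_{i=1}^\infty\|b_i\|_\infty=:\bar M<\infty$ holds because $(\|b_i\|_\infty)_{i\ge1}\in\ell^1$. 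Together with Assumption \ref{3StandingAssumptions} $(T)$ and Assumption \ref{3FinalAssumption} $(H')$ (which guarantees $-3H_k>-1$ for every $k$, so that Lemma \ref{3SimplexIntegralLemma} applies throughout), these replacements leave every step valid and yield
\begin{align*}
\sup_{d,n}E'\Big[E\Big[\int_0^t|\DM_\theta x^{d,n}(t)-1|^2d\theta\Big]\Big]<\infty,\quad \sup_{d,n}E'\Big[\int_0^t\!\!\int_0^t\frac{E[|\DM_\theta x^{d,n}(t)-\DM_{\theta'}x^{d,n}(t)|^2]}{|\theta'-\theta|^{1+2\beta}}d\theta'd\theta\Big]<\infty
\end{align*}
for some $\beta\in(0,1/2)$ with $\beta<\delta_H$, with constants depending only on $\bar M$, $\bar A:=\sum_j A_j$, $\eps$, $T$ and $\delta_H$.

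I would then transfer these bounds from $x^{d,n}$ to $x^d$ by letting $n\to\infty$ for each fixed $d$. Since $x^{d,n}\to x^d$ strongly in $L^2(\Omega\times\Omega')$ and the Malliavin norms of $x^{d,n}$ are bounded uniformly in $n$, closedness of the Malliavin derivative gives $x^d\in\mathbb D^{1,2}$ with $\DM x^{d,n}\rightharpoonup\DM x^d$ weakly in $L^2(\Omega\times\Omega'\times[0,t])$ and, by boundedness in the weighted fractional Sobolev seminorm, weakly in that space as well. Weak lower semicontinuity of both Hilbert seminorms then passes the two displayed bounds to $x^d$ with the \emph{same} constants, so that the suprema over $d$ stay finite. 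At this stage the proof closes exactly as in Lemma \ref{3VI_relcomp}: a single application of Fatou's lemma to the $E'$–integral shows that the $\liminf_d$ of the sum of the two inner ($E$–)quantities has finite $E'$–expectation, hence is finite for $P'$–a.e.\ $\omega'$; extracting for each such $\omega'$ a subsequence $d_k(\omega')$ realising this $\liminf$ yields \eqref{3CompactnessInfiniteDim1} and \eqref{3CompactnessInfiniteDim2}, and the relative compactness in $L^2(\Omega,\F^W_t,P)$ follows from Lemma \ref{3VI_compactcrit}.

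The main obstacle I anticipate is the rigorous transfer of the derivative estimates from the smooth approximations $x^{d,n}$ to the genuine strong solution $x^d$: one must argue that $x^d$ is Malliavin differentiable and that both the $L^2$– and the fractional‑Sobolev‑type bounds survive the passage $n\to\infty$, which rests on closedness of $\DM$ and weak lower semicontinuity rather than on any explicit formula for $\DM x^d$. The genuinely new analytic ingredient, by contrast, is elementary: it is the monotonicity $\|b^d_j\|_{L^1}\le\|b_j\|_{L^1}$ together with $(\|b_i\|_\infty)_{i\ge1}\in\ell^1$, which is precisely what upgrades the finite condition $\sum_{j=1}^dA_j<1$ of Assumption \ref{3StandingAssumptions} $(A)$ to the dimension‑free condition $\sum_{j=1}^\infty A_j<1$ of Assumption \ref{3FinalAssumption} $(A')$ and renders all constants independent of $d$.
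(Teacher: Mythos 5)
Your proposal is correct and follows essentially the same route as the paper's proof: uniform-in-$(d,n)$ finite-dimensional estimates obtained from $\|b^{d,n}_j\|_{L^1}\le\|b_j\|_{L^1}$ together with Assumption \ref{3FinalAssumption} $(A')$ (and the $\ell^1$-bound on $(\|b_i\|_\infty)_{i\ge1}$ controlling the Girsanov constant), transfer of the bounds from $x^{d,n}$ to $x^d$ via closedness of $\DM$ (the paper's Lemma \ref{3NualartLemma}) and lower semicontinuity, and then Fatou's lemma plus Corollary \ref{3VI_compactcrit} for the $\omega'$-wise subsequence. The only (minor) divergence is in the fractional-Sobolev bound: instead of your weak-convergence-and-lower-semicontinuity argument in the weighted space, the paper tests against nonnegative $\rho\in L^2(\R^2)$ and uses H\"older together with monotone convergence to get the pointwise a.e.\ bound $E[|\DM_\theta x^{d}(t)-\DM_{\theta'}x^{d}(t)|^2]\le\sup_{n}E[|\DM_\theta x^{d,n}(t)-\DM_{\theta'}x^{d,n}(t)|^2]$, which it then integrates against the singular weight — the two arguments are equivalent in substance.
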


\begin{proof}
 As in the proof of Lemma \ref{3VI_relcomp}, it suffices to show that
 \begin{align}\label{3CompactnessFinal1}
 \sup_{d\geq 1}E_{P\otimes P'}\bigg[\int_0^t|\DM_{\theta} x^d(t)|^2d\theta\bigg] <\infty,
 \end{align}
 and
 \begin{align}\label{3CompactnessFinal2}
  \sup_{d\geq 1}\int_0^t \int_0^t \frac{E_{P\otimes P'}[|\DM_\theta x^{d}(t) - \DM_{\theta'} x^{d}(t)|^2]}{|\theta' - \theta|^{1+2\beta}} d\theta' d\theta <\infty.
 \end{align}
 First, we show \eqref{3CompactnessFinal1}. For this, note that for $t\in[0,T]$, $x^{d,n}(t)\stackrel{n\rightarrow\infty}{\longrightarrow}x^d(t)$ strongly in $L^2(\Omega\times\Omega',\F_t,P\otimes P')$. Moreover, as SFDE \eqref{3ApproxAproxSFDEinfiniteDim} has Lipschitz coefficients (recall that $b^{d,n}_i\in C^\infty_c(\R)$ by construction), $x^{d,n}(t)\in\mathbb{D}^{1,2}$. Finally, we have, by \eqref{3FinalEstimationStep1} that $$\sup_{n\geq1}E_{P\otimes P'}\Big[\int_0^T|\DM_\theta x^{d,n}(t)|^2d\theta\Big]<\infty.$$ Therefore, by Lemma \ref{3NualartLemma}, we have for all $t\in[0,T]$,
 \begin{align*}
  x^{d}(t)\in\mathbb{D}^{1,2}\quad\text{and}\quad \DM_\cdot x^{d,n}(t)\stackrel{n\rightarrow\infty}{\longrightarrow}\DM_\cdot x^{d}(t)\quad\text{weakly in }L^2(\Omega\times\Omega',L^2([0,T])).
 \end{align*}
 Therefore, by Fatou's Lemma and the weak lower semi-continuity of norms, we have
 \begin{align*}
  \sup_{d\geq 1}E'\bigg[E\Big[\int_0^t|\DM_{\theta} x^d(t)|^2d\theta\Big]\bigg]=\sup_{d\geq 1}\liminf_{n\rightarrow\infty}E'\bigg[E\Big[\int_0^t|\DM_{\theta} x^{d,n}(t)|^2d\theta\Big]\bigg].
 \end{align*}
 By \eqref{3UsefulEstimationForLater1}, we have 
 \begin{align*}
 E'\bigg[E\Big[\int_0^t|\DM_{\theta} x^{d,n}(t)|^2d\theta\Big]\bigg]\leq e^{\frac{1}{2}M^2T}\bigg(\sum_{m=1}^\infty\Big((t-\theta)^{\delta_H}\sum_{j=1}^dA_{j}\Big)^m\bigg)^2,
 \end{align*}
 which does not depend on $n$ any more. Now, Fubini-Tonelli's theorem yields
 \begin{align*}
  \sup_{d\geq 1}E'\bigg[E\Big[\int_0^t|\DM_{\theta} x^d(t)|^2d\theta\Big]\bigg]&\leq \sup_{d\geq 1}\liminf_{n\rightarrow\infty}E'\bigg[E\Big[\int_0^t|\DM_{\theta} x^{d,n}(t)|^2d\theta\Big]\bigg]\\
  &\leq e^{\frac{1}{2}M^2T}\lim_{d\geq 1}\bigg(\sum_{m=1}^\infty\Big((t-\theta)^{\delta_H}\sum_{j=1}^dA_{j}\Big)^m\bigg)^2\\
  &=e^{\frac{1}{2}M^2T}\bigg(\sum_{m=1}^\infty\Big((t-\theta)^{\delta_H}\sum_{j=1}^\infty A_{j}\Big)^m\bigg)^2\\
  &<\infty.
 \end{align*}
 This proves \eqref{3CompactnessFinal1}.\\
 \par
 In order to prove \eqref{3CompactnessFinal2}, let $\rho\in L^2(\R^2,\R)$, $\rho\geq0$. Then, by Lemma \eqref{3NualartLemma}, H\"{o}lder's inequality and monotone convergence
 \begin{align*}
  &\int_0^t \int_0^t \rho(\theta,\theta')E_{P\otimes P'}[|\DM_\theta x^{d}(t) - \DM_{\theta'} x^{d}(t)|^2] d\theta' d\theta\\
  &=\lim_{n,m\rightarrow\infty}\int_0^t \int_0^t E_{P\otimes P'}[\rho(\theta,\theta')(\DM_\theta x^{d,n}(t) - \DM_{\theta'} x^{d,n}(t))(\DM_\theta x^{d,m}(t) - \DM_{\theta'} x^{d,m}(t))] d\theta' d\theta\\
  &\leq \lim_{n\rightarrow\infty}\bigg(\int_0^t \int_0^t \rho(\theta,\theta')E_{P\otimes P'}[|\DM_\theta x^{d,n}(t) - \DM_{\theta'} x^{d,n}(t))|^2] d\theta' d\theta\bigg)^{\frac{1}{2}}\\
  &\quad\cdot\lim_{m\rightarrow\infty}\bigg(\int_0^t \int_0^t \rho(\theta,\theta')E_{P\otimes P'}[|\DM_\theta x^{d,m}(t) - \DM_{\theta'} x^{d,m}(t))|^2] d\theta' d\theta\bigg)^{\frac{1}{2}}\\
  &=\lim_{n\rightarrow\infty}\int_0^t \int_0^t \rho(\theta,\theta')E_{P\otimes P'}[|\DM_\theta x^{d,n}(t) - \DM_{\theta'} x^{d,n}(t))|^2] d\theta' d\theta\\
  &=\int_0^t \int_0^t \rho(\theta,\theta')\sup_{n\geq1}E_{P\otimes P'}[|\DM_\theta x^{d,n}(t) - \DM_{\theta'} x^{d,n}(t))|^2] d\theta' d\theta.
 \end{align*}
 Since this holds for every $\rho\in L^2(\R^2,\R)$, $\rho\geq0$, we obtain that for Lebesgue-a.e. $(\theta,\theta')\in[0,t]^2$,
 \begin{align*}
  E_{P\otimes P'}[|\DM_\theta x^{d}(t) - \DM_{\theta'} x^{d}(t)|^2]\leq \sup_{n\geq1}E_{P\otimes P'}[|\DM_\theta x^{d,n}(t) - \DM_{\theta'} x^{d,n}(t))|^2].
 \end{align*}
 On the other hand, we know, by \eqref{3I1Estimate}, \eqref{3I2Estimate} and \eqref{3I3Estimate} and Assumption \ref{3FinalAssumption} that
 \begin{align*}
  \sup_{n\geq1}E_{P\otimes P'}[|\DM_\theta x^{d,n}(t) - \DM_{\theta'} x^{d,n}(t))|^2]&\leq 3e^{\frac{1}{2}M^2T}\delta_T^{-2\delta_H}|\theta'-\theta|^{2\delta_H}\\
  &\quad+3e^{\frac{1}{2}M^2T}\delta_T^{-2\delta_H}|t-\theta'|^{2\delta_H}\frac{|\theta-\theta'|}{(1+r)^2}\\
  &\quad+3e^{\frac{1}{2}M^2T}\delta_T^{-4\delta_H}|t-\theta'|^{2\delta_H}|\theta-\theta'|^{2\delta_H}\\
  &=C(\theta,\theta',t)|\theta-\theta'|^{2(\delta_H\wedge\frac{1}{2})},
 \end{align*}
 
 for some bounded function $C:[0,T]\times[0,T]\times[0,T]\rightarrow\R$. Note that the right handside of this equation \textit{does not depend on $d$}. Therefore,
 
 \begin{align*}
  \sup_{d\geq1}E_{P\otimes P'}[|\DM_\theta x^{d}(t) - \DM_{\theta'} x^{d}(t)|^2]\leq C(\theta,\theta',t)|\theta-\theta'|^{2(\delta_H\wedge\frac{1}{2})}.
 \end{align*}
 
 Choosing $\beta<\delta_H\wedge\frac{1}{2}$, we therefore have that
 
 \begin{align*}
  &\sup_{d\geq 1}\int_0^t \int_0^t \frac{E_{P\otimes P'}[|\DM_\theta x^{d}(t) - \DM_{\theta'} x^{d}(t)|^2]}{|\theta' - \theta|^{1+2\beta}} d\theta' d\theta\\
  &\quad\leq\int_0^t \int_0^t \frac{\sup_{d\geq 1}E_{P\otimes P'}[|\DM_\theta x^{d}(t) - \DM_{\theta'} x^{d}(t)|^2]}{|\theta' - \theta|^{1+2\beta}} d\theta' d\theta\\
  &\leq\int_0^t \int_0^t C(\theta,\theta',t)|\theta-\theta'|^{2(\delta_H\wedge\frac{1}{2}-\beta)-1} d\theta' d\theta<\infty,
 \end{align*}
 
 which proves \eqref{3CompactnessFinal2}.
\end{proof}

\begin{thm}
 Under assumptions \ref{3StandingAssumptions} $(T)$ and \ref{3FinalAssumption} $(H')$ and $(A')$, SFDE \ref{3SFDE} has a unique strong solution.
\end{thm}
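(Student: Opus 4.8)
The plan is to reproduce the machinery developed for the finite-dimensional case (Theorems \ref{3StrongConvergenceL2} and \ref{3mainTheorem}) essentially verbatim, replacing the mollification index $n$ by the truncation dimension $d$ and using Lemma \ref{3VI_relcomp2} as the compactness input in place of Lemma \ref{3VI_relcomp}. First I would establish the weak-compactness analogue of Lemma \ref{3WeaklyConvergingSubsequence}: since $\|b^d\|_\infty\leq\|b\|_\infty=\sum_{i\geq1}\|b_i\|_\infty<\infty$ uniformly in $d$, the same Jensen/Fubini estimate used there shows that $(x^d(t))_{d\geq1}$ and $(|x^d(t)|^2)_{d\geq1}$ are bounded in $L^2(\Omega\times\Omega',\F_t,P\otimes P')$, so after passing to a subsequence we obtain weak limits $x(t)$ and $y(t)$ with $x^d(t)\to x(t)$ and $|x^d(t)|^2\to y(t)$ weakly.

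Next, for $P'$-a.e. fixed $\omega'$, Lemma \ref{3VI_relcomp2} together with the compactness criterion (Corollary \ref{3VI_compactcrit}) yields a sub-subsequence along which $x^{d_k(\omega')}(t,\cdot,\omega')$ converges \emph{strongly} in $L^2(\Omega,\F^W_t,P)$ to some $\check x(t,\cdot,\omega')$. I would then prove the $b^d$-analogue of the Wiener-transform representations of Lemma \ref{3WienerRepresentations}: the Girsanov computation is identical, and the crucial passage to the limit in $d$ is justified by the pointwise convergence $b^d\to b$ from Lemma \ref{3PointwiseConvergenceFiniteDimensionalApprox} combined with the uniform bound $\|b^d\|_\infty\leq\|b\|_\infty$ through dominated convergence. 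This identifies $E[(\check x(t,\cdot,\omega'))^2]=E[y(t,\cdot,\omega')]$ and, via the weak-limit uniqueness of Remark \ref{3Facts1and2} and Corollary \ref{3UniquenessWeakLimit}, $\check x(t,\cdot,\omega')=x(t,\cdot,\omega')$, exactly as in the three steps of Theorem \ref{3StrongConvergenceL2}. The Hilbert-space fact that weak convergence together with convergence of norms implies strong convergence then upgrades this to $x^d(t)\to x(t)$ strongly in $L^2(\Omega\times\Omega',\F_t,P\otimes P')$, first $\omega'$-wise in $L^2(\Omega,\F^W_t,P)$ and then globally by the dominated-convergence argument of Step 3 of Theorem \ref{3StrongConvergenceL2}.

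With strong convergence in hand, the construction of the strong solution proceeds exactly as in Theorem \ref{3mainTheorem}: I would set $\tilde x(t):=\eta(0)+W(t)$, use Girsanov to exhibit $\tilde x$ as a weak solution under a measure $\widetilde P$ for which $\widetilde W(t):=W(t)-\int_0^t b(\tilde x_s+\eps\BB(s))ds$ is a Brownian motion independent of $\BB$, define $\tilde x^d$ by substituting $\widetilde W$ into the progressively measurable functional $\psi_d$ representing $x^d$, and verify that $(\tilde x^d(t))_d$ is Cauchy in $L^2(\Omega\times\Omega',\widetilde\F_t,\widetilde P)$ (its increments equal those of $x^d$ under $P\otimes P'$, which converge). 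The weak-convergence identity $f(\tilde x^d(t))\to E_{\widetilde P}[f(\tilde x(t))\mid\widetilde\F_t]$ (the $d$-analogue of Lemma \ref{3weakConvergencefXn}, again relying on $b^d\to b$ pointwise and boundedness) together with Lemma \ref{3measurabilityLemma} then shows that $\tilde x(t)$ is $\widetilde\F_t$-measurable, so by Remark \ref{3VI_stochbasisrmk} and back-lifting (Remark \ref{3StrongSolutionOriginalSetup}) the constructed process is a strong solution on the original basis.

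For uniqueness I would argue directly through the Wiener transform, staying within the methodology of the paper and avoiding Yamada--Watanabe: any strong solution $x$ of \eqref{3SFDE} is $(\F_t)$-adapted and, by the boundedness of $b$ and Girsanov's theorem, makes $\overline W:=W+\int_0^\cdot b(x_s+\eps\BB(s))ds$ a Brownian motion under an equivalent measure, whence $x(t)=\eta(0)+\overline W(t)$ and $\<x_s,e_i\>=F_i(s,\overline W_s)$ as in Corollary \ref{3GirsanovWdkCor}. The Girsanov computation of Lemma \ref{3WienerRepresentations} then expresses $\Wi_\Omega(x(t,\cdot,\omega'))(\phi)$ by a formula depending only on $b$, the $F_i$, $W$ and $\BB$, hence \emph{not} on the particular solution; by Remark \ref{3Facts1and2} two strong solutions therefore coincide a.s. The genuine obstacle throughout is the \emph{uniformity in $d$} of every estimate, which is precisely why Assumption \ref{3FinalAssumption} replaces the finite condition $\sum_{j=1}^dA_j<1$ by the summable $\sum_{j=1}^\infty A_j<1$ and why Lemma \ref{3VI_relcomp2} already packages the Malliavin-derivative bounds independently of $d$; beyond this, the only new analytic point relative to the finite-dimensional case is the dominated-convergence passage to the limit in the Wiener transforms, which rests entirely on Lemma \ref{3PointwiseConvergenceFiniteDimensionalApprox}.
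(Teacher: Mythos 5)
Your proposal is correct and follows essentially the same route as the paper: the paper's own proof consists precisely of the reduction you describe, namely repeating the finite-dimensional arguments (Theorems \ref{3StrongConvergenceL2} and \ref{3mainTheorem}) with the truncation index $d$ in place of the mollification index $n$, invoking Lemma \ref{3VI_relcomp2} instead of Lemma \ref{3VI_relcomp} for the $\omega'$-wise compactness, and deducing strong uniqueness from the Wiener-transform representation of solutions. Your spelled-out details — the uniform bound $\|b^d\|_\infty\leq\sum_i\|b_i\|_\infty$, the dominated-convergence passage based on Lemma \ref{3PointwiseConvergenceFiniteDimensionalApprox}, and the explicit uniqueness argument via Corollary \ref{3GirsanovWdkCor} and Remark \ref{3Facts1and2} — are exactly what the paper's terse proof implicitly relies on.
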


\begin{proof}
 The proof follows the exact same lines as in the finite dimensional case (see sections before) just that instead of Lemma \ref{3VI_relcomp}, we apply Lemma \ref{3VI_relcomp2} in order to show relative compactness of $(x^{d_k(\omega')}(t))_{k\geq1}$ for almost every $\omega'\in\Omega'$ and every $t\in[0,T]$. Finally, strong uniqueness follows by using the representation of the Wiener transform of solutions as before.
\end{proof}

\appendix
 
\section{Technical results}

\begin{lem}\label{3measurabilityLemma}
 Let $(\Omega,\mathcal{A},P)$ be a probability space and $X\in L^2(\Omega,\mathcal{A},P)$. Furthermore, let $\F\subseteq\mathcal{A}$ be a sigma-algebra. If for all bounded, continuous $\phi:\R\rightarrow\R$,
 \begin{align*}
  \phi(E[X|\F])=E[\phi(X)|\F],
 \end{align*}
 then $X$ is $\F$-measurable.
\end{lem}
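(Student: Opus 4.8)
The plan is to show that $X$ coincides almost surely with the $\F$-measurable random variable $Y:=E[X|\F]$, which immediately yields the $\F$-measurability of $X$ (up to the usual null set). Since $Y$ is the orthogonal projection of $X$ onto $L^2(\Omega,\F,P)$, the Pythagorean identity gives
$$E[(X-Y)^2]=E[X^2]-E[Y^2],$$
so it suffices to prove $E[X^2]=E[Y^2]=E[(E[X|\F])^2]$. Taking conditional expectations, this in turn reduces to the pointwise identity
$$E[X^2\mid\F]=(E[X|\F])^2\quad P\text{-a.s.},$$
since integrating the latter gives exactly $E[X^2]=E[(E[X|\F])^2]$. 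Note that $X\in L^2$ guarantees $E[X^2]<\infty$, so every quantity appearing here is finite and the subtraction above is well defined.

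To obtain this identity I would feed the hypothesis the bounded, continuous truncations $\phi_N(x):=\min(x^2,N^2)$, $N\in\NN$. For each fixed $N$ the assumption yields
$$\phi_N(E[X|\F])=E[\phi_N(X)\mid\F]\quad P\text{-a.s.}$$
The exceptional null set depends on $N$, so I would first take the union over $N\in\NN$ of these countably many null sets; outside this (still null) set the equality holds simultaneously for every $N$, and one may safely let $N\to\infty$. On the left-hand side $\phi_N(y)\to y^2$ for every $y\in\R$, hence $\phi_N(E[X|\F])\to(E[X|\F])^2$ pointwise. On the right-hand side $\phi_N(X)\uparrow X^2$ as $N\to\infty$ with $\phi_N(X)\geq0$, so the conditional monotone convergence theorem gives $E[\phi_N(X)\mid\F]\uparrow E[X^2\mid\F]$ almost surely. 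Passing to the limit establishes $(E[X|\F])^2=E[X^2\mid\F]$ a.s., as required.

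Combining the two steps gives $E[(X-Y)^2]=0$, i.e. $X=E[X|\F]$ a.s., so $X$ is $\F$-measurable. The only genuinely delicate point is the unboundedness of $x\mapsto x^2$: the hypothesis is available only for \emph{bounded} continuous test functions, so the square must be reached through an increasing sequence of bounded continuous approximations, and one must ensure the a.s. identities survive the limit. This is precisely why $\phi_N=\min(x^2,N^2)$ is the right choice — it is bounded, continuous, nonnegative, and increases pointwise to $x^2$ — allowing the left-hand limit to be handled by plain pointwise convergence and the right-hand limit by conditional monotone convergence; collecting the countably many $N$-dependent null sets beforehand is what makes the simultaneous passage to the limit legitimate.
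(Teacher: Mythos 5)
Your proof is correct and follows essentially the same route as the paper: both reduce the claim to $E\big[(X-E[X|\F])^2\big]=E[X^2]-E\big[(E[X|\F])^2\big]=0$ by testing the hypothesis against bounded continuous truncations of $x\mapsto x^2$ (your $\phi_N(x)=\min(x^2,N^2)$, the paper's $x^2\wedge n$) and passing to the limit. The only cosmetic difference is that you pass to the limit at the level of conditional expectations via conditional monotone convergence, after collecting the $N$-dependent null sets, whereas the paper passes to the limit under the plain expectation via dominated convergence combined with the tower property.
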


\begin{proof}
 We prove that $X=E[X|\F]$. To see that, we define, for every $n\in\NN$, the bounded, continuous function $\phi_n:\R\rightarrow\R$ by $\phi_n(x):=x^2\vee n$ and consider
 \begin{align*}
  E\big[\big(X-E[X|\F]\big)^2\big]
  &=E\big[X^2-\big(E[X|\F]\big)^2\big]\\
  &=E\Big[\lim_{n\rightarrow\infty}\Big(X^2\vee n-\Big(\big(E[X|\F]\big)^2\vee n\Big)\Big)\Big].
 \end{align*}
 Since $X,E[X|\F]\in L^2(\Omega,\mathcal{A},P)$ and $\phi_n(x)\leq x^2$, it follows from dominated convergence and the tower property that
 \begin{align*}
  E\big[\big(X-E[X|\F]\big)^2\big]&=E\big[\lim_{n\rightarrow\infty}\big(\phi_n(X)-\phi_n(E[X|\F])\big)\big]\\
  &=\lim_{n\rightarrow\infty}E\big[\phi_n(X)-\phi_n(E[X|\F])\big]\\
  &=\lim_{n\rightarrow\infty}E\big[E\big[\phi_n(X)-\phi_n(E[X|\F])\big|\F\big]\big]\\
  &=\lim_{n\rightarrow\infty}E\big[E\big[\phi_n(X)\big|\F\big]-\phi_n(E[X|\F])\big]\\
  &=\lim_{n\rightarrow\infty}0=0.
 \end{align*}
\end{proof}

The next lemma is a simple adaptation of \cite[Lemma 1.2.3]{Nualart}

\begin{lem}\label{3NualartLemma}
 Let $(\Omega,\mathfrak{A},P)$ be a probability space, $0<T<\infty$ and let
 \begin{itemize}
  \item[(i)] $Y^n\stackrel{n\rightarrow\infty}{\longrightarrow}Y$ in $L^2(\Omega)$,
  \item[(ii)] $Y^n\in\mathbb{D}^{1,2}$ for all $n\in\NN$, and
  \item[(iii)] $\sup_{n\geq1}E\Big[\int_0^T|D_sY^n|^2ds\Big]<\infty$.
 \end{itemize}
 Then $Y\in\mathbb{D}^{1,2}$ and $D_\cdot Y^n\stackrel{n\rightarrow\infty}{\longrightarrow}D_\cdot Y$ weakly in $L^2(\Omega,H)$, where $H=L^2([0,T])$.
\end{lem}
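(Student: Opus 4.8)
The plan is to exploit the uniform bound in condition (iii) together with the closability of the Malliavin derivative operator, which I would realize concretely through its adjoint, the divergence (Skorokhod) operator $\delta$. The overall scheme is: extract a weakly convergent subsequence of the derivatives, identify its limit as $DY$ via an integration-by-parts/duality argument, and then upgrade subsequential convergence to convergence of the full sequence.

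First I would note that $L^2(\Omega,H)$ is a Hilbert space, hence reflexive, so by (iii) the bounded sequence $(DY^n)_{n\geq1}$ admits a subsequence $(DY^{n_k})_{k\geq1}$ converging weakly in $L^2(\Omega,H)$ to some $\zeta\in L^2(\Omega,H)$. The real work is to show that this limit is in fact the derivative of $Y$. Let $\delta$ denote the adjoint of $D$, so that for every $u\in\dom(\delta)$ the duality relation $E[\langle DG,u\rangle_H]=E[G\,\delta(u)]$ holds for $G\in\mathbb{D}^{1,2}$. Applying this with $G=Y^{n_k}$ gives $E[\langle DY^{n_k},u\rangle_H]=E[Y^{n_k}\,\delta(u)]$. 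Letting $k\to\infty$, the left-hand side converges to $E[\langle\zeta,u\rangle_H]$ by the weak convergence $DY^{n_k}\rightharpoonup\zeta$ (here $u\in L^2(\Omega,H)$ is an admissible test element), while the right-hand side converges to $E[Y\,\delta(u)]$ by condition (i) together with $\delta(u)\in L^2(\Omega)$. Hence $E[\langle\zeta,u\rangle_H]=E[Y\,\delta(u)]$ for every $u\in\dom(\delta)$, which is precisely the duality characterization asserting that $Y\in\mathbb{D}^{1,2}$ and $DY=\zeta$.

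Finally I would pass from the subsequence to the full sequence. Since the limit $\zeta=DY$ obtained above is uniquely determined and does not depend on the particular subsequence chosen, every weakly convergent subsequence of the bounded sequence $(DY^n)_{n\geq1}$ must have the same limit $DY$. A standard subsequence-of-subsequences argument in the reflexive space $L^2(\Omega,H)$ then forces the whole sequence to converge: $DY^n\rightharpoonup DY$ weakly in $L^2(\Omega,H)$, as claimed.

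The only genuinely delicate point is the limit passage in the duality relation, i.e. confirming that the closability characterization of $\mathbb{D}^{1,2}$ through the adjoint $\delta$ is available and that both sides converge as stated. This is exactly the content of the cited \cite[Lemma 1.2.3]{Nualart}, so the present statement is obtained by re-running that argument, the only modification being that the derivatives are here shown to converge \emph{weakly} rather than strongly.
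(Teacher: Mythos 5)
Your proof is correct, and its second half coincides with the paper's, but the way you establish $Y\in\mathbb{D}^{1,2}$ takes a genuinely different route. The paper verifies membership directly from the chaos-norm definition: writing $J_n$ for the projection onto the $n$-th Wiener chaos, it uses the $L^2$-continuity of $J_n$, hypothesis (iii), and Fatou's lemma applied to the series $\sum_{n}n\|J_n(Y)\|_2^2$ to bound that series, which gives $Y\in\mathbb{D}^{1,2}$ before any weak limit is extracted; the weak convergence of the \emph{full} sequence then follows, as in your argument, from the duality $E[\langle D Y^n,a\rangle_H]=E[Y^n\delta(a)]$ for $a\in\dom(\delta)$, combined with the uniform bound and the density of $\dom(\delta)$ in $L^2(\Omega\times[0,T])$ — so no subsequence extraction is needed there. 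You instead extract a weak subsequential limit $\zeta$ by reflexivity and identify it through the relation $E[\langle\zeta,u\rangle_H]=E[Y\,\delta(u)]$, $u\in\dom(\delta)$, which characterizes $Y\in\dom(\delta^{*})$ with $\delta^{*}Y=\zeta$; this yields the conclusion because $D$, being closed (by the very construction of $\mathbb{D}^{1,2}$ as the domain of the closure) and densely defined with adjoint $\delta$, satisfies $D=\delta^{*}$ by the biadjoint theorem, so $\dom(\delta^{*})=\mathbb{D}^{1,2}$ and $DY=\zeta$. The subsequence-of-subsequences argument then correctly upgrades weak convergence to the whole sequence, since the limit is independent of the subsequence. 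Both routes are sound: yours is more abstract and buys generality — it works verbatim for any closed, densely defined operator and never touches the chaos decomposition — while the paper's is more hands-on, using only the defining chaos characterization of $\mathbb{D}^{1,2}$ and thereby avoiding the operator-theoretic identity $\dom(\delta^{*})=\mathbb{D}^{1,2}$, which is the one point in your write-up that merits an explicit word of justification (closedness of $D$ plus dense definedness of $\delta$) rather than being folded into the phrase ``duality characterization.''
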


\begin{proof}
 We first show that $Y\in\mathbb{D}^{1,2}$. For that, let $J_n(Y)$ denote the projection to the $n^{\text{th}}$ Wiener chaos. Then, what we need to show is that
 \begin{align*}
  \sum_{n=1}^\infty n\|J_n(Y)\|_2^2<\infty.
 \end{align*}
 Since the $n^{\text{th}}$ projection $J_n$ is continuous w.r.t. the $L^2$ topology, we have by $(i)$ and $(ii)$ and 
 \begin{align*}
  \sum_{n=1}^\infty n\|J_n(Y)\|_2^2=\sum_{n=1}^\infty n\lim_{m\rightarrow\infty}\|J_n(Y^m)\|_2^2.
 \end{align*}
 Now Fatou's Lemma applied to the sum $\sum_{n=1}^\infty\dots$ and $(iii)$ yield
 \begin{align*}
  \sum_{n=1}^\infty n\|J_n(Y)\|_2^2&\leq\liminf_{m\rightarrow\infty}\sum_{n=1}^\infty n\|J_n(Y^m)\|_2^2=\liminf_{m\rightarrow\infty}E\Big[\int_0^T|D_sY^m|^2ds\Big]\\
  &\leq\sup_{m\geq1}E\Big[\int_0^T|D_sY^m|^2ds\Big]<\infty.
 \end{align*}
 Now that we know that $Y\in\mathbb{D}^{1,2}$, we can prove the convergence. For that, let $\delta(a)$ denote the Skorohod integral $\int_0^Ta(s)\delta(ds)$ and let $a\in\dom(\delta)$. By the duality formula and $(i)$, we have
 \begin{align*}
  E\Big[\int_0^TD_sY^na(s)ds\Big]=E\big[Y^n\delta(a)\big]\stackrel{n\rightarrow\infty}{\longrightarrow} E\big[Y\delta(a)\big]=E\Big[\int_0^TD_sYa(s)ds\Big].
 \end{align*}
 The weak convergence in $L^2(\Omega,H)$ follows now from the fact that $\dom(\delta)$ is dense in $L^2(\Omega\times[0,T])$.
\end{proof}

\begin{lem}\label{3SimplexIntegralLemma}
 Let $m\in\NN$, $s_{m+1=\theta}$ and, for $j=1,\dots,m$, $a_j\in(-1,\infty)$. Then
 \begin{align*}
  \int_{\Delta^m_{\theta,t}}\prod_{j=1}^m(s_j-s_{j+1})^{a_j}ds=\frac{\prod_{l=1}^m\Gamma(a_l+1)}{\Gamma(\sum_{l=1}^ma_l+m+1)}(t-\theta)^{\sum_{l=1}^ma_l+m}.
 \end{align*}
\end{lem}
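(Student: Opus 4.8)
The plan is to prove this by induction on $m$, using the scaling invariance of the simplex to normalize the endpoints and the Beta-function identity to carry out each one-dimensional integration.

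First I would reduce to the case $\theta=0$, $t=1$ via the affine change of variables $s_j=\theta+(t-\theta)u_j$, which maps $\Delta^m_{\theta,t}$ onto $\Delta^m_{0,1}$, turns each factor $s_j-s_{j+1}$ into $(t-\theta)(u_j-u_{j+1})$ (with the conventions $s_{m+1}=\theta$, $u_{m+1}=0$), and contributes a Jacobian $(t-\theta)^m$. This produces exactly the prefactor $(t-\theta)^{\sum_l a_l+m}$ and leaves the normalized integral $\int_{\Delta^m_{0,1}}\prod_j(u_j-u_{j+1})^{a_j}\,du$, for which it then suffices to show it equals $\prod_l\Gamma(a_l+1)/\Gamma(\sum_l a_l+m+1)$.

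For the induction on the normalized statement, the base case $m=1$ is $\int_0^1 u_1^{a_1}\,du_1=(a_1+1)^{-1}=\Gamma(a_1+1)/\Gamma(a_1+2)$. For the inductive step I would integrate out the innermost variable $u_m\in(0,u_{m-1})$ first: the substitution $u_m=u_{m-1}v$ gives
\begin{align*}
 \int_0^{u_{m-1}}(u_{m-1}-u_m)^{a_{m-1}}u_m^{a_m}\,du_m=u_{m-1}^{a_{m-1}+a_m+1}\int_0^1(1-v)^{a_{m-1}}v^{a_m}\,dv,
\end{align*}
where the Beta integral is finite precisely because $a_{m-1},a_m>-1$, and equals $\Gamma(a_m+1)\Gamma(a_{m-1}+1)/\Gamma(a_{m-1}+a_m+2)$. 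What remains is an integral over $\Delta^{m-1}_{0,1}$ with exponents $a_1,\dots,a_{m-2}$ and a new final exponent $a_{m-1}+a_m+1$, which still lies in $(-1,\infty)$, so the induction hypothesis applies.

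Applying the inductive hypothesis and multiplying by the Beta factor, the $\Gamma(a_{m-1}+a_m+2)$ terms cancel; the merged final exponent contributes $\sum_{l=1}^{m-2}a_l+(a_{m-1}+a_m+1)=\sum_{l=1}^m a_l+1$ to the denominator argument, yielding $\Gamma(\sum_{l=1}^m a_l+m+1)$, while the numerator collapses to $\prod_{l=1}^m\Gamma(a_l+1)$, which closes the induction. The argument is otherwise entirely routine; the only points needing care are checking that the condition $a_j>-1$ is preserved under the recursion (so that every Beta integral converges and the hypothesis stays applicable) and the bookkeeping of the Gamma arguments after the cancellation.
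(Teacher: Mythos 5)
Your proof is correct and follows essentially the same route as the paper's: induction on $m$, integrating out the innermost variable via the Beta-function identity $\int_\theta^s(s-s_1)^a(s_1-\theta)^b\,ds_1=\frac{\Gamma(a+1)\Gamma(b+1)}{\Gamma(a+b+2)}(s-\theta)^{a+b+1}$, merging the two last exponents into $a_{m-1}+a_m+1$, and cancelling the resulting Gamma factors. Your preliminary rescaling to the unit simplex is a harmless cosmetic variation; the paper simply keeps general endpoints $(\theta,t)$ throughout and absorbs the factor $(t-\theta)^{\sum_l a_l+m}$ directly in the induction.
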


\begin{proof}
 Before we start with the actual proof, we recall the well-known fact that for all $a,b>-1$
 \begin{align}\label{3IntegralHilfsFormel}
  \int_\theta^s(s-s_1)^a(s_1-\theta)^bds_1=\frac{\Gamma(a+1)\Gamma(b+1)}{\Gamma(a+b+2)}(s-\theta)^{a+b+1}
 \end{align}
 We will now proof the lemma by induction. For that, note that
 \begin{align*}
  \int_\theta^t(s_1-\theta)^{a_1}ds_1=\frac{1}{a_1+1}(t-\theta)^{a_1+1}=\frac{\Gamma(a_1+1)}{\Gamma(a_1+2)}(t-\theta)^{a_1+1},
 \end{align*}
 since for all $z>0$, $\Gamma(z+1)=z\Gamma(z)$. This proves the lemma for $m=1$. Suppose now, we have proven the result up to some fixed $m$. We need to show that it also holds for $m+1$. Therefore, we consider
 \begin{align*}
  &\int_{\Delta^{m+1}_{\theta,t}}\prod_{j=1}^{m+1}(s_j-s_{j+1})^{a_j}ds\\
  &=\int\limits_\theta^t\int\limits_\theta^{s_1}\dots\int\limits_\theta^{s_{m-1}}\int\limits_\theta^{s_m}(s_{m+1}-\theta)^{a_{m+1}}(s_m-s_{m+1})^{a_m}\prod_{j=1}^{m-1}(s_j-s_{j+1})^{a_j}ds_{m+1}ds_m\dots ds_1ds_1\\
  &=\int\limits_\theta^t\int\limits_\theta^{s_1}\dots\int\limits_\theta^{s_{m-1}}\prod_{j=1}^{m-1}(s_j-s_{j+1})^{a_j}\int\limits_\theta^{s_m}(s_{m+1}-\theta)^{a_{m+1}}(s_m-s_{m+1})^{a_m}ds_{m+1}ds_m\dots ds_1ds_1\\
  &=\int\limits_\theta^t\int\limits_\theta^{s_1}\dots\int\limits_\theta^{s_{m-1}}\prod_{j=1}^{m-1}(s_j-s_{j+1})^{a_j}\frac{\Gamma(a_m+1)\Gamma(a_{m+1}+1)}{\Gamma(a_m+a_{m+1}+2)}(s_m-\theta)^{a_m+a_{m+1}+1}ds_m\dots ds_1ds_1\\
  &=\frac{\Gamma(a_m+1)\Gamma(a_{m+1}+1)}{\Gamma(a_m+a_{m+1}+2)}\int_{\Delta^m_{\theta,t}}(s_m-\theta)^{a_m+a_{m+1}+1}\prod_{j=1}^{m-1}(s_j-s_{j+1})^{a_j}ds,
 \end{align*}
 where we applied \eqref{3IntegralHilfsFormel}. Defining $\tilde a_j:=a_j$ for $j=1,\dots,m-1$ and $\tilde a_m:=a_m+a_{m+1}+1$ and applying our assumption that the formula holds for $m$, we get
 \begin{align*}
 &\int_{\Delta^m_{\theta,t}}(s_m-\theta)^{a_m+a_{m+1}+1}\prod_{j=1}^{m-1}(s_j-s_{j+1})^{a_j}ds\\
 &\quad=\int_{\Delta^m_{\theta,t}}\prod_{j=1}^m(s_j-s_{j+1})^{\tilde a_j}ds\\
 &\quad=\frac{\prod_{l=1}^m\Gamma(\tilde a_l+1)}{\Gamma(\sum_{l=1}^m\tilde a_l+m+1)}(t-\theta)^{\sum_{l=1}^m\tilde a_l+m}\\
 &\quad=\frac{\Gamma(a_m+a_{m+1}+1+1)\prod_{l=1}^{m-1}\Gamma(a_l+1)}{\Gamma(\sum_{l=1}^{m-1}a_l+a_m+a_{m+1}+1+m+1)}(t-\theta)^{\sum_{l=1}^{m-1} a_l+a_m+a_{m+1}+1+m}\\
 &\quad=\frac{\Gamma(a_m+a_{m+1}+2)\prod_{l=1}^{m-1}\Gamma(a_l+1)}{\Gamma(\sum_{l=1}^{m+1}a_l+m+2)}(t-\theta)^{\sum_{l=1}^{m+1} a_l+m+1}.
 \end{align*}
 Plugging this in, we finally achieve
 \begin{align*}
  &\int_{\Delta^{m+1}_{\theta,t}}\prod_{j=1}^{m+1}(s_j-s_{j+1})^{a_j}ds\\
  &=\frac{\Gamma(a_m+1)\Gamma(a_{m+1}+1)}{\Gamma(a_m+a_{m+1}+2)}\frac{\Gamma(a_m+a_{m+1}+2)\prod_{l=1}^{m-1}\Gamma(a_l+1)}{\Gamma(\sum_{l=1}^{m+1}a_l+m+2)}(t-\theta)^{\sum_{l=1}^{m+1} a_l+m+1}\\
  &=\frac{\prod_{l=1}^{m+1}\Gamma(a_l+1)}{\Gamma(\sum_{l=1}^{m+1}a_l+(m+1)+1)}(t-\theta)^{\sum_{l=1}^{m+1}a_l+(m+1)},
 \end{align*}
 which proves the formula for $m+1$. By the principle of induction, this finishes the proof.
\end{proof}

The following result which is due to \cite[Theorem 1] {DPMN92} provides a compactness criterion for subsets of $L^{2}(\Omega)$ using Malliavin calculus.

\begin{thm}
\label{3VI_MCompactness}Let $\left\{ \left( \Omega ,\mathcal{A},P\right)
;H\right\} $ be a Gaussian probability space, that is $\left( \Omega ,%
\mathcal{A},P\right) $ is a probability space and $H$ a separable closed
subspace of Gaussian random variables of $L^{2}(\Omega )$, which generate
the $\sigma $-field $\mathcal{A}$. Denote by $\mathbf{D}$ the derivative
operator acting on elementary smooth random variables in the sense that%
\begin{equation*}
\mathbf{D}(f(h_{1},\ldots,h_{n}))=\sum_{i=1}^{n}\partial
_{i}f(h_{1},\ldots,h_{n})h_{i},\text{ }h_{i}\in H,f\in C_{b}^{\infty }(\mathbb{R%
}^{n}).
\end{equation*}%
Further let $\mathbb{D}^{1,2}$ be the closure of the family of elementary
smooth random variables with respect to the norm%
\begin{align*}
\left\Vert F\right\Vert _{1,2}:=\left\Vert F\right\Vert _{L^{2}(\Omega
)}+\left\Vert \mathbf{D}F\right\Vert _{L^{2}(\Omega ;H)}.
\end{align*}%
Assume that $C$ is a self-adjoint compact operator on $H$ with dense image.
Then for any $c>0$ the set
\begin{equation*}
\mathcal{G}=\left\{ G\in \mathbb{D}^{1,2}:\left\Vert G\right\Vert
_{L^{2}(\Omega )}+\left\Vert C^{-1} \mathbf{D} \,G\right\Vert _{L^{2}(\Omega ;H)}\leq
c\right\}
\end{equation*}%
is relatively compact in $L^{2}(\Omega )$.
\end{thm}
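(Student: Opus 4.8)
The plan is to combine the Wiener chaos decomposition of $L^2(\Omega)$ with the spectral decomposition of $C$, thereby reducing relative compactness to a total-boundedness statement obtained from uniform approximation by \emph{finite-dimensional} projections. Since $C$ is self-adjoint and compact with dense image, I would first diagonalize it: there is an orthonormal basis $(e_k)_{k\geq1}$ of $H$ with $Ce_k=\lambda_k e_k$, where $\lambda_k>0$ for all $k$ (dense image forces every eigenvalue to be nonzero, so that the condition involving $C^{-1}$ is meaningful) and $\lambda_k\to0$ (compactness); I order them so that $(\lambda_k^{-2})_k$ is nondecreasing. Writing $\xi_k$ for the standard Gaussian attached to $e_k$, the normalized Hermite products $\Phi_a:=\prod_k H_{a_k}(\xi_k)/\sqrt{a_k!}$, indexed by finitely supported multi-indices $a=(a_k)$, form an orthonormal basis of $L^2(\Omega)$, and the $n$-th chaos is spanned by those $\Phi_a$ with $|a|:=\sum_k a_k=n$. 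For $G=\sum_a c_a\Phi_a\in\mathbb D^{1,2}$ one has $\mathbf D_{e_k}\Phi_a=\sqrt{a_k}\,\Phi_{a-\delta_k}$, whence I would record the three coefficient identities
\[
\|G\|_{L^2(\Omega)}^2=\sum_a c_a^2,\qquad \|\mathbf D G\|_{L^2(\Omega;H)}^2=\sum_a c_a^2\,|a|,\qquad \|C^{-1}\mathbf D G\|_{L^2(\Omega;H)}^2=\sum_a c_a^2\sum_k\lambda_k^{-2}a_k.
\]
Because $C$ is bounded, $\|\mathbf D G\|\leq\lambda_1\|C^{-1}\mathbf D G\|$, so $\mathcal G$ is bounded in $\mathbb D^{1,2}$.

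Next I would extract two uniform approximation estimates. Writing $J_n$ for the projection onto the $n$-th chaos and $P_N:=\sum_{n\le N}J_n$, the boundedness just noted gives $\sup_{G\in\mathcal G}\sum_n n\,\|J_nG\|_{L^2}^2\leq\lambda_1^2c^2=:c_1^2<\infty$, and hence the \emph{chaos-tail} bound $\sup_{G\in\mathcal G}\|G-P_NG\|_{L^2}^2\leq c_1^2/(N+1)\to0$. The heart of the argument is the transverse, \emph{dimension-tail} estimate, which is exactly where $C$ enters quantitatively. Let $E[\,\cdot\mid\mathcal F_m]$ denote conditional expectation given $\mathcal F_m:=\sigma(\xi_1,\dots,\xi_m)$; in the $\Phi_a$-basis it annihilates every $\Phi_a$ whose support meets $\{m+1,m+2,\dots\}$. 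For any such $a$ one has $\sum_k\lambda_k^{-2}a_k\geq\lambda_{m+1}^{-2}$, so
\[
\|G-E[G\mid\mathcal F_m]\|_{L^2}^2=\sum_{a:\,\mathrm{supp}(a)\not\subseteq\{1,\dots,m\}}c_a^2\leq\lambda_{m+1}^2\sum_a c_a^2\sum_k\lambda_k^{-2}a_k\leq\lambda_{m+1}^2\,c^2,
\]
which tends to $0$ uniformly over $\mathcal G$ precisely because $\lambda_{m+1}\to0$.

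Finally I would assemble the compactness. The operators $P_N$ and $E[\,\cdot\mid\mathcal F_m]$ are simultaneously diagonal in the $\Phi_a$-basis, so $Q_{N,m}:=P_N\,E[\,\cdot\mid\mathcal F_m]$ is the orthogonal projection onto the span of the finitely many $\Phi_a$ with $|a|\leq N$ and $\mathrm{supp}(a)\subseteq\{1,\dots,m\}$; its range is finite-dimensional. Since $P_N$ is a contraction, $\|G-Q_{N,m}G\|_{L^2}\leq\|G-P_NG\|_{L^2}+\|G-E[G\mid\mathcal F_m]\|_{L^2}$, and the two previous estimates give $\sup_{G\in\mathcal G}\|G-Q_{N,m}G\|_{L^2}\to0$ as $N,m\to\infty$. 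As $Q_{N,m}\mathcal G$ is a bounded subset of a finite-dimensional space, hence relatively compact, and $\mathcal G$ lies in an arbitrarily small neighborhood of it, $\mathcal G$ is totally bounded, and therefore relatively compact in the complete space $L^2(\Omega)$.

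I expect the main obstacle to be the second step: making the Hermite-basis bookkeeping for $\mathbf D_{e_k}$ and for $E[\,\cdot\mid\mathcal F_m]$ precise, and verifying that it is exactly the compactness of $C$ (through $\lambda_{m+1}\to0$) together with its dense image (through $\lambda_k>0$) that converts the single bound $\|C^{-1}\mathbf D G\|\leq c$ into a \emph{uniform} finite-dimensional approximation. The chaos truncation alone is insufficient, since each chaos $\mathcal H_n$ is itself infinite-dimensional; the operator $C$ supplies the missing Rellich-type gain in the transverse Gaussian directions, and the whole scheme collapses if any $\lambda_k$ vanishes or if the $\lambda_k$ fail to decay.
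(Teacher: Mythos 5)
Your proposal is correct, but note that the paper itself offers no proof to compare against: Theorem \ref{3VI_MCompactness} is imported verbatim from \cite[Theorem 1]{DPMN92}, so the only internal ``proof'' is the citation. Your argument is a sound, self-contained reconstruction of the mechanism behind that result: diagonalizing $C$, expanding in the tensor-Hermite basis $\Phi_a$, and converting the bound $\|C^{-1}\mathbf{D}G\|_{L^2(\Omega;H)}\leq c$ into \emph{two} uniform tail estimates --- the chaos-order tail via $\sum_n n\|J_nG\|_{L^2}^2=\|\mathbf{D}G\|_{L^2(\Omega;H)}^2$, and the transverse dimension tail via $\sum_k\lambda_k^{-2}a_k\geq\lambda_{m+1}^{-2}$ whenever $\mathrm{supp}(a)\not\subseteq\{1,\dots,m\}$ --- and then concluding total boundedness from the finite-rank projections $Q_{N,m}$. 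All three coefficient identities check out ($\mathbf{D}_{e_k}\Phi_a=\sqrt{a_k}\,\Phi_{a-\delta_k}$, injectivity of $a\mapsto a-\delta_k$ on $\{a_k\geq1\}$), and your closing diagnosis is exactly right: chaos truncation alone cannot work because each chaos is infinite-dimensional, and it is the decay $\lambda_{m+1}\to0$ (compactness) together with $\lambda_k\neq0$ (dense image $\Rightarrow$ injectivity, since $\ker C=(\overline{\mathrm{Ran}\,C})^{\perp}$ for self-adjoint $C$) that supplies the missing gain.

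Two cosmetic repairs you should make. First, a self-adjoint compact operator may have negative eigenvalues, so you cannot assert $\lambda_k>0$; the correct statement is $\lambda_k\in\R\setminus\{0\}$ with $|\lambda_k|\to0$, and you should order by $|\lambda_k|$ nonincreasing and write $|\lambda_1|$, $|\lambda_{m+1}|$ in your bounds --- harmless, since your estimates only ever use $\lambda_k^{\pm2}$. Second, membership in $\mathcal{G}$ should be read as requiring $\mathbf{D}G\in\mathrm{dom}(C^{-1})$ (equivalently, finiteness of $\sum_a c_a^2\sum_k\lambda_k^{-2}a_k$), which is the interpretation your coefficient identity already presupposes; it is worth saying so explicitly before using it.
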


In order to formulate compactness criteria useful for our purposes, we need the following technical result which also can be found in \cite{DPMN92}.

\begin{lem}
\label{3VI_DaPMN} Let $v_{s},s\geq 0$ be the Haar basis of $L^{2}([0,T])$. For
any $0<\alpha <1/2$ define the operator $A_{\alpha }$ on $L^{2}([0,T])$ by%
\begin{equation*}
A_{\alpha }v_{s}=2^{k\alpha }v_{s}\text{, if }s=2^{k}+j\text{ }
\end{equation*}%
for $k\geq 0,0\leq j\leq 2^{k}$ and%
\begin{equation*}
A_{\alpha }1=1.
\end{equation*}%
Then for all $\beta $ with $\alpha <\beta <(1/2),$ there exists a constant $%
c_{1}$ such that%
\begin{equation*}
\left\Vert A_{\alpha }f\right\Vert \leq c_{1}\left\{ \left\Vert f\right\Vert
_{L^{2}([0,T])}+\left(\int_{0}^{T}\int_{0}^{T}\frac{\left|
f(t)-f(t^{\prime })\right|^2}{\left\vert t-t^{\prime }\right\vert
^{1+2\beta }}dt\,dt^{\prime }\right)^{1/2}\right\} .
\end{equation*}
\end{lem}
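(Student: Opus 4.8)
The plan is to treat $A_\alpha$ as a multiplier relative to the Haar basis, acting as multiplication by $2^{k\alpha}$ on every wavelet at dyadic scale $k$, and to control the resulting weighted sum of squared Haar coefficients by the Gagliardo-type double integral on the right-hand side. Writing $f = c_0 v_0 + \sum_{k\ge 0}\sum_{j} c_{k,j}\,h_{k,j}$, where $v_0$ is the normalized constant function (with $A_\alpha v_0 = v_0$) and $h_{k,j}$ is the normalized Haar function indexed by $s = 2^k+j$, supported on the dyadic interval $I_{k,j}\subset[0,T]$ of length $|I_{k,j}| = T2^{-k}$, orthonormality gives $\|A_\alpha f\|^2 = |c_0|^2 + \sum_{k\ge 0}\sum_j 2^{2k\alpha}|c_{k,j}|^2$. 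Since $|c_0|\le\|f\|_{L^2([0,T])}$ by Cauchy--Schwarz, the whole problem reduces to bounding the weighted wavelet sum by the seminorm, which I abbreviate as $[f]_\beta^2 := \int_0^T\int_0^T \frac{|f(t)-f(t')|^2}{|t-t'|^{1+2\beta}}\,dt\,dt'$.

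The central step is a pointwise bound on each Haar coefficient by a \emph{local} double integral. Denoting by $I^-_{k,j},I^+_{k,j}$ the two halves of $I_{k,j}$, each of length $\ell=|I_{k,j}|/2$, I would use $c_{k,j} = |I_{k,j}|^{-1/2}\big(\int_{I^-}f-\int_{I^+}f\big) = 2|I_{k,j}|^{-3/2}\int_{I^-}\int_{I^+}(f(s)-f(t))\,ds\,dt$ and apply Cauchy--Schwarz in the two variables together with $|I^-\times I^+| = |I_{k,j}|^2/4$ to get $|c_{k,j}|^2 \le |I_{k,j}|^{-1}\int_{I^-}\int_{I^+}|f(s)-f(t)|^2\,ds\,dt$. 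Since $|s-t|\le|I_{k,j}|$ whenever $(s,t)\in I^-\times I^+$, inserting the weight $|s-t|^{1+2\beta}$ and bounding it from above by $|I_{k,j}|^{1+2\beta}$ yields $|c_{k,j}|^2 \le |I_{k,j}|^{2\beta}\int_{I^-}\int_{I^+}\frac{|f(s)-f(t)|^2}{|s-t|^{1+2\beta}}\,ds\,dt$.

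The two summations are then handled separately. For fixed $k$, the product sets $I^-_{k,j}\times I^+_{k,j}$ are pairwise disjoint subsets of $[0,T]^2$ (they sit inside the disjoint dyadic intervals $I_{k,j}$), so summing the local integrals over $j$ is bounded by the single copy $[f]_\beta^2$; using $|I_{k,j}|^{2\beta}=T^{2\beta}2^{-2k\beta}$ this gives $\sum_j 2^{2k\alpha}|c_{k,j}|^2 \le T^{2\beta}2^{2k(\alpha-\beta)}[f]_\beta^2$. Summing over $k\ge 0$ and invoking the hypothesis $\alpha<\beta$, the geometric series $\sum_{k\ge0}2^{2k(\alpha-\beta)}$ converges to some $C_{\alpha,\beta}<\infty$ --- this is exactly where $\alpha<\beta$ is consumed. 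Combining with the $|c_0|^2$ term gives $\|A_\alpha f\|^2 \le \|f\|_{L^2([0,T])}^2 + T^{2\beta}C_{\alpha,\beta}[f]_\beta^2$, and taking square roots with $\sqrt{a^2+b^2}\le a+b$ produces the stated inequality with $c_1 = \max\{1,\,T^{\beta}C_{\alpha,\beta}^{1/2}\}$, depending only on $T,\alpha,\beta$.

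All estimates are elementary; the step I expect to demand the most care is the bookkeeping that makes the position sum collapse, namely verifying rigorously the disjointness of the products $I^-_{k,j}\times I^+_{k,j}$ at a fixed scale and checking that each such product lies in the region $\{|s-t|\le|I_{k,j}|\}$ so the weight can be absorbed cleanly. Once that covering argument is in place, convergence of the scale sum is immediate from $\alpha<\beta$, and the remainder is routine.
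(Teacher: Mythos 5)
Your proof is correct, and it is worth noting that the paper itself offers no proof to compare against: Lemma \ref{3VI_DaPMN} is stated there as a technical result imported from Da Prato--Malliavin--Nualart \cite{DPMN92}, so your argument fills a gap the authors deliberately left to the literature. What you wrote is essentially the standard (and, as far as the brief C.~R. note allows one to tell, the original) argument: Parseval reduces $\|A_\alpha f\|^2$ to the weighted coefficient sum $|c_0|^2+\sum_{k,j}2^{2k\alpha}|c_{k,j}|^2$; the identity $c_{k,j}=2|I_{k,j}|^{-3/2}\int_{I^-_{k,j}}\int_{I^+_{k,j}}(f(s)-f(t))\,ds\,dt$ (which I verified, including the factor $2$ coming from $|I^\pm_{k,j}|=|I_{k,j}|/2$) plus Cauchy--Schwarz gives $|c_{k,j}|^2\le|I_{k,j}|^{-1}\int_{I^-}\int_{I^+}|f(s)-f(t)|^2\,ds\,dt$, and inserting the weight $|s-t|^{1+2\beta}\le|I_{k,j}|^{1+2\beta}$ is legitimate since $I^-_{k,j}\times I^+_{k,j}\subset I_{k,j}\times I_{k,j}\subset\{|s-t|\le|I_{k,j}|\}$. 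The disjointness you flagged as the delicate point is immediate in this form: at fixed $k$ the intervals $I_{k,j}$ are pairwise disjoint, hence so are the squares $I_{k,j}\times I_{k,j}$, so the position sum is dominated by one copy of the seminorm, and the scale sum $\sum_k 2^{2k(\alpha-\beta)}$ converges exactly because $\alpha<\beta$. Three cosmetic remarks: the lemma's indexing ``$0\le j\le 2^k$'' should be read as $0\le j<2^k$ (so $s=2^k+j$ runs through $2^k,\dots,2^{k+1}-1$), which is what your construction implicitly does; the hypothesis $\beta<1/2$ is never used in your estimate (only $\alpha<\beta$ is) --- it matters downstream, where the seminorm must match the fractional-regularity bound available for the Malliavin derivatives in Corollary \ref{3VI_compactcrit}; and your constant $c_1=\max\{1,T^\beta C_{\alpha,\beta}^{1/2}\}$ depends on $T$, which is consistent with the statement. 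Finally, since $A_\alpha$ is a priori unbounded, the Parseval computation should be read as showing that when the right-hand side is finite the series defining $A_\alpha f$ converges in $L^2([0,T])$, i.e.\ $f$ lies in the domain of $A_\alpha$ --- a one-line remark worth adding, after which the proof is complete.
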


A direct consequence of Theorem \ref{3VI_MCompactness} and Lemma \ref{3VI_DaPMN} is now the following compactness criteria.

\begin{cor} \label{3VI_compactcrit}
Let a sequence of $\mathcal{F}_T$-measurable random variables $X_n\in\mathbb{D}^{1,2}$, $n=1,2,\dots$, be such that there exists a constant $C>0$ with
$$
\sup_n E[|X_n|^2] \leq C ,
$$
$$
\sup_n E \left[ \| \DM_t X_n \|_{L^2([0,T])}^2 \right] \leq C \,
$$
and there exists a $\beta \in (0,1/2)$ such that
$$
\sup_n \int_0^T \int_0^T \frac{E \left[ \| \DM_t X_n - \DM_{t'} X_n \|^2 \right]}{|t-t'|^{1+2\beta}} dtdt' <\infty
$$
where $\|\cdot\|$ denotes any matrix norm.

Then the sequence $X_n$, $n=1,2,\dots$, is relatively compact in $L^{2}(\Omega )$.
\end{cor}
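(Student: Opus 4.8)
The plan is to deduce the statement directly from Theorem \ref{3VI_MCompactness}, using Lemma \ref{3VI_DaPMN} as the bridge between the H\"older-type double integral appearing in the hypotheses and the weighted derivative norm $\|C^{-1}\DM G\|$ that controls membership in the relatively compact set $\mathcal{G}$. We work on the Gaussian probability space generated by $W$, so that $L^2(\Omega)$ together with $H=L^2([0,T])$ (the space in which the Malliavin derivatives live) fits the framework of Theorem \ref{3VI_MCompactness}.

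First I would fix $\alpha\in(0,\beta)$, with $\beta$ as in the third hypothesis, and consider the operator $A_\alpha$ of Lemma \ref{3VI_DaPMN}. Since $A_\alpha$ acts diagonally on the Haar basis $(v_s)$ with eigenvalues $2^{k\alpha}\geq 1$ (and eigenvalue $1$ on the constant), it is a positive self-adjoint operator whose eigenvalues tend to infinity. Hence $C:=A_\alpha^{-1}$ is a well-defined, self-adjoint, \emph{compact} operator on $H$ with eigenvalues $2^{-k\alpha}\to 0$, and its image contains all finite linear combinations of Haar functions and is therefore dense. Thus $C$ satisfies exactly the structural requirements of Theorem \ref{3VI_MCompactness}, and $C^{-1}=A_\alpha$.

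Next I would bound $\sup_n E[\|A_\alpha \DM X_n\|_H^2]$. For fixed $\omega$, applying Lemma \ref{3VI_DaPMN} to $f=\DM_\cdot X_n(\omega)\in L^2([0,T])$ yields a pointwise estimate of $\|A_\alpha \DM_\cdot X_n(\omega)\|_H$ by $\|\DM_\cdot X_n(\omega)\|_H$ together with the double integral $\int_0^T\int_0^T |\DM_t X_n(\omega)-\DM_{t'}X_n(\omega)|^2/|t-t'|^{1+2\beta}\,dt\,dt'$ (which is finite for a.e.\ $\omega$ by the third hypothesis, so $A_\alpha\DM X_n$ is a.e.\ well defined despite $A_\alpha$ being unbounded). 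Squaring, using $(a+b)^2\leq 2a^2+2b^2$, taking expectations, and invoking Fubini to interchange $E$ with the double integral, the second and third hypotheses give $\sup_n E[\|A_\alpha \DM X_n\|_H^2]<\infty$.

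Finally, combining this with $\sup_n E[|X_n|^2]\leq C$ produces a finite constant $c$ with
\begin{equation*}
 \|X_n\|_{L^2(\Omega)}+\|C^{-1}\DM X_n\|_{L^2(\Omega;H)}\leq c\quad\text{for all }n,
\end{equation*}
so that every $X_n$ lies in the set $\mathcal{G}$ of Theorem \ref{3VI_MCompactness}. That theorem then yields relative compactness of $\mathcal{G}$, and a fortiori of the sequence $(X_n)$, in $L^2(\Omega)$. The only points requiring genuine care are the verification that $A_\alpha^{-1}$ meets the self-adjoint/compact/dense-image requirements on $C$ and the correct pointwise-then-integrated use of Lemma \ref{3VI_DaPMN}; once these are in place the conclusion is immediate. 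For the vector- or matrix-valued formulation one simply runs the scalar argument componentwise, exploiting the equivalence of all matrix norms in finite dimension.
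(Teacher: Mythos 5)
Your proposal is correct and follows precisely the route the paper intends: the corollary is stated there without a written proof, as ``a direct consequence of Theorem \ref{3VI_MCompactness} and Lemma \ref{3VI_DaPMN}'', and your argument --- fixing $\alpha\in(0,\beta)$, taking $C=A_\alpha^{-1}$ (compact, self-adjoint, dense image, since $A_\alpha$ is diagonal on the Haar basis with eigenvalues $2^{k\alpha}\geq1$), and bounding $\sup_n E[\|A_\alpha \DM X_n\|_H^2]$ via the pointwise estimate of Lemma \ref{3VI_DaPMN}, the inequality $(a+b)^2\leq 2a^2+2b^2$, and Fubini --- supplies exactly the details left implicit. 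No gaps.
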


\textbf{Acknowledgements} The financial support from the Norwegian Research Council within the ISP project 239019 ``FINance, INsurance, Energy, Weather and STOCHastics'' (FINEWSTOCH) and the project 250768/F20 ``Challenges in STOchastic CONtrol, INFormation and Applications'' (STOCONINF) is greatly acknowledged.

\end{document}